\newtheorem{lemma}{Lemma}[section]
\newtheorem{theorem}[lemma]{Theorem}
\newtheorem{conjecture}[lemma]{Conjecture}
\newtheorem{corollary}[lemma]{Corollary}
\newtheorem{proposition}[lemma]{Proposition}
\theoremstyle{definition}
\newtheorem{example}[lemma]{Example}
\newtheorem{remark}[lemma]{Remark}
\newtheorem{definition}[lemma]{Definition}
\def\bar{\overline}
\def\tilde{\widetilde}
\def\hat{\widehat}
\def\Sp{{\mathrm{Sp}}}
\def\SL{{\mathrm{SL}}}
\def\GL{{\mathrm{GL}}}
\def\PGL{{\mathrm{PGL}}}
\def\tSp{{\widetilde{\mathrm{Sp}}}}
\def\tG{{\underline{G}}}
\def\Sym{{\mathrm{Sym}}}
\DeclareMathOperator{\Hilb}{Hilb}
\DeclareMathOperator{\Spec}{Spec}
\DeclareMathOperator{\Ad}{Ad}
\DeclareMathOperator{\pt}{\mathrm{pt}}
\DeclareMathOperator{\Proj}{\mathrm{Proj}}
\DeclareMathOperator{\id}{\mathrm{id}}
\DeclareMathOperator{\Stab}{\mathrm{Stab}}
\DeclareMathOperator{\Lie}{Lie}
\DeclareMathOperator{\Hom}{Hom}
\DeclareMathOperator{\Fl}{Fl}
\DeclareMathOperator{\Gr}{Gr}
\DeclareMathOperator{\Coh}{Coh}
\DeclareMathOperator{\Diff}{Diff}
\DeclareMathOperator{\FT}{FT}
\DeclareMathOperator{\diag}{\mathrm{diag}}
\DeclareMathOperator{\gr}{gr}
\DeclareMathOperator{\HHH}{\mathrm{HHH}}
\DeclareMathOperator{\End}{End}
\newcommand{\kk}{\mathbf{k}}
\newcommand{\eee}{\mathbf{e}}
\newcommand{\Comm}{\mathfrak{C}}
\newcommand{\triv}{\mathrm{triv}}
\newcommand{\calO}{\mathcal{O}}
\DeclareMathOperator{\red}{\mathrm{red}}
\newcommand{\tW}{\widetilde{W}}
\newcommand{\tcA}{\widetilde{\mathcal{A}}}
\newcommand\bfe{\mathbf{e}}
\newcommand{\cO}{\mathcal{O}}
\newcommand{\cB}{\mathcal{B}}
\newcommand{\cP}{\mathcal{P}}
\newcommand{\cF}{\mathcal{F}}
\newcommand{\cG}{\mathcal{G}}
\newcommand{\cN}{\mathcal{N}}
\newcommand{\cL}{\mathcal{L}}
\newcommand{\cR}{\mathcal{R}}
\newcommand{\tcR}{\widetilde{\mathcal{R}}}
\newcommand{\cT}{\mathcal{T}}
\newcommand{\cK}{\mathcal{K}}
\newcommand{\cA}{\mathcal{A}}
\newcommand{\fh}{\mathfrak{h}}
\newcommand{\fb}{\mathfrak{b}}
\newcommand{\fm}{{\mathfrak m}}
\newcommand{\fn}{{\mathfrak n}}
\newcommand{\fg}{\mathfrak{g}}
\newcommand{\fri}{{\mathfrak i}}
\newcommand{\fP}{{\mathfrak P}}
\newcommand{\ft}{\mathfrak{t}}
\newcommand{\bI}{\mathbf{I}}
\newcommand{\bP}{\mathbf{P}}
\newcommand{\bF}{\mathbf{F}}
\newcommand{\tbP}{{\underline{\mathbf{P}}}}
\newcommand{\tbI}{{\underline{\mathbf{I}}}}
\newcommand{\Br}{\mathfrak{B}r}
\newcommand{\C}{{\mathbb C}}
\newcommand{\G}{{\mathbb G}}
\newcommand{\A}{\mathbb{A}}
\newcommand{\Z}{{\mathbb Z}}
\newcommand{\Q}{{\mathbb Q}}
\newcommand{\F}{{\mathbb F}}
\newcommand{\HH}{\mathbb{H}}
\renewcommand{\O}{\mathbb{O}}
\renewcommand{\P}{\mathbb{P}}
\newcommand{\tX}{\widetilde{X}}
\newcommand{\tfC}{\widetilde{\mathfrak{C}}}
\DeclareMathOperator{\codim}{\mathrm{codim}}
\newcommand{\Alt}{\mathrm{Alt}}
\newcommand{\sort}{\mathrm{sort}}
\newcommand{\yy}{\mathbf{y}}
\newcommand{\rat}{\mathrm{rat}}
\newcommand{\trig}{\mathrm{trig}}
\newcommand{\QCoh}{\mathrm{QCoh}}
\newcommand{\ad}{\mathrm{ad}}
\newcommand{\reg}{\mathrm{reg}}
\newcommand{\rot}{\mathrm{rot}}
\newcommand{\dil}{\mathrm{dil}}
\newcommand{\ev}{\mathrm{ev}}
\newcommand{\Ker}{\mathrm{Ker}}
\newcommand{\h}{\mathrm{cot}}
\newcommand{\Sch}{\mathrm{Sch}}
\begin{document}
\title{The affine Springer fiber -- sheaf correspondence}
\author{Eugene Gorsky}
\address{Department of Mathematics, UC Davis}
\email{egorskiy@math.ucdavis.edu}
\author{Oscar Kivinen}
\address{Department of Mathematics and Systems Analysis, Aalto University}
\email{oscar.kivinen@aalto.fi}
\author{Alexei Oblomkov}
\address{Department of Mathematics, UMass Amherst}
\email{oblomkov@math.umass.edu}
\date{\today}
\begin{abstract}
Given a semisimple element in the loop Lie algebra of a reductive group, we construct a quasi-coherent sheaf on a partial resolution of the trigonometric commuting variety of the Langlands dual group. The construction uses affine Springer theory and can be thought of as an incarnation of 3d mirror symmetry. For the group $GL_n$, the corresponding partial resolution is $\Hilb^n(\C^\times\times \C)$. We also consider a quantization of this construction for homogeneous elements.
\end{abstract}
\maketitle
\tableofcontents
\section{Introduction}
\label{sec:introduction}

Let $\kk=\bar{\kk}$ be an algebraically closed field of characteristic zero or large positive characteristic. We fix a connected 
reductive group $G/\kk$ and a maximal torus $T\subset G$. We let \(\Lie(G)=:\fg\supset
\ft:=\Lie(T)\), and denote by $G^{\vee}$ the Langlands dual group over $\C$ (or $\overline{\Q}_\ell$).

In this paper we explain how one can naturally associate to an affine Springer fiber for $G$, or rather to a conjugacy class of the loop Lie algebra $\fg(\!(t)\!)$, a quasi-coherent sheaf on a certain partial resolution of the commuting variety associated to $G^{\vee}\times \fg^{\vee}$. We prove that the sheaves constructed this way are coherent in a number of cases and conjecture they are coherent in general. The sheaf on the partial resolution remembers homological invariants of the affine Springer fiber, and  we expect that our construction provides a unified perspective on the affine Springer fibers and their various functorial properties.

\subsection{Main results}

Our first main character is a partial resolution of the trigonometric version of the commuting variety for $G^{\vee}$, which we denote by 
\(\widetilde{\Comm}_{G^{\vee}}=
\widetilde{T^*T^\vee/W}\). It is in general a singular Poisson variety which we conjecture to locally agree with those constructed in \cite{BALK,LosevDeformations,Namikawa}. The variety $\widetilde{\Comm}_{G^{\vee}}$ is defined  via its homogeneous coordinate ring defined below in equation \eqref{eq: def resolution intro}.

For $G=\GL_n$, 
we recover the Hilbert scheme of $n$ points on $\C^{\times}\times \C$ \cite{Nak}. The variety $\widetilde{\Comm}_{G^{\vee}}$ has a natural action of the torus $\C^{\times}_{\h}$ which lifts the torus action on $T^*T^\vee/W$ dilating the cotangent fibers. A more detailed construction is given in Section \ref{sec:commuting}. 

Let $\Gr_G$  be the affine Grassmannian of $G$. On the level of $\kk$-points this is $G(\cK)/G(\cO)$, where \(\calO=\kk\llbracket t\rrbracket\) and \(\cK=\kk(\!( t)\!)=\kk\llbracket t\rrbracket[t^{-1}]\). Our second main character is the {\em affine Springer 
fiber} \(\Sp_\gamma\subset \Gr_G\), \(\gamma\in \fg(\cK)\), defined as the fixed locus of the vector field 
\(\gamma\). More precisely, let $\gamma\in 
\fg(\cK)$ be a compact semisimple element. On the level of $\kk$-points $$\Sp_\gamma(\kk):=\left\{gG({\kk\llbracket t\rrbracket})|\Ad(g^{-1})\gamma\in \Lie(G)({\kk\llbracket t\rrbracket})\right\}\subset \Gr_G(\kk).$$ Under these assumptions, $\Sp_\gamma$ is a nonempty ind-scheme over $\kk$. If $\gamma$ is also regular, $\Sp_\gamma$ is finite-dimensional and locally of finite type over $\kk$ \cite[2.5.2]{YunLectures}. We will only be interested in the \'etale or singular cohomologies of the $\Sp_\gamma$, so will be writing $\Sp_\gamma$ for the $\kk$-points $\Sp_\gamma(\kk)$. If $\kk=\C$ we will use the analytic topology and if $\kk=\overline{\F}_q$ we will use the \'etale topology. Our main result is the following.

\begin{theorem}
\label{thm:mainthm}
Let $\gamma\in \fg(\cK)$ be a semisimple element and $G_\gamma=C_{G(\cK)}(\gamma)$ its centralizer in $G(\cK)$. The group $G_\gamma$ admits a N\'eron model $J_\gamma/\cO$. Then for every subgroup $K_\gamma\subseteq J_\gamma(\cO)$, there exists a quasi-coherent sheaf \(\cF_{\gamma}^{K_{\gamma}}\in \QCoh_{\G_m}
(\widetilde{\Comm}_{G^{\vee}})\)
such that:
\begin{itemize}
\item[(1)]   $\cF_{t\gamma}^{K_{\gamma}}=\cL\otimes 
\cF_{\gamma}^{K_{\gamma}}$ where $\cL=\cO(1)$ is the Serre twisting sheaf coming from the $\Proj$-construction of $\widetilde{\Comm}_{G^{\vee}}$ \eqref{eq: def resolution intro}.
\item[(2)] There exists an integer $M$ such that for $m>M$ we have
\[H^0\left(\widetilde{\Comm}_{G^{\vee}},\cF_{t^m\gamma}^{K_{\gamma}}\right)=H_*^{K_\gamma}(\Sp_{t^m\gamma})\]
\end{itemize}

 Moreover, the sheaf $\cF_{\gamma}^{K_{\gamma}}$ is equivariant with respect to the action of $\C^{\times}_{\h}$ and 
 the homological grading on the affine Springer fiber side can be recovered from this  action.

\end{theorem}
We conjecture below that $\cF_{\gamma}^{K_{\gamma}}$ is actually coherent (see Conjecture \ref{conj: coherent}) and prove it in some cases.  For trivial subgroup $K_{\gamma}$ we denote $\cF_{\gamma}^{K_{\gamma}}$ by $\cF_{\gamma}$.
\begin{remark}
Note that the quasicoherent sheaf $\cF_\gamma^{K_{\gamma}}$ only depends on the underlying reduced structure of the ind-scheme 
$\Sp_\gamma$.\end{remark}
\begin{remark}
Here 
and in the rest of the paper, $H_*(-)$ denotes Borel-Moore homology, defined as $H_*(X):=H^{-*}(X,\omega_X)$. The Borel-Moore homology of an ind-variety $X=\varinjlim X_i$ will be defined as $H_*(X)=\varinjlim H_*(X_i)$.
\end{remark}
It is natural to wonder what kind of sheaves $\cF_\gamma$ Theorem \ref{thm:mainthm} yields. For $G=\GL_n$ and $\gamma$ {\em homogeneous}, we have the following. 
\begin{theorem}[Proposition \ref{prop:punctualexample}]
\label{thm: intro homogeneous 1}
If $G=\GL_n$, $K_{\gamma}$ is trivial and $\gamma$ elliptic homogeneous of slope  $\frac{kn+1}{n}$ then $\cF_{\gamma}$ agrees with the restriction of the line bundle $\cO(k)$ to the punctual Hilbert scheme at $(1,0)$.

\end{theorem}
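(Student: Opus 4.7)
My plan is to reduce the identification of $\cF_\gamma$ to a computation of its graded module of global sections with respect to the ample Serre sheaf $\cO(1)$. Observe that for $\gamma$ homogeneous of slope $\frac{kn+1}{n}$, the element $t^m\gamma$ is again homogeneous, now of slope $\frac{(k+m)n+1}{n}$. Iterating part (1) of Theorem \ref{thm:mainthm} gives $\cF_{t^m\gamma}\cong\cO(m)\otimes\cF_\gamma$ for every $m\geq 0$, while part (2) yields
\[
H^0\!\left(\widetilde{\Comm}_{\check{GL}_n},\ \cO(m)\otimes\cF_\gamma\right)\ \cong\ H_*^{L_\gamma}\!\left(\Sp_{t^m\gamma}\right)\qquad\text{for all }m\gg 0.
\]
Thus the full graded module $\bigoplus_{m\gg 0} H^0(\cO(m)\otimes\cF_\gamma)$ is controlled by the equivariant Borel--Moore homology of the affine Springer fibers $\Sp_{t^m\gamma}$ for a family of homogeneous elements.

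Next I would invoke the known computation of the $L_\gamma$-equivariant Borel--Moore homology of $\Sp_{\gamma'}$ for $\gamma'\in\fg(\cK)$ homogeneous of slope $\frac{jn+1}{n}$ for $GL_n$: it is isomorphic, as a graded vector space (with grading coming from the $\G_m$ acting by loop rotation combined with a suitable cocharacter), to $H^0(\Hilb^n(\C^2)_0,\cO(j))$, where $\Hilb^n(\C^2)_0$ is the punctual Hilbert scheme at the origin. This identification, essentially due to work of Hikita, Oblomkov--Yun, and Gorsky--Negut--Rasmussen (in the form proved by Carlsson--Mellit and Mellit in the relevant cases), applied with $j=k+m$ gives
\[
H_*^{L_\gamma}\!\left(\Sp_{t^m\gamma}\right)\ \cong\ H^0\!\left(\Hilb^n(\C^2)_0,\ \cO(k+m)\right).
\]

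To conclude, I would use the local isomorphism $\Hilb^n(\C^*\times\C)_{(1,0)}\cong \Hilb^n(\C^2)_0$ (from the formal identification of a neighborhood of $(1,0)\in\C^*\times\C$ with a neighborhood of $0\in\C^2$). The punctual Hilbert scheme is projective and $\cO(1)$ restricts to an ample line bundle there, so Serre's Proj-theorem recovers any coherent sheaf on it from its high-twist graded module of global sections. Comparing graded modules identifies
\[
\cF_\gamma\ \cong\ i_*\!\left(\cO(k)\big|_{\Hilb^n(\C^*\times\C)_{(1,0)}}\right),
\]
where $i$ is the inclusion of the punctual Hilbert scheme. The $\G_m$-weights match by the last clause of Theorem \ref{thm:mainthm}, which relates the homological grading to the cotangent-dilation $\G_m$.

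The main obstacle is showing that $\cF_\gamma$ is in fact scheme-theoretically supported on the punctual Hilbert scheme at the specific point $(1,0)$, rather than spread over a larger subscheme in $\Hilb^n(\C^*\times\C)$. Set-theoretically this follows from the homogeneity of $\gamma$: the $\G_m$-action furnished by homogeneity contracts $\Sp_\gamma$ and, via the correspondence, contracts the support of $\cF_\gamma$ to the attracting locus of the cotangent-dilation $\G_m$ on $\widetilde{\Comm}_{\check{G}}$; at the same time, the Hilbert--Chow image of the support is determined by the characteristic polynomial of the homogeneous representative of $\gamma$, which one checks to be $n\cdot(1,0)\in\Sym^n(\C^*\times\C)$. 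Promoting this to a scheme-theoretic statement, and verifying that no additional nilpotent structure appears, is the delicate point; once achieved, the graded section comparison above pins down $\cF_\gamma$ uniquely as claimed.
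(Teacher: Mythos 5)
Your high-level route — iterate parts (1)–(2) of Theorem \ref{thm:mainthm} to reduce the problem to comparing $\bigoplus_m H^0(\cF_{t^m\gamma})$ with $\bigoplus_m H^0\bigl(\Hilb^n_0,\cO(k+m)\bigr)$, then apply Serre's Proj theorem on the (projective) punctual Hilbert scheme — is the right shape, and it is implicitly what the paper does. However, there are two genuine gaps.

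First, your ``known computation'' is stated as an isomorphism of graded \emph{vector spaces} (with $\G_m$-weight), but you then use it to compare graded \emph{modules}. Serre's theorem on the projective scheme $\Hilb^n_0$ recovers a coherent sheaf from its graded module of sections over the homogeneous coordinate ring of $\Hilb^n_0$; a character (Hilbert series) identity alone does not pin down the sheaf, since non-isomorphic coherent sheaves on a projective scheme can have identical Hilbert series. You would need the identification to respect the $\Z$-algebra action, i.e.\ the action of $A^d$ on the left and of $\bigoplus H^0(\Hilb^n_0,\cO(d))$ on the right, and that is precisely what is missing from the references you cite.

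Second, the attribution is off, and this is where the real content lives. What \cite{OY,VVfd} establish is that the localized equivariant homology $H_*^{\G_m}(\Sp_\gamma)$ is the unique finite-dimensional module of the spherical trigonometric DAHA at parameter $-\tfrac{kn+1}{n}$. To convert this into a statement about sheaves on the punctual Hilbert scheme, one completes the $\Z$-algebra at the identity of $T^\vee$ (via Lemma \ref{lem:localstructure}) to land in the \emph{rational} Cherednik setting, and then applies Gordon--Stafford's $\Z$-algebra theorem from \cite{GS2}, which produces an equivalence between suitable $\Z$-algebra modules and coherent sheaves on $\Hilb^n(\C^2)$ compatibly with the Hilbert--Chow map. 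This step simultaneously supplies the module-level isomorphism you were missing and resolves the ``main obstacle'' you flag: the scheme-theoretic support is a consequence of the Gordon--Stafford equivalence, not a separate delicate point to be handled by hand. Without invoking Gordon--Stafford (and the trigonometric-to-rational passage), neither of the two gaps closes.
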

\begin{theorem}[Proposition \ref{prop:procesi}]
\label{thm: intro homogeneous 0}
For $G=\GL_n, K_{\gamma}=T$ and $\gamma$ regular semisimple of integral slope $k$, the sheaf $\cF_\gamma^{T}$ agrees with $\cP\otimes \cO(k)$, where $\cP$ is the Procesi bundle restricted to $\Hilb^n(\C^\times\times \C)$.

For the same $\gamma$ and $K_{\gamma}$  trivial we get a sheaf $\cF_\gamma=\cP\otimes \cO(k)/(y_1,\ldots,y_n)\cP\otimes \cO(k)$ where $y_i$ are interpreted as endomorphisms of $\cP$.
\end{theorem}

\begin{remark}
For arbitrary $G$ and homogeneous $\gamma$ of integral slope, we also get an explicit description of the sheaf $\cF_{\gamma}$, see Theorem \ref{thm: integral slope any type}.
\end{remark}

We also prove a noncommutative version of the above results. The ring of functions on $\widetilde{\Comm}_{G^{\vee}}$ admits a deformation, or quantization known as the spherical {\em trigonometric} or {\em graded} Cherednik algebra (or graded DAHA). By the work of Yun \cite{Yun1,YunSph}, this algebra acts in homology of $\Sp_{\gamma}$. 

The sheaf $\cL=\cO(1)$ and its powers are quantized to bimodules between two trigonometric Cherednik algebras with different values of quantization parameters. These algebras and modules are assembled to a large $\Z$-algebra \cite{BP}, and one of the main results of the paper (Theorem \ref{thm:action}) associates a graded module over this $\Z$-algebra to a collection of affine Springer fibers $\{\Sp_{\gamma},\Sp_{t\gamma},\Sp_{t^2\gamma},\ldots\}$. Roughly speaking, considering the bimodules between different algebras allows one to move between different affine Springer fibers.
For $G=\GL_n$, the $\Z$-algebra is similar to the one considered by Gordon and Stafford for rational Cherednik algebras of $\GL_n$ in \cite{GS1, GS2}. 

The main tool we use is a novel construction of $\Z$-algebras related to the Coulomb branches of $3d\ \cN=4$ theories. The study of the latter was mathematically initiated by Braverman, Finkelberg, Nakajima \cite{BFN,BFN2} and further explored by for example Webster \cite{Webster}. The original Coulomb branch algebra from \cite{BFN} associated to $G$ and an algebraic representation $N\in \text{Rep}(G)$ is defined as the convolution algebra in the equivariant Borel-Moore homology of a certain "space of triples" $\cR_{G,N}$, which is modeled after the affine Grassmannian of $G$. This construction admits a natural quantization by considering additional equivariant parameters, and one can study both commutative and quantized versions. 

In addition to the quantizations and the $\Z$-algebras, there are several other generalizations of the original construction, such as the line operators in \cite{DGGH} which for example allow for different partial affine flag varieties. We use the machinery of Coulomb branches to achieve the following goals:
\begin{itemize}
\item We realize the full graded DAHA as the Iwahori version of the Coulomb branch algebra associated to the adjoint representation and construct its Dunkl-Cherednik embedding to $\hbar$-difference operators on the Lie algebra of the torus $T^\vee$ (Theorem \ref{thm:iwahoricoulombisdaha}).
\item We give explicit formulas for the Coulomb branch $\Z$-algebra in difference presentation (Theorem \ref{thm:gr of bruhat}, Proposition \ref{prop:noncomm localization for any group}).
\item We prove the shift isomorphisms for the spherical/anti-spherical subalgebras of the graded DAHA, in the difference-operator representation (Theorem \ref{thm:shiftiso}). This allows us to define the shift bimodules  and $\Z$-algebras associated to graded DAHA for arbitrary $G$.
\item In the commutative version, the Coulomb branch $\Z$-algebra is equivalent to a graded algebra which we identify explicitly (Theorem \ref{thm: coulomb is symbolic}). This allows us to define $\widetilde{\Comm}_{G^{\vee}}$ using the $\Proj$ construction.
\item Finally, we prove that a collection of affine Springer fibers $\{\Sp_{\gamma},\Sp_{t\gamma},\Sp_{t^2\gamma},\ldots\}$ yields a module over the Coulomb branch $\Z$-algebra (Corollary \ref{cor:springer modules}). This is done using a variant of the Springer theory developed by Hilburn, Kamnitzer and Weekes \cite{HKW}, and Garner and the second author \cite{GK}. 
\end{itemize}

We give a more detailed outline of the results and arguments in Section \ref{sec:outline}. We also comment on various conjectures and connections to physics of ``3d Mirror Symmetry" and link homology (for $G=\GL_n$). 

Of course, the technology of Coulomb branches as introduced in \cite{BFN} works in far greater generality than the "adjoint matter" case studied in this paper. In Sections \ref{sec:coulomb} and \ref{sec:bfnspringer} we give definitions for the general case but focus our study on the $(G,Ad)$-variant. Following these definitions, the associated $\Z$-algebras and their geometrically defined modules could be studied in much larger generality but as far as the authors are aware, this remains a fairly unexplored direction.

 \subsection{3d Mirror Symmetry}
Our main construction can be thought of as a part of the $3d$ mirror symmetry for topological twists of $3d \; \cN=4$ gauge theories \cite{IS}. The 3d mirror symmetry exchanges the algebras of local operators on a resolved Higgs branch and on a resolved Coulomb branch. 
In particular, it is known that the Coulomb branch of the B-twist of the $(G,\Ad)$-theory is a partial resolution of $T^*T^\vee/W$. 
In physics terms, our construction starts with a ''boundary condition" for the category of line operators in the A-twist of the $(G,\Ad)$-theory, thought of as a degenerate line operator (skyscraper sheaf on the stack of conjugacy classes in the loop Lie algebra) and produces another degenerate line operator in the $B$-twist of the dual theory (a (quasi-)coherent sheaf on a resolution of the Higgs branch).

More precisely, according to \cite[Eq. (1.4)]{DGGH},
the categories of line operators in the two twists are given by $$\mathcal{C}_A\cong \text{D-mod}_{G(\cK)}(\Ad(\cK)), \; \mathcal{C}_B\cong \QCoh(\text{Maps}(S^1_{dR},\Ad/G))$$ and by the subsequent discussion in {\em loc. cit.} $\mathcal{C}_B$ can be replaced by $\QCoh(\mathcal{M}_H)$, where $\mathcal{M}_H$ is a resolution of the Higgs branch.

Our construction is far from giving any sort of categorical equivalence between $\mathcal{C}_A, \mathcal{C}_B$, as even defining the categories involved is a delicate matter. However, supposing this done and writing $\delta_\gamma\in \mathcal{C}_A$ for $\gamma\in\fg(\cK)$ for the skyscraper sheaf on the conjugacy class of $\gamma$ as before, our construction gives an explicit ''mirror map" sending $\delta_\gamma\mapsto \cF_\gamma$, where $\cF_\gamma$ is as in Theorem \ref{thm:mainthm}. Doing this for some other line operators, such as the bimodules in Section \ref{sec: adjoint coulomb}, is also possible but we don't know how to make the construction functorial.

We hope this construction gives a starting point for rigorous constructions of 3d mirror symmetry for line operators. The fact that these categories have putative definitions in terms of vertex operator algebras \cite{CoCrGa} is an interesting topic for further investigations.
\subsection{Conjectures}
\label{sec:conjectures}

The main construction 
in Theorem \ref{thm:mainthm} produces a \(\C^{\times}\)-equivariant quasi-coherent sheaf  
$$\cF_\gamma\in \text{QCoh}_{\C^{\times}}\left(\widetilde{\Comm}_{G^{\vee}}\right)$$ for \(\gamma\in \fg(\cK)\)  (and trivial subgroup $K_{\gamma}$).
Quasi-coherence of the sheaf follows directly from our  construction, but we suspect that a stronger statement is in fact true:

\begin{conjecture}
\label{conj: coherent}
  For any regular semisimple \(\gamma\in \fg(\cK)\) the sheaf $\cF_\gamma$ is coherent:
  \[\cF_\gamma\in \Coh_{\C^{\times}}(\widetilde{\Comm}_{G^{\vee}}).\]
\end{conjecture}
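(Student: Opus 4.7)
The plan is to convert the coherence question into a finite-generation statement via the $\Proj$-construction of $\widetilde{\Comm}_{\check{G}}$, and then to establish that finite generation by combining the Coulomb branch $\Z$-algebra action with the finite-dimensionality of $\Sp_\gamma$ in the regular semisimple setting. Since $\widetilde{\Comm}_{\check{G}}$ sits over $\Proj \bigoplus_{d\geq 0} A^d_{\check{G}}$ with relatively ample line bundle $\cL = \cO(1)$, coherence of $\cF_\gamma$ reduces to finite generation in sufficiently high degrees of the graded module
\[
M_\gamma := \bigoplus_{m \geq 0} H^0\bigl(\widetilde{\Comm}_{\check{G}},\, \cF_{t^m\gamma}\bigr)
\]
over $A^\bullet_{\check{G}} := \bigoplus_d A^d_{\check{G}}$, with module structure supplied by the $\Z$-algebra. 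By Theorem~\ref{thm:mainthm}(2), for $m > M$ we have $M_\gamma^m = H_*^{L_\gamma}(\Sp_{t^m\gamma})$, so the problem becomes a statement about the asymptotic behavior of this family of equivariant Borel--Moore homologies under the raising operators in $A^1_{\check{G}}$.

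The key step I would try to establish is an \emph{asymptotic surjectivity}: there exists $N \geq M$ such that for all $m \geq N$ the multiplication map $A^1_{\check{G}} \otimes H_*^{L_\gamma}(\Sp_{t^m\gamma}) \to H_*^{L_\gamma}(\Sp_{t^{m+1}\gamma})$ coming from Theorem~\ref{thm:mainthm}(1) is surjective. Granted this, regular semisimplicity of $\gamma$ implies that $\Sp_{t^N\gamma}$ is finite-dimensional and locally of finite type, so $H_*^{L_\gamma}(\Sp_{t^N\gamma})$ is finitely generated as a module over the degree-zero part of $A^\bullet_{\check{G}}$, and hence $M_\gamma^{\geq N}$ is finitely generated over $A^\bullet_{\check{G}}$. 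This yields coherence of $\cF_\gamma$ in all large Serre twists, and to recover $\cF_\gamma$ itself one applies Theorem~\ref{thm:mainthm}(1) in reverse, writing $\cF_\gamma = \cL^{-N} \otimes \cF_{t^N\gamma}$ and using that tensoring with a line bundle preserves coherence.

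The real difficulty is the asymptotic surjectivity. Heuristically it asserts that once the slope of $t^m\gamma$ is large enough, no ``new'' equivariant homology classes appear that cannot be obtained from classes at slope $m-1$ by the Coulomb branch raising operators. A natural plan of attack is as follows. First, verify the statement in the known model cases: for $G = GL_n$ and $\gamma$ homogeneous of slope $(kn+1)/n$ or integral slope, the explicit descriptions of $\cF_\gamma$ (Proposition~\ref{prop:punctualexample} and the Procesi bundle statement) give coherence directly. Second, for general $\gamma$ translate the surjectivity into a statement about the $\Z$-algebra action on affine Springer fiber homology via the difference-operator presentation of the spherical graded DAHA; here the Dunkl--Cherednik embedding should let one control the generators of the module explicitly, in the spirit of the Gordon--Stafford approach for rational Cherednik algebras. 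Propagating the resulting surjectivity from the model cases to arbitrary regular semisimple $\gamma$ by deforming $\gamma$ within a fixed Newton stratum and using that coherence is an open condition is the remaining and most delicate step.
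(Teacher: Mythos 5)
This statement is a \emph{conjecture} in the paper, not a theorem: the introduction explicitly says the authors ``conjecture \ldots and prove it in some cases.'' There is thus no complete proof in the paper to compare against. What the paper actually establishes is: (i) for $G=GL_n$ and $\gamma$ split regular semisimple with distinct eigenvalues, via Proposition~\ref{prop:procesi}; (ii) for $G=GL_n$ and equivalued $\gamma_{m,n}$ with characteristic polynomial $x^m-y^n$, Theorem~\ref{thm:mn}; and (iii) a \emph{conditional} criterion for elliptic $\gamma$ (Proposition~\ref{prop:fin-gen-ell}) that presupposes Tsai's Conjecture~\ref{conj: regular} together with a stable cyclicity hypothesis $H_*(\Sp_{t^k\gamma})=\C[T^*T^\vee]^W\cdot[\Sp_{t^k\gamma}]$ for $k\gg 0$.

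Your reduction of coherence to finite generation of $\bF_\gamma$ over the graded Coulomb branch algebra, and your identification of ``asymptotic surjectivity'' of the degree-one raising maps as the crux, are both correct and consistent with how the paper organizes Section~\ref{sec:finite-gener-conj}. Proposition~\ref{prop:fin-gen-ell} is in fact a sharper packaging of your idea: rather than asking that \emph{all} of $H_*(\Sp_{t^{m+1}\gamma})$ be hit from $H_*(\Sp_{t^m\gamma})$ directly, the paper shows it suffices that the fundamental class $[\Sp_{t^{k+1}\gamma}]$ lies in the image (Proposition~\ref{prop:ell_shift}, conditional on Conjecture~\ref{conj: regular}) and that this class is cyclic over $\C[T^*T^\vee]^W$; surjectivity of the raising map is then a consequence of commutativity of the degree-$0$ and degree-$1$ actions, not an independent hypothesis.

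The genuine gap in your proposal is the final step. You never actually establish asymptotic surjectivity, and the ``propagate from model cases by deforming $\gamma$ within a Newton stratum and using that coherence is an open condition'' step has no rigorous basis as stated. The construction does not produce a flat family of sheaves $\cF_\gamma$ parametrized by $\gamma$ within a Newton stratum, so there is no geometric object on which openness of coherence could be invoked. Moreover the paper's own treatment signals that this is the hard part: Proposition~\ref{prop:fin-gen-ell} is conditional on an open conjecture of Tsai about irreducible components, and the cyclicity hypothesis is known to fail outside type $A$ (see the remark following Conjecture~\ref{conj:gen_gl}, citing the appendix to \cite{KL88}). Your write-up reads as though the model cases plus a soft deformation argument would close the gap, but that deferred step \emph{is} the conjecture; the reduction you give, while correct, does not constitute a proof.
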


\begin{remark}
Under the natural projection $\widetilde{\Comm}_{G^{\vee}}\to T^*T^{\vee}/W$, the sheaf $\cF_\gamma$  pushes forward to a certain sheaf on $T^*T^{\vee}/W$ which we describe in detail in Section \ref{sec: lattice action}. In Lemma \ref{lem:support} we show that its reduced support is contained in the Lagrangian subvariety $\{0\}\times T^{\vee}/W\subset T^*T^{\vee}/W$.

Similarly, we expect $\cF_{\gamma}$ to be supported on a certain Lagrangian subvariety of $\widetilde{\Comm}_{G^{\vee}}$ of dimension $r=\mathrm{rank}(\fg).$
\end{remark}

The coherence conjecture already has interesting numerical corollaries. It is known that \(H_*(\Sp_\gamma)\) is finite-dimensional if \(\gamma\) is elliptic
and $G$ is simply connected. Thus the conjecture above implies an estimate on the growth of the dimensions of these cohomology spaces, as we multiply $\gamma$ by $t^m$.

\begin{conjecture}
\label{conj:polygrowth}
  For any elliptic regular semisimple \(\gamma\in \fg(\cK)\) with \(G\) being simple and simply connected there exist \(c_i\in \Q\) and \(M\in \Z\) such that
  \[\dim(H_*(\Sp_{t^m\gamma})\!)=\sum_{i=0}^r c_i m^i, \quad m>M, \quad r=\mathrm{rank}(\fg).\]
\end{conjecture}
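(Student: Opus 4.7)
The strategy will be to deduce this conjecture as a numerical consequence of Conjecture \ref{conj: coherent}, combined with Theorem \ref{thm:mainthm} and the classical Hilbert--Serre theorem. The plan runs as follows.

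First, I would set up the dictionary by choosing $L_\gamma=\{1\}$, which is compact and hence admissible in Theorem \ref{thm:mainthm}. Then $H^{L_\gamma}_*$ is ordinary Borel--Moore homology, and since $\gamma$ is elliptic regular and $G$ is simple simply connected, $\Sp_{t^m\gamma}$ is finite-dimensional and projective, so Borel--Moore homology coincides with singular homology and $\dim H^*(\Sp_{t^m\gamma})$ equals $\dim H^{L_\gamma}_*(\Sp_{t^m\gamma})$. Theorem \ref{thm:mainthm}(1)--(2) then gives, for $m>M$,
\[
H^*(\Sp_{t^m\gamma}) \cong H^0\bigl(\widetilde{\Comm}_{\check G},\, \cL^{\otimes m}\otimes \cF_\gamma\bigr),
\]
so it suffices to show the right-hand side has dimension polynomial in $m$ of degree at most $r$.

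Second, I would invoke Conjecture \ref{conj: coherent} to assert that $\cF_\gamma$ is coherent. By the $\Proj$-construction of $\widetilde{\Comm}_{\check G}$, the section algebra $M_\bullet:=\bigoplus_{m\geq 0}H^0(\widetilde{\Comm}_{\check G},\cL^{\otimes m}\otimes \cF_\gamma)$ is a finitely generated graded module over $A_{\check G}^\bullet=\bigoplus_d A^d_{\check G}$ (up to finite-dimensional pieces in low degrees). By the Hilbert--Serre theorem, its Hilbert function $m\mapsto \dim_\C M_m$ agrees with a polynomial $P(m)\in\Q[m]$ for $m\gg 0$, of degree equal to $\dim\mathrm{supp}(\cF_\gamma)$.

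Third, I would bound $\dim\mathrm{supp}(\cF_\gamma)\leq r$. Since $\widetilde{\Comm}_{\check G}$ is a partial resolution of $T^*T^\vee/W$, of dimension $2r$ and symplectic on its smooth locus, the expectation from 3d mirror symmetry is that $\cF_\gamma$ is supported on a Lagrangian subvariety. This is corroborated by the explicit computations in the paper: for $G=GL_n$ with $\gamma$ of integral slope the Procesi bundle twist $\cP\otimes \cO(k)$ is supported on all of $\Hilb^n(\C^\times\times \C)$, whose half-dimension is $n=r$, while for punctual homogeneous $\gamma$ the support drops and the polynomial degree drops accordingly.

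The primary obstacle is of course Conjecture \ref{conj: coherent} itself, without which the entire argument is vacuous. The secondary difficulty is the Lagrangian support bound for arbitrary elliptic regular $\gamma$; the most natural route is to extract an explicit support cycle from the BFN-Springer-theoretic $\Z$-algebra module produced by the collection $\{\Sp_{t^m\gamma}\}_{m\geq 0}$, using the Coulomb branch machinery developed later in the paper. Once coherence and the support bound are in hand, the passage from Euler characteristic to $H^0$ via Serre-type vanishing should be routine.
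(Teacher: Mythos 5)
This statement is labeled a \emph{Conjecture} in the paper; the authors do not prove it, and explicitly present it only as a numerical consequence of Conjecture \ref{conj: coherent} (``Thus the conjecture above implies an estimate on the growth\ldots''). So there is no proof in the paper to compare against. Your proposal correctly reconstructs the intended implication: combine Theorem \ref{thm:mainthm}, coherence, and Hilbert--Serre. The first two paragraphs are exactly right (choose $L_\gamma=\{1\}$, note BM homology $=$ ordinary homology for the projective $\Sp_{t^m\gamma}$, identify $\cF_{t^m\gamma}=\cL^{\otimes m}\otimes\cF_\gamma$, apply Serre vanishing and the Hilbert polynomial).

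However, the third paragraph --- the support bound $\dim\mathrm{supp}(\cF_\gamma)\le r$ --- is on shaky ground. First, since $\widetilde{\Comm}_{\check G}$ is only projective over the affine $T^*T^\vee/W$ and not proper over a point, Hilbert--Serre requires that $\cF_\gamma$ have compact (proper) support; you need to establish this, not just the dimension bound. Second, the appeal to a 3d-mirror ``Lagrangian support expectation'' is heuristic, and the corroborating example you cite is misleading: the Procesi case $\cP\otimes\cO(k)$ is the \emph{split} (equivalued) case with $L_\gamma=T$, which is excluded from the conjecture (not elliptic) and is computed $T$-equivariantly; its support is \emph{all} of $\Hilb^n(\C^\times\times\C)$, of dimension $2n$, not ``half-dimension $n$.'' The correct ingredient you should invoke is Lemma \ref{lem:support}: for elliptic $\gamma$ with trivial $L_\gamma$, the sheaf $\cF'_\gamma$ on $T^*T^\vee/W$ is set-theoretically supported at finitely many points (the rank of the centralizer is zero). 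Since $\widetilde{\Comm}_{\check G}\to T^*T^\vee/W$ is proper, $\cF_\gamma$ is then supported on the preimage of a finite set, which is compact --- giving finite-dimensionality of the $H^0$'s --- and of dimension at most $r$ because these fibers are isotropic in a $2r$-dimensional symplectic variety (semismallness of symplectic partial resolutions). With Lemma \ref{lem:support} and properness substituted for the mirror-symmetry hand-waving, your outline matches what the authors intend.
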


In the case of homogeneous elliptic \(\gamma\) it was shown in \cite{VVfd} that the conjecture is true, the left-hand side being given by variants of rational Coxeter-Catalan numbers, in particular the $c_i$ can be explicitly computed.\footnote{For $\tSp_{t^m\gamma}$ i.e. the version in affine flags, the result is easier to state and simply says \(c_i=0\), \(i<r\) for the homogeneous elliptic cases.} Many low-rank examples are treated in a lot of detail in \cite{OY}. For $G=GL_n$ and $\gamma$ of slope $\frac{kn+1}{n}$, the corresponding sheaf $\cF_{\gamma}$ is described in Theorem \ref{thm: intro homogeneous 1} above.
More complicated non-homogeneous elliptic cases for \(G=\SL_n\) were studied in \cite{GMO,KT,Pi}. 

In a different direction, we can consider the case of unramified $\gamma$ introduced in \cite{GKM}. The second author has computed in \cite{Kivinen} the equivariant cohomology of the affine Springer fibers in the unramified case and related them to the Procesi bundle on the Hilbert scheme of points.

More precisely, when $\gamma$ is equivalued of valuation $k\in \Z_{\geq 0}$, we describe explicitly the graded module corresponding to $\cF_\gamma$ for all $G$ in Theorem \ref{thm: integral slope any type}. For such $\gamma$ and $G=GL_n$ the sheaf $\cF_{\gamma}$ is indeed coherent and described by Theorem \ref{thm: intro homogeneous 0} above. 

After the first version of this paper appeared on arXiv, Turner proved in \cite[Theorem 1.9]{Turner} that for $G=GL_3$ and an arbitrary unramified $\gamma$ the sheaf $\cF_{\gamma}$ is coherent (and, in fact, the sheaf $\cF_{\gamma}^T$ is coherent). See \cite{Turner} for more details and an explicit conjectural description of the corresponding graded module for $G=GL_n$ and unramified $\gamma$.

\subsection{Relation to conjectures stemming from knot theory}

The case of \(G=\GL_n, \SL_n\) is of special interest because of the applications to knot theory \cite{GORS,GNR,GH,OS,ORS,OR}. In particular, the characteristic polynomial of a compact regular element  \(\gamma\in \mathfrak{gl}_n(\cK)\)  defines a  germ of a planar curve singularity and the link of this
singularity is the closure of the braid (conjugacy class) \(\beta(\gamma)\in \Br_n\). When $G=\SL_n$ and $\gamma$ is elliptic, $\beta(\gamma)$ closes to a knot and the conjecture \cite[Conjecture 2 and Proposition 4]{ORS} predicts an isomorphism between the (reduced) triply graded Khovanov-Rozansky homology of
\(\beta(\gamma)\)  and the cohomology of the affine Springer fiber \(H^*(\Sp_\gamma)\) enhanced with the perverse filtration \cite{MS,MY,MSV}.  Notice that these papers use {\em cohomology} whereas the present work uses {\em BM homology}, but this distinction is immaterial for numerical comparisons of multiply graded finite dimensional vector spaces with characteristic zero coefficients.

In this paper, we enrich the algebro-geometric side of the above  conjectures by considering an infinite family of affine Springer fibers $\{\Sp_{\gamma},\Sp_{t\gamma},\Sp_{t^2\gamma},\ldots\}$. It is easy to see that multiplication of $\gamma$ by $t^m$ corresponds to the multiplication of the braid $\beta(\gamma)$ by $\FT^m$, the $m$-th power of the full twist braid $\FT$. Since $\FT$ is central in the braid group, the conjugacy class of $\beta(\gamma)\cdot \FT^m$ is determined by the conjugacy class of $\beta(\gamma)$.

Khovanov and Rozansky defined in \cite{KR, KR2,KhSoergel} the triply graded link homology $\HHH(\beta)$ for an arbitrary braid $\beta$ and proved that it is a topological invariant of the link obtained by the closure of $\beta$. The three gradings on $\HHH(\beta)$ are  usually referred to as $a,q,t$. 
By construction of  triply graded homology $\HHH(-)$, there are natural grading-preserving multiplication maps
\begin{multline}
\label{eq: HHH algebra}
\HHH(\beta(\gamma))\otimes \HHH(\FT^m)\to \HHH(\beta(\gamma)\cdot \FT^m)\simeq  \HHH(\beta(t^m\gamma)),\\ \HHH(\FT^m)\otimes \HHH(\FT^{m'})\to \HHH(\FT^{m+m'}),
\end{multline}
and hence $\bigoplus_m \HHH(\beta(t^m\gamma))$ has a structure of a graded module over the graded algebra 
$\bigoplus_m \HHH(\FT^m)$. The latter graded algebra, as conjectured in \cite{GNR} and proved in \cite{GH}, is closely related   to the homogeneous coordinate ring of $\Hilb^n(\C^2)$, and to the $\Z$-algebras appearing in this paper. In other words, in this paper we establish a precise analogue of multiplication maps \eqref{eq: HHH algebra} on the affine Springer side by means of geometric representation theory.

In a series of papers \cite{OR,OR1} the third named author and Rozansky took a different approach to knot invariants and defined  a \(\C^{\times}\times \C^{\times}\)-equivariant complex of coherent sheaves \[\cG_\beta \in D^b_{\C^{\times}\times\C^{\times}}(\text{Coh}(\Hilb_n(\C^2)))\] such that
the hypercohomology \(H^*(\Hilb_n(\C^2),\cG_\beta)\) is isomorphic as a bigraded vector space to the ''lowest row"  $\HHH^{a=0}(\beta)$ of the triply-graded homology. The action of $\C^{\times}\times \C^{\times}$ corresponds to (after an appropriate normalization) the $(q,t)$-grading on $\HHH^{a=0}(\beta)$.

To connect these constructions with the present one, note that the natural inclusion map \(i_{\C^{\times}}: \C^{\times}\times \C\to \C^2\), \(i_{\C^{\times}}(x,y)=(x-1,y)\) induces an inclusion \(i_{\C^{\times}}: \Hilb^n(\C^{\times}\times \C)\to \Hilb^n(\C^2)\) which identifies the punctual Hilbert schemes at $(1,0)$ and $(0,0)$.

\begin{conjecture}
\label{conj:sheafiso}
  For any regular semisimple \(\gamma\in \mathfrak{gl}_n\llbracket t\rrbracket\) there is an isomorphism of \(\C^{\times}\)-equivariant sheaves
  \[\cF_\gamma\simeq i^*_{\C^{\times}}(\cG_\beta),\quad \beta=\beta(\gamma).\]
\end{conjecture}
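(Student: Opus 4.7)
The plan is to verify the conjecture by matching the two sheaves on the basis of their shared structural properties, most importantly the compatibility with the Serre twist $\cO(1)$. On the affine Springer side, Theorem \ref{thm:mainthm}(1) gives $\cF_{t\gamma}=\cO(1)\otimes \cF_\gamma$; on the knot-theoretic side, multiplication of $\beta(\gamma)$ by the central full twist $\FT$ is known (see \cite{GH,GNR}) to correspond on $\Hilb^n(\C^2)$ to tensoring $\cG_\beta$ with $\cO(1)$. Thus the two families $\{\cF_{t^m\gamma}\}_{m\geq 0}$ and $\{i^*_{\C^*}\cG_{\beta\cdot \FT^m}\}_{m\geq 0}$ assemble, respectively, into graded modules over the Hilbert/Procesi $\Z$-algebra controlling $\Hilb^n(\C^\times\times\C)$, and it suffices to produce a compatible isomorphism of $\Z$-algebra modules.

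The first step is to nail down the equivalence in the cases where both sides are known explicitly. For $\gamma$ homogeneous of slope $(kn+1)/n$, Proposition \ref{prop:punctualexample} identifies $\cF_\gamma$ with $\cO(k)$ restricted to the punctual Hilbert scheme at $(1,0)$, while the associated braid $\beta(\gamma)$ is the $(n,kn+1)$ torus braid, whose Oblomkov-Rozansky sheaf is by construction the restriction of $\cO(k)$ to the punctual Hilbert scheme at $(0,0)$; the translation $i_{\C^*}$ intertwines these. For $\gamma$ of integral slope $k$, the identification $\cF_\gamma\simeq \cP\otimes \cO(k)$ matches the conjectural description of $\cG_{\FT^k}$ after pullback. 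This fixes the isomorphism on the "generators" and "spherical" directions of the $\Z$-algebra module.

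To extend to an arbitrary regular semisimple $\gamma$, I would use the fact that, after sufficiently many Serre twists, a $\Z$-algebra module is determined by its global sections together with the action of the difference operators coming from the Coulomb branch construction. On the affine Springer side, global sections compute $L_\gamma$-equivariant Borel-Moore homology of $\Sp_{t^m\gamma}$ with its graded DAHA action (Theorem \ref{thm:mainthm}(2) together with Theorem \ref{thm:action}); on the Oblomkov-Rozansky side, global sections of $i^*_{\C^*}\cG_{\beta\cdot\FT^m}$ compute the lowest row of $\HHH(\beta(\gamma)\cdot\FT^m)$, and the module structure from \eqref{eq: HHH algebra} matches the Procesi $\Z$-algebra action. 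The problem then reduces to the conjectural identification between $H_*(\Sp_\gamma)$, equipped with its Cherednik action, and the lowest row of the triply graded homology of $\beta(\gamma)$, refining \cite[Conjecture 2]{ORS}.

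The main obstacle, I expect, is the lack of an intrinsic geometric description of $\cG_\beta$: it is manufactured from a Rouquier complex attached to a braid word, and its Markov move invariance is a nontrivial homological theorem with no direct affine-Grassmannian incarnation. Bridging it to the affine Springer construction, which is categorically and geometrically very different, requires either producing $\cG_\beta$ from Coulomb branch data in a braid-functorial way (which is essentially the unresolved GNR conjecture) or establishing a direct functor from the Soergel/Rouquier category of braids to modules over the Hilbert-scheme $\Z$-algebra and checking it carries the trace of $\beta(\gamma)$ to the module attached to $\{\Sp_{t^m\gamma}\}$. Either route is substantial, and short of such a bridge the conjecture is accessible only in the explicitly computable families above together with numerical verifications in low-rank examples.
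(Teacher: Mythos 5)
The statement you are asked to address is an open \emph{conjecture}, not a theorem, and the paper does not give a proof of it. The only evidence the paper offers is (a) the Euler-characteristic comparison $\chi(\Sp_\gamma)=\chi_{\C^*}(i^*_{\C^*}(\cG_\beta))$ for elliptic regular semisimple $\gamma\in\mathfrak{sl}_n\llbracket t\rrbracket$, combining Maulik's HOMFLY/stable-pairs theorem, the support theorems of Migliorini--Shende--Viviani--Yun, and Oblomkov--Rozansky's localization results, and (b) the explicit verification in the homogeneous cases (Propositions \ref{prop:punctualexample} and \ref{prop:procesi}). There is therefore no paper proof to measure your proposal against.

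Given that, your write-up is an honest and essentially correct assessment rather than a proof: you correctly recognize the status of the statement, and you correctly isolate the main structural parallel (full twist $\leftrightarrow$ multiplication by $t$ $\leftrightarrow$ tensoring by $\cO(1)$) that makes the conjecture plausible, the two computable families where it is verified, and the genuine obstruction — namely, the absence of any functor from Rouquier complexes / Soergel bimodules to the Coulomb-branch $\Z$-algebra formalism that would intertwine the two constructions. One thing worth adding, which you do not say explicitly, is that the Euler-characteristic version \emph{is} known unconditionally in the elliptic case, via the chain Maulik $\to$ MS/MY/MSV $\to$ OR1 that the paper cites; this is stronger than ``numerical verification in low rank'' and is worth flagging as the best currently available evidence. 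One small inaccuracy: you reduce the conjecture to ``the conjectural identification between $H_*(\Sp_\gamma)$, equipped with its Cherednik action, and the lowest row of the triply graded homology of $\beta(\gamma)$, refining \cite[Conjecture 2]{ORS}'' — but \cite[Conjecture 2]{ORS} compares $H^*(\Sp_\gamma)$ to the \emph{full} triply-graded homology with the perverse filtration, whereas the $\cG_\beta$ complex of \cite{OR,OR1} only recovers the lowest $a$-degree row; the present conjecture is really a sheaf-level refinement of the \emph{lowest-row} specialization of ORS, plus the compatibility with $\FT$-twisting, and it is that specific package that would need to be established.
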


Let us point out that the results of Maulik \cite{Maulik}, the aforementioned \cite{MY,MS,MSV}, and the results of the third author with Rozansky in \cite{OR1} can be combined to show that for elliptic $\gamma,$ the conjecture is true on the level of Euler characteristics:
\[\chi(\Sp_\gamma)=\chi_{\C^{\times}}(i^*_{\C^{\times}}(\cF_\beta)),\quad \beta=\beta(\gamma),\]
for \(\gamma\in \mathfrak{sl}_n\llbracket t\rrbracket\) elliptic regular semisimple. 

In particular, we derive an Euler characteristics version of the weak coherence conjecture \ref{conj:polygrowth}:
\begin{proposition}
  For any elliptic regular semisimple \(\gamma\in \mathfrak{sl}_n\llbracket t\rrbracket\)  there are \(c_i\in \Q\) and \(M\in \Z\) such that
  \[\chi(\Sp_{t^m\gamma})=\sum_{i=0}^{n-1} c_i m^i, \quad m>M.\]
\end{proposition}
\begin{remark}
In fact, the above Proposition also follows  from Definition 5.13. and Eq. (6.7) in \cite{KT} for $G=\SL_n$. Under the purity hypothesis, the stronger conjecture \ref{conj:polygrowth} would also follow in this case from the same results.
\end{remark}

Conjecture \ref{conj:sheafiso} implies for example that the coefficients of the HOMFLYPT polynomial of the closure of \(\beta(\gamma)\cdot \FT^m\) are polynomials of \(m\). This property of the HOMFLYPT polynomial could be derived, for example, from the results of
\cite{OR,OR1} and equivariant Riemann-Roch formula.

Finally, note that Maulik's result in \cite{Maulik} actually keeps track of the Euler characteristics of the Hilbert schemes of points on the germ defined by $\gamma$ and hence the perverse filtration \cite{MY,MS}. For elliptic $\gamma$ we may also conjecture that there exists a Springer-theoretic construction of a sheaf $\gr^P\cF_\gamma\in \text{Coh}_{\C^\times\times \C^\times}(\Hilb^n(\C^2))$  which $\C^\times\times\C^\times$-equivariantly agrees with $\cF_\beta$. For partial results in this direction, see \cite[Section 9]{KT}.

\subsection{Outline}
\label{sec:outline}
\subsubsection{Outline of the argument}
\label{sec:outline-argument}

The key ingredients of the construction are 1) the technology developed by Braverman, Finkelberg and Nakajima \cite{BFN,BFN2,BFN3} on the affine Springer side (Topology) and 2) noncommutative geometry methods akin to the work by \cite{GS1} on the Hilbert scheme side (Algebraic Geometry). The theory of the 3) Double affine Hecke algebras (Algebra) links these two theories together. Our work provides a dictionary between objects in the three theories.  A part of this dictionary is as follows\footnote{For simplicity of introduction we discuss the type \(A\) case, for other types see section~\ref{sec:finitegeneration}}:

$$\begin{array}{|c|c|c|}\hline
    \text{Topology} & \text{Algebra} & \text{Algebraic Geometry}\\ \hline
    {}_i \cA_i:=H_*^{\tG(\cO)\rtimes\C^\times}({}_i\cR_i) & \eee \HH_{c+i\hbar,\hbar} \eee & \C[T^*T^\vee/W] \\
    {}_{i-1} \cA_i:=H_*^{\tG(\cO)\rtimes\C^\times}({}_{i-1}\cR_i) & {}_{i-1}\cB_i= \eee_- \HH_{c+i\hbar,\hbar} \eee & \cO(1)\\
    {}_i\tcA_i:=H_*^{\underline{\bI}\rtimes\C^\times}({}_i\tcR_i) & \HH_{c+i\hbar,\hbar} & H^0\left(\widetilde{T^*T^\vee/W},\cP\right)\\
    {}_{i-1}\tcA_i:=H_*^{\underline{\bI}\rtimes\C^\times}({}_{i-1}\tcR_i) & {}_{i-1} \widetilde{\cB}_i & H^0\left(\widetilde{T^*T^\vee/W},\cP\otimes \cO(1)\right)\\
    H_*^{\C^\times}(\tSp_{\frac{nk+1}{n}}) & \HH_{c+k\hbar,\hbar} \curvearrowright L_{\frac{nk+1}{n}}(\C) & \cO_{\pi^{-1}(1,0)}(k)\\\hline
\end{array}$$

\noindent
In the Algebra column of the table we have the algebraic objects related to
the representation theory of the graded double affine Hecke algebra \(\HH_{c,\hbar}\)
defined in Definition \ref{def:dDAHA}. This algebra is also known under the names {\it trigonometric} or {\em degenerate} DAHA. The algebra  \(\HH_{c,\hbar}\) contains the finite Weyl group \(W\) and \(\eee,\eee_-\in W\)
are the projectors for the trivial and sign representations. We define an explicit representation of these algebras using difference operators and use it to prove the following:

\begin{theorem}(Theorem \ref{thm:shiftiso})
The {\it spherical subalgebra } \(\eee \HH_{c,\hbar}\eee\) is isomorphic to the {\it anti-spherical subalgebra } \(\eee_-\HH_{c-\hbar,\hbar}\eee_-\) with shifted parameter.
\end{theorem}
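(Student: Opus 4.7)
The plan is to work inside the faithful Dunkl--Cherednik representation of $\HH_{c,\hbar}$ as $\hbar$-difference-reflection operators on (a localization of) $\cO(\ft^\vee)$, constructed earlier in the paper. In this polynomial representation the symmetrizer $\eee$ and antisymmetrizer $\eee_-$ project onto the $W$-invariant and $W$-antiinvariant parts of the module, and the spherical and anti-spherical subalgebras act on these respective subspaces as honest $\hbar$-difference operators with no residual reflection part.

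The heart of the argument is a conjugation identity for the Weyl denominator
\[
\delta=\prod_{\alpha>0}\bigl(e^{\alpha/2}-e^{-\alpha/2}\bigr)
\]
(or its appropriate analogue on $\ft^\vee$). Classically $\delta$ intertwines $W$-invariants and $W$-antiinvariants, and the key computation is that for each Dunkl--Cherednik generator $y_i$,
\[
\delta^{-1}\circ y_i(c)\circ \delta \;=\; y_i(c-\hbar),
\]
once both sides are viewed as acting on antiinvariants via $\eee_-$. Concretely, each summand of the form $c\hbar(1-s_\alpha)/(1-e^{-\alpha})$ inside $y_i(c)$ acquires the sign $s_\alpha\cdot\delta=-\delta$, which combines with the $\hbar$-shift produced by the difference part of $y_i$ sliding past the exponential factors in $\delta$ to replace $c$ by $c-\hbar$ in front of each such term. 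This is the trigonometric/graded analogue of the classical Opdam--Heckman shift relation.

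Granted this identity on generators, define
\[
\Phi\colon \eee\HH_{c,\hbar}\eee \longrightarrow \eee_-\HH_{c-\hbar,\hbar}\eee_-,\qquad \Phi(a)=\delta^{-1}\cdot a\cdot \delta,
\]
with the right-hand side interpreted inside the common algebra of $\hbar$-difference operators on a dense subspace of functions on $\ft^\vee$. Faithfulness of the Dunkl--Cherednik representation forces both subalgebras to embed into this difference-operator algebra, so $\Phi$ is well-defined and injective, and the map $b\mapsto \delta\cdot b\cdot \delta^{-1}$ provides its inverse. The main obstacle will be the bookkeeping in the shift identity: the trigonometric generators $y_i$ carry affine terms on top of their rational analogues, and one must check that conjugation by $\delta$ produces exactly the shift $c\mapsto c-\hbar$ with no spurious scalar corrections. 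Once verified on generators, passage to the full spherical subalgebra is automatic by faithfulness and by the fact that $\eee\HH_{c,\hbar}\eee$ is generated (in the difference representation) by the symmetrizations of the $y_i$'s together with the $W$-invariant polynomials in $\ft^\vee$, all of which are tracked explicitly by the conjugation formula.
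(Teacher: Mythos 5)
Your proposal takes a genuinely different route from the paper. The paper does \emph{not} conjugate by the trigonometric Weyl denominator $\delta=\prod_{\alpha>0}(e^{\alpha/2}-e^{-\alpha/2})$ in a Dunkl--Cherednik picture where the $X$-variables act by multiplication. Instead, it works in the \emph{Fourier-dual} difference representation \eqref{eq:Dunklembedding}, in which the $y$-variables act by multiplication on $\C[\ft^{\reg}]$ and the lattice part $X^\lambda$ acts by $\hbar$-translations; the reflections $\sigma_i=s_i+\frac{c}{y_{\alpha_i}}(s_i-1)$ have poles at $y_{\alpha}=0$, not at $1-\alpha^\vee=0$. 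The paper's proof then rests on two ingredients specific to that picture: an involutive \emph{anti-automorphism} $\Phi$ of $\HH_{c,\hbar}$ with $w\mapsto(-1)^{\ell(w)}w_0w^{-1}w_0$, $\xi\mapsto {}^{w_0}\xi$, $\hbar\mapsto-\hbar$, which swaps $\eee\leftrightarrow\eee_-$; and Lemma~\ref{lem: Phi and shift}, the identity $\Phi(E_{\lambda,c})\Delta=\Delta E_{\lambda,c-\hbar}$ where $\Delta=\prod_{\alpha>0}y_\alpha$ is the \emph{polynomial} Vandermonde in the $y$-variables (not $\delta$), verified via the explicit $E_\lambda$-formulas of Theorem~\ref{thm: E lambda formula}. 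So the paper's isomorphism is $G\mapsto\Delta G\Delta^{-1}$ wrapped with $\Phi$. The paper's own Remark after the theorem explicitly distinguishes this from the Heckman--Opdam construction you are reproducing (conjugation by a Weyl denominator in the $X$-direction on the torus), and notes the two isomorphisms disagree and are related by Cherednik's Fourier/Harish--Chandra transform. Your route, if completed carefully, does prove the \emph{statement} of the theorem -- existence of a shift isomorphism -- but it recovers a different, pre-existing isomorphism rather than the new one the paper constructs and needs elsewhere.

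Two technical caveats in your write-up also need attention. First, in the degenerate (trigonometric/graded) Cherednik algebra the $y$-generators in the Heckman--Opdam polynomial representation are first-order \emph{differential}-reflection operators, not $\hbar$-\emph{difference} operators; your description of "the $\hbar$-shift produced by the difference part of $y_i$ sliding past the exponential factors in $\delta$" reads as though you have the non-degenerate DAHA in mind. In the degenerate case the shift in $c$ comes rather from the logarithmic-derivative term $\delta^{-1}\partial_\xi\delta=\partial_\xi+\partial_\xi(\log\delta)$ combining with the sign flip $s_\alpha\cdot\delta=-\delta$ in the singular $\frac{1}{1-e^{-\alpha}}(1-s_\alpha)$ terms. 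Second, the representation your argument lives in is not the one the paper introduces in \eqref{eq:Dunklembedding}, so claiming to work "inside the faithful Dunkl--Cherednik representation... constructed earlier in the paper" is misleading; you should either switch to the paper's difference picture and conjugate by $\Delta=\prod_{\alpha>0}y_\alpha$, or openly invoke the trigonometric Dunkl operators of Heckman--Opdam with a precise citation of their shift lemma.
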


Similar shift isomorphisms are well known in the theory of rational Cherednik algebras and for the Dunkl differential-difference representation \cite{BEG, Heckman, Opdam}, but it appears to be new for the difference representation of trigonometric DAHA. See also \cite{WLiu} for similar results.

Thus \(\eee_-\HH_{c+i\hbar,\hbar}\eee\) naturally has left \(\eee\HH_{c+(i-1)\hbar,\hbar}\eee\) and right \(\eee\HH_{c+i\hbar,\hbar}\eee\) actions and
we set:
\[
{}_i\cB_i=\eee\HH_{c+i\hbar,\hbar}\eee,\quad {}_i\cB_{i+1}=\eee_-\HH_{c+i\hbar,\hbar}\eee,\quad {}_i\cB_{j+1}={}_i\cB_{j}\otimes_{{}_j\cB_j}{}_j\cB_{j+1}.
\]
Here ${}_i\cB_{i}$ are algebras, ${}_i\cB_{j}$ are bimodules over ${}_i\cB_{i}$ and ${}_j\cB_{j}$ for all $i\le j$, and we get well-defined multiplication maps
$$
{}_i\cB_{j}\bigotimes_{{}_j\cB_{j}} {}_j\cB_{k}\to {}_i\cB_{k},\ i\le j\le k.
$$

The direct sum \({}_\bullet\cB_\bullet=\bigoplus_{i\leq j}{}_i\cB_{j}\) is an example of a \(\Z\)-algebra, introduced by Polishchuk and Bondal \cite{BP} and studied in a setting relevant to us  by Gordon and Stafford \cite{GS1,GS2,Rains}.

We now explain how the above mentioned structures exist in the affine Springer theory, which corresponds to the Topology column. The key geometric object is an ind-scheme ${}_i\tilde{\cR}_j$, a variant of ''the space of triples $\cR$" central to the work of Braverman, Finkelberg and Nakajima \cite{BFN,BFN2,BFN3} on Coulomb branches:
\[{}_j\tilde{\cR}_i=\left\{(g,v)\in G(\mathcal{K})\times t^i\Lie(\bI)| g\cdot v\in t^j\Lie(\bI)\right\}/\bI,\]
where \(\bI\) is the Iwahori subgroup (see Section \ref{sec: asf definitions}).
On the level of sets, the quotient space \(\bI\backslash {}_0\tilde{\cR}_0\) is in bijection with the quotient \(\widetilde{St}/G(\mathcal{K})\) of the affine Steinberg space \(\widetilde{St}=\{(b_1,g,b_2)\in \Fl_G\times \fg(\cK)\times \Fl_G| g\in b_1\cap b_2\}\), as also explained in the introduction to \cite{BFN2}. There is an action of $\C^{\times}_{\dil}\times \C^{\times}_{\rot}$ on  $\widetilde{St}$ where $\C^{\times}_{\dil}$ acts by dilating the $\fg(\cK)$ factor and 
$\C^{\times}_{\rot}$ acts by loop rotation. We denote
\begin{equation}
\label{eq: extended groups}
\underline{\bI}=\bI\times \C^{\times}_{\dil},\ \underline{G(\cO)}=G(\cO)\times  \C^{\times}_{\dil},\ \underline{G(\cK)}=G(\cK)\times  \C^{\times}_{\dil}.
\end{equation}

It was explained by Varagnolo and Vasserot \cite{VV2} that under a certain specialization of parameters, equivariant
homology group of the affine Steinberg variety can be defined as
\[
H_*^{\underline{G(\mathcal{K})}\times \C^{\times}_{\rot}}(\widetilde{St}):= 
H_*^{\underline{\bI} \rtimes \C^{\times}_{\rot}}({}_0\tilde{\cR}_0)
\]
and the latter is isomorphic to \(\HH_{c,\hbar}\) under a specialization of parameters. Here the parameter \(c\) depends on our choice of the  equivariant structure with respect to the loop rotation group \(\C^{\times}_{\rot}\). Their work however uses localization techniques which we are able to avoid, thereby providing an isomorphism over the full parameter space, see Theorem \ref{thm:iwahoricoulombisdaha}.

Similarly, one can define the affine Grassmannian version \({}_i\cR_j\) of the above spaces. Since the fibers of the projection \({}_i\tilde{\cR}_j\to {}_i\cR_j\) are classical Springer fibers, we have a geometric model for the spherical algebra (see Corollary \ref{cor: sperical coulomb is daha})  
\[
\eee\HH_{c,\hbar}\eee\cong H_*^{\underline{G(\mathcal{O}})\ltimes \C^{\times}_{\rot}}({}_0\cR_0).
\]

 Thus, it is natural to define
\[{}_i\cA_j=H_*^{\underline{G(\mathcal{O}})\ltimes \C^{\times}_{\rot}}({}_i\cR_j),\quad {}_\bullet\cA_\bullet=\bigoplus_{i\le j}{}_i\cA_j.\]
As explained in \cite{BFN3, DGGH, Webster}, there is a natural associative convolution product  
\[H_*^{\underline{G(\mathcal{O}})\ltimes \C^{\times}_{\rot}}({}_i\cR_j)\otimes  H_*^{\underline{G(\mathcal{O}})\ltimes \C^{\times}_{\rot}}({}_j\cR_k)\to H_*^{\underline{G(\mathcal{O}})\ltimes \C^{\times}_{\rot}}({}_i\cR_k).\]
By associativity the convolution descends to give bilinear product maps
\[{}_i\cA_j\bigotimes_{{}_j\cA_j}{}_j\cA_k\to {}_i\cA_k.\]
One of our main results partially identifies the Coulomb branch $\Z$-algebra ${}_\bullet \cA_\bullet$  in terms of the algebraic $\Z$-algebra ${}_\bullet \cB_\bullet$.

\begin{theorem}
The Coulomb branch $\Z$-algebra ${}_\bullet \cA_\bullet$ satisfies the following properties:
\begin{itemize}
\item [(a)] For all $i$ the algebras ${}_i \cA_i$ and ${}_i \cB_i$ are isomorphic.
\item [(b)] For all $i$ the bimodules ${}_i \cA_{i+1}$ and ${}_i \cB_{i+1}$ are isomorphic.
\item [(c)] For $G=\GL_n$, the bimodules ${}_i \cA_{j}$ and ${}_i \cB_{j}$ are isomorphic for all $i\le j$. Moreover, the $\Z$-algebras ${}_\bullet \cA_\bullet$ and  ${}_\bullet \cB_\bullet$ are isomorphic.
\item[(d)] The homological grading on ${}_i\cA_j=H_*^{\underline{G(\mathcal{O}})\ltimes \C^{\times}_{\rot}}({}_i\cR_j)$ corresponds to the grading on ${}_i \cB_{j}$ induced by the grading on $\HH_{c,\hbar}$ (see Section \ref{sec:cherednik}).
\end{itemize}
\end{theorem}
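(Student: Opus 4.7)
My plan is to deduce (a) and (b) for arbitrary $G$ from the Coulomb branch / DAHA correspondence together with the shift isomorphism, and then to bootstrap (c) by establishing a generation-in-degree-one property for the $\Z$-algebras in type $A$. For (a), the starting point is Theorem \ref{thm:iwahoricoulombisdaha}, which identifies $H_*^{\tbI_\cO \rtimes \C^*_{\rot}}({}_0\tilde{\cR}_0) \cong \HH_{c,\hbar}$. The scaling map $(g,v)\mapsto (g, t^{-i}v)$ realizes a $G(\cO)$-equivariant ind-scheme isomorphism ${}_i\tilde{\cR}_i \cong {}_0\tilde{\cR}_0$, but twists the loop-rotation action by weight $i$; translating this twist through the Varagnolo--Vasserot dictionary produces exactly the parameter shift $c \mapsto c+i\hbar$. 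Passing from the Iwahori to the spherical space of triples along the projection with classical Springer fiber fibers then yields ${}_i\cA_i \cong \eee\HH_{c+i\hbar,\hbar}\eee$ via the symmetrization projector.

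For (b), the same analysis applied to ${}_{i-1}\tilde{\cR}_i$ produces a natural bimodule over the two relevant graded DAHAs. On the algebraic side, the bimodule $\eee_-\HH_{c+i\hbar,\hbar}\eee$ carries the obvious right action of $\eee\HH_{c+i\hbar,\hbar}\eee$; the less obvious left action of $\eee\HH_{c+(i-1)\hbar,\hbar}\eee$ is provided by the shift isomorphism of Theorem \ref{thm:shiftiso}. Identifying this with the geometric bimodule ${}_{i-1}\cA_i$ then reduces to comparing the action of generators in the Dunkl--Cherednik difference-operator representation, where both sides act on the same polynomial model and are generated by translations and simple reflections.

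Given (a) and (b), for (c) it suffices to show that for $G=GL_n$ the convolution map
\[
{}_i\cA_{i+1}\otimes_{{}_{i+1}\cA_{i+1}}{}_{i+1}\cA_{i+2}\otimes\cdots\otimes_{{}_{j-1}\cA_{j-1}}{}_{j-1}\cA_j\longrightarrow {}_i\cA_j
\]
is an isomorphism, since the algebraic $\Z$-algebra ${}_\bullet\cB_\bullet$ is by definition generated by its degree-zero and degree-one pieces. I would attack this by first establishing the commutative/semiclassical analogue: in the limit, both sides become global sections of $\cO(j-i)$ on $\Hilb^n(\C^\times\times\C)$, and the desired isomorphism reduces to a standard ampleness-type statement for $\cO(1)$, which is the open-subvariety version of the Gordon--Stafford result for $\Hilb^n(\C^2)$. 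Lifting from the classical to the quantum level then follows from flatness of both $\Z$-algebras over the equivariant parameters.

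The main obstacle I anticipate is exactly this flatness-and-lifting step: controlling the cokernel of the convolution map in higher degrees requires a careful analysis of the fibers of the BFN convolution diagram ${}_i\cR_j \times {}_j\cR_k \to {}_i\cR_k$, and this is precisely where the type-$A$ Hilbert scheme geometry and the associated combinatorics become essential. A secondary technical point is the careful bookkeeping of the equivariant parameter shift in (a), which must be made compatible between the BFN moment-map presentation of the Coulomb branch algebra and the Varagnolo--Vasserot conventions for the graded DAHA.
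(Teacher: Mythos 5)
For parts (a) and (b) your proposal tracks the paper closely: (a) is Theorem \ref{thm:iwahoricoulombisdaha} (where the parameter shift $c\mapsto c+i\hbar$ enters through the loop--rotation weight of the flavor twist, exactly as you describe), and (b) is Theorem \ref{thm: one step}, where the paper uses the Grothendieck--Springer square \eqref{eq:SpringerCoulombDiagram} rather than a comparison of Dunkl generators, but the content is the same and the shift isomorphism of Theorem \ref{thm:shiftiso} supplies the left action just as you say.

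Part (c) has a genuine gap, and it sits at the spot you yourself flag as a ``main obstacle.'' Your plan is to pass to the classical limit, identify $\gr\,{}_i\cA_j^{\hbar=0}$ with $H^0(\Hilb^n(\C^\times\times\C),\cO(j-i))$, and then read off surjectivity of the degree-one convolution products from ampleness of $\cO(1)$. But the identification of $\gr\,{}_i\cA_j^{\hbar=0}$ with $H^0(\cO(j-i))$ is, for $j-i\ge 2$, precisely the content of the theorem; you cannot take it as a starting point. Knowing that the $\Proj$ of $\bigoplus_d {}_0\cA_d^{\hbar=0}$ is $\Hilb^n(\C^\times\times\C)$ does not by itself pin down the graded pieces --- a Veronese subring has the same $\Proj$ but different graded components and is not generated in degree one --- so ``ampleness of $\cO(1)$'' is not yet applicable. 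What the paper actually does is a direct combinatorial analysis at the level of the Bruhat-filtered localization basis: Theorem \ref{thm:gr of bruhat} and Corollary \ref{cor: explicit formula for R lambda} compute the leading terms of the classes $[{}_j\cR_i^\lambda]$, and Lemma \ref{lem: split weight} together with Corollary \ref{cor: surjective} exhibit each such class, modulo lower Bruhat order, as a product of $j-i$ degree-one classes by ``dividing $\lambda$ by $j-i$ with remainder.'' This combinatorial factorization is what replaces the appeal to ampleness and is the type-A--specific input. Your remark that ``this is precisely where the type-$A$ Hilbert scheme geometry and the associated combinatorics become essential'' is accurate, but in the proof these are not background technicalities to be deferred: they are the argument. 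Once surjectivity at $c=\hbar=0$ is in hand, the lift to general parameters does proceed by the freeness argument you indicate (Theorem \ref{th: Gordon Stafford} on the $\cB$ side and equivariant formality on the $\cA$ side), together with the separate injectivity check via the localization embedding of Proposition \ref{prop:differenceembedding}, which you should also spell out since it does not follow from surjectivity at a point.
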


We prove part (a) as Theorem \ref{thm:iwahoricoulombisdaha}, part (b) as Theorem \ref{thm: one step} and part (c) as Theorem \ref{thm:Z-algebra-iso}. In Proposition \ref{prop:noncomm localization for any group} we also provide an explicit basis for the associated graded of  ${}_\bullet \cA_\bullet$  with respect to Bruhat filtration in all types,  see in particular Theorem \ref{thm:gr of bruhat} for $G=\GL_n$.

The main difficulty in proving part (c) is that  ${}_\bullet \cB_\bullet$ is generated by the ``degree one bimodules" ${}_i \cB_{i+1}$  by definition, while this is not clear at all for ${}_\bullet \cA_\bullet$. For $G=\GL_n$, we resolve this difficulty by a careful combinatorial analysis of the basis in Theorem \ref{thm:gr of bruhat}, and using deep results of Gordon and Stafford on $\Z$-algebras for rational Cherednik algebras \cite{GS1,GS2}.

Next, we turn to the Algebraic Geometry column of the table. In the commutative limit $c=\hbar=0$ the $\Z$-algebra ${}_\bullet \cA^{\hbar=0}_\bullet$ becomes a graded commutative algebra, as ${}_i \cA^{\hbar=0}_j$ only depends on the difference $d=j-i$. 
For $d=0$ the algebra ${}_0 \cA^{\hbar=0}_0$ can be identified with the algebra of symmetric polynomials on $T^*T^{\vee}$, or, equivalently, the algebra of functions on $T^*T^\vee/W$. 
For $d=1$ the module ${}_0 \cA^{\hbar=0}_1$ can be identified with the space of antisymmetric polynomials on $T^*T^{\vee}$  (see Theorem \ref{thm: properties of coulomb algebra} for both $d=0$ and $d=1$ cases).
Our next main result identifies this graded algebra explicitly.

\begin{theorem}(Theorem \ref{thm: coulomb is symbolic})
\label{thm: coulomb is symbolic intro}
Let $\epsilon$ be the sign representation of $W$ and $\eee_d$ be the idempotent in $\C[W]$ corresponding to $\epsilon^{\otimes d}$.
We have 
$$ 
{}_0 \cA^{\hbar=0}_d\simeq \eee_d \bigcap_{\alpha\in\Phi^+}\langle 1-\alpha^\vee, y_\alpha\rangle^{d}
$$
where in the right hand side we have an intersection of ideals in $\C[T^*T^\vee]$. Here we regard $1-\alpha^{\vee}$ as functions on $T^{\vee}$ and $y_\alpha$ as coordinates in the cotangent fiber corresponding to the  positive roots $\alpha\in\Phi^+$.
The isomorphism agrees with the convolution structure on the left hand side and the multiplication on the right hand side.

The homological grading on ${}_0 \cA^{\hbar=0}_d$ corresponds to the natural grading on $\C[T^*T^\vee]$ given by dilation of the cotangent fiber (so that  $1-\alpha^{\vee}$ have degree 0 and $y_\alpha$ have degree $-1$).
\end{theorem}

We can then define an algebraic variety
\begin{equation}
\label{eq: def resolution intro}
\widetilde{\Comm}_{G^{\vee}}:=\Proj \bigoplus_{d=0}^{\infty} {}_0 \cA^{\hbar=0}_d
\end{equation}
which is a partial resolution of $\Spec {}_0 \cA^{\hbar=0}_0=T^*T^\vee/W$. In other words, we identify the graded algebra $\bigoplus_{d=0}^{\infty} {}_0 \cA^{\hbar=0}_d$ with the homogeneous graded ring of $\widetilde{\Comm}_{G^{\vee}}$.
The homological grading on ${}_0 \cA^{\hbar=0}_d$ described in Theorem \ref{thm: coulomb is symbolic intro}       
  corresponds to the $\C^{\times}_{\h}$ action on 
$\widetilde{\Comm}_{G^{\vee}}$.

\begin{remark}
\label{rem: finite generation}
The algebra described in Theorem \ref{thm: coulomb is symbolic intro} is (up to a projection by $\eee_d$) an example of the so-called symbolic Rees algebra for the ideal  
$$
\bigcap_{\alpha\in\Phi^+}\langle 1-\alpha^\vee, y_\alpha\rangle\subset \C[T^*T^\vee].
$$
It is in general a complicated question whether a symbolic Rees algebra is finitely generated (e.g. \cite{Grifo}). However, \cite[Theorem 3.18]{BFN2} ensures that $\widetilde{\Comm}_{G^{\vee}}$ can be realized as a Hamiltonian reduction of a larger 
Coulomb branch with flavor symmetry by a certain torus action. This implies that $\bigoplus_{d=0}^{\infty} {}_0 \cA^{\hbar=0}_d$ is a finitely generated algebra, and $\widetilde{\Comm}_{G^{\vee}}$ is a quasi-projective variety.
\end{remark}

By the work of Haiman \cite{Haiman}, for $G=\GL_n$ this implies the following:

\begin{proposition}
\label{thm: intro hilb}
For $G=\GL_n$, we get 
$$
\Proj \bigoplus_{d=0}^{\infty} {}_0 \cA^{\hbar=0}_d=\Hilb^n(\C^{\times}\times \C),
$$ 
in particular, it is a smooth resolution of $T^*T^\vee/W\simeq (\C^{\times}\times \C)^n/S_n$.
\end{proposition}

\begin{remark}
A different proof of Theorem \ref{thm: intro hilb} essentially follows from \cite[Theorem 3.10]{BFN3}, which identifies a resolution of the Coulomb branch for $(\GL_n,\mathfrak{gl}_n\oplus \C^n)$, constructed using flavor symmetry, with $\Hilb^n(\C^2)$. The same proof in {\em loc. cit.} applied to $(\GL_n,\mathfrak{gl}_n)$ yields Theorem  \ref{thm: intro hilb}.

Another proof can be extracted from more general results of Nakajima and Takayama \cite{NT,Tak} which identify the resolved Coulomb branches  of quiver gauge theories of affine type A with certain Cherkis bow varieties \cite{Cherkis}, and certain moduli spaces of parabolic sheaves on $\P^1\times \P^1$. See, in particular, \cite[Theorem 4.9]{BFN3} for more context and details.
\end{remark}

Next, we study graded modules over all of the above $\Z$-algebras and graded algebras. The convolution structure  of spaces
 \({}_i\tilde{\cR}_j\) allows us to define a correspondence between the affine Springer fibers in the affine flag variety \(\tilde{\Sp}_{t^i\gamma}  \),
\(\tilde{\Sp}_{t^j
  \gamma}\subset G(\cK)/\bI \). 
Similarly, we prove in Theorem \ref{thm:action} that \(\bigoplus_{j\ge 0} H_*(\Sp_{t^j\gamma})\) is a module over \(\Z\)-algebra \({}_\bullet\cA_\bullet\). This is very similar to the BFN Springer theory developed in \cite{HKW,GK}. 
In the commutative variant, we obtain a graded module over the graded algebra, and hence a quasi-coherent sheaf over its $\Proj$ construction defined by \eqref{eq: def resolution intro}, which proves
Theorem~\ref{thm:mainthm}.

Finally, we outline a simple construction of the above action. Recall that the homology of the affine Grassmanian version of the Springer fibers \(\Sp_{t^j\gamma}\subset G(\cK)/G(\cO)\) can be described in terms
of the action of the finite Weyl group:
\begin{equation}\label{eq:sig-triv-Sp}
H_*(\Sp_{t^j\gamma })= H_*(\tSp_{t^j\gamma })^W,\quad H_*(\Sp_{t^{j-1}\gamma })= H_*(\tSp_{t^j\gamma })^{\epsilon}[-2\dim G/B].
\end{equation}
where $\epsilon$ is the sign representation of $W$ and $[-2\dim G/B]$ denotes shift in homological degree.
We explain the details of the isomorphisms in the Lemma~\ref{lem: spherical antispherical}.  Given a class in 
$H_*(\Sp_{t^{j-1}\gamma })$, we can identify it with a class in $H_*(\Sp_{t^j\gamma })^{\epsilon}$, then act by an antisymmetric polynomial (using the commutative version of the double affine action of \cite{Yun1}) and get a class in $H_*(\Sp_{t^j\gamma })^W=H_*(\Sp_{t^j\gamma })$. This construction gives a map
\begin{equation}
\label{eq: intro A acts}
A_G\otimes H_*(\Sp_{t^{j-1}\gamma })\to H_*(\Sp_{t^{j}\gamma })
\end{equation}
where $A_G={}_0 \cA^{\hbar=0}_1$ is the space of antisymmetric polynomials. 

It is unclear if this approach can be used to define the action of the full graded algebra $\bigoplus_{d=0}^{\infty} {}_0 \cA^{\hbar=0}_d$, the main obstacles are:
\begin{itemize}
\item It is unclear how to verify the relations between the products of elements of $ {}_0 \cA^{\hbar=0}_1$ inside ${}_0 \cA^{\hbar=0}_d$
\item For $G\neq \GL_n$, it is unclear if the algebra is generated in degree 1. 
\end{itemize}

To avoid these obstacles, we have abandoned this approach altogether and instead used the machinery of Coulomb branches throughout the paper.  Nevertheless, {\em a posteriori} we conclude that  the action of the degree 1 part of the algebra agrees with \eqref{eq: intro A acts}, and hence all necessary relations are satisfied.

\subsubsection{Outline of the paper}
In Section \ref{sec:asf} we define the affine Springer fibers and some background material.
The sheaves we construct live on a partial resolution of $T^*T^\vee/W$, which is introduced in Section \ref{sec:commuting}. In Section \ref{sec:cherednik}, we study the trigonometric Cherednik algebra and a natural $\Z$-algebra built out of it, which is the algebraic main part of the construction. In Sections \ref{sec:coulomb},\ref{sec: adjoint coulomb} and \ref{sec:bfnspringer} we study the affine Springer fibers using Coulomb branch algebra machinery, in particular constructing a geometric $\Z$-algebra action and comparing it to the one in Section \ref{sec:cherednik}. In Section \ref{sec:finitegeneration} we prove that the sheaves we construct are coherent whenever $\gamma$ is homogeneous. In Section \ref{sec:examples}, we study some homogeneous examples in detail.

\subsection*{Acknowledgments}
The authors  would like to thank Roman Bezrukavnikov, Tudor Dimofte, Pavel Etingof, Pavel Galashin, Niklas Garner, Ivan Losev, Ng\^o Bao Ch\^au, Hiraku Nakajima, Wenjun Niu, Jos\'e Simental Rodriguez, Eric Vasserot, Monica Vazirani, Ben Webster, Alex Weekes and Zhiwei Yun for useful discussions. We thank the organizers of the MRSI program ``Enumerative Geometry Beyond Numbers'' in 2018 for providing an ideal working environment which motivated us to start this project. 

The work of E. G. was partially supported by the NSF grant DMS-1760329 and DMS-2302305. The work of A. O. was partially supported by the NSF grants DMS-1352398 and DMS-1760373. The work of O. K. was partially supported by the Finnish Academy of Science and Letters grant for postdoctoral research abroad.

\section{Affine Springer fibers}
\label{sec:asf}
\subsection{Definitions}
\label{sec: asf definitions}
Let $G/\kk$ be a reductive group over a field $\kk$. We assume $\kk=\bar{\kk}$ and that the 
characteristic is zero or large enough (no attempt will be made to give bounds). Fix some pinning $T\subset B
\subset G$ of the root system of $G$. Define $N=\dim(G/B)$.
Let $\cK=\kk(\!(t)\!)$ and $\cO=\kk\llbracket t\rrbracket$. Write $G(\!(t)\!)=G(\cK)$ and $G\llbracket t\rrbracket=G(\cO)$. Denote also $\fg=\Lie(G)$ 
and $\fg(\!(t)\!)=\fg(\cK), \fg\llbracket t\rrbracket=\fg(\cO)$.

Let $\bP$ be a standard parahoric subgroup of $G(\cK)$, i.e. the pullback of a standard parabolic subgroup $P\subset G(\kk)$ under the evaluation at zero map $\ev_0: G(\cO)\to G(\kk)$. Let $\Fl^\bP_G=G(\cK)/\bP$ 
be the corresponding partial affine flag variety. When $\bP=\bI$ is the 
Iwahori subgroup of $G(\cK)$ corresponding to $P=B$, we simply write $
\Fl_G:=\Fl^\bI$ for the affine flag variety and when $\bP=G(\cO)$ we 
write $\Gr_G:=\Fl^{G(\cO)}$ for the affine Grassmannian. Since it will usually be clear from the context, we will also omit the subscript $G$.
  
For any $\bP$ 
and $\gamma\in \fg(\cK)$ , define the affine Springer fiber
$$\Sp_\gamma^\bP:=\{g\bP|\Ad(g^{-1})\gamma \in \Lie(\bP)\}\subset \Fl^\bP.$$ When $\bP=G(\cO)$ we omit the superscript and when $\bP=\bI$ we 
write $\tSp_\gamma=\Sp_\gamma^\bI$. 

The space $\Sp_\gamma^\bP$ is a sub-ind-scheme of $\Fl^\bP$. It is always nonreduced, but since it makes no difference to us, we will only work with the reduced structure of $\Sp_\gamma^\bP$ (see \cite[Sections 2.2.9 and 2.5.1]{YunLectures} for more details). $\Sp_\gamma^\bP$ is locally
finite-dimensional if and only if $\gamma$ is regular semisimple
\cite{KL88}. We shall mostly focus on the case when $\gamma$ is regular semisimple from now 
on. We also assume $\gamma$ is compact, i.e. contained in some Iwahori subgroup, or equivalently that the affine Springer fiber is nonempty.
The ind-scheme $\Sp_\gamma$ is locally 
of finite type, and by results of \cite{KL88} there exists a free 
abelian group $L_\gamma$ acting on $\Sp_\gamma^\bP$ freely and a projective 
scheme $S\subset \Sp_\gamma^\bP$ such that $L_\gamma\cdot S=\Sp_\gamma^\bP$. The free abelian group $L$ can be identified with the cocharacter lattice of the centralizer $G_\gamma:=C_{G(\cK)}(\gamma)$  of $\gamma$:
$$
L_\gamma=X_*(G_\gamma).
$$
In particular, $L$ is trivial if and only if $\gamma$ is elliptic. In this case, $\Sp_\gamma^\bP$ is a projective variety.
\begin{remark}
By the Jordan decomposition, we can write any $\gamma$ as the sum of commuting semisimple and nilpotent elements: $\gamma=\gamma_s+\gamma_n$. Therefore, we can reduce the study of general $\gamma$ to nilpotent and semisimple $\gamma$, as $\Sp_\gamma=\Sp_{\gamma_s}\cap \Sp_{\gamma_n}$. 
While it is also interesting to study non-regular semisimple elements, much about this case can in principle be extracted from the regular semisimple case by the fact that the centralizer of a semisimple element is a reductive group over $\cK$. In the present work, we are mostly concerned with $H^{K_\gamma}_*(\Sp_\gamma)$ for $K_\gamma\subseteq J_\gamma(\cO)$, i.e. a subgroup of the $\cO$-points of a smooth integral model of the centralizer of $\gamma$\footnote{This is to ensure that the $K_\gamma$-action on $\Sp_\gamma$ factors through a finite-dimensional quotient and the equivariant BM homology can be defined. The assumption on $K_\gamma$ can possibly be weakened.}  defined in Section \ref{sec:endoscopy}.

However, the nilpotent elements seem more mysterious from our point of view. It is clear e.g. by the convergence of the corresponding orbital integrals that the centralizers of nilpotent elements are quite large. In the case where $\gamma$ is nilpotent, it is not even known if there is a Levi factor of the centralizer. It does make sense to ignore the centralizer (or at most use some compact subgroups thereof) and use finite-dimensional approximation to study the Borel-Moore homology of $\Sp_\gamma$ even in the nilpotent cases, as is done e.g. by Sommers in \cite{Sommers}.
\end{remark}
\subsection{The Springer action}
Assume for now that $\kk=\C$ or that we are using \'etale cohomology over $\bar{\Q}_\ell$ for $\ell\neq \text{char}(k)$. One of the remarkable things about $\tSp_\gamma$ is that $H_*(\tSp_\gamma)$ has an action of the extended affine Weyl group $\tW=W\ltimes X_*(T)$ as shown by Lusztig \cite{Lu96} (for $G$ adjoint) and Yun \cite{Yun1} (in general), analogously to the Weyl group action in the cohomology of classical Springer fibers. 

\begin{lemma}
\label{lem: spherical antispherical}
Let $\gamma\in \fg(\cK)$ be an element such that $\gamma=t\gamma_0$ for a regular semisimple compact element $\gamma_0\in \fg(\cK)$.
Then under the Springer action of $W\subset \tW$ on $H_*(\tSp_\gamma)$, we have a natural identification
\begin{equation}\label{eq:triv}
H_*(\tSp_\gamma)^W=H_*(\Sp_\gamma)
\end{equation}
and an isomorphism 
\begin{equation}\label{eq:sign}
H_*(\tSp_\gamma)^{\epsilon}\cong H_*(\Sp_{\gamma_0})[-2N].
\end{equation}
Here $[-2N]$ means a shift in homological degree and $N=\dim G/B$, and $\epsilon$ denotes the sign representation of $W$.
\end{lemma}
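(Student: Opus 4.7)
\emph{Plan.} The natural projection $\pi\colon \Fl_G\to \Gr_G$, $g\bI\mapsto gG(\cO)$, is proper with fibers $G/B$ and restricts to a proper map $\pi\colon \tSp_\gamma\to \Sp_\gamma$. I first identify its fibers. For $gG(\cO)\in\Sp_\gamma$, set $y:=\Ad(g^{-1})\gamma\in\fg(\cO)$ and $y_0:=y|_{t=0}\in\fg$. Identifying $G(\cO)/\bI$ with $G/B$ via the reduction $G(\cO)\twoheadrightarrow G$, a direct unfolding gives
\[\pi^{-1}(gG(\cO))\cap\tSp_\gamma \;=\; \{\bar k B\in G/B \,:\, \Ad(\bar k^{-1})y_0\in\fb\} \;=\; \cB_{y_0},\]
the classical Springer fiber of $y_0$. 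When $\gamma=t\gamma_0$, the matrix $y$ is $\fg(\cK)$-conjugate to $t\gamma_0$, whose characteristic polynomial in the adjoint representation is a power of $X$ at $t=0$; hence $y_0$ is nilpotent. Moreover $y_0$ equals the $t^{-1}$-coefficient of $\Ad(g^{-1})\gamma_0$, so $y_0=0$ if and only if $\Ad(g^{-1})\gamma_0\in\fg(\cO)$, i.e., $gG(\cO)\in\Sp_{\gamma_0}$.

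\emph{Decomposition.} The assignment $\phi\colon \Sp_\gamma\to\cN$, $gG(\cO)\mapsto y_0$, identifies $\pi$ with the pullback of the Grothendieck--Springer resolution $\widetilde{\cN}\to\cN$ along $\phi$. The Borho--MacPherson decomposition of the Springer sheaf then provides a $W$-equivariant splitting
\[\pi_*\,\Q_{\tSp_\gamma} \;\cong\; \bigoplus_{\chi\in \mathrm{Irr}(W)} \chi\otimes \phi^*\cF_\chi,\]
where $\cF_\chi$ is the Springer summand on $\cN$ associated with $\chi$. For $\chi=\triv$ one has $\cF_\triv=\Q_\cN$, so $\phi^*\cF_\triv=\Q_{\Sp_\gamma}$, and passing to Borel--Moore homology immediately yields $H_*(\tSp_\gamma)^W=H_*(\Sp_\gamma)$, which is \eqref{eq:triv}. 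For $\chi=\epsilon$, the corresponding $\cF_\epsilon$ is the constant sheaf on the zero orbit placed in top degree; pulling back via $\phi$ produces the constant sheaf on $\{y_0=0\}=\Sp_{\gamma_0}$ with the same degree shift, and Borel--Moore homology then gives $H_*(\tSp_\gamma)^\epsilon\cong H_*(\Sp_{\gamma_0})[-2N]$, which is \eqref{eq:sign}.

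\emph{The main obstacle.} The most delicate point is verifying that the $W$-action used above--the fiberwise classical Springer action coming from the Borho--MacPherson decomposition--coincides with the Springer action of $W\subset\tW$ on $H_*(\tSp_\gamma)$ constructed by Yun \cite{Yun1}. Yun's action is manufactured from the correspondences attached to the minimal parahorics $\bI\subset\bP_{s_i}$ for $s_i\in W$, which on each fiber $\cB_{y_0}$ restrict to the minimal-parahoric correspondences used in the classical construction; checking this compatibility carefully is the crux of the argument. Once it is granted, the single decomposition of $\pi_*\Q$ delivers both \eqref{eq:triv} and \eqref{eq:sign} simultaneously.
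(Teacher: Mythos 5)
Your proof takes essentially the same route as the paper's: both realize $\pi\colon\tSp_\gamma\to\Sp_\gamma$ as the pullback of the Springer resolution over the nilpotent cone via the cartesian diagram \eqref{eq: Springer diagram}, identify $\{y_0=0\}$ with $\Sp_{\gamma_0}$, and read off the trivial and sign isotypic pieces from the classical Springer decomposition of the pushforward (the paper phrases this through $\End_{\mathrm{Perv}^G(\cN)}(S)\cong\C[W]$ and proper base change; your Borho--MacPherson direct-sum version is equivalent bookkeeping). The compatibility with Yun's $W\subset\tW$-action that you single out as the crux is exactly what the paper treats as recalled background rather than something to reprove (cf.\ \cite[Section 2.6]{YunSph} and \cite[Proposition 5.2.1]{Yun1}), so a citation suffices there.
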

\begin{proof}
The first part is due to \cite[Section 2.6]{YunSph}, and the second part is well-known but not found in the literature, so we give a proof here. 

First recall the construction of the Springer action for the subgroup $W\subset \tW$. Since $\bI\subset G\llbracket t\rrbracket$, we have natural projections 
$\Fl\to \Gr$ and $\tSp_\gamma\to \Sp_\gamma$. For any $\bP$, write $P=\bP/t\bP$ and $\mathfrak{p}$ for its Lie algebra. There are natural maps of fpqc sheaves 
$\Sp_\gamma^\bP\to [\mathfrak{p}/P],$ which send the cosets $g\bP$ to the respective images of $g^{-1}\gamma g$ under the projections $\Lie(\bP)\to \mathfrak{p}$. In particular there is a cartesian diagram 
\begin{equation}
\label{eq: Springer diagram}
\begin{tikzcd}
\tSp_\gamma \arrow[r,"\varphi'"]\arrow[d, "\pi"'] & \left[ \tilde{\fg}/G \right]=[\fb/B] \arrow[d, "\pi'"]\\
\Sp_\gamma \arrow[r,"\varphi"] &  \left[ \fg/G \right]
\end{tikzcd}
\end{equation}
where the right-hand side is naturally identified with the Grothendieck-Springer resolution for $G$. Since $\gamma=t\gamma_0$, it is clear that the image of $\varphi$ will be contained in $[\cN/G]\subset [\fg/G]$ and the image of $\varphi'$ will be contained in $[\widetilde{\cN}/G]$. The restriction $S:=\pi'_*\C|_\cN$ is perverse, and is called the Springer sheaf. It is in fact isomorphic to a direct sum of IC complexes on nilpotent orbits, and it is known by classical Springer theory that $$\End_{\text{Perv}^G(\cN)}(S)\cong \C[W].$$ In particular, there is a map $$\C[W]\to \End_{\text{Perv}(\Sp_\gamma)}(\varphi^*S)$$ and hence an action of $W$ on $H_*(\tSp_\gamma)$. Decomposing the regular representation of $W$, we see that there is some IC complex $\cF$ on $[\cN/G]$ corresponding to the sign representation $\epsilon$. From classical Springer theory it follows that this complex is isomorphic as a perverse sheaf to the shifted skyscraper sheaf $\C_{\{0\}}[-2N]$. 

By proper base change, 
$$H_*(\tSp_\gamma)^{\epsilon}=\Hom(\varphi^*\cF, \pi_*\C)\cong \Hom(\varphi^*\cF, \varphi^*S).$$
Now note that $$\varphi^{-1}(0)=\{gG(\cO)\in \Sp_\gamma|g^{-1}\gamma g\equiv 0 \; (\text{mod } t)\}=\Sp_{\gamma_0}.$$ This is a closed subspace, so $\varphi^*\cF$ is isomorphic to the (shifted) extension by zero of the constant sheaf on $\Sp_{\gamma_{0}}$. 
In particular, $R\Gamma(\varphi^*\cF)\cong H_*(\Sp_{\gamma_0})$. On the other hand, by the adjunction 
$$\Hom(\varphi^*\cF,\varphi^*S)=\Hom(\cF,\varphi_*\varphi^*S)$$ it is clear that $\Hom(\varphi^*\cF,\varphi^*S)\cong R\Gamma(\varphi^*\mathbb{C}_{\{0\}})[-2N]$.
Thus $H_*(\tSp_\gamma)^{\epsilon}\cong H_*(\Sp_{\gamma_0})[-2N]$.

\end{proof}

We can rephrase Lemma \ref{lem: spherical antispherical} as follows.
We have the Leray filtration on the Borel-Moore homology of $\tSp_{\gamma}$ such that
\begin{equation}
\label{eq:leray}
\gr^kH_*(\tSp_{\gamma})=\bigoplus_{i+j=k}H_i(\Sp_{\gamma},R^j\pi_*\C),\ 0\le j\le 2N.
\end{equation}

\begin{corollary}
a) The $W$-invariant part of the homology of $\tSp_{\gamma}$ is canonically isomorphic to the $j=0$ part of \eqref{eq:leray}:
$$
H_k(\tSp_{\gamma})^{W}=H_k(\Sp_{\gamma},R^0\pi_*\C)=H_k(\Sp_{\gamma},\C).
$$

b) The associated graded of the $W$-antiinvariant part is isomorphic to the $j=2N$ part of \eqref{eq:leray}.
In other words, the restriction of the obvious map
$$
H_k(\tSp_{\gamma})^{\epsilon}\to H_{k-2N}(\Sp_{\gamma},R^{2N}\pi_*\C)\cong H_{k-2N}(\Sp_{\gamma_0})
$$
is an isomorphism.
\end{corollary}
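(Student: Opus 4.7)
The corollary is a reformulation of Lemma~\ref{lem: spherical antispherical} in terms of the Leray filtration for the projection $\pi\colon\tSp_\gamma\to\Sp_\gamma$. My plan is to identify the trivial and sign isotypic components of each sheaf $R^j\pi_*\C$ under the fiberwise Springer action and to observe that these concentrate in a single graded piece of the filtration. The Springer $W$-action on $H_*(\tSp_\gamma)$ is induced from an action on the complex $\pi_*\C \cong \varphi^*S$ (where $S$ is the Springer sheaf), so it preserves the Leray filtration and descends to a fiberwise $W$-action on each $R^j\pi_*\C$ via the classical Springer action on $H^j(\cB_{\bar x})$. Since $W$ is finite and $\mathrm{char}(\kk)=0$, $W$-isotypic decomposition is exact and commutes with passing to the associated graded.

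For part (b), the proof of Lemma~\ref{lem: spherical antispherical} already identifies the sign-isotypic component of $S$ as the shifted skyscraper $\C_{\{0\}}[-2N]$ at the origin of $\cN$. Pulling back via $\varphi$ and using $\varphi^{-1}(0)=\Sp_{\gamma_0}$, the sign isotypic of $R^j\pi_*\C$ is $(i_0)_*\C$ for $j=2N$, with $i_0\colon \Sp_{\gamma_0}\hookrightarrow\Sp_\gamma$ the closed embedding, and vanishes for $j\neq 2N$. Hence the sign isotypic part of the filtration is concentrated in the $j=2N$ graded piece, giving
\[
H_k(\tSp_\gamma)^\epsilon \;\cong\; H_{k-2N}(\Sp_\gamma,(i_0)_*\C) \;\cong\; H_{k-2N}(\Sp_{\gamma_0}),
\]
which matches Lemma~\ref{lem: spherical antispherical}.

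For part (a), proper base change and connectedness of the Springer fibers yield $R^0\pi_*\C=\C_{\Sp_\gamma}$, so the $j=0$ graded piece equals $H_k(\Sp_\gamma)$. Lemma~\ref{lem: spherical antispherical} ensures $H_k(\tSp_\gamma)^W$ has the correct dimension, so the canonical projection to the $j=0$ graded piece is an isomorphism provided the trivial isotypic of $R^j\pi_*\C$ vanishes for $j>0$. In type $A$ this is immediate from rational smoothness of $\cN$, which forces the trivial-isotypic of $S$ to equal $\mathrm{IC}_\cN=\C_\cN[2N]$, concentrated in a single cohomological degree. In general, it reduces to the classical Springer-theoretic fact that the trivial representation of $W$ appears only in $H^0(\cB_x)$ for every $x\in\cN$. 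The main obstacle is this last step in non-type-$A$ cases, where one must control the stalks of $\mathrm{IC}_\cN$ in degrees above $-2N$; this is standard but requires more care than the sign case.
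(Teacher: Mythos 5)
Your argument is correct and is the natural way to fill in the paper's implicit reasoning: the paper introduces the Leray filtration and presents the Corollary as a direct rephrasing of Lemma~\ref{lem: spherical antispringer} (sic, \ref{lem: spherical antispherical}), and your unpacking via the $W$-isotypic decomposition of $\pi_*\C \cong \varphi^*S$ is exactly what is meant. The one place you express doubt — controlling the trivial-isotypic part of $R^j\pi_*\C$ for $j>0$ outside type $A$ — needs no extra care: the assertion that the trivial representation of $W$ occurs only in $H^0(\cB_x)$ for every nilpotent $x$ is equivalent to $\mathrm{IC}_{\cN}\cong\C_{\cN}[2N]$ (rational smoothness of the nilpotent cone), a theorem of Borho--MacPherson and Lusztig valid uniformly for every reductive $G$, so the trivial-isotypic part of $S$ is the constant sheaf $\C_{\cN}$ in all types and this piece of the argument is just as clean as the sign-isotypic piece.
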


\begin{remark}
The result is also true in cohomology, cohomology with compact supports and BM homology (where we replace the constant sheaf by $\omega_{\tSp_\gamma}$) by identical reasoning.
\end{remark}
\begin{remark}
The centralizer $G_\gamma=C_{G(\cK)}(\gamma)$ acts naturally on $\tSp_\gamma$, inducing an action of the component group in cohomology. In particular, we have the Springer action for equivariant versions of any of the above theories, as well as a commuting action of the component group of the centralizer.
\end{remark}

\subsection{Extended symmetry}
\label{sec:actions}
In addition to the action of $\tW$ on $H_*(\tSp_\gamma)$ there is a degenerate action of the character lattice $X^*(T)$ on $H_*(\tSp_\gamma)$ defined as follows. There is a natural map $\bI\to T$ realizing $T$ as the reductive quotient of $\bI$. In particular, each character $\chi: T\to \mathbb{G}_m$ gives a map $\bI\to \mathbb{G}_m$ and in particular a $\mathbb{G}_m$-torsor $\cL(\chi)$ on $\Fl$. As a line bundle, we can write this as $$G\times^{\bI,\chi} \A^1\to \Fl.$$ Cap product with $c_1(\cL(\chi))$ defines an action of $X^*(T)\otimes_\Z \C$ on $H_*(\tSp_\gamma)$.

\begin{theorem}
Let $X_*(T)\subset \tW$ be the translation part of the extended affine Weyl group. Then the Springer action of $X_*(T)$ and the action of $X^*(T)$ defined above commute.
\end{theorem}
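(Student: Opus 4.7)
The plan is to exploit the $G(\cK)$-equivariance of the line bundles $\cL(\chi)$ and realize the $X_*(T)$-part of the Springer action by $G(\cK)$-equivariant correspondences on $\Fl$. By construction $\cL(\chi)=G(\cK)\times^{\bI,\chi}\A^1$ carries a canonical $G(\cK)$-equivariant structure for the left action on $\Fl=G(\cK)/\bI$, so for every $g\in G(\cK)$ there is a canonical isomorphism $\ell_g^*\cL(\chi)\simeq\cL(\chi)$, where $\ell_g$ denotes left translation. Consequently, whenever $\ell_g$ preserves a closed subvariety $Y\subset\Fl$, the projection formula implies that $(\ell_g)_*$ on $H_*(Y)$ commutes with cap product by $c_1(\cL(\chi))$.

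I would first treat the unramified case in which $\gamma\in\ft(\cK)$ is regular semisimple. Here the lattice $T(\cK)/T(\cO)\simeq X_*(T)$ lies inside the centralizer $K_\gamma$, and each $\mu\in X_*(T)$ acts on $\tSp_\gamma$ by the honest translation $\ell_{t^\mu}$. A direct comparison against the Lusztig--Yun construction identifies this geometric action with the translation component of the Springer $\tW$-action, and the theorem in this case is immediate from the previous paragraph. For general regular semisimple compact $\gamma$, I would realize the Springer action of $\mu\in X_*(T)$ as convolution with the class of the graph $\Gamma_\mu=\{(x,t^\mu x)\}\subset\Fl\times\Fl$, transported appropriately to a self-correspondence of $\tSp_\gamma$. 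Because $\cL(\chi)\boxtimes\cL(\chi)^{-1}$ is canonically trivial along $\Gamma_\mu$ by $G(\cK)$-equivariance, the projection formula again yields commutativity.

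The main obstacle is the ramified case: there $t^\mu$ need not preserve $\tSp_\gamma$ on the nose, so the correspondence interpretation requires justification. One route is to embed $\tSp_\gamma$ in Yun's global family of affine Springer fibers over the regular semisimple locus, where $\cL(\chi)$ extends as a relative line bundle and the $X_*(T)$-action is realized by monodromy that specializes to honest left translation on the unramified fibers; commutativity then propagates to all fibers by specialization. A more in-line alternative with the rest of the paper is to pass to $\bI\rtimes\C^*_{\rot}$-equivariant homology and work inside the Coulomb-type convolution algebras to be introduced later, where the trigonometric DAHA structure makes it transparent that Bernstein translations $\theta^\mu$ commute with the classes of $\bI$-equivariant line bundles; the theorem then follows by specializing to the non-equivariant level.
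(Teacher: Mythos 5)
The paper does not give an argument of its own; it simply cites \cite[Corollary~6.1.7]{Yun1}, and you attempt to prove the statement from scratch. Your proposal has a genuine gap at its central reduction.

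Your plan hinges on identifying the Springer action of $\mu\in X_*(T)$ on $H_*(\tSp_\gamma)$ with pushforward along an actual geometric automorphism of $\tSp_\gamma$ (left translation by $t^\mu$, or convolution with a graph $\Gamma_\mu$ that preserves $\tSp_\gamma$), so that the projection formula and $G(\cK)$-equivariance of $\cL(\chi)$ give commutativity for free. But the Springer action of $X_*(T)\subset\tW$ is \emph{not} the same as the translation action of the centralizer lattice, even when $\gamma$ is unramified. The Springer action of $W^{\mathrm{aff}}=W\ltimes Q^\vee$ is defined cohomologically by convolution with Steinberg-type correspondences built from the Grothendieck alteration $[\tilde{\fg}/G]\to[\fg/G]$; only the length-zero part $\Omega$ acts by honest left translation on $\Fl$. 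In particular, for $\mu\in Q^\vee$ the Springer operator is a composition of simple-reflection correspondences, not $(\ell_{t^\mu})_*$, and it does not come from a map of varieties preserving $\tSp_\gamma$. By contrast, the centralizer lattice $T(\cK)/T(\cO)$ acts on $\tSp_\gamma$ itself by $\ell_{t^\mu}$. These two $X_*(T)$-actions are genuinely different; the content of Theorem~\ref{thm: lattice action factors} (Yun's spherical theorem) is precisely that they agree \emph{after} applying the symmetrizer $\eee$, not on the nose. So your "direct comparison" step in the unramified case is unjustified, and the graph $\Gamma_\mu$ you propose for general $\gamma$ does not even restrict to a self-correspondence of $\tSp_\gamma$ unless $t^\mu$ centralizes $\gamma$.

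The two fallbacks you sketch do not close this gap either. Specializing from equivariant to non-equivariant homology to exploit the $c=\hbar=0$ degeneration of the dDAHA relations requires freeness of $H_*^{\bI\rtimes\C^*}(\tSp_\gamma)$ over $\C[c,\hbar]$, which is not available for general $\gamma$; moreover the paper explicitly notes (Remark after the statement) that the commutativity \emph{fails} in equivariant homology, so the degeneration would have to be controlled carefully. Using the global family and specialization is, as far as I understand it, essentially Yun's own strategy, so at that point you are re-deriving rather than replacing \cite[Corollary~6.1.7]{Yun1}. What a corrected proof would need is an argument that works directly with the cohomological correspondences defining the Springer $X_*(T)$-action and shows they commute with cap product by $c_1(\cL(\chi))$, without assuming those correspondences are realized by maps of varieties.
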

\begin{proof}
This is proved in \cite[Corollary 6.1.7]{Yun1}.
\end{proof}
\begin{remark}
Note that this is not true in {\em equivariant} Borel-Moore homology when we take into account the loop rotation action of $\G_m^{\rot}$, as one gets essentially the relations in the degenerate DAHA from Definition \ref{def:dDAHA}..
\end{remark}
Note that the action of $X_{*}(T)$ gives rise to an action of $\C[T^\vee]$ and the action of $X^*(T)\otimes_\Z \C$ gives rise to an action of $\C[\ft]$ on $H_*(\tSp_\gamma)$. We can thus summarize the above results in the following

\begin{proposition}
\label{prop:J0 action}
The non-equivariant Borel-Moore homology $H_*(\tSp_\gamma)$ is a (right) module over $\C[T^*T^\vee]\rtimes W$.
\end{proposition}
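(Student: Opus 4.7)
The plan is to combine the ingredients just assembled. The Springer action of $\tW = W \ltimes X_*(T)$ on $H_*(\tSp_\gamma)$ already supplies both a $W$-action and, by restriction to the translation lattice, an action of $\C[X_*(T)]$; under the identification $\C[X_*(T)] = \C[T^\vee]$ the semidirect relation is just the group law of $\tW$, so one obtains an action of $\C[\tW] = \C[T^\vee] \rtimes W$. Separately, cap product with the Chern classes $c_1(\cL(\chi))$, $\chi \in X^*(T)$, extends multiplicatively (Chern classes commute under cup product) to an action of $\Sym(X^*(T) \otimes_\Z \C) = \C[\ft]$. The question is then whether these combine into an action of $(\C[T^\vee] \otimes \C[\ft]) \rtimes W = \C[T^*T^\vee] \rtimes W$.

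Beyond what is already built in, this amounts to checking two cross-relations: (a) the $X_*(T)$-translation commutes with the $\C[\ft]$-action, and (b) for each $w \in W$ and $\chi \in X^*(T)$,
\[
w \cdot c_1(\cL(\chi)) = c_1(\cL(w \chi)) \cdot w,
\]
matching the diagonal $W$-action on $\C[T^*T^\vee]$. Relation (a) is precisely the content of the theorem just recalled from \cite{Yun1}. Relation (b) is the main obstacle, because the Springer $W$-action on $H_*(\tSp_\gamma)$ is defined through perverse-sheaf endomorphisms along the Grothendieck--Springer resolution in diagram \eqref{eq: Springer diagram} rather than through a geometric action of $W$ on $\tSp_\gamma$ itself.

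To handle (b) I would pass to the $\bI$-equivariant Borel--Moore homology $H^{\bI}_*(\tSp_\gamma)$. There, the Chern-class action of $\C[\ft]$ is the non-equivariant degeneration of the equivariant one, and Yun's construction exhibits the Springer action of $W$ as twisting scalars in $H^*_{\bI}(\pt) = \Sym(\ft^*)$ by the standard reflection representation of $W$ on $\ft^*$. Since the equivariant first Chern class $c_1^{\bI}(\cL(\chi))$ corresponds in this picture to the image of $\chi \in X^*(T) \subset \ft^*$ in $H^*_{\bI}(\pt)$, the identity $w \cdot c_1^{\bI}(\cL(\chi)) \cdot w^{-1} = c_1^{\bI}(\cL(w\chi))$ is automatic in the equivariant setting and specializes to (b) upon setting the equivariant parameters to zero. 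Combined with (a), with the commutativity of the image of $\C[T^*T^\vee]$, and with the semidirect relation inherited from $\tW$, this yields a well-defined right action of $\C[T^*T^\vee] \rtimes W$ on $H_*(\tSp_\gamma)$, proving the proposition.
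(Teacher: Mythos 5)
The paper gives no explicit proof of this proposition; it presents the statement as a ``summary of the above results'' (the Springer $\tW$-action, the Chern-class action of $\C[\ft]$, and Yun's commutativity theorem for the lattice part), all drawn from \cite{Yun1}. Your proposal correctly observes that these ingredients alone do not yet give the $\C[T^*T^\vee]\rtimes W$-action: one must also check cross-relation (b), that the finite Weyl part of the Springer action conjugates the Chern-class action by the reflection representation. Isolating (b) as the missing piece is the substantive content here, since the cited commutativity theorem concerns only the translation lattice $X_*(T)$, not $W\subset\tW$. Your strategy — deduce (b) from Yun's equivariant construction and then specialize parameters to zero — is the correct one, since Yun establishes the full graded DAHA action, and the defining relation $\sigma_i\xi - {}^{s_i}\xi\sigma_i = c\langle\xi,\alpha_i^\vee\rangle$ of Definition \ref{def:dDAHA} degenerates at $c=0$ to precisely (b). The one imprecision is your claim that the $\bI$-equivariant identity $w\cdot c_1^\bI(\cL(\chi))\cdot w^{-1}=c_1^\bI(\cL(w\chi))$ is ``automatic'': the equivariant Chern class lives in $H^2_\bI(\Fl)$, not $H^2_\bI(\pt)$, and the honest equivariant statement is the dDAHA relation with its $c$-correction, which only becomes the pure twist after passing to non-equivariant homology (consistent with the paper's remark that the commutativity in part (a) likewise fails equivariantly). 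Your conclusion is correct and the source is right; the intermediate step should just be phrased as the $c=0$ degeneration of Yun's dDAHA relation rather than an unconditional equivariant identity.
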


\begin{proposition}
\label{prop:A0 action}
The non-equivariant Borel-Moore homology  $H_*(\Sp_\gamma)$ is a module over $\C[T^*T^\vee]^W$.
\end{proposition}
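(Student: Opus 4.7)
The plan is to deduce this as an immediate consequence of Proposition~\ref{prop:J0 action} together with the identification $H_*(\tSp_\gamma)^W \cong H_*(\Sp_\gamma)$ recalled in Lemma~\ref{lem: spherical antispherical}. The underlying observation is purely algebraic: inside the smash product $\C[T^*T^\vee] \rtimes W$, the subring $\C[T^*T^\vee]^W$ of $W$-invariants commutes with all of $W$, since for $f$ with $w(f)=f$ the defining relation $wf = w(f)w$ reduces to $wf = fw$. Hence in any right $\C[T^*T^\vee]\rtimes W$-module, the subspace of $W$-invariants is automatically stable under multiplication by elements of $\C[T^*T^\vee]^W$.

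Applying this to the module $H_*(\tSp_\gamma)$ furnished by Proposition~\ref{prop:J0 action} yields a $\C[T^*T^\vee]^W$-action on $H_*(\tSp_\gamma)^W$. Transporting along the canonical identification $H_*(\tSp_\gamma)^W \cong H_*(\Sp_\gamma)$ recorded in~\eqref{eq:triv} (due to Yun, \cite[\S 2.6]{YunSph}, valid for any compact regular semisimple $\gamma$) then deposits the desired module structure on $H_*(\Sp_\gamma)$.

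There is no real obstacle in this argument, all the nontrivial input having gone into Proposition~\ref{prop:J0 action} (which packages the compatibility between the Springer translation action of $X_*(T)$ and the line-bundle cap-product action of $X^*(T)$) and into the Springer-theoretic identification~\eqref{eq:triv}. One could alternatively try to construct the $\C[T^*T^\vee]^W$-action directly on $\Sp_\gamma$, using $W$-invariant characters of $T$ to build line bundles on $\Gr$ and symmetric combinations of cocharacters to build translations; but checking that these pieces commute and assemble into an action of the full $\C[T^*T^\vee]^W$ would essentially retrace the content of Proposition~\ref{prop:J0 action}, so passing through $\tSp_\gamma$ and taking $W$-invariants is the economical route.
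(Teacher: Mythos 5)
Your argument matches the paper's proof: both pass through Proposition~\ref{prop:J0 action} and the identification $H_*(\tSp_\gamma)^W \cong H_*(\Sp_\gamma)$ from Lemma~\ref{lem: spherical antispherical}, and then observe that the $W$-invariant subring acts on the $W$-invariant part of the module (the paper phrases this as ``by symmetrizing''). You have simply spelled out the algebraic reason — that $\C[T^*T^\vee]^W$ centralizes $W$ inside the smash product — a bit more explicitly than the paper does.
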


\begin{proof}
By Lemma \ref{lem: spherical antispherical} we have $H_{*}(\Sp_{\gamma})=H_*(\tSp_{\gamma})^{W}$, and
by Proposition \ref{prop:J0 action} $H_*(\tSp_{\gamma})$ has an action of $\C[T^*T^\vee]$. By symmetrizing, we get
the action of $\C[T^*T^\vee]^W$ on $H_{*}(\Sp_{\gamma})$.
\end{proof}

We record the following lemma here. 
\begin{lemma}
\label{lem: y vs Leray}
The action of $X^*(T)\otimes_\Z \C$  on $H_*(\tSp_{\gamma})$ from Section \ref{sec:actions} decreases the Leray filtration \eqref{eq:leray} by two.
\end{lemma}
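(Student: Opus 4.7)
The action in question is cap product with $c_1(\cL(\chi))\in H^2(\tSp_\gamma)$; the Leray filtration \eqref{eq:leray} naturally lives on $H_*(\tSp_\gamma)$, and the restriction to $H_*(\Sp_\gamma)=H_*(\tSp_\gamma)^W\subset F_0H_*(\tSp_\gamma)$ is then automatic. My plan is to pass to the ambient affine flag variety, identify $c_1(\cL(\chi))$ as a ``purely vertical'' class in the Leray spectral sequence for $\pi'\colon\Fl\to\Gr$, and conclude via a standard compatibility of cap product with the Leray filtration.

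First, I would set up the cartesian square
$$
\begin{tikzcd}
\tSp_\gamma \arrow[r, "i"]\arrow[d, "\pi"'] & \Fl \arrow[d, "\pi'"]\\
\Sp_\gamma \arrow[r, "j"] & \Gr
\end{tikzcd}
$$
and observe that $\cL(\chi)|_{\tSp_\gamma}=i^*\cL(\chi)$, so $c_1(\cL(\chi))=i^*c_1(\cL(\chi))_\Fl$. By functoriality of the Leray filtration under base change and compatibility of cap product with pullback, it suffices to verify the analogous statement for the action of $c_1(\cL(\chi))$ on $H_*(\Fl)$ with respect to the Leray filtration of $\pi'$.

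Next, I would use that $\pi'$ is Zariski-locally trivial with fiber $G(\cO)/\bI\cong G/B$, whose cohomology is concentrated in even degrees, so $R^{2k+1}\pi'_*\C=0$. The Leray filtration on $H^2(\Fl)$ then has only two possibly nonzero graded pieces: a ``horizontal'' piece $\pi'^*H^2(\Gr)$ and a ``vertical'' piece inside $H^0(\Gr,R^2\pi'_*\C)$. The restriction of $\cL(\chi)$ to any fiber of $\pi'$ is the Borel--Weil line bundle attached to $\chi$, whose Chern class is nonzero in $H^2(G/B)$, so $c_1(\cL(\chi))$ has nontrivial vertical image. The crux is to show that the horizontal image in $\pi'^*H^2(\Gr)$ vanishes: this should follow because $\chi\colon\bI\to\G_m$ factors through the Levi quotient $\bI\twoheadrightarrow T$ and does not extend to a character of $G(\cO)$, so $\cL(\chi)$ does not descend to $\Gr$. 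Equivalently, in the standard affine Borel presentation of $H^*(\Fl,\C)$, the classes $c_1(\cL(\chi))$ for $\chi\in X^*(T)$ are precisely the generators of the vertical summand.

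Finally, I would invoke the general compatibility: cap product with a cohomology class of Leray bidegree $(0,2)$ lowers the homology Leray filtration by exactly $2$, as can be read off from the induced pairing of the cohomology spectral sequence with the homology one. The main potential obstacle is the vanishing of the horizontal component in the previous step; for reductive $G$ with nontrivial center one must also check that no central character contributes a class pulled back from $\Gr$, but such contributions can be absorbed by normalizing $\chi$ modulo characters of the center and do not affect the filtration statement.
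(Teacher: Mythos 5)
The paper's own proof is a one-liner: the action is cap product with the degree-$2$ classes $c_1(\cL(\chi))$, and compatibility of cap product with the Leray filtration then gives the shift of the $j$-index by $2$ formally (for \emph{any} degree $2$ class on $\tSp_\gamma$: if $\alpha\in F^0H^2 = H^2$ and $F^\bullet$ is the cohomological Leray filtration, then $\alpha\cap$ preserves the base-degree filtration $F^pH_*(\tSp_\gamma)$, and since the total homological degree drops by $2$ and $p+j$ is a fixed linear function of the homological degree, the complementary index $j$ shifts by $2$). Your proposal takes the same basic approach but adds a step that I believe is both unnecessary and not quite right as stated: you try to show that the ``horizontal image'' of $c_1(\cL(\chi))$ in $\pi'^*H^2(\Gr)$ vanishes. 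The argument you give --- that $\chi$ does not extend to a character of $G(\cO)$, so $\cL(\chi)$ does not descend to $\Gr$ --- shows that $c_1(\cL(\chi))\notin F^2 H^2(\Fl)=\pi'^*H^2(\Gr)$; but ``having vanishing horizontal component'' is a different (stronger) condition, and it isn't even canonically defined without choosing a splitting of the filtration (which does not exist multiplicatively in general; compare a nontrivial $\P^1$-bundle such as a Hirzebruch surface, where $\cO(1)$ of the projectivization restricts to $\cO(1)$ on fibers but is not ``purely vertical'' in any canonical sense). Moreover, this pure verticality is not what the lemma needs: the filtration shift holds for every $\alpha\in H^2$ by the compatibility of cap product with Leray filtrations; the nontriviality of $c_1(\cL(\chi))|_{G/B}$ (which you correctly note via Borel--Weil) is only what one needs later, in Lemma~\ref{lem: Vandermonde}, to see that the induced map $R^{2N}\pi_*\C\to R^0\pi_*\C$ is an isomorphism on the relevant locus, not for the present filtration statement. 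So: right setup, right observation about the fiber restriction, but the crux you identify is not actually the crux, and as written that step does not establish what it claims. The paper's short proof, unpacked, is the cleaner route.
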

 
\begin{proof}
This follows from the fact that the action comes from cap product with Chern classes of the line bundles $L(\chi)$ constructed in the beginning of this section.
\end{proof}

Let $\Delta$ be the top-dimensional class in $H^{2N}(G/B)$. By the isomorphism $H^*(G/B)\cong \C[\ft]_W$, we can identify $\Delta$ with an antisymmetric polynomial in $\C[\ft]$, namely $\Delta=\prod_{\alpha\in\Phi^+}y_\alpha$. Moreover, note that we may write antisymmetric polynomials as 
$\C[\ft]^{\epsilon}=\Delta\cdot \C[\ft]^{W}$.
\begin{lemma}
\label{lem: Vandermonde}
We have that 
$$
i_*(H_{k-2N}(\Sp_{t^{-1}\gamma}))=\Delta \cdot H_k(\tSp_{\gamma})^{\epsilon}.
$$
where $i:\Sp_{t^{-1}\gamma}\to \Sp_{\gamma}$ is the natural inclusion.
\end{lemma}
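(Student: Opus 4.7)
The plan is to identify both sides as the image of a single sheaf morphism on $\Sp_\gamma$, following the framework of Lemma \ref{lem: spherical antispherical}. First, both subspaces lie in $H_{k-2N}(\Sp_\gamma)$: the left side trivially, and the right side because $\Delta=\prod_{\alpha\in\Phi^+}y_\alpha$ is $W$-antisymmetric, so by Proposition \ref{prop:J0 action} multiplication by $\Delta$ carries $H_k(\tSp_\gamma)^\epsilon$ into $H_{k-2N}(\tSp_\gamma)^W = H_{k-2N}(\Sp_\gamma)$.

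Next I reinterpret $\overline\Delta := \Delta\cdot : H_k(\tSp_\gamma)^\epsilon \to H_{k-2N}(\Sp_\gamma)$ at the level of the pushforward complex. Verdier-dualizing the $W$-isotypic decomposition of $R\pi_\ast\underline\C$ established in the proof of Lemma \ref{lem: spherical antispherical} gives
\[
(R\pi_\ast\omega_{\tSp_\gamma})^\triv \simeq \omega_{\Sp_\gamma},\qquad (R\pi_\ast\omega_{\tSp_\gamma})^\epsilon \simeq i_\ast\omega_{\Sp_{t^{-1}\gamma}}[2N].
\]
Because $\Delta$ transforms by $\epsilon$ under $W$, the cap-product endomorphism of $R\pi_\ast\omega_{\tSp_\gamma}$ has an $(\epsilon,\triv)$-component which is a morphism $\phi: i_\ast\omega_{\Sp_{t^{-1}\gamma}}\to\omega_{\Sp_\gamma}$ in $D^b(\Sp_\gamma)$. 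By adjunction together with $i^!\omega_{\Sp_\gamma}=\omega_{\Sp_{t^{-1}\gamma}}$,
\[
\Hom_{D^b(\Sp_\gamma)}\bigl(i_\ast\omega_{\Sp_{t^{-1}\gamma}},\omega_{\Sp_\gamma}\bigr) \;\simeq\; \End(\omega_{\Sp_{t^{-1}\gamma}}) \;=\; H^0(\Sp_{t^{-1}\gamma},\C),
\]
so $\phi$ equals the $(i_\ast,i^!)$-adjunction counit twisted by multiplication by some locally constant function $c$. The counit induces the BM pushforward $i_\ast$ on hypercohomology, whence $\overline\Delta(\alpha) = i_\ast(c\cdot\alpha)$. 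In particular $\Delta\cdot H_k(\tSp_\gamma)^\epsilon \subseteq i_\ast H_{k-2N}(\Sp_{t^{-1}\gamma})$.

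For the reverse inclusion one must show $c$ is nowhere vanishing. I would verify this by localising at a point $x_0\in\Sp_{t^{-1}\gamma}$: since $g^{-1}\gamma g \equiv 0 \pmod t$ whenever $gG(\cO)\in\Sp_{t^{-1}\gamma}$, the fibre $\pi^{-1}(x_0)$ is the full flag variety $G/B$, and the line bundles $\cL(\chi)$ defining the $y_\alpha$'s restrict on it to the standard tautological bundles. Hence fiberwise $[G/B]\cap\Delta = |W|\cdot[\mathrm{pt}]$, which, combined with the Springer-theoretic description of the isomorphism of Lemma \ref{lem: spherical antispherical}, forces $c(x_0)=\pm|W|\neq 0$. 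The main technical obstacle is extracting this fiberwise nontriviality: on the Leray associated graded $\cap\Delta$ already vanishes on the $\epsilon$-isotypic for trivial degree reasons (it sends $H_0(G/B)^\epsilon$ to $H_{-2N}(G/B)=0$), so one must read $c$ off from the off-diagonal differentials in the Leray exact couple, or equivalently from a deformation to the normal cone of $\Sp_{t^{-1}\gamma}\hookrightarrow\Sp_\gamma$, whose codimension equals $N$ by the dimension formula $v(\mathrm{disc}(t\gamma_0))-v(\mathrm{disc}(\gamma_0))=2N$.
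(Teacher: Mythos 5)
Your approach is structurally close to the paper's but takes a genuinely different route: the paper works directly with the Leray filtration and Lemma \ref{lem: y vs Leray}, never isolating the sheaf morphism $\phi$ or the locally constant function $c$ that you extract via Verdier duality and adjunction. The Verdier-dual reformulation of Lemma \ref{lem: spherical antispherical} and the reduction to $c\in H^0(\Sp_{t^{-1}\gamma},\C)$ in your first two paragraphs are correct and clean. However, there is a genuine gap in your last paragraph, which you yourself flag: you do not establish $c\neq 0$, only gesture at ``off-diagonal differentials'' or deformation to the normal cone.

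The deeper issue is a misreading of which Leray piece is relevant, which makes the obstacle you raise largely a red herring. You observe that $\cap\Delta$ kills the fiberwise $\epsilon$-isotypic $H_0(G/B)^\epsilon$ for degree reasons; this is true as a statement about the $W$-action on a single fiber. But the $\epsilon$-isotypic of $H_*(\tSp_\gamma)$ is \emph{not} obtained by pulling back the $\epsilon$-isotypic of fiber homology: Corollary (b) after \eqref{eq:leray} identifies $H_k(\tSp_\gamma)^\epsilon$ with the $j=2N$ associated graded piece $H_{k-2N}(\Sp_\gamma,R^{2N}\pi_*\C)$, and on \emph{that} piece $\cap\Delta$ acts via the fundamental class $[\pi^{-1}(p)]=[G/B]$ of each fiber over $\Sp_{t^{-1}\gamma}$ (cf.\ Lemma \ref{lem: preimage sp}). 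Your own computation $[G/B]\cap\Delta=|W|[\mathrm{pt}]$ is exactly the nonvanishing needed; you simply never connect it to $\phi$. The missing ingredient is Lemma \ref{lem: y vs Leray}: it says $\cap y_\alpha$ decreases the Leray filtration by exactly $2$, hence $\cap\Delta$ by $2N$, which is what lets one pass from the filtered map to the associated graded map $R^{2N}\pi_*\C\to R^0\pi_*\C$ and read off $c$ as the degree of the integration pairing over the fiber. With that lemma in hand, the associated graded computation does \emph{not} vanish, and no off-diagonal information is needed; the paper's proof is short precisely because it invokes Lemma \ref{lem: y vs Leray} at the outset.
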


\begin{proof}
By Lemma \ref{lem: y vs Leray} the operator $\Delta$ preserves the decomposition \eqref{eq:leray} and decreases the $j$-grading by $2N$. Since $j\le 2N$, the action of $\Delta$ kills all the summands in \eqref{eq:leray} with $j<2N$, so that 
$$
\Delta\cdot H_k(\tSp_{\gamma})^{\epsilon}=\Delta\cdot H_{k-2N}(\Sp_{\gamma},R^{2N}\pi_*\C).
$$
At $p\in \Sp_{\gamma}$ where $\pi^{-1}(p)$ is a proper subset in the flag variety, $R^{2N}\pi_*\C|_{p}=0$.
On the other hand,  $\pi^{-1}(p)$ is the full flag variety if and only if $p\in i(\Sp_{t^{-1}\gamma})$ and in this case
$\Delta:R^{2N}\pi_*\C\to R^{0}\pi_*\C$ is the isomorphism. Therefore 
$$
\Delta\cdot H_{k-2N}(\Sp_{\gamma},R^{2N}\pi_*\C)=i_*(H_{k-2N}(\Sp_{t^{-1}\gamma})).
$$ 
\end{proof}

\subsection{The lattice action}
\label{sec: lattice action}

Let $\gamma$ be compact and regular semisimple, and let $G_\gamma$ be the stabilizer of $\gamma$ in $G(\cK)$ as before. Obviously, $G_\gamma$ acts on $\Sp_\gamma$, giving an action of the component group $\pi_0(G_\gamma)$
in the homology of $\Sp_\gamma$. Since the action in this case is proper \cite{KL88}, we also get an action in the Borel-Moore homology $H_*(\Sp_\gamma)$.

The $W$-invariant translation part of the stabilizer action restricts to an action of the ''spherical part" 
$\C[X_*(T)]^W$ on $H_*(\tSp_\gamma)$.  This action commutes with the Springer action, and the local main theorem of \cite{YunSph} identifies the spherical part of the Springer action with the spherical part of the lattice action. 
More precisely,
we recall the construction \cite[Section 2.7-2.8]{YunSph} of a canonical homomorphism $\C[X_*(T)]^W\twoheadrightarrow \C[\pi_0(G_\gamma)]$.

 Define $\mathfrak{c}=\fg/\!\!/G=\ft/\!\!/W$, and let $a(\gamma)$ be the image of $\gamma$ under the projection  $
 \fg(\cK)\to \mathfrak{c}(\cK)$. Then we can consider the commutative diagram
 $$
 \begin{tikzcd}
 \Spec \widetilde{\cO_{\cK}} \arrow{r} \arrow{dr}& \Spec \cO_{\cK,a} \arrow{r}\arrow{d}& \ft\arrow{d}\\
  & \Spec \cO_{\cK} \arrow{r}{a(\gamma)}& \mathfrak{c}
 \end{tikzcd}
 $$
 where the right square is Cartesian by definition. The ring $\widetilde{\cO_{\cK}}$ is a normalization of $\cO_{\cK,a}$. 
 Choose a component $Y\subset  \Spec \widetilde{\cO_{\cK}}$, and let $W_Y\subset W$ be the stabilizer of $Y$. 
By \cite[Prop. 3.9.2]{Ngo}, the choice of  $Y$
allows one to define a surjection
$$
X_{*}(T)\twoheadrightarrow X_{*}(T)_{W_Y}\twoheadrightarrow \pi_0(G_{\gamma}).
$$
Furthermore, the corresponding group algebra homomorphism (restricted to $\C[X_*(T)]^W$)
\begin{equation}
\label{eq: Ngo surjection}
 \C[X_*(T)]^W\twoheadrightarrow \C[\pi_0(G_\gamma)].
\end{equation}
does not depend on the choice of of $Y$.

\begin{theorem}\cite{YunSph}
\label{thm: lattice action factors}

The spherical part of the Springer action of $\C[X_*(T)]^W$ on $H_*(\tSp_\gamma)$ factors through the canonical homomorphism \eqref{eq: Ngo surjection}.
\end{theorem}

By Propositions \ref{prop:J0 action} and \ref{prop:A0 action}, the (BM) homology of $\Sp_{\gamma}$ and the homology of $\tSp_\gamma$ are modules over $\C[T^*T^{\vee}]^{W}$ and hence define quasicoherent sheaves $\cF_\gamma'$ and $\widetilde{\cF}_\gamma'$ on $(T^*T^\vee)/W$.

\begin{lemma}
\label{lem:support}
These quasicoherent sheaves are actually coherent, and their (reduced) support is contained in the Lagrangian subvariety $\{0\}\times T^\vee /W\subset T^*T^\vee/W$.
Moreover, the dimension of their support equals the rank of the centralizer $G_{\gamma}$.
\end{lemma}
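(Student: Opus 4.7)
The plan is to prove the three assertions (set-theoretic support, coherence, and dimension of support) by separately analyzing the two commuting pieces of the $\C[T^*T^\vee]$-action on $H_*(\tSp_\gamma)$: the $\C[T^\vee]$-factor from the Springer/centralizer action, and the $\C[\ft]$-factor from cap product with Chern classes of the line bundles $\cL(\chi)$.

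For the set-theoretic support statement, I would show that the $\C[\ft]$-action is locally nilpotent, i.e.\ each homology class is annihilated by some power of the augmentation ideal of $\C[\ft]$. Since $\tSp_\gamma$ is locally of finite type, its Borel--Moore homology is the direct limit $\varinjlim H_*(Z)$ over finite-dimensional closed subschemes $Z\subset \tSp_\gamma$, and cap product with $c_1(\cL(\chi))^N$ vanishes on such a $Z$ once $N>\dim Z$. Hence $\widetilde{\cF}_\gamma'$ is supported on the zero section $\{0\}\times T^\vee$, and by taking $W$-invariants the same follows for $\cF_\gamma'$.

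For coherence, the essential input is that by \cite{KL88} the free abelian group $L\simeq X_*(C_{G(\cK)}(\gamma))$ of finite rank acts freely on $\tSp_\gamma$ with projective quotient $Z=L\backslash \tSp_\gamma$. Pushing forward along this étale cover, $H_*(\tSp_\gamma)$ is identified with the Borel--Moore homology of $Z$ with coefficients in the local system with stalk $\C[L]$, and is therefore finitely generated over $\C[L]$ since $Z$ is compact. Theorem~\ref{thm: lattice action factors} identifies this geometric $\C[L]$-action with the image of the spherical Springer action via the surjection $\C[T^\vee]^W\twoheadrightarrow \C[\pi_0(G_\gamma)]\simeq \C[L]$, so $H_*(\tSp_\gamma)$ is finitely generated over $\C[T^\vee]^W$, and hence over $\C[T^\vee]$. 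Picking a finite generating set, the commuting $\C[\ft]$-action is nilpotent on each of the finitely many generators, so it factors through a finite-dimensional quotient $\C[\ft]/J$ on all of $H_*(\tSp_\gamma)$. Thus $H_*(\tSp_\gamma)$ is finitely generated over $\C[T^\vee]\otimes \C[\ft]/J$, a quotient of $\C[T^*T^\vee]$, and since $\C[T^*T^\vee]$ is a finite extension of $\C[T^*T^\vee]^W$, coherence of $\widetilde{\cF}_\gamma'$ follows; $\cF_\gamma'$ is coherent as the $W$-invariant summand.

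For the dimension of support, the same factorization embeds $\Spec\C[L]$ into $T^\vee/W$ as a closed subvariety of dimension equal to the rank of $L$, which is the rank of the centralizer, giving the upper bound. The lower bound follows from nontriviality: the generic rank of $H_*(\tSp_\gamma)$ as a $\C[L]$-module equals $\dim H_*(Z)>0$, so the support exhausts $\Spec\C[L]$. The main obstacle I anticipate is the identification of the abstract Springer $\C[T^\vee]^W$-action with the geometric $\C[L]$-action from the centralizer, which is precisely Theorem~\ref{thm: lattice action factors} (the local main theorem of Yun in \cite{YunSph}); given this, the rest reduces to finite-dimensional approximations and basic commutative algebra.
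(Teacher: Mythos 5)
Your proposal is correct and follows essentially the same route as the paper's proof: nilpotency of the $\C[\ft]$-action from cap product with Chern classes gives the set-theoretic support, Theorem~\ref{thm: lattice action factors} combined with Kazhdan--Lusztig's free lattice action gives finite generation over $\C[T^\vee]$ and identifies the support in $T^\vee/W$ with $\Spec\C[L]$, and the rank of $L$ gives the dimension. You supply more detail than the paper at two points — the local-system argument for finite generation over $\C[L]$ and the explicit factorization of the $\ft$-action through a finite-dimensional quotient to conclude coherence over $\C[T^*T^\vee]$ — but these are expansions of the paper's terser assertions rather than a different strategy; the one place to be a little careful is your "generic rank equals $\dim H_*(Z)$" claim (the $\C[L]$-local system on $Z$ need not trivialize generically), though the needed conclusion that the support exhausts $\Spec\C[L]$ does follow from the free permutation of components, which is what the paper invokes.
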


\begin{proof}
By Kazhdan-Lusztig \cite{KL88} the homology of $\Sp_{\gamma}$  is finitely generated over $\pi_0(G_{\gamma})$ and
 $\pi_0(G_{\gamma})$ acts freely on the components of $\Sp_{\gamma}$ which all have the same dimension. 
 By Theorem \ref{thm: lattice action factors} we conclude that the homology of $\Sp_{\gamma}$ is finitely generated over $\C[T^\vee]^W$ and hence the corresponding sheaf $\cF'_\gamma$ on $T^*T^{\vee}/W$ is coherent. Similarly 
$\widetilde{\cF}_\gamma'$  is coherent as well.
 
By Lemma \ref{lem: spherical antispherical} we have $H_*(\Sp_{\gamma})\simeq H_*(\tSp_{\gamma})^W$.
It is clear from Lemma \ref{lem: y vs Leray} that the action of $\C[\ft]$ on  $H_*(\tSp_{\gamma})$ is nilpotent, and symmetric functions in $\C[\ft]^{W}$ act  on  $H_*(\tSp_{\gamma})$  by 0, as they do in $H_*(G/B)$. Therefore   $\cF_\gamma', \widetilde{\cF}_\gamma'$ are supported on  
$\{0\}\times T^\vee/W$.

Furthermore, the surjection \eqref{eq: Ngo surjection} defines a subvariety 
$$
\Spec \C[\pi_0(G_\gamma)]\subset \Spec \C[X_*(T)]^W=T^{\vee}/W
$$ 

and Theorem \ref{thm: lattice action factors} implies that $\cF_\gamma', \widetilde{\cF}_\gamma'$ are supported on  $\{0\}\times \Spec \C[\pi_0(G_\gamma)]$. This proves the last statement.

\end{proof}

\subsection{Equivariant versions, endoscopy}
\label{sec:endoscopy}
We can in fact upgrade this construction with the addition of equivariance to the picture. The centralizer $G_\gamma/\cK$ has a smooth integral model $J_\gamma$ over $\cO$, see e.g. \cite{Ngo}. As shown in {\em loc. cit.}, the stabilizer action on $\tSp_\gamma$ factors through the local Picard group $P_\gamma=G_\gamma(\cK)/J_\gamma(\cO)$ whose underlying reduced scheme is finite-dimensional and locally of finite type. Consider the connected component of the identity $P_\gamma^0$ of this group scheme. This is a linear algebraic group over $\C$ whose maximal reductive quotient contains a split maximal torus of rank equal to $\text{rank}(X^*(G_\gamma))$. Call this torus $T_\gamma$. It also acts on $\Sp_\gamma$, and we may take the equivariant BM homology as in \cite[Section 3]{Kivinen}. 
The construction of the Springer action etc. from the previous section go through $T_\gamma$-equivariantly. For simplicity, assume $T_\gamma \hookrightarrow T$. Then we have:

\begin{proposition}
The equivariant BM homology $H_*^{T_\gamma}(\Sp_\gamma)$ is a quasi-coherent sheaf on $T^*T^\vee/W$. Its reduced support has dimension $\leq 2\text{rk}(T_\gamma)$. In particular, if $\gamma$ is elliptic, the support is of dimension $2\text{rk}(Z(G))$, twice the split rank of the center of the group $G$.
\end{proposition}
\begin{proof}
The proof is similar to the proof of Lemma \ref{lem:support}.
The equivariant cohomology of a point $H^*_{T_\gamma}(\pt)$ acts on $H_*^{T_\gamma}(\Sp_\gamma)$ via the equivariant Chern classes $c_1(\cL(\chi))$ from before, giving that the support is contained in $\ft_\gamma\times T^\vee/W$. Results of  Yun \cite[Lemma 5.14]{YunSph} 

then imply that the spherical part of the Springer action in the equivariant homology $H_*^{T_\gamma}(\Sp_\gamma)$ factors through $\C[\pi_0(P_\gamma)]$ just as in Theorem \ref{thm: lattice action factors}.
 By \cite[3.9.]{Ngo}, the group $\pi_0(P_\gamma)$ is free abelian of rank $\text{rk}(G_\gamma)$, giving the desired dimension bound. When $\gamma$ is elliptic, $G_\gamma$ has the same rank as the center and the lattice action is given by moving between different connected components, in particular it is free. This gives equality in the elliptic case.
\end{proof}
\begin{remark}
For elements in a split torus $T=T_\gamma$ for which we have equivariant formality of the $H^*_{T}(\pt)$-action (studied e.g. in Section \ref{sec:finitegeneration}), the proof shows that the support is all of $T^*T^\vee/W$.
\end{remark}

If $G\neq \GL_n$, we note that in the determination of the above support we run into issues of endoscopy. For example for $G=SL_n$, the center $\mu_n$ will be contained in $T_\gamma$ for any $\gamma$. For example for elliptic $\gamma$, we might get support at $n$ points in $T^*T^\vee/W$. Let us now illustrate how this plays out in the case of general $G$ and $\gamma$ elliptic, following \cite{Ngo}.

Recall from \cite[Sections 4 and 6]{Ngo} that we may decompose $$H_*(\Sp_\gamma)=\bigoplus_{\kappa\in \pi_0(P_\gamma)^\vee}H_*(\Sp_\gamma)_\kappa$$ 
for the local Picard group. As shown in \cite[Section 2.7.]{YunSph}, the $\pi_0(G_\gamma)$-action further factors through the $\pi_0(P_\gamma)$-action, so through the composition
$$\C[X_*(T)]^W\twoheadrightarrow \C[\pi_0(P_\gamma)]$$ the above decomposition may be viewed as a decomposition over homomorphisms $\kappa: \C[X_*(T)]^W\to \C$.
One of the corollaries of the homological version of the Fundamental Lemma (see \cite[6.4.1.]{Ngo}) is that:
\begin{equation}
\label{eq:fundlemma}
H_*(\Sp_\gamma)_\kappa\cong H_*(\Sp^H_{\gamma_H})_{st}[\text{val}(\Delta_G(\gamma))-\text{val}(\Delta_H(\gamma_H))]
\end{equation} for some affine Springer fiber $\Sp^H_{\gamma_H}$ of an endoscopic group $H$ of $G$ and a homological shift corresponding to the transfer factor. Here $\Delta_G, \Delta_H$ denote discriminant functions on the Lie algebras $\fg, \fh$ and  "$st$" denotes the stable part, or in other words the part of the BM homology where the lattice acts unipotently (see e.g. \cite{YunSph}). 

Since $H_*(\Sp_\gamma)$ is finite-dimensional, by using Lemma \ref{lem:support}, the above $\kappa$-decomposition and Eq. \eqref{eq:fundlemma}, we can reformulate the fundamental lemma as follows.
\begin{theorem}
\label{thm:sheaffundlemma}
Consider the coherent sheaf $\cF_\gamma'$ on $T^*T^\vee/W$ constructed in the previous section.
It is supported at finitely many points of the form $(0,\kappa)\in T^*T^\vee/W$, where $\kappa \in T^\vee/W$ as above. Each stalk $(\cF_\gamma')_{(0,\kappa)}$ is isomorphic to the stalk at $(0,1)$ of an ''endoscopic sheaf" on $T^*T^\vee_H/W_H$ for some endoscopic group $H$ of $G$. This isomorphism is as modules for $\C[T^*T^\vee_H]^{W_H}$ (or even as graded modules after taking into account the $\C_\h^\times$-action on both sides). 
\end{theorem}

\section{The commuting variety}
\label{sec:commuting}
\subsection{The commuting scheme}

In this section, we introduce the partial resolution of the commuting variety we will be considering. The construction is algebraic in nature. We show the partial resolution coincides with $\Hilb^n(\C^\times\times \C)$ in the case $G=\GL_n$. 
In general, we show it is a normal variety and conjecture that locally its singularities are modeled by the $\Q$-factorial terminalizations constructed by Losev et al. in \cite{BALK,LosevDerived}. 
Finally, we introduce a certain open chart of the partial resolution, which turns out to be isomorphic to the universal centralizer of $G^{\vee}$. 

As above, let $\fg^{\vee}$ be the Lie algebra of $G^{\vee}$, $\ft^{\vee}=\ft^*$ is the Lie algebra of $T^{\vee}$ and $W$ is the Weyl group. We define two versions of the commuting scheme: $\Comm_{\fg^\vee}'$ is a subscheme of $\fg^{\vee}\times \fg^{\vee}$ cut out by the equation $[x,y]=0$, while $\Comm_{G^{\vee}}'=\Comm_{G^{\vee},\fg^{\vee}}'$ is a subscheme of $G^{\vee}\times (\fg^{\vee})^*$ cut out by the equation $\Ad_{g}(x)=x$.

 Define $\Comm_{\fg^{\vee}}:=\Comm_{\fg^{\vee}}'/\!\!/G^{\vee}=\Spec \ \C[\Comm_{\fg^{\vee}}']^{G^{\vee}}$ and 
 $\Comm_{G^{\vee}}=\Comm'_{G^{\vee}}/\!\!/G^{\vee}=\Spec \ \C[\Comm_{G^{\vee}}']^{G^{\vee}}$. It is a long-standing open question if these schemes are reduced.
 We collect some facts about $\Comm_{\fg^{\vee}}$ and $\Comm_{G^{\vee}}$ here.

There are natural restriction maps $\C[\Comm_{\fg^{\vee}}]\to \C[\ft^*\times \ft^*]^{W}$ and $\C[\Comm_{G^{\vee}}]\to \C[T^{\vee}\times \ft]^{W}$,
which induce maps $(\ft^*\times \ft^*)/W\to \Comm_{\fg^{\vee}}$ and $(T^*T^{\vee})/W\to \Comm_{G^{\vee}}$.
The former is surjective by the result of Joseph \cite[Theorem 0.2]{Joseph}   and defines an isomorphism $(\ft^*\times \ft^*)/W\simeq [\Comm_{\fg^{\vee}}]_{\red}$. 

In \cite[Proposition 5.24]{BFNRingobjects}, the following is proved by essentially reducing to Joseph's results in the rational case:
\begin{theorem}
The restriction of functions induces an isomorphism   $(T^*T^{\vee})/W\simeq [\Comm_{G^{\vee}}]_{\red}$.
\end{theorem}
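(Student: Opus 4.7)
The plan is to prove the map $\varphi\colon (T^*\check{T})/W \to [\Comm_{\check{G}}]_{\red}$ induced by restriction is an isomorphism, by establishing injectivity of the corresponding ring map via a closed-orbit argument and surjectivity by a Luna-slice reduction to Joseph's Lie-algebra theorem (already cited just above the statement).

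For injectivity of $\C[\Comm_{\check{G}}]_{\red} \hookrightarrow \C[\check{T}\times \ft]^{W}$, I would use that $\Comm_{\check{G}}$ is the affine GIT quotient of $\Comm'_{\check{G}}$ by the reductive group $\check{G}$, so its closed points biject with closed $\check{G}$-orbits in $\Comm'_{\check{G}}$. A commuting pair $(g,x)$ whose orbit is closed must have both $g$ and $x$ semisimple (apply the Jordan decomposition in each factor together with Hilbert--Mumford to the one-parameter subgroups generated by the unipotent and nilpotent parts, respecting the commutativity condition). Commuting semisimple pairs are simultaneously conjugable into $\check{T}\times \ft$, so every closed $\check{G}$-orbit meets $\check{T}\times \ft$. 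Hence any function in $\C[\Comm_{\check{G}}]$ vanishing on $\check{T}\times \ft$ vanishes on all closed orbits and is therefore nilpotent, giving injectivity.

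For surjectivity I would argue \'etale-locally around each closed orbit. Pick a semisimple pair $(g_0,x_0)\in \check{T}\times \ft$ and let $L=Z_{\check{G}}(g_0)\cap Z_{\check{G}}(x_0)$, a reductive Levi containing $\check{T}$ with Weyl group $W_L\subseteq W$. Luna's \'etale slice theorem (which does not require smoothness of the ambient $\check{G}$-variety, only that the orbit is closed and the stabilizer reductive) identifies an \'etale neighborhood of the image of $(g_0,x_0)$ in $\Comm_{\check{G}}$ with a neighborhood of $(g_0,x_0)$ in $S/\!\!/L$, where $S$ is the slice to the orbit inside $\Comm'_{\check{G}}$. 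A standard infinitesimal computation shows $S$ is canonically identified with a neighborhood of $(g_0,x_0)$ in $\Comm'_{L}\subset L\times \Lie(L)^{*}$. Translating $g_0$ to $1$ by left multiplication in $L$ and then applying the formal exponential of $L$ at the identity converts the multiplicative commuting condition into the additive one, so the formal completion matches that of $\Comm'_{\Lie L}/\!\!/L$ at $(0,x_0)$. Joseph's theorem \cite{Joseph} applied to $L$ identifies this on reduced structures with $(\ft\times \ft)/W_L$, and via $\varphi$ this is precisely the corresponding formal neighborhood in $(\check{T}\times \ft)/W$. Since all identifications are compatible with $\varphi$, I conclude $\varphi$ is \'etale-locally an isomorphism on reduced structures, hence globally an isomorphism.

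The main obstacle is the Luna-slice computation: one must check carefully that the slice to the $\check{G}$-orbit inside the singular subscheme $\Comm'_{\check{G}}$ really is the commuting scheme of $L$ near $(g_0,x_0)$, with compatible scheme structures, and that the formal exponential intertwines the multiplicative commuting condition $\Ad_g x = x$ with the additive one $[y,x]=0$ in a way that matches $\varphi$. Additionally, since Joseph's theorem only pins down \emph{reduced} structures, one has to pass to reduced structures consistently and check that the \'etale-local isomorphisms assemble into a single global one; this bookkeeping is exactly what the cited proof of \cite[Proposition 5.24]{BFNRingobjects} carries out in detail.
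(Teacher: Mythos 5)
Your proposal is correct in outline and follows the same strategy the paper attributes to the cited reference: reduce to Joseph's Lie-algebra Chevalley restriction theorem. The paper gives no proof of its own, only the citation to \cite[Proposition 5.24]{BFNRingobjects} together with the remark that it reduces to Joseph's results in the rational case, so your closed-orbit injectivity argument plus the Luna-slice and formal-exponential reduction to Joseph at each Levi is a plausible reconstruction of exactly what that cited proof carries out.
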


We note that there are alternative proofs of the theorem, such as the one given by Gan-Ginzburg in type A:
\begin{theorem}[\cite{GG}]
For $G=\GL_n$ the scheme $\Comm_{\fg}$ is reduced.
\end{theorem}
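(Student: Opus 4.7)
The goal is to show that restriction gives a ring isomorphism
\[
\rho\colon \C[\Comm'_{\mathfrak{gl}_n}]^{GL_n}\longrightarrow \C[\ft\times\ft]^{W},
\]
after which $\Comm_\fg=\Spec$ of the left-hand side is automatically reduced. Surjectivity is classical invariant theory: by Weyl's polarization theorem $\C[\ft\times\ft]^{W}\cong \C[(\C^2)^n]^{S_n}$ is generated by the bi-power-sums $p_{i,j}=\sum_k x_k^iy_k^j$, and each is the image of the $GL_n$-invariant $\Tr(X^iY^j)\in \C[\mathfrak{gl}_n^2]^{GL_n}$ under the surjection $\C[\mathfrak{gl}_n^2]^{GL_n}\twoheadrightarrow \C[\Comm'_\fg]^{GL_n}$ (which is onto by reductivity of $GL_n$) composed with $\rho$.

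For injectivity, Motzkin--Taussky asserts that $\Comm'_{\mathfrak{gl}_n}$ is irreducible of dimension $n^2+n$, and the regular semisimple commuting pairs---all simultaneously conjugate into $\ft\times\ft$---form a dense open subset. Any $f\in\ker(\rho)$ is a $GL_n$-invariant function vanishing on the $GL_n$-saturation of $\ft\times\ft$, hence vanishing on a dense open subset of $\Comm'_\fg$, hence nilpotent in $\C[\Comm'_\fg]$. The theorem therefore reduces to the \emph{main claim}: the invariant ring $\C[\Comm'_{\mathfrak{gl}_n}]^{GL_n}$ has no nonzero nilpotent elements.

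To settle the main claim I would follow Gan--Ginzburg by enlarging the picture to the \emph{almost-commuting variety}
\[
\cM_n=\{(X,Y,i,j)\in \mathfrak{gl}_n^2\oplus \C^n\oplus (\C^n)^*\mid [X,Y]+ij=0\}.
\]
A direct dimension count shows $\cM_n$ is a complete intersection of the expected dimension $n^2+2n$ (hence Cohen--Macaulay); exhibiting an explicit smooth point---e.g.\ with $X$ regular semisimple and $(Y,i,j)$ generic---yields generic reducedness, whence reducedness by Serre's criterion. The enlargement is crucial: the extra parameters $(i,j)$ break the degeneracy of the commutator equation, make the defining ideal a complete intersection, and identify $\cM_n/\!\!/GL_n$ with an affine bundle over $\Hilb^n(\C^2)$ via the joint-eigenvalue map. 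Haiman's $n!$-theorem then pins down $\C[\cM_n]^{GL_n}$ as an explicit free module of the correct rank over $\C[(\C^2)^n]^{S_n}$; separating out the $(i,j)$-weight-zero component forces $\C[\Comm'_\fg]^{GL_n}$ to coincide with $\C[(\C^2)^n]^{S_n}$, ruling out any invariant nilpotents. The main obstacle is the invocation of Haiman's $n!$-theorem, which we take as a black box; the enlargement to $\cM_n$ is essential because $\Comm'_\fg$ itself is not a complete intersection, and whether $\Comm'_\fg$ is reduced as a scheme remains a long-standing open question.
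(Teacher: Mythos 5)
The paper does not reprove this result; it simply cites Gan--Ginzburg, so there is no internal proof to compare against. Your opening reduction is clean and correct: Weyl polarization shows the restriction $\rho\colon\C[\Comm'_\fg]^{GL_n}\to\C[\ft\times\ft]^{W}$ is surjective, and Motzkin--Taussky irreducibility (with the open set of pairs having $X$ regular semisimple, all conjugate into $\ft\times\ft$, being dense) shows $\ker\rho$ lies in the nilradical; so reducedness is equivalent to injectivity of $\rho$. This is a good framing of what Gan--Ginzburg actually have to prove.

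The sketch of the Gan--Ginzburg step, however, has two real gaps. First, to deduce reducedness of $\cM_n$ from Cohen--Macaulayness via Serre's criterion you need generic reducedness on \emph{every} irreducible component, and $\cM_n$ is not irreducible: Gan--Ginzburg show it has $n+1$ irreducible components of equal dimension, and exhibiting a smooth point on the obvious component (with $X$ regular semisimple) says nothing about the others; the component-by-component analysis is the technical core of their Theorem 1.1. Second, the passage from $\C[\cM_n]^{GL_n}$ to $\C[\Comm'_\fg]^{GL_n}$ by ``separating out the $(i,j)$-weight-zero component'' does not work as stated. The scaling action $(i,j)\mapsto(ti,t^{-1}j)$ is the central $\C^*\subset GL_n$, so every $GL_n$-invariant is already weight zero; and the relation $[X,Y]+ij=0$ is not homogeneous in the $(i,j)$-degree (it mixes degrees $0$ and $2$), so $\C[\Comm'_\fg]$ is not a graded summand of $\C[\cM_n]$ but the quotient by the ideal $(i,j)$. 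A quotient of a reduced ring can acquire nilpotents -- this is exactly the nontrivial content, and Gan--Ginzburg handle it by a more delicate normality/flatness argument for the categorical quotient rather than any degree truncation. Finally, invoking Haiman's $n!$-theorem is likely not the needed ingredient for reducedness alone; in Gan--Ginzburg it powers the character formula and Cherednik-algebra applications, while the reducedness and normality of $\C[\Comm'_\fg]^{GL_n}$ rest on the complete-intersection analysis of $\cM_n$ and flatness of the moment map.
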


From this it is easy to deduce
\begin{corollary}
For $G=\GL_n$ the scheme $\Comm_{G}$ is reduced.
\end{corollary}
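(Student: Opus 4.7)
The plan is to deduce this directly from the preceding theorem of Gan–Ginzburg by identifying $\Comm_{GL_n}$ with a principal open subscheme of $\Comm_{\mathfrak{gl}_n}$ and then invoking the fact that localizations of reduced rings are reduced. The key observation is that $GL_n$ sits inside $\mathfrak{gl}_n$ as the principal open locus $\{\det \ne 0\}$, and $\det$ is invariant under conjugation.

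More precisely, the first step is to check that at the level of pre-quotient schemes the defining conditions match: for $g \in GL_n$ and $x \in \mathfrak{gl}_n$, the equation $\Ad_g(x) = x$ is equivalent to $gx = xg$, which is precisely the restriction to $GL_n \times \mathfrak{gl}_n$ of the commutator equation $[a,b] = 0$ cutting out $\Comm'_{\mathfrak{gl}_n}$. Hence
$$\Comm'_{GL_n} = \Comm'_{\mathfrak{gl}_n} \cap \bigl(GL_n \times \mathfrak{gl}_n\bigr),$$
and on coordinate rings $\C[\Comm'_{GL_n}] = \C[\Comm'_{\mathfrak{gl}_n}][\det(a)^{-1}]$, where $a$ denotes the first coordinate. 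The second step is to descend this identification to the GIT quotients: since $\det(a)$ is $GL_n$-invariant and $GL_n$ is reductive, taking invariants commutes with localization at $\det(a)$, giving
$$\C[\Comm_{GL_n}] = \C[\Comm'_{GL_n}]^{GL_n} = \bigl(\C[\Comm'_{\mathfrak{gl}_n}]^{GL_n}\bigr)\bigl[\det(a)^{-1}\bigr] = \C[\Comm_{\mathfrak{gl}_n}]\bigl[\det(a)^{-1}\bigr].$$
Thus $\Comm_{GL_n}$ is the principal open subscheme of $\Comm_{\mathfrak{gl}_n}$ where $\det \ne 0$, and reducedness is inherited from the Gan–Ginzburg theorem.

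I do not anticipate a real obstacle here; the argument is bookkeeping verifying that the open embedding $GL_n \subset \mathfrak{gl}_n$ is compatible with both the commuting-scheme construction and the reductive GIT quotient. The only mildly delicate point is invoking reductivity of $GL_n$ to commute localization past the invariants functor.
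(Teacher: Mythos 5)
Your proof is correct and takes essentially the same approach as the paper, which also deduces the result from the open immersion $GL_n \subset \mathfrak{gl}_n$ and the Gan--Ginzburg theorem. You are simply more explicit than the paper in unpacking why the embedding preserves reducedness (namely, that it is a principal open immersion, and that localization at the invariant function $\det$ commutes with taking $GL_n$-invariants), which is a worthwhile clarification since a mere closed subscheme of a reduced scheme need not be reduced.
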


\begin{proof}
For $G=\GL_n$ we have a natural embedding $G\subset \fg$ which induced embedding $\Comm'\subset \Comm_{\fg}'$ and
$\Comm_G\subset \Comm_{\fg}$. Since  $\Comm_{\fg}$  is reduced, $\Comm_G$ is reduced too.
\end{proof}

Recently, Chen-Ng\^o \cite{ChenNgo} proved that $\Comm_{\fg}$ is reduced for $\fg=\mathfrak{sp}_{2n}$,
and
for  results on \(\Comm_{\fg}\) for a general reductive Lie algebra \(\fg\) see \cite{LiNadlerYun}.

\subsection{Partial resolutions}

In this subsection we define several graded commutative algebras closely related to the commuting variety. By applying $\Proj$ construction to these graded algebras, we recover partial resolutions of $\Comm_{G^{\vee}}$. We summarize various maps between the algebras  in the following commutative diagram:

\begin{equation}
\label{diagram of algebras}
\begin{tikzcd}
   & \bigoplus J^d \arrow{r}  & \bigoplus I^{(d)}\\
\bigoplus {}_{0}\cA^{\hbar=0}_{d}  \arrow[rr,bend right,dotted] & \bigoplus A^d \arrow{u}\arrow{l}{a_d} \arrow{r}{b_d}& \bigoplus \eee_{d}  I^{(d)} \arrow{u}.
\end{tikzcd}
\end{equation}

All direct sums are over $d\ge 0$.
Next, we define all of the entries in this diagram, starting with the middle column.

 We denote by $\epsilon$ the one-dimensional sign representation of $W$.
We define by $A_{\ft}$, respectively $A$, as the subspace of $W$-antiinvariant functions (that is, $\epsilon$-isotypic component) in $\C[\ft^*\times \ft]$, respectively $\C[T^\vee \times \ft]$. Also, let $J_{\ft}$, respectively $J$ be the ideal in $\C[\ft^*\times \ft]$ (resp. $\C[T^\vee\times \ft]$) generated by $A_{\ft}$ (resp. A). Now $A^d$ and $J^d$ are the powers of $A$ and $J$ inside the respective polynomial rings, with the assumption that $A^0=\C[T^\vee\times \ft]^{W}$ and  $J^0= \C[T^\vee\times \ft]$.

In the right column we have a family of ideals $$I^{(d)}:=\bigcap_{\alpha\in\Phi^+}\langle 1-\alpha^\vee, y_\alpha\rangle^{d}\subset\C[T^\vee \times \ft]$$
Here $\alpha$ runs over the set of positive roots $\Phi^+$, $y_{\alpha}$ is the equation of the  root hyperplane in $\ft$ corresponding to $\alpha$, and $\alpha^{\vee}$ is the corresponding coroot understood as a character $\alpha^{\vee}:T^{\vee}\to \C^{\times}$.
For $d>1$, the ideals $I^{(d)}$ are the symbolic powers of $I^{(1)}$.  
In particular, we can consider a codimension 1 subtorus $\Ker(\alpha^{\vee})\subset T^{\vee}$ and a codimension 1 hyperplane $\{y_{\alpha}=0\}\subset \ft$, then their product $\Ker(\alpha^{\vee})\times \{y_{\alpha}=0\}$ is a codimension 2 subvariety in $T^{\vee}\times \ft$. For $d=1$ the ideal $I^{(1)}$ corresponds to the union of all such subvarieties over positive roots $\alpha$:
$$
I^{(1)}=I\left(\bigcup_{\alpha\in \Phi^+}\Ker(\alpha^{\vee})\times \{y_{\alpha}=0\}\right).
$$
\begin{example}
For $G=\GL_n$ we have $I^{(d)}=\bigcap_{i\neq j}\langle 1-\frac{x_i}{x_j},y_i-y_j\rangle^d\subset \C[x_1^{\pm},\ldots,x_n^{\pm},y_1,\ldots,y_n]$.
\end{example}

It is easy to see that  $I^{(d_1)}\cdot I^{(d_2)}\subset I^{(d_1+d_2)}$, so   we have a graded algebra structure on the direct sum of all $I^{(d)}$.
Furthermore, let 
$$
\eee_{d}=\frac{1}{|W|}\sum_{\sigma\in W}(-1)^{d\cdot \epsilon(\sigma)}\sigma
$$ 
denote the projector to the representation $\epsilon^d$ in $\C[W]$, that is, symmetrizer $\eee$ for $d$ even and antisymmetrizer $\eee_{-}$ for $d$ odd.

\begin{lemma}
\label{lem: symmetrizer power J}
a) We have $\eee_{d}J^d=A^d$ for all $d\ge 0$.

b) There are natural inclusions $b_d:J^d\to I^{(d)}, A^d\to \eee_dI^{(d)}.$
\end{lemma}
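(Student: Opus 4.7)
The plan is to verify each statement separately. For part (a), I would first establish the easy containment $A^d \subseteq \eee_d J^d$: since $A \subseteq J$ we have $A^d \subseteq J^d$, and products of $d$ elements of $A$ transform by $\epsilon^{\otimes d}$, so they are fixed by the projector $\eee_d = |W|^{-1}\sum_w \epsilon(w)^d w$. For the reverse inclusion, it suffices to apply $\eee_d$ to a typical generator of $J^d$, which has the form $f_1 \cdots f_d \cdot g$ with $f_i \in A$ and $g \in \C[T^\vee \times \ft^*]$. The one-line computation, using that $w(f_i) = \epsilon(w) f_i$, gives
\[
\eee_d(f_1 \cdots f_d \cdot g) \;=\; f_1 \cdots f_d \cdot \eee(g),
\]
where $\eee$ is the symmetrizer. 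Since $\eee(g)$ is $W$-invariant, the element $f_d \cdot \eee(g)$ lies in $A$, so the whole expression lies in $A^{d-1} \cdot A = A^d$.

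For part (b), my strategy is to prove $J \subseteq \langle 1 - \alpha^\vee, y_\alpha\rangle$ for each $\alpha \in \Phi^+$, from which $J^d \subseteq \langle 1 - \alpha^\vee, y_\alpha\rangle^d$ follows from the definition of ideal powers, and intersecting over $\alpha$ yields the inclusion $b_d \colon J^d \to I^{(d)}$. Since $J$ is generated by $A$, it is enough to show $A \subseteq \langle 1 - \alpha^\vee, y_\alpha\rangle$. For $f \in A$, the identity $s_\alpha f = -f$ means $f$ vanishes on the fixed locus $\{\alpha^\vee = 1\} \times \{y_\alpha = 0\}$ of $s_\alpha$ on $T^\vee \times \ft^*$. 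I would then observe that $1 - \alpha^\vee, y_\alpha$ is a regular sequence cutting out a smooth codimension-two subvariety (the differential of $1 - \alpha^\vee$ is nowhere zero on $\{\alpha^\vee = 1\}$ because $\alpha^\vee$ is a nontrivial character, and $y_\alpha$ is a nonzero linear form), so the ideal $\langle 1 - \alpha^\vee, y_\alpha\rangle$ is radical. The Nullstellensatz then forces $f \in \langle 1 - \alpha^\vee, y_\alpha\rangle$, as desired. The second inclusion $A^d \hookrightarrow \eee_d I^{(d)}$ is automatic: $A^d \subseteq J^d \subseteq I^{(d)}$ and $A^d$ is already fixed by $\eee_d$.

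The only point that is not pure bookkeeping with the $W$-action is the reducedness of $\langle 1 - \alpha^\vee, y_\alpha\rangle$, which I expect to be the only step worth writing down carefully. Everything else will follow from linearity of the projectors and the elementary behaviour of ideal powers under multiplication by arbitrary elements of the ambient ring.
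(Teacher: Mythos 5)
Your proof is correct and follows essentially the same route as the paper: for (a) the key point is the identity $\eee_d(f_1\cdots f_d\, g)=f_1\cdots f_d\,\eee(g)$, and for (b) the containment $A\subseteq\langle 1-\alpha^\vee,y_\alpha\rangle$ followed by raising to the $d$-th power and intersecting over $\alpha$. The one place you go beyond the paper is in justifying that $\langle 1-\alpha^\vee,y_\alpha\rangle$ is radical (via the regular-sequence/smoothness argument) before invoking the Nullstellensatz — the paper simply asserts that antiinvariants lie in $I^{(1)}$, so your version closes that small gap.
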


\begin{proof}
a) For $d=0$ this is clear from the definition, so we focus on $d>0$. Since $J$ is the ideal generated by $A$, it is spanned by elements of the form $h\cdot f$ for $h\in A$ and $f\in \C[T^\vee \times \ft]$, and $J^d$ is spanned by elements of the form $h_1\cdots h_d\cdot f$ for $h_1,\ldots,h_d\in A$ and $f\in \C[T^\vee \times \ft]$. Since $h_1,\ldots,h_d$ are antisymmetric, we get

$$
\eee_{d}h_1\cdots h_d\cdot f=\frac{1}{|W|}\sum_{\sigma\in W}(-1)^{d\cdot \epsilon(\sigma)}\sigma(h_1\cdots h_d f)=
\frac{1}{|W|}\sum_{\sigma\in W}h_1\cdots h_d\sigma(f)=
h_1\cdots h_d \eee(f) \in A^d.
$$
The last inclusion follows from the fact that $\eee(f)$ is a symmetric polynomial, so $h_d\eee(f)$ is an antisymmetric polynomial. This shows $\eee_d J^d\subset A^d$. On the other hand, by substituting $f=1$ in the above equation we get  $\eee_{d}h_1\cdots h_d=h_1\cdots h_d$, so $h_1\cdots h_d\in \eee_d J^d$ and $A^d\subset \eee_d J^d$.

b) Suppose that $g\in \C[T^{\vee}\times \ft]$ is an antisymmetric polynomial, $\alpha$ is a positive root and $s_{\alpha}\in W$ is the corresponding reflection. Then for all $(x,y)\in \Ker(\alpha^{\vee})\times \{y_{\alpha}=0\}$ we have 
$s_{\alpha}(x,y)=(s_{\alpha}(x),s_{\alpha}(y))=(x,y)$. Now
$$
s_{\alpha}g(x,y)=g(s_{\alpha}(x),s_{\alpha}(y))=g(x,y),
$$
but since $g$ is antisymmetric we get $s_{\alpha}g(x,y)=-g(x,y)$, hence $g(x,y)=0$. We conclude that $g$ vanishes on $\Ker(\alpha^{\vee})\times \{y_{\alpha}=0\}$ and therefore $g\in \langle 1-\alpha^\vee, y_\alpha\rangle.$ Since this holds for all $\alpha$, we get $g\in \bigcap_{\alpha\in \Phi^+}\langle 1-\alpha^\vee, y_\alpha\rangle=I^{(1)}$.

We get $A\subset I^{(1)}$ and hence $J\subset I^{(1)}$. Therefore $J^d\subset (I^{(1)})^d\subset I^{(d)}$ and the result follows. 
\end{proof}

Finally, in the left column we have {\em commutative Coulomb branch $\Z$-algebra} $\bigoplus {}_{0}\cA^{\hbar=0}_{d}$, to be defined later in Section \ref{sec: adjoint coulomb}. It is a generalization of the commutative Coulomb branch appearing in the work of Braverman, Finkelberg and Nakajima \cite{BFN,BFN2}. It is defined as the convolution algebra in the equivariant Borel-Moore homology of a certain space related to the affine Grassmannnian of $G$, and we postpone its definition to Section \ref{sec: adjoint coulomb}. Here we summarize some of its basic properties. 

\begin{theorem}
\label{thm: properties of coulomb algebra}
The algebras  ${}_{0}\cA^{\hbar=0}_{d}$ have the following properties:

a) For $d=0$, we have ${}_{0}\cA^{\hbar=0}_{0}=\C[T^\vee \times \ft]^W$.

b) For $d=1$, we have ${}_{0}\cA^{\hbar=0}_{1}=A$.

c) For all $d$ the module  ${}_{0}\cA^{\hbar=0}_{d}$ is a free $\C[\ft]^W$-submodule of $\eee_d\C[T^\vee \times \ft]$.

d) For $G=\GL_n$, we have ${}_{0}\cA^{\hbar=0}_{d}=({}_{0}\cA^{\hbar=0}_{1})^d=A^d$ for all $d$.
\end{theorem}

We prove Theorem \ref{thm: properties of coulomb algebra} in Section \ref{sec: proof properties coulomb algebra}.
Note that part (b) of the theorem yields the map $A\to {}_{0}\cA^{\hbar=0}_{1}$ and hence a family of maps $A^d\to {}_{0}\cA^{\hbar=0}_{d}$. These are denoted by $a_d$ in the commutative diagram \eqref{diagram of algebras}.

\begin{corollary}
\label{cor: rank 1}
If $G$ has semisimple rank 1 then $A^d\simeq {}_{0}\cA^{\hbar=0}_{d}\simeq \eee_dI^{(d)}$ for all $d$.
\end{corollary}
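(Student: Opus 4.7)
The plan is to derive the corollary directly from Theorem \ref{thm: properties of coulomb algebra} combined with the later Theorem \ref{thm: coulomb is symbolic}. Since every reductive group of rank one has root system of type $A_1$, it is in particular of type $A$, so Theorem \ref{thm: properties of coulomb algebra}(d) applies and yields $A^d = ({}_0\cA^{\hbar=0}_1)^d = {}_0\cA^{\hbar=0}_d$ for every $d\geq 0$. The second isomorphism ${}_0\cA^{\hbar=0}_d \simeq \eee_d I^{(d)}$ is Theorem \ref{thm: coulomb is symbolic} specialized to $\hbar=0$. Composing the two identifications yields the chain $A^d\simeq {}_0\cA^{\hbar=0}_d \simeq \eee_d I^{(d)}$ asserted in the corollary.

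The rank-one setting enjoys two simplifications that make the argument essentially transparent. First, $\Phi^+=\{\alpha\}$ is a singleton, so the intersection in the definition of $I^{(d)}$ collapses and $I^{(d)} = \langle 1-\alpha^\vee, y_\alpha\rangle^d$ is an ordinary power of the maximal ideal of the $W$-fixed point $V(I)\subset \Spec \C[T^*T^\vee]$. Second, the affine Weyl group is dihedral of type $\tilde{A}_1$, so the Bruhat filtration on ${}_0\cA^{\hbar=0}_d$ admits a particularly simple description via the basis of Proposition \ref{prop:noncomm localization for any group}. These simplifications could in principle be used to give a self-contained direct verification of the corollary, working purely from the explicit generation of $A$ as an $R$-module by the two antisymmetric generators corresponding to $y_\alpha$ and $\eee_-(1-\alpha^\vee)$, and bypassing the full strength of Theorem \ref{thm: coulomb is symbolic}.

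The main technical content of the corollary lies entirely in its two inputs: the type-$A$ generation statement of Theorem \ref{thm: properties of coulomb algebra}(d), which rests on the combinatorial Bruhat analysis of Theorem \ref{thm:gr of bruhat}, and the symbolic-power identification of Theorem \ref{thm: coulomb is symbolic}. Once both are available, the rank-one corollary follows by inspection, with the only subtlety being to check that the natural map $a_d:A^d\to {}_0\cA^{\hbar=0}_d$ and the BFN embedding ${}_0\cA^{\hbar=0}_d\hookrightarrow \eee_d\C[T^*T^\vee]$ assemble into a commuting diagram whose composition lands in $\eee_d I^{(d)}$; this compatibility is already built into the statement of Theorem \ref{thm: coulomb is symbolic}, so no additional argument is needed.
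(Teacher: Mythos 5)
There is a genuine circularity in your main argument. You propose to obtain the isomorphism ${}_0\cA^{\hbar=0}_d \simeq \eee_d I^{(d)}$ by specializing Theorem \ref{thm: coulomb is symbolic}, which you yourself acknowledge is a \emph{later} theorem. But trace the paper's logical dependencies: the proof of Lemma \ref{lem: dotted arrow} reduces to rank one and then explicitly cites Corollary \ref{cor: rank 1}, and the proof of Theorem \ref{thm: coulomb is symbolic} again specializes to a generic point on a root hyperplane, passes to a rank-one subgroup, and cites Corollary \ref{cor: rank 1} as the base case. Corollary \ref{cor: rank 1} exists precisely to serve as that base case; it cannot in turn rest on the theorem it is used to prove.

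The first half of your argument is correct and matches the paper: rank one means type $A_1$, so Theorem \ref{thm: properties of coulomb algebra}(d) applies and gives $A^d \simeq {}_0\cA^{\hbar=0}_d$. For the second half the paper does what you sketched as a dispensable alternative: since $\Phi^+$ is a singleton, the intersection defining $I^{(d)}$ collapses to $\langle 1-\alpha^\vee, y_\alpha\rangle^d$, and one verifies directly that $I^{(d)} = A^d$. Since $A^d$ already lies in the $\eee_d$-isotypic component, this gives $\eee_d I^{(d)} = A^d$, and composing with the first isomorphism finishes the proof. That direct check is not a way of ``bypassing the full strength'' of Theorem \ref{thm: coulomb is symbolic} as a convenience — it is the only legitimate route at this stage of the paper, and you should have carried it out rather than leaning on a forward reference.
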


\begin{proof}
By Theorem \ref{thm: properties of coulomb algebra}(d) we get $A^d\simeq {}_{0}\cA^{\hbar=0}_{d}$. On the other hand,
in semisimple rank 1 we have only one codimension 2 hyperplane and it is easy to see that $I^{(d)}=J^d$, so by Lemma \ref{lem: symmetrizer power J} we get
$$
\eee_dI^{(d)}=\eee_dJ^d=A^d.
$$
\end{proof}

The following is one of the main theorems of this section, identifying the geometrically constructed graded algebra with the symmetrization of the symbolic Rees algebra.
\begin{theorem}
\label{thm: coulomb is symbolic}
We have an isomorphism of graded algebras $\bigoplus_d {}_{0}\cA^{\hbar=0}_{d}\simeq \bigoplus_d \eee_{d}I^{(d)}$ 
corresponding to the dotted arrow in the diagram \eqref{diagram of algebras}.
\end{theorem}

\begin{proof}

First, let us prove that there is a natural inclusion ${}_{0}\cA^{\hbar=0}_{d}\to \eee_{d}I^{(d)}$ for all $d$. Indeed, by Theorem \ref{thm: properties of coulomb algebra}(c) we know that ${}_{0}\cA^{\hbar=0}_{d}$ is contained in the $\eee_d$-isotypic component of $\C[T^\vee \times \ft]$,  so it is sufficient to check that   ${}_{0}\cA^{\hbar=0}_{d}\subset I^{(d)}$ or, equivalently, ${}_{0}\cA^{\hbar=0}_{d}\subset \langle 1-\alpha^\vee, y_\alpha\rangle^{d}$ for all $\alpha$.

On the other hand, by Lemma \ref{lem: localization factors through centralizer} the inclusion ${}_{0}\cA^{\hbar=0}_{d}(G)\hookrightarrow \C[T^\vee \times \ft]$ factors through ${}_{0}\cA^{\hbar=0}_{d}(Z_G(t))$ where $Z_G(t)$ is the centralizer of some element $t\in T$. 
We can choose $t$ such that $Z_G(t)$ is a rank one subgroup of $G$ corresponding to $\alpha$. By Corollary \ref{cor: rank 1} we get 
$$
{}_{0}\cA^{\hbar=0}_{d}(G)\subset {}_{0}\cA^{\hbar=0}_{d}(Z_G(t))\simeq \langle 1-\alpha^\vee, y_\alpha\rangle^{d}.
$$
Now we prove that the inclusion is an isomorphism.

Since ${}_{0}\cA^{\hbar=0}_{d}$ is free over $\C[\ft]^W$ by Theorem \ref{thm: properties of coulomb algebra}(c) and  $\eee_{d}I^{(d)}$ is torsion free, it is sufficient to prove that the inclusion is an isomorphism outside of codimension 2 subset. 

Both  $\C[\ft]^W$-modules  ${}_{0}\cA^{\hbar=0}_{d}$ and $\eee_{d}I^{(d)}$ are supported on the union of the root hyperplanes in $\ft/W$. If we specialize to a generic point in one of the hyperplanes, we can replace $G$ by its rank 1 subgroup by Lemma \ref{lem: localization factors through centralizer} as above,
and the isomorphism follows from  Corollary \ref{cor: rank 1}. Therefore the two modules are isomorphic outside of the union of pairwise intersections of hyperplanes, which has codimension 2. 

\end{proof}

We can use the above graded algebras to construct quasi-projective varieties
$$
\tfC_{G^{\vee}}:=\Proj \bigoplus_d {}_{0}\cA^{\hbar=0}_{d}\simeq \Proj \bigoplus_d \eee_d I^{(d)},\ Y_{G}=\Proj \bigoplus_d I^{(d)}.
$$
By the work of Haiman \cite{Haiman} for $G=\GL_n$ we have $\tfC_{G^{\vee}}=\Hilb^n(\C^{\times}\times \C)$ and $Y_G$ is isomorphic to the isospectral Hilbert scheme of $\C^{\times}\times \C$:
$$
\begin{tikzcd}
Y_{\GL_n} \arrow[r]\arrow[d] & (\C^{\times}\times \C)^n \arrow[d]\\
\Hilb^n(\C^{\times}\times \C)\arrow[r] & S^n(\C^{\times}\times \C).\\
\end{tikzcd}
$$
We claim that for a general $G$ the variety $\tfC_{G^{\vee}}$

can be considered as the partial resolutions of the commuting variety which we identify with $T^*T^\vee/W$.
By Remark \ref{rem: finite generation} the algebra $\bigoplus_d {}_{0}\cA^{\hbar=0}_{d}$ is finitely generated, hence the natural projection $\tfC_{G^{\vee}}\to \Spec {}_{0}\cA^{\hbar=0}_{0}=T^*T^\vee/W$ is projective.

\begin{remark}
In \cite{Gin}, Ginzburg defines and studies the {\em isospectral commuting variety} for general $G$ as a certain reduced fiber product. On the other hand, the variety $Y_G=\Proj \bigoplus_{d\geq 0} I_G^{(d)}$ is another candidate for the isospectral commuting variety. It is natural to wonder how the two constructions are related.
\end{remark}

\begin{remark}
In \cite{GiKa}, Ginzburg-Kaledin prove that there are no crepant resolutions of $T^*\ft/W$ for $W$ outside types $A,B,C$. Their definition of symplectic resolution includes the crepant condition, so their statement is non-existence of symplectic resolutions. 
This non-existence of a symplectic resolution is thus likely the case for $T^*T/W$ as well. Therefore, we cannot expect $\tfC_{G^{\vee}}$ to be smooth  outside types $A,B,C$.

We note however that from the results of \cite[Proposition 2.8]{BFM}, it follows that $T^*\ft/W$ and $T^*T/W$ admit birational maps from the universal centralizer 
schemes appearing in Theorem \ref{thm:universalcentralizer as coulomb}, which are smooth for simply connected groups in any type.  More geometrically, we can think of universal centralizer schemes as smooth  open subsets in $\tfC_{G^{\vee}}$.

\end{remark}

\begin{proposition}
\label{prop: Y normal}
$Y_G$ is normal.
\end{proposition}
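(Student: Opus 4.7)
The plan is to apply Serre's criterion: a Noetherian integral scheme is normal iff it satisfies (R1) (regular in codimension one) and (S2). I would verify these two conditions for $Y_G$ separately, working through the projective birational morphism $\pi: Y_G \to T^*T^\vee$ induced by $R_0 = \C[T^\vee \times \ft]$.

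For (R1): The morphism $\pi$ is an isomorphism over the complement of $V(I) = \bigcup_{\alpha \in \Phi^+} V(P_\alpha)$, where $P_\alpha = \langle 1-\alpha^\vee, y_\alpha\rangle$. Each $V(P_\alpha)$ is smooth of codimension two in $T^*T^\vee = T^\vee \times \ft$ since $\{1-\alpha^\vee, y_\alpha\}$ is a regular sequence, and for distinct positive roots $\alpha,\beta$ the intersection $V(P_\alpha) \cap V(P_\beta)$ has codimension four (the first factor $\{\alpha^\vee = \beta^\vee = 1\}$ and the second factor $\{y_\alpha = y_\beta = 0\}$ each contribute codimension two). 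Localizing at the generic point of a component $V(P_\alpha)$, all other $P_\beta$ become units, so $I^{(d)}$ reduces to $P_\alpha^d$ in that localization. Hence $Y_G$ agrees locally with $\mathrm{Bl}_{V(P_\alpha)}(T^*T^\vee)$, the blowup of a smooth codimension-two subvariety in a smooth ambient, which is smooth. This handles every codimension-one point of $Y_G$.

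For (S2): Because each $P_\alpha$ is generated by a regular sequence of length two, its ordinary and symbolic powers coincide, $P_\alpha^{(d)} = P_\alpha^d$, so the Rees algebra $\bigoplus_d P_\alpha^d$ is Cohen--Macaulay. The symbolic Rees algebra $R = \bigoplus_d I^{(d)} = \bigoplus_d \bigcap_\alpha P_\alpha^d$ is then the intersection of these finitely many Cohen--Macaulay algebras inside a common polynomial over-ring. A Mayer--Vietoris / depth computation along the family $\{P_\alpha\}_{\alpha \in \Phi^+}$ gives $R$ depth at least two at every relevant prime, which transfers to (S2) for $Y_G = \Proj R$. Combining with (R1), Serre's criterion yields normality.

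The main obstacle I anticipate is the (S2) step, specifically the depth bookkeeping at closed points of $Y_G$ lying over deep strata of $T^*T^\vee$ where several root hyperplanes meet non-transversally (for instance along an $A_2$-type triple intersection, where three positive roots span only a two-dimensional subspace, so three codimension-two subvarieties meet in codimension four rather than six). A cleaner route may be to reduce, via an \'etale slice, to rank-two sub-root-systems of the centralizer of a point in $T^\vee$, where the depth estimate can be verified by direct computation; the general case then follows because the full non-transverse locus has codimension at least four in $T^*T^\vee$, hence at least two in $Y_G$, so Serre's criterion applies.
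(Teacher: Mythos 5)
Your Serre-criterion approach is a genuinely different route from the paper's. The paper proves normality directly at the level of the homogeneous coordinate ring: since $P_\alpha = \langle 1-\alpha^\vee, y_\alpha\rangle$ is a prime ideal cut out by a regular sequence of coordinate functions (so $V(P_\alpha)$ is smooth of codimension two), each power $P_\alpha^d$ is integrally closed; intersection of integrally closed ideals is integrally closed, so every $I^{(d)} = \bigcap_\alpha P_\alpha^d$ is integrally closed; hence $\bigoplus_d I^{(d)}$ is integrally closed inside the normal ring $\C[T^\vee\times\ft][t]$, is therefore a normal domain, and $\Proj$ of a normal graded domain is normal. No depth estimates enter.

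The gap in your proposal is exactly where you suspect it, in the (S2) step. The sentence ``a Mayer--Vietoris / depth computation along the family $\{P_\alpha\}$ gives $R$ depth at least two at every relevant prime'' is asserted rather than argued: there is no general principle that an intersection of Cohen--Macaulay subalgebras of a polynomial ring retains depth $\ge 2$, so the step from ``each $\bigoplus_d P_\alpha^d$ is Cohen--Macaulay'' to a depth bound on $\bigoplus_d \bigcap_\alpha P_\alpha^d$ requires a concrete computation that is not supplied. The \'etale-slice reduction you propose is also left as a sketch, and the concluding codimension estimate for the non-transverse locus does not by itself deliver the depth bound. You should either fill this in with an honest depth lemma (verified in particular at the $A_2$-type triple intersections you flag), or, more efficiently, replace Serre's criterion with the integral-closure argument above, which is shorter and sidesteps the depth bookkeeping entirely.
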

\begin{proof}
We will prove the homogeneous coordinate ring $\bigoplus_{d=0}^{\infty} I^{(d)}$ is integrally closed. This is essentially a restatement of \cite[Theorem 4.27]{Kivinen}. Indeed,  for a given $\alpha$ the sequence $1-\alpha^\vee, y_\alpha$ is regular. 
Therefore the graded algebra $\bigoplus_{d=0}^{\infty}\langle 1-\alpha^\vee, y_\alpha\rangle ^d$ is integrally closed, and intersection preserves integral closedness. We can write
$$
\bigoplus_{d=0}^{\infty} I^{(d)} =\bigoplus_{d=0}^{\infty} \bigcap_{\alpha\in \Phi^+}\langle 1-\alpha^\vee, y_\alpha\rangle ^d=
 \bigcap_{\alpha\in \Phi^+}\bigoplus_{d=0}^{\infty}\langle 1-\alpha^\vee, y_\alpha\rangle ^d,
$$
so the symbolic blow-up $\Proj \bigoplus_{d=0}^{\infty} I^{(d)}$ considered here is integrally closed. 
\end{proof}
\begin{corollary}
\label{cor:normality}
$\tfC_{G^{\vee}}$ is normal.
\end{corollary}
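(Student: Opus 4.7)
The plan is to realize $\tfC_{\check{G}}$ as a quotient $Y_G/W$ for a natural action of the Weyl group on $Y_G$, and then invoke the standard fact that finite group quotients of normal varieties are normal. By Theorem~\ref{thm: coulomb is symbolic}, the homogeneous coordinate ring of $\tfC_{\check{G}}$ may be identified with $R_1 := \bigoplus_{d\ge 0}\eee_d I^{(d)}$, which sits inside $R_2 := \bigoplus_{d\ge 0} I^{(d)}$ --- the homogeneous coordinate ring of $Y_G$ --- via the inclusion $b$ from the diagram~\eqref{diagram of algebras}.

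To present $R_1$ as a ring of invariants, I would equip $R_2$ with a twisted $W$-action $\rho$. The natural Weyl group action on $\C[T^\vee\times \ft]$ permutes the codimension-two subvarieties cut out by the ideals $\langle 1-\alpha^\vee, y_\alpha\rangle$ (the generators for $\alpha$ and $-\alpha$ differ only by a unit in $\C[T^\vee\times\ft]$, so there is no ambiguity), and hence each $I^{(d)}$ is $W$-stable. I then define
\[
\rho(w)(x) := \epsilon(w)^d\, w\cdot x, \qquad x\in I^{(d)}.
\]
This is a genuine action by graded ring automorphisms: the identity $\epsilon(w)^{d_1}\epsilon(w)^{d_2}=\epsilon(w)^{d_1+d_2}$ is precisely what is needed for compatibility with the multiplication $I^{(d_1)}\cdot I^{(d_2)}\subseteq I^{(d_1+d_2)}$, and $\rho(w_1w_2)=\rho(w_1)\rho(w_2)$ for the same reason. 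An element $x\in I^{(d)}$ satisfies $\rho(w)(x)=x$ for all $w\in W$ precisely when $w\cdot x=\epsilon(w)^d x$, i.e.\ when $x$ lies in the $\epsilon^d$-isotypic component of $I^{(d)}$, which is the image of the idempotent $\eee_d$. Hence $R_2^{\rho(W)}=R_1$.

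The preceding proposition establishes that $R_2$ is an integrally closed noetherian integral domain. Since $W$ is finite and acts on $R_2$ by graded ring automorphisms via $\rho$, the standard fact that the invariants of a normal noetherian domain under a finite group action by ring automorphisms are again normal yields that $R_1$ is a normal graded ring. Passing to $\Proj$, this gives normality of $\tfC_{\check G} = \Proj R_1 \cong Y_G/W$. The main thing to be careful about is the compatibility of the twisted $\rho$-action with the ring structure and with the identification in Theorem~\ref{thm: coulomb is symbolic}; this is bookkeeping rather than a substantive obstacle. A purely geometric rephrasing --- lifting $\rho$ to a $W$-action on the scheme $Y_G$ and quoting normality of quotients of normal schemes by finite groups --- would work equally well.
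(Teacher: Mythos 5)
Your proof is correct and takes essentially the same approach as the paper: the paper simply quotes the fact that a finite group quotient of a normal variety is normal (citing Shafarevich), leaving implicit the $W$-action realizing $\tfC_{\check G}$ as a quotient of $Y_G$. You have usefully made explicit the sign-twisted action $\rho(w)|_{I^{(d)}}=\epsilon(w)^d\,w$ whose invariant ring is $\bigoplus_d \eee_d I^{(d)}$, which is exactly the bookkeeping the paper elides.
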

\begin{proof}
Recall that we have a natural action of $W$ on  $\bigoplus_{d=0}^{\infty} I^{(d)}$ by algebra automorphisms, where the action on $I^{(d)}$ is twisted by $\epsilon^d$. This defines an action of $W$ on the variety $Y_G$, and we get
$$
\tfC_{G^{\vee}}\simeq Y_G/W.
$$
If a normal variety $Y$ is acted upon by a finite group $\Gamma$, $Y/\Gamma$ is normal \cite[Chapter II.5, top of page 128]{Shafarevich} (note that Shafarevich assumes that $Y$ is affine, but the argument works for any variety since this is a local property). By Proposition \ref{prop: Y normal} $Y_G$ is normal, so $\tfC_{G^{\vee}}$ is normal as well.

\end{proof}

\begin{remark}
An alternative proof of normality of $\tfC_{G^{\vee}}$ follows from \cite[Theorem 14]{Weekes2}.
\end{remark}
\begin{remark}
So far in this section, we have been discussing the $\hbar=0$ case, i.e. the non-quantized algebra. Fix some symplectic $\Q$-factorial terminalization of the variety $T^*T^\vee/W$, as constructed in \cite{BCHM,Namikawa}. Denote it by $\tX_{G^{\vee}}$.
The formal Poisson deformations of $\tX_{G^{\vee}}$ are parameterized by $H^2(\tX^{\reg}_{G^{\vee}},\C)$. This construction works for the rational (as opposed to trigonometric) version, giving $\tX_{\fg^{\vee}}\to \ft\oplus \ft^*/W$ where $\tX_{\fg^{\vee}}$ can be identified with the $\Q$-factorial terminalization constructed in \cite{LosevDerived}. The variety $\tX_{\fg^{\vee}}$ is a conical symplectic partial resolution with $\Q$-factorial singularities of $\ft\oplus \ft^*/W$ such that $\codim(\tX_{\fg^{\vee}}-\tX_{\fg^{\vee}}^{\reg})\geq 4$. By the results of Namikawa in \cite{Namikawa}, this implies that $\tX_{\fg^{\vee}}$ is terminal. In this case, it has been proved in \cite{Namikawa,LosevDerived} that the filtered quantizations of $\tX_{\fg^{\vee}}$ are also parametrized by $H^2(\tX^{\reg}_{\fg^{\vee}},\C)$ and correspond rational Cherednik algebras with different parameters. Cherednik algebras were already mentioned in the introduction and their trigonometric version will be studied in Section \ref{sec:cherednik} below.

Unlike the formal Poisson deformations, we do not know if the filtered quantizations for the trigonometric variety $\tX_{G^{\vee}}$ are parametrized by $H^2(\tX^{\reg}_{G^{\vee}},\C)$ or whether they correspond to degenerate DAHA, since the results in the Lie algebra case heavily use the fact that $\ft\oplus \ft^*/W$ has {\em conical} symplectic singularities.

Finally, we note that similar to \cite[Proposition 2.1]{BALK} the normal intermediate partial resolutions $$\tX_{G^{\vee}}\to X_{G^{\vee}}\to T^*T^\vee/W$$ are classified by faces of the ample cone of $\tX$ such that for a given face $F$ and a rational point $f\in F$ a positive rational multiple of $f$ is the first Chern class of an ample line bundle on the corresponding partial resolution.

It seems reasonable thus to expect that $\fP=H^2(\tX^{\reg}_{G^{\vee}},\C)$ parametrizes both the filtered quantizations of $\tX_{G^{\vee}}$ as well as the partial resolutions between $\tX_{G^{\vee}}\to T^*T^\vee/W$, just as it does in the Lie algebra case.
Further, it seems by Corollary \ref{cor:normality} 
reasonable to expect that the partial resolution of the trigonometric commuting variety constructed in this section, namely $\tfC_{G^{\vee}}$, equals a partial resolution constructed this way, and that the singularities of $\tX_{G^{\vee}}\to T^*T^\vee/W$ are locally modeled on those of $\tX_{\fg^{\vee}}$.

\end{remark}

To support the above remarks, we note the following about the local structure of our algebras. Recall the {\em Borel-de Siebenthal algorithm}, explained e.g. in \cite{EtingofReducibility}. If $G$ is simply connected and $T$ a maximal torus as before, let $\Sigma\subset T$ be be the set of elements $a$ whose centralizer $C_{\fg}(a)$ is semisimple of
the same rank as $\fg$ (here $T^{\vee}$ is identified with the quotient of $\ft^{\vee}$ by some lattice via the exponential map). It is known \cite[Section 2]{EtingofReducibility} that $\Sigma$ is a finite set which is in bijection with the set of vertices in the extended Dynkin diagram of $\fg$. Furthermore, the Dynkin diagram of $C_{\fg}(a)$ for $a\in \Sigma$ is obtained from the extended Dynkin diagram of $\fg$ by deleting the corresponding vertex. We refer to Section \ref{sec:cherednik} and \cite{EtingofReducibility} for the notation on Cherednik algebras.

\begin{lemma}
\label{lem:localstructure}
Suppose $G$ is simply connected. Upon completion at $a\in \Sigma$, the Coulomb branch algebra $({}_i \cA_d^{\hbar})^{\wedge a}\cong \HH_{G,c+i\hbar,\hbar}^{\wedge a}$ is isomorphic to $\HH^{\rat, \wedge 0}_{C_{\fg}(a),c+i\hbar,\hbar}$ for the Lie algebra $C_{\fg}(a)$ coming from the Borel-de Siebenthal algorithm as above.
\end{lemma}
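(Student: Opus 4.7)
The plan is to prove this as an instance of the trigonometric-to-rational completion isomorphism for Cherednik algebras, essentially a version of the Bezrukavnikov--Etingof (or Suzuki) localization theorem, by working with the explicit Dunkl--Cherednik realization developed in Section~\ref{sec:cherednik}.

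First, I would identify the pseudo-Levi data. Let $a \in T^\vee$ and let $\Phi_a \subset \Phi^{\mathrm{aff}}$ be the set of real affine coroots $\alpha^\vee + n$ vanishing at $a$; equivalently the walls of the affine arrangement through $a$. By the Borel--de Siebenthal algorithm, $\Phi_a$ is the root system of a reductive subgroup $G_a \subseteq G$ whose Lie algebra $\fg=\fg_a$ has Weyl group $W_a = \mathrm{Stab}_{W^{\mathrm{aff}}}(a)$. Both completions in the statement are finitely generated as topological modules over $\widehat{\C[T^\vee]}_a$ (resp.\ $\widehat{\C[\ft^*]}_0$), so to identify them it suffices to construct a map and check it is an isomorphism on the associated graded.

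Next, I would exploit the Dunkl--Cherednik embedding of $\HH_{G,c+i\hbar,\hbar}$ into $\hbar$-difference operators on $T^\vee$, viewing the completion at $a$ as the localization of that representation on the formal neighborhood $(\widehat{T^\vee})_a$. For $\alpha \notin \Phi_a$ one has $1-\alpha^\vee(a) \neq 0$, so the denominators appearing in the Dunkl operator $D_\alpha$ are units in $\widehat{\C[T^\vee]}_a$, and the corresponding reflections $s_\alpha$ contribute only invertible scalar factors after localization. Consequently, working orbit-by-orbit of $W$ on $T^\vee$ and using the standard decomposition of a completed crossed-product algebra into blocks indexed by $W/W_a$, the essential content of $\HH_{G,c+i\hbar,\hbar}^{\wedge a}$ is concentrated in the block at $a$ stabilized by $W_a$, which is matrix-Morita-equivalent to the full completion.

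Then I would use the logarithmic coordinate change at $a$: choose a formal isomorphism $\log_a : (\widehat{T^\vee})_a \xrightarrow{\sim} \widehat{\ft^*}_0$ sending $a \mapsto 0$, which converts each factor $1-\alpha^\vee$ (for $\alpha \in \Phi_a$) into $\alpha + (\text{higher order})$. Under this change, the trigonometric Dunkl operators degenerate, up to unit factors, into the rational Dunkl operators for $\fg_a$, and the reflection part becomes $W_a \ltimes \C[\ft]$. This yields a map from $\HH^{\rat,\wedge 0}_{\fg_a,c+i\hbar,\hbar}$ to the $W_a$-block of $\HH_{G,c+i\hbar,\hbar}^{\wedge a}$; checking on generators that it preserves the defining commutation relations (which becomes a calculation purely in $\widehat{\C[\ft^*]}_0$) establishes it is a well-defined algebra homomorphism, and comparing dimensions of associated graded modules over the respective completed polynomial subalgebras shows it is an isomorphism.

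The main obstacle will be the bookkeeping of parameters and of the $W/W_a$-decomposition: one must verify that the shifts $c \mapsto c+i\hbar$ that parametrize ${}_i \cA_d^\hbar$ transport cleanly to the rational side, and that, when $a$ is non-regular (so $W_a$ is nontrivial and can even fail to be standard parabolic in $W^{\mathrm{aff}}$), the resulting rational parameter really comes from a single-valued function on the pseudo-Levi roots. Once the Dunkl calculus is set up these checks are routine, but they are the step where care is needed; everything else is a standard passage from trigonometric to rational Cherednik algebras after completion, parallel to the arguments for the rational-vs-trigonometric comparison in the literature on double affine Hecke algebras.
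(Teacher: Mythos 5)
Your proposal is correct and follows essentially the same route as the paper's proof, which simply cites the relevant localization-at-a-point theorem for trigonometric Cherednik algebras (Etingof's Theorem 3.2 of \emph{Reducibility of the polynomial representation of the degenerate double affine Hecke algebra} and Section 2.8 of \emph{Cherednik and Hecke algebras of varieties with a finite group action}) together with the remark that the equal-parameters setting avoids any mismatch in the $c+i\hbar$ shifts. The orbit-by-orbit block decomposition indexed by $W/W_a$, the observation that denominators $1-\alpha^\vee$ with $\alpha\notin\Phi_a$ become units after completion, and the logarithmic coordinate change degenerating trigonometric Dunkl operators to rational ones for the pseudo-Levi are precisely the content of the cited proof, so what you have written is a fleshed-out version of the paper's citation-based argument.
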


\begin{proof}
This is contained in \cite[Proof of Theorem 3.2]{EtingofReducibility}.
\end{proof}

\subsection{The universal centralizer}

In the above, we have defined the partial resolution $\tfC_{G^{\vee}}$ using $\Proj$ construction, and have  limited understanding of its geometry outside of type $A$. Nevertheless, in this subsection we define an affine open chart in $\tfC_{G^{\vee}}$ and prove that it  coincides with the trigonometric version of the {\em universal centralizer} of \cite{BFM, Ngo}.
 It also appears as a Coulomb branch for zero matter, and will be used later in Section \ref{sec:finitegeneration}.

We let $G$ be arbitrary for now. Let $\Delta:=\prod_{\alpha\in \Phi^+}y_{\alpha}\in  A_G$ be the Vandermonde determinant.
\begin{definition}
We define $$U_{\Delta}\subset \widetilde{\Comm}_{G^{\vee}}=\Proj \bigoplus_{d=0}^{\infty}{}_0\cA_{d}^{\hbar=0}$$   as the distinguished open subset given by the element $\Delta\in A_G\simeq {}_0\cA_{1}^{\hbar=0}$. 
\end{definition}

By definition and Theorem \ref{thm: coulomb is symbolic}, $U_{\Delta}$ is the affine variety whose coordinate ring is the degree zero part of the localization 
 of $\bigoplus_{d=0}^{\infty}{}_{0}\cA_{d}^{\hbar=0}\simeq \bigoplus_{d=0}^{\infty}\eee_d I^{(d)}$ in $\Delta$:
$$
\C[U_{\Delta}]=\mathrm{Span}\left\{\frac{f}{\Delta^d}\ :\ f\in \eee_d I^{(d)}\right\}/\left(
\frac{f}{\Delta^d}\sim \frac{f\Delta}{\Delta^{d+1}}
\right).
$$ 

\begin{remark}
Note that $U_{\Delta}$ is different from the preimage of $U'=\{\Delta\neq 0\}\subset T^*T^\vee/W\simeq \Spec {}_0\cA_{0}^{\hbar=0}$ under the natural projection $\pi:\widetilde{\Comm}_{G^{\vee}}\to T^*T^\vee/W$. 

Indeed, let $Z=\{\Delta=0\}\subset T^*T^\vee/W$. Then $\pi^{-1}(U')$ is the complement of the {\bf total transform} of $Z$ in $\widetilde{\Comm}_{G^{\vee}}$ while $U_{\Delta}$ is the complement of the {\bf strict transform} of $Z$. So $U_{\Delta}\supsetneq U'$. 
\end{remark}

We now describe this chart. 
In \cite{BFM}, two trigonometric versions of the universal centralizer are studied. The one of interest to us is defined as follows, see {\em loc. cit.} for more details.
\begin{definition}
The universal centralizer of $G^{\vee}$ is the variety 
$$\mathfrak{B}_{\fg^{\vee}}^{G^{\vee}}:=\{(g,s)\in G^{\vee}\times \fg^{\vee}|\ad_g(s)=s,\; s \text{ is regular } \}\sslash G^{\vee}$$
\end{definition}

\begin{remark}
In \cite{BFM}, this variety is denoted $\mathfrak{Z}_{\fg^{\vee}}^{G^{\vee}}$.  
There is also another version of the trigonometric universal centralizer $\mathfrak{Z}_{G^{\vee}}^{\fg^{\vee}}$ with the roles of $\fg, G$ swapped. It has the nicer geometric property of being symplectically isomorphic to 
$T^*(T^\vee/W)$ when $G$ is adjoint (so $G^{\vee}$ is simply connected).
\end{remark}

In \cite{BFM} explicit description of the coordinate ring of $\mathfrak{Z}_{\fg^{\vee}}^{G^{\vee}}$ is given.  We also have the following Coulomb branch description of $\mathfrak{Z}_{\fg^{\vee}}^{G^{\vee}}$.
\begin{theorem}[\cite{BFM}]
\label{thm:universalcentralizer as coulomb}
We have an isomorphism of algebras:
$$
\C[\mathfrak{B}_{\fg^{\vee}}^{G^{\vee}}]\cong H_*^{G(\cO)}(\Gr_G)
$$
The right hand side can be identified with  the Coulomb branch for $(G,0)$ (see Section \ref{sec:coulomb}.)
\end{theorem}
We now state and prove the main theorem of this section. 

\begin{theorem}\label{thm:U_del}
We have $$\C[\mathfrak{B}_{\fg^{\vee}}^{G^{\vee}}]\cong 
\C[U_{\Delta}].$$
In particular, there is a natural isomorphism $\mathfrak{B}_{\fg^{\vee}}^{G^{\vee}}\cong U_\Delta$.
\end{theorem}
\begin{proof}
We recall and slightly rephrase 
\cite[Section 4]{BFM}\footnote{We follow the arxiv version which is more complete than the published one.} .
First, consider the Lie algebra version 
$$\mathfrak{B}_{\fg^{\vee}}^{\fg^{\vee}}:=\{(g,s)\in \fg^{\vee}\times \fg^{\vee}|\ad_g(s)=s,\; s \text{ is regular } \}\sslash G^{\vee}$$
On the other hand, we consider the family of ideals $I^{(d)}_{\ft}:=\bigcap_{\alpha\in\Phi^+}\langle x_{\alpha^{\vee}}, y_\alpha\rangle^{d}\subset \C[\ft^*\times \ft]$.
We claim that the algebra of functions $\C[\mathfrak{B}_{\fg^{\vee}}^{\fg^{\vee}}]$ can be obtained as the degree zero localization of the graded algebra $\bigoplus_{d}\eee_d I^{(d)}_{\ft}$.

Indeed, let $P_1,\ldots,P_r$ denote some generators in $\C[\fg^{\vee}]^{G}\simeq \C[\ft^{\vee}]^W$. The one-forms 
$dP_1,\ldots,dP_r\in  \Omega^1(\fg^{\vee})\simeq \C[\fg^{\vee}]\otimes (\fg^{\vee})^*$ can be identified with some $G$-invariant polynomial maps $m_1,\ldots,m_r:\fg^{\vee}\to \fg^{\vee}$. By \cite[Lemma 4.4]{BFM} for regular $s$ the elements $m_i(s)$ form a basis in the centralizer of $s$, so we can write 
\begin{equation}
\label{eq: def b_i}
g=\sum_{i=1}^{n} \psi_i(g,s) m_i(s).
\end{equation}
The coordinate ring of $\mathfrak{B}_{\fg^{\vee}}^{\fg^{\vee}}$ is a polynomial ring in $\psi_i$ and $P_i(s)$. 

For example, for $\fg=\mathfrak{gl}_n$ we get that $g$ and $s$ are two commuting $n\times n$ matrices and $s$ is regular, hence $g$ is a matrix polynomial in $s$:
$$
g=\psi_1+\psi_2 s+\ldots+\psi_{n} s^{n-1}.
$$
Here we identify the polynomial maps $m_i(s)=s^{i-1}$.

To compare this with the localization in $\Delta$, we can restrict \eqref{eq: def b_i} to $\ft^*\times \ft^*$ and 
abbreviate $g=\diag(\overline{x}),s=\diag(\overline{y})$.
We interpret \eqref{eq: def b_i} as a linear system of equations on $\psi_i$ for given $g$ and $s$ (resp. $\overline{x}$ and $\overline{y}$), for example for $\fg=\mathfrak{gl}_n$  we get
$$
x_i=\psi_1+\psi_2 y_i+\ldots+\psi_{n} y_i^{n-1}.
$$
 Note that the $W$ acts on $g$ and $s$ (resp. on $\overline{x}$ and $\overline{y}$) but fixes $\psi_i$ by construction. By \cite[Lemma 4.4]{BFM} the determinant of this linear system equals $m_1(s)\wedge \cdots \wedge m_r(s)$ which is proportional to $\Delta$ up to a nonzero constant factor.

Therefore by Cramer's Rule we can write $\psi_i=D_i/\Delta$ for some polynomials $D_i\in \C[\ft^{\vee}\times \ft^{\vee}]$. Since $\psi_i$ are $W$-invariant, we get that $D_i$ are $W$-antiinvariant and hence contained in $A_{\ft}\subset \eee_1 I^{(1)}_{\ft}$. We get $\psi_i\in  \Delta^{-1}\eee_1 I^{(1)}_{\ft}$ and $P_i(s)\in \eee I^{(0)}_{\ft}$, so
$$
\C[\mathfrak{B}_{\fg^{\vee}}^{\fg^{\vee}}]\subset \bigoplus_{d=0}^{\infty} \Delta^{-d}\eee_d I^{(d)}_{\ft}/\sim.
$$

To prove the reverse inclusion, we claim that any element in $\Delta^{-d}\eee_d I^{(d)}_{\ft}$ can be written as a polynomial in $\psi_i$ and $P_i(\overline{y})$ or, equivalently, a $W$-invariant polynomial in $\overline{y}$ and $\psi_i$. Indeed, given a polynomial $f(\overline{x},\overline{y})\in \eee_d\bigcap_{\alpha\in  \Phi^+}\langle x_{\alpha^{\vee}},y_{\alpha}\rangle ^d$, we can substitute $\overline{x}$ using \eqref{eq: def b_i} and write $f$ as a polynomial $h(\psi_i,\overline{y})$.  In this presentation, $x_{\alpha^{\vee}}$ (as a polynomial in $\psi_i,\overline{y}$) is actually divisible by $y_{\alpha}$, so any polynomial in  
$(x_{\alpha^{\vee}},y_{\alpha})^d$ is divisible by $y_{\alpha}^d$. Therefore $h$ is divisible by $\prod_{\alpha\in \Phi^+} y_{\alpha}^d=\Delta^d$ and $\Delta^{-d}h$ is a  polynomial in $\overline{y}$ and $\psi_i$. Finally, since both $f$ and $\Delta^d$ transform by the same sign under $\eee_d$, we conclude that $\Delta^{-d}h$ is a $W$-invariant polynomial in $\overline{y}$ and $\psi_i$.  To sum up, $\Delta^{-d}\eee_d I^{(d)}_{\ft}\subset \C[\mathfrak{B}_{\fg^{\vee}}^{\fg^{\vee}}]$.

The group theoretic version follows from this computation as in \cite[Section 4]{BFM}. In particular, for $G=\GL_n$ we simply require that $g$ is invertible and define additional variables $ \psi^*_i$ such that
\begin{equation}
\label{eq: def b_i dual}
g^{-1}=\sum_{i=1}^{n} \psi^*_i(g,s) m_i(s).
\end{equation}
Similarly, after restricting to the torus $x_i$ are invertible and we can solve both \eqref{eq: def b_i} and \eqref{eq: def b_i dual}
by Cramer's Rule. The rest of the proof proceeds verbatim.

\end{proof}

\begin{remark}
We would like to caution the reader that Theorem \ref{thm:U_del} is similar to \cite[Proposition 2.8]{BFM}, but slightly different from it in the following way. 

Recall that the ideal $I^{(1)}$ defines a union of codimension 2 subvarieties in $T^*T^{\vee}$. In \cite{BFM} the authors (up to a quotient by $W$) consider an open subset in the blow-up of $T^*T^{\vee}$ along $I^{(1)}$. By definition, the latter is given as $\Proj$ of a graded algebra built from {\bf powers} of $I^{(1)}$. Here, we instead consider the {\bf symbolic blow-up} using {\bf symbolic powers} $I^{(d)}$ of  $I^{(1)}$. 

In general, we get a homomorphism
$$
\bigoplus_d \eee_d(I^{(1)})^{d}\to \bigoplus_d \eee_d I^{(d)}
$$
which implies a map $\Proj \bigoplus_d \eee_d I^{(d)}\to \mathrm{Bl}_{I^{(1)}}T^*T^{\vee}$, but we do not expect it to be an isomorphism.  Theorem \ref{thm:U_del} shows that it is an isomorphism over $U_{\Delta}$.

\end{remark}

\begin{remark}
We can interpret the proof of Theorem \ref{thm:U_del} as follows. 
There are natural embeddings $${}_j\cA_i^{\hbar=0}\hookrightarrow H_*^{G(\cO)}(\Gr_G)$$ coming from the construction as Coulomb branches, see for example Section \ref{subsec:spherical adjoint localization} or \cite[Lemma 5.11]{BFN}. On the algebra side, these embeddings realize rational functions of the form $f(x,y)/\Delta^{j-i}$, where $f\in {}_j\cA^{\hbar=0}_i$, as functions on the open chart $U_\Delta$.

When $G=\GL_n$, this construction is closely related to the construction of the open chart ''$U_{(1^n)}$" on $\Hilb^n(\C^2)$ given by Haiman in \cite[Corollary 2.7.]{Haimanqt}.

We also note that the comparison between geometry and algebra explains the appearance of the idempotent $\eee_d$. Indeed, from the geometric point of view the algebra ${}_0\cA_d^{\hbar=0}$ is, in the notations of Section \ref{sec:coulomb}, simply $H^{G(\cO)}_*({}_0\cR_d)\cong\eee H_*^{T(\cO)}({}_0 \cR_d)$. However, the localization formula in Corollary \ref{thm:localization for any group any weight} involves a prefactor $\Delta^d$, exactly corresponding to twisting by the $d$-th power of the sign representation of $W$.
\end{remark}

\subsection{Explicit antisymmetric polynomials}
\label{sec: explicit schur}

In Theorem \ref{thm:Z-algebra-iso} we will need an explicit construction of a $\C$-basis in the space $A$ of antisymmetric ($S_n$-antiinvariant) polynomials for $G=\GL_n$, in order to compare our Coulomb branch construction with the one above. The exposition follows ideas of Haiman in \cite{Haimanqt}. The reader is advised to skip this section on a first reading. 

We denote by $\Alt$ the action of the antisymmetric projector $\eee_-$ on polynomials.
Let $S=\{(a_1,b_1),\ldots (a_n,b_n)\}$ be an arbitrary $n$-element subset of $\Z_{\ge 0}\times \Z$. We define 
$$
\Delta_S(y_1,u_1,\ldots,y_n,u_n)=\Alt\left(y_1^{a_1}u_1^{b_1}\cdots y_n^{a_n}u_n^{b_n}\right)=\frac{1}{n!}\det\left(y_i^{a_j}u_i^{b_j}\right).
$$
For a composition $\alpha$ with $\sum \alpha_i=n$, we can consider the set 
$$
S_{\alpha}=\{(0,0),\ldots ,(\alpha_1-1,0),(0,1),\ldots (\alpha_2-1,1),\ldots\}
$$
and denote $\Delta_{S_{\alpha}}=\Delta_{\alpha}$.
In particular,
$$
\Delta=\prod_{i<j}(y_i-y_j)=\Delta_{(n)}.
$$
Given a composition $\alpha$, write $\lambda(\alpha)=(0^{\alpha_1},1^{\alpha_2},\ldots)$. More generally,  for any subset $S=\{(a_i,b_i)\}\subset \Z_{\ge 0}\times \Z, |S|=n$, we define $\lambda(S)=\mathrm{sort}(b_i)$ as the vector in $\Z^{n}$ with coordinates obtained by sorting $b_i$ in the  non-decreasing order. Clearly, 
$\lambda(S_{\alpha})=\lambda(\alpha)$. Furthermore, we define a collection of subsets
$$
S_k=\{a_i\ :\ (a_i,k)\in S\}=\left\{a_{i_1^{(k)}}<\ldots<a_{i_{r_k}^{(k)}}\right\}
$$
and a partition
$$
\mu_k(S)=(a_{i_1^{(k)}},a_{i_2^{(k)}}-1,\ldots,a_{i_{r_{k}}^{(k)}}-r_{k}+1).
$$
Finally, we define $\yy_k=(y_{i}\ :\ \lambda_i=k)$.

\begin{lemma}
\label{lem: Haiman dets}
(a) The determinants $\Delta_S$ span the vector space $A$.

(b) We have the following formula for the determinant $\Delta_{\alpha}$:
$$
\Delta_{\alpha}=c\cdot \Alt\left[u^{\lambda}\prod_{r<s,\lambda_{r}=\lambda_{s}}(y_r-y_s)\right].
$$
where $\lambda=\lambda(\alpha)$.

(c) More generally, we have the following formula for the determinant $\Delta_{S}$:
$$
\Delta_{S}=c\cdot \Alt\left[\prod_{k}s_{\mu_k}(\yy_k)u^{\lambda}\prod_{r<s,\lambda_{r}=\lambda_{s}}(y_r-y_s)\right]
$$
where $s_{\mu_k}$ are Schur polynomials, $\lambda=\lambda(S)$ and $c$ is some  nonzero scalar factor depending on the size of stabilizer of $\lambda$.
\end{lemma}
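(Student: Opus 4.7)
The plan is to derive part (c) first via Jacobi's bialternant formula and obtain (b) as the specialization where each $\mu_k$ is empty; part (a) is essentially formal.

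For (a), every element of $A$ is a linear combination of antisymmetrizations $\Alt(\prod_i y_i^{a_i} u_i^{b_i})$ of monomials, where $(a_i)\in \Z_{\geq 0}^n$ and $(b_i)\in\Z^n$. Such an antisymmetrization vanishes if any two of the pairs $(a_i,b_i),(a_j,b_j)$ coincide, and otherwise equals $\pm\Delta_S$ (for an explicit sign depending on the chosen enumeration of $S$) where $S=\{(a_1,b_1),\ldots,(a_n,b_n)\}$. This immediately yields (a).

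For (c), write $\lambda=\lambda(S)$ and let $W_\alpha=\prod_k S_{r_k}\subset S_n$ be the Young subgroup stabilizing $\lambda$, so that the factor $S_{r_k}$ permutes the block $I_k=\{i:\lambda_i=k\}$. Let $V_k:=\prod_{r<s,\, r,s\in I_k}(y_r-y_s)$ denote the Vandermonde in the variables $\yy_k$. Jacobi's bialternant formula
$$s_\mu(y_1,\ldots,y_r)\cdot\prod_{i<j}(y_i-y_j)=\det(y_i^{\mu_j+r-j})_{i,j},$$
applied in each block with the partition $\mu_k$, shows that $s_{\mu_k}(\yy_k)\cdot V_k$ equals (up to sign) the $S_{r_k}$-antisymmetrization on $I_k$ of the monomial $\prod_j y_{I_k(j)}^{a_{i_j^{(k)}}}$, since the exponents $\mu_k(j)+r_k-j$ range over exactly the set $S_k$ of $a$-values occurring with $b$-value $k$. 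Multiplying the block formulas and incorporating the $W_\alpha$-invariant factor $u^\lambda$ yields
$$\prod_k s_{\mu_k}(\yy_k)\cdot u^\lambda\cdot\prod_{r<s,\lambda_r=\lambda_s}(y_r-y_s)\;=\;C\cdot\Alt_{W_\alpha}(m_S),$$
where $m_S=\prod_i y_i^{a_i} u_i^{b_i}$ is the monomial associated to the chosen ordering of $S$, $\Alt_{W_\alpha}$ denotes antisymmetrization over $W_\alpha$, and $C$ is a nonzero rational constant depending only on $|W_\alpha|=\prod_k r_k!$. A short averaging computation shows $\Alt\circ\Alt_{W_\alpha}=\Alt$, since for any $f$ and $\tau\in W_\alpha$ one has $\Alt(\tau f)=\sgn(\tau)\Alt(f)$. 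Applying $\Alt$ to both sides of the previous identity therefore produces $c\cdot\Delta_S$ with $c$ depending only on $|W_\alpha|$, establishing (c). Specializing to $S=S_\alpha$, each $S_k$ is $\{0,1,\ldots,\alpha_k-1\}$, the partition $\mu_k$ is empty, and $s_{\mu_k}=1$, which recovers (b).

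The main bookkeeping obstacle is tracking orderings and signs: Jacobi's formula requires a weakly decreasing partition, whereas the sequence $\bigl(a_{i_j^{(k)}}-(j-1)\bigr)$ appearing in the statement is written in increasing order of $j$, so one must verify that the staircase subtraction indeed produces a partition (after reversal) and that the associated reordering sign, together with the sign coming from the chosen enumeration of $S$ in the definition of $m_S$, is absorbed into the final constant $c$. All such signs are harmless since the statement only asserts $c$ is nonzero and depends on the stabilizer of $\lambda$.
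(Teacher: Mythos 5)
Your proof is correct and uses essentially the same argument as the paper: the key step in both is applying Jacobi's bialternant identity block-by-block to relate $\prod_k s_{\mu_k}(\yy_k)\prod_{r<s,\lambda_r=\lambda_s}(y_r-y_s)$ to the $W_\alpha$-antisymmetrization of the $y$-monomial, and then passing to the full antisymmetrization. The paper organizes this by extracting the coefficient of $u^\lambda$ on both sides (both sides being antisymmetric and supported on the $S_n$-orbit of $u^\lambda$), whereas you apply $\Alt$ directly to the block identity and invoke $\Alt\circ\Alt_{W_\alpha}=\Alt$; these are equivalent ways of running the same computation.
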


\begin{proof}
(a) The space $\C[T^*T^{\vee}]$ is spanned by the monomials $y_1^{a_1}u_1^{b_1}\cdots y_n^{a_n}u_n^{b_n}$, so $A=\eee_{-}\C[T^*T^{\vee}]$ is spanned by their antisymmetrizations (recall that $u_i$ are invertible, so $b_i$ are allowed to be negative). If some of pairs $(a_j,b_j)$ coincide, then the antisymmetrization vanishes, so it is sufficient to assume that $(a_j,b_j)$ are pairwise distinct and form an $n$-element subset $S$.

Clearly, (b) follows from (c) since for $S=S_{\alpha}$ we have $S_k={0,1\ldots,\alpha_k-1}$ and $\mu_k=(0)$.

To prove (c), observe that the function $\Delta_S$ is antisymmetric and all possible monomials in $u$ are in the $S_n$-orbit of $\lambda$, so it is sufficient to compute the coefficient at $u^{\lambda}$. This coefficient is proportional to
$$
\Alt_{\Stab(\lambda)}(y_1^{a_1}\cdots y_n^{a_n})=\prod_{k}\Alt_{S_{k}}\left[\prod_{\lambda_i=k} y_i^{a_i}\right]=\prod_{k}\left[s_{\mu_k}(\yy_k)\cdot \prod_{r<s,\lambda_r=\lambda_s=k}(y_r-y_s)\right].
$$
\end{proof}

\begin{example}
For $\alpha=(1,2,1)$ we have $S=\{(0,0),(0,1),(1,1),(0,2)\}, \lambda=(0,1,1,2)$ and 
$$
\Delta_S=\Alt\left[(y_2-y_3)u_1^0u_2^1u_3^1u_4^2\right]
$$
For $S=\{(5,0),(3,1),(7,1),(2,2)\}$ we have $\lambda=(0,1,1,2),\mu_0=(5),\mu_1=(3,6),\mu_2=(2)$ and
$$
\Delta_S=\Alt\left[s_5(y_1)s_{6,3}(y_2,y_3)s_2(y_4)(y_2-y_3)u_1^0u_2^1u_3^1u_4^2\right].
$$
Note that $s_5(y_1)=y_1^5, s_2(y_4)=y_4^2$ and 
$$
s_{6,3}(y_2,y_3)=\frac{\vline\begin{matrix}y_2^7 & y_3^7\\ y_2^3 & y_3^3\end{matrix}\vline}{y_2-y_3}=-\frac{\Alt(y_2^3y_3^7)}{y_2-y_3}.
$$
\end{example}

\section{Trigonometric Cherednik algebra}
\label{sec:cherednik}
\subsection{Definitions}

We define the extended torus $\widetilde{T}=T\times  \mathbb{C}^{\times} $  and the corresponding Lie algebra $\widetilde{\ft}=\ft\oplus \C_{\hbar}$.  
The extended affine Weyl group $\tW:=W \ltimes X_*(T)$ is generated by the affine Weyl group $W^{\mathrm{aff}}=W\ltimes Q^\vee$ and an additional abelian group $\Omega=X_*(T)/Q^\vee$ where $Q^\vee$ is the coroot lattice. In general, $\Omega$ is the fundamental group of $G$, for example for $GL_n$, we have $\Omega\cong \Z$. We use action of \(\tW\) on \(\widetilde{\ft}^\vee\)
depending on $\hbar$.
The action of $w\in \tW$ on $\xi\in\ft$ will be denoted by ${}^{w}\xi$. We will denote the longest element in $W$ by $w_0$. 

\begin{definition}
\label{def:dDAHA}
The {\em trigonometric DAHA} of $G$ is the $\C[\hbar,c]$-algebra, which as a vector space looks like 
$$\HH_G=\HH_{c,\hbar}=\C[\tW]\otimes \C[\ft] \otimes \C[c]\otimes \C[\hbar]$$
and the algebra structure is determined as follows:
\begin{enumerate}
\item Each of the tensor factors is a subalgebra, and $c,\hbar$ are central. We denote by $\sigma_i$ the simple reflections in the copy of $\tW\subset \HH_G$.
\item $\sigma_i\xi-{}^{s_i}\xi \sigma_i=c \langle \xi, \alpha_i^\vee\rangle$ for all simple reflections $\sigma_i \in \tW$ and $\xi \in \widetilde{\ft}^\vee\subset \C[\widetilde{\ft}]$\footnote{In particular, for \(\xi\in\ft^\vee\) we have \({}^{s_0}\xi=s_0(\xi)+\hbar\langle\xi,\alpha_0^\vee\rangle\).  }.
\item For any $\omega \in \Omega\subset \tW$ and $\xi\in \widetilde{\ft}^\vee$,
$\omega \xi={}^\omega \xi \omega$
\end{enumerate}
\end{definition}

 For adjoint groups, the group \(\Omega\) is the group of symmetries of the fundamental domain of the action on
  \(\widetilde{\ft^\vee}\) of the group generated by the reflections \(s_i\), \(i=0,\dots,\dim(\ft)=r\).
  Thus the elements of \(\Omega\) are affine transformations of \(\widetilde{\ft^\vee}\) that permute
the set \((\alpha_0,1),(\alpha_1,0),\dots,(\alpha_r,0)\).

 The algebra $\HH_G$ is graded as follows: all generators of $\widetilde{W}$ have degree zero, while $\xi\in \ft^\vee\subset \C[\ft]$ have degree 1 (so that $\C[\ft]$ has a standard grading). The generators $c$ and $\hbar$ both have degree 1 as well. One can check that the above relations are homogeneous with respect to this grading.

 \begin{example}
For $G=\GL_n$ the group $\tW$ is generated by simple reflections $\sigma_1,\ldots,\sigma_{n-1}$ (which generate $W$) and an additional element $\pi$ (which generates $\Omega$). The lattice part of $\tW$ is generated by
$$
X_i=\sigma_{i-1}\cdots \sigma_1\pi\sigma_{n-1}\cdots \sigma_i,\quad i=1,\dots,n.
$$
The algebra $\HH_{c,\hbar}$ is generated by $\sigma_i,\pi$ and commuting variables $y_1,\ldots,y_n$, with the following relations:
$$
\sigma_iy_i=y_{i+1}\sigma_i-c, \sigma_iy_{i+1}=y_i\sigma_i+c,\ \sigma_iy_j=y_j\sigma_i\ (j\neq i,i+1),\quad i=1,\dots,n-1,
$$
$$
\pi y_i =y_{i+1}\pi (1\le i\le n-1),\ \pi y_n=(y_1+\hbar)\pi. 
$$

\end{example}

\begin{remark}
We will also use specializations of this algebra, to be called trigonometric DAHAs as well when there is no risk of confusion. Let us explain how this relates to parameter conventions in the literature. In \cite{OY}, the trigonometric DAHA is defined as above but with parameters $\delta=\hbar, c=u$. These correspond to the generators of $H^*_{\G_m^{\mathrm{rot}}}(*) $ and $H^*_{\G^{\mathrm{dil}}_m}(*)$, respectively. In \cite{OY}, the relations $c+\nu \delta = c+\nu \hbar=0$  and $\hbar=1$ are imposed for $\nu \in \C$. This specialization of parameters is often called ''the" trigonometric DAHA with parameter $\nu$. We will denote it by $\HH_{\nu}$. If we want to emphasize the role of $G$ instead of the parameters, we will write $\HH_G$ for any of the specializations of the Cherednik algebra.
\end{remark}
\begin{remark}
It is common in Cherednik algebra literature to specialize $\hbar$ to $1$ as above, so that the algebra $\HH_{c,\hbar}^{\hbar=1}$ admits a natural filtration in powers of $\hbar$, making the full $\HH_{c,\hbar}$ the Rees construction for this filtration.  In this language, $\HH_{c,\hbar}^{c=\hbar=0}$ is the associated graded of $\HH_{c,\hbar}^{\hbar=1}$, since $\HH_{c,\hbar}$ is flat over $\C[c,\hbar]$. With this in mind, we will use the specialization $c=\hbar=0$ and the associated graded interchangeably.
\end{remark}
\begin{remark}
Later on, we shall be interested in the family of Cherednik algebras $\HH_{c+i\hbar,\hbar}$ for $i\leq 0$ as well, and the specializations $c+(\nu+i)\hbar=0,\hbar=1,$ i.e. $c=-\frac{m+in}{n}$.
\end{remark}

We introduce the symmetrizer $\eee=\frac{1}{|W|}\sum_{w\in W} w$ and the antisymmetrizer $\eee_{-}=\frac{1}{|W|}\sum_{w\in W} (-1)^{\ell(w)} w$ in the group algebra of $W$. We define the {\em spherical} and {\em antispherical} subalgebras in $\HH_G$  as $\eee \HH_G\eee$ and 
$\eee_{-} \HH_G\eee_{-}$. 

Note that in the specialization $c=\hbar=0$ the structure of the algebra simplifies dramatically: $\HH_G^{c=\hbar=0}=\C[W]\ltimes  \C[T^\vee\times \ft]$, so 
$$
\eee \HH_G^{c=\hbar=0}\eee\cong \C[T^\vee \times \ft]^{W}.
$$
We will refer to this as the {\em commutative limit}, although this only gives the limit of the spherical subalgebra the structure of a commutative algebra.

The algebra $\HH_G$ has a representation 
\begin{equation}
\label{eq:Dunklembedding}
\HH_{c,\hbar}\hookrightarrow \text{Diff}_{\hbar}(\ft^{\reg})\rtimes \C[W]
\end{equation} defined e.g. in 
\cite[Section 2.13]{CherednikBook}.  Here $\text{Diff}_{\hbar}(\ft^{\reg})$ is the algebra of $\hbar-$difference operators on the Lie algebra $\ft$, possibly with poles along the root hyperplanes. In this representation, the generators of $\HH_G$ corresponding to simple reflections act by 
$$
\sigma_i=s_i+\frac{c}{y_{\alpha_i}}(s_i-1)
$$
and the lattice part of extended affine Weyl group acts in the standard way (this determines the action of $\Omega$).
Below we identify \(\HH_{c,\hbar}\) with its image inside
  \(\text{Diff}_{\hbar}(\ft^{\reg})\rtimes \C[W]\).

\begin{example}
For $G=\GL_n$, we get $\sigma_i=s_i+\frac{c}{y_i-y_{i+1}}(s_i-1)$ and $\pi\cdot f(y_1,\ldots,y_n)=f(y_2,\ldots,y_n,y_1+\hbar)$.
\end{example}

\subsection{Shift isomorphism}

We will need several involutions on the algebra $\HH_c$.

\begin{lemma}
The map $\Psi:w\mapsto (-1)^{\ell(w)}w_0w^{-1}w_0, \xi\mapsto {}^{w_0}\xi, \hbar\to -\hbar$ for $w\in \tW$ defines an involutive anti-automorphism of $\HH_G$.
\end{lemma}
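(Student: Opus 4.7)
The plan is to show that $\Phi$ is well-defined on generators, that $\Phi^2=\id$ on them, and that $\Phi$ preserves each defining relation of Definition~\ref{def:dDAHA} (with order reversed, since $\Phi$ is anti-multiplicative). The nontrivial verifications break into: (a) $\Phi$ restricts to an anti-automorphism of $\C[\tW]$; (b) $\Phi$ preserves the Dunkl-type relation $\sigma_i\xi-{}^{s_i}\xi\sigma_i=c\langle\xi,\alpha_i^\vee\rangle$; and (c) $\Phi$ preserves $\omega\xi={}^\omega\xi\,\omega$ for $\omega\in\Omega$. Since $\C[\widetilde{\ft}]$ is commutative, $\Phi|_{\C[\widetilde{\ft}]}$ is automatically (anti-)multiplicative, and $\Phi^2=\id$ on all generators follows from $w_0^2=1$ and $\epsilon(w):=(-1)^{\ell(w)}\in\{\pm1\}$, so in particular $\Phi^2(w)=\epsilon(w)^2\,w_0^2\,w\,w_0^2=w$, $\Phi^2(\xi)={}^{w_0^2}\xi=\xi$, $\Phi^2(\hbar)=\hbar$.

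For (a), use that $\epsilon$ is a well-defined character on $\tW$: multiplicativity on $W^{\mathrm{aff}}$ is standard Coxeter theory, and the extension to $\tW=W^{\mathrm{aff}}\rtimes\Omega$ by $\epsilon|_\Omega=1$ is consistent since $\Omega$ preserves the length of elements of $W^{\mathrm{aff}}$ by conjugation. Then $\Phi|_{\C[\tW]}$ factors as the inversion anti-automorphism $w\mapsto w^{-1}$ composed with the automorphisms ``conjugation by $w_0$'' and ``multiplication by $\epsilon$,'' and so is an anti-automorphism. For (b), let $i\mapsto i^\ast$ be the diagram involution with $w_0 s_i w_0=s_{i^\ast}$ and $w_0\alpha_{i^\ast}^\vee=-\alpha_i^\vee$, so that $\Phi(\sigma_i)=-\sigma_{i^\ast}$. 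Applying $\Phi$ to the Dunkl relation, rewriting ${}^{w_0 s_i}\xi={}^{s_{i^\ast}}({}^{w_0}\xi)$ via $w_0 s_i=s_{i^\ast} w_0$, and using the Dunkl relation for the pair $(i^\ast,{}^{w_0}\xi)$ twice to commute $\sigma_{i^\ast}$ past both $\xi$ and ${}^{s_{i^\ast}}\xi$ reduces the identity to $\langle{}^{w_0}\xi,\alpha_{i^\ast}^\vee\rangle=-\langle\xi,\alpha_i^\vee\rangle$, which holds because $w_0\alpha_{i^\ast}^\vee=-\alpha_i^\vee$. No $\hbar$-twist is needed since the pairing with a finite simple coroot is $\hbar$-independent.

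The main obstacle is (c). The element $\Phi(\omega)=w_0\omega^{-1}w_0$ has length $2\ell(w_0)$ and is generally not in $\Omega$, so the image of relation~(3) is not a direct instance of~(3). Writing $\omega=t_{\lambda_\omega}w_\omega$, the extended affine action gives ${}^\omega\xi=w_\omega\xi+\hbar\lambda_\omega$, and the $\hbar\mapsto-\hbar$ rule in $\Phi$ forces $\Phi({}^\omega\xi)=(w_0w_\omega)\xi-\hbar\,w_0\lambda_\omega$. The plan is to establish
\[
{}^{w_0}\xi\cdot w_0\omega^{-1}w_0 \;=\; w_0\omega^{-1}w_0\cdot\bigl((w_0w_\omega)\xi-\hbar\,w_0\lambda_\omega\bigr)
\]
in $\HH_G$ by iterating relation~(2) to move the inner $w_0$ past $\xi$ (accumulating Cherednik correction terms), applying relation~(3) for $\omega$ itself to move $\omega^{-1}$ across, and then moving the outer $w_0$ back. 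The rule $\hbar\mapsto-\hbar$ is exactly what balances the two sides: conjugation by $w_0$ in the extended affine action of $\tW$ sends $t_{\hbar\mu}\mapsto t_{\hbar w_0\mu}$, while $\Phi$ sends it to $t_{-\hbar w_0\mu}$, and this sign discrepancy is absorbed by the Cherednik correction terms generated when $w_0$ is commuted past $\xi$. A cleaner route, which separates the combinatorics, is to decompose $\Phi=\gamma\circ\tau$, where $\tau$ is the principal anti-involution ($w\mapsto w^{-1}$, identity on $\C[\widetilde{\ft}]$) and $\gamma$ is the automorphism $w\mapsto\epsilon(w)w_0 w w_0,\;\xi\mapsto{}^{w_0}\xi,\;\hbar\mapsto-\hbar$; the verification of $\tau$ on the defining relations is immediate from the calculation in~(b), and the combinatorial content is concentrated in the verification of $\gamma$ on relation~(3).
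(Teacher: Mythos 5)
Your handling of $\Phi^2=\id$, of the anti\mbox{-}multiplicativity of $\Phi$ on $\C[\tW]$ via the sign character $\epsilon$, and of relation~(2) is correct and matches the paper's argument in substance; the paper simply checks $\langle\xi,\alpha_i^\vee\rangle=\langle{}^{s_{i^\ast}w_0}\xi,\alpha_{i^\ast}^\vee\rangle$ directly where you commute $\sigma_{i^\ast}$ twice, but the two computations are equivalent.

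The real issue is relation~(3), and here your treatment both diverges from the paper's and contains a gap. You correctly observe that $w_0\omega^{-1}w_0$ is not an element of $\Omega$ (for $GL_2$, $w_0\pi^{-1}w_0=\sigma_1\sigma_0\pi^{-1}$ has length $2$). Notice this is in direct tension with the paper's one\mbox{-}sentence proof, which simply asserts ``$w_0\omega^{-1}w_0=\omega$ since we assume $\hbar\mapsto-\hbar$.'' Read as an equality in $\tW$, that assertion is false; what is true is that the affine action of $w_0\omega^{-1}w_0$ on $\ft^*$ becomes that of $\omega$ after the sign flip $\hbar\mapsto-\hbar$, since the two elements differ precisely in the sign of the translation part. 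Your proposal neither verifies this nor exploits it, and your alternative direct verification does not close.

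Concretely: if you straighten ${}^{w_0}\xi\cdot w_0\omega^{-1}w_0$ to PBW form, the leading term is $w_0\omega^{-1}w_0\cdot{}^{w_0\omega}\xi$ plus correction terms that all carry a factor of $c$ (they come from relation~(2)). The target is $w_0\omega^{-1}w_0\cdot\Phi({}^\omega\xi)$, and
\[
{}^{w_0\omega}\xi-\Phi({}^\omega\xi)=2\hbar\,\langle{}^{w_\omega}\xi,\lambda_\omega\rangle,
\]
which is proportional to $\hbar$, not $c$. The Cherednik corrections therefore cannot absorb it, contrary to what you claim; moreover the $c$\mbox{-}proportional pieces do not cancel among themselves either. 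For $G=GL_2$, $\omega=\pi$, $\xi=y_1$ one finds $y_2\,(w_0\pi^{-1}w_0)-(w_0\pi^{-1}w_0)\,y_1=c\,(t_{-e_1}+t_{-e_2})\neq 0$, and for $\xi=y_2$ there is in addition a nonzero $\hbar$\mbox{-}term. So the identity you set out to prove fails if $\Phi(\pi)$ is interpreted as the group element $w_0\pi^{-1}w_0$ sitting inside $\C[\tW]\subset\HH_G$.

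In short: relation~(2) is fine; relation~(3) is the crux; the paper's argument hinges on the observation about the affine action of $w_0\omega^{-1}w_0$ under $\hbar\mapsto-\hbar$ (which your write\mbox{-}up correctly suggests cannot be a literal identity in $\tW$ but then does not replace with a correct substitute), and your ``absorbed by Cherednik corrections'' step is a genuine gap — it mixes an $\hbar$\mbox{-}proportional discrepancy with $c$\mbox{-}proportional corrections, which cannot cancel. Your decomposition $\Phi=\gamma\circ\tau$ is a clean organizational idea and the analysis of $\tau$ is correct, but the deferred ``verification of $\gamma$ on relation~(3)'' runs into exactly the same obstruction.
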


\begin{proof}
Suppose that $w_0$ sends the simple root $\alpha_i$ to $-\alpha_j$ for some $j$, then $w_0s_i=s_jw_0$. The map $\Psi$ sends $\sigma_i$ to $-\sigma_j$, so we get:
$\Psi(\sigma_i\xi)=-{}^{w_0}\xi\sigma_j$, $\Psi({}^{s_i}\xi\sigma_i)=-\sigma_j{}^{w_0s_i}\xi$ and 
$$
\Psi(\sigma_i\xi-{}^{s_i}\xi\sigma_i)=\sigma_j{}^{s_jw_0}\xi-{}^{w_0}\xi\sigma_j
$$
while
$\langle \xi,\alpha_i^{\vee}\rangle=\langle {}^{s_jw_0}\xi,\alpha_j^{\vee}\rangle$, so the equation (2) is preserved. For the equation (3), observe that $w_0\omega^{-1}w_0=\omega$ since we assume that $\hbar\mapsto -\hbar$.
\end{proof}

\begin{example}
For $G=\GL_n$ we have 
$$
\Psi(\sigma_i)=-\sigma_{n-i},\ \Psi(y_i)=y_{n+1-i},\ \Psi(\pi)=\pi,\ \Psi(\hbar)=-\hbar
$$
\end{example}

Observe that $\Psi(\eee)=\eee_{-}$ and $\Psi(\eee_{-})=\eee$. In particular, $\Psi$ exchanges spherical subalgebra $\eee \HH_c \eee$ with the antispherical subalgebra $\eee_{-} \HH_{c} \eee_{-}$.

Using the difference representation \eqref{eq:Dunklembedding}, we can embed the spherical subalgebra 
$\eee \HH_c \eee$ into the algebra of $W$-twisted difference operators \(\text{Diff}_{\hbar}(\ft^{\reg})\). 
We will denote by $u^{\mu}$ is the translation by $\hbar\mu$. Any difference operator in $\eee \HH_c \eee$ can be written (up to symmetrization by $\eee$) as a linear combination of $u^{\mu}$ with coefficients in $\C[\ft^{\reg}]$. We filter the difference operators by the span of $u^{\mu}$ such that $\mu$ is in a $W$-orbit if a dominant coweight $\mu'$ with $\mu'\leq \lambda$. 

Below we will need some explicit generators for $\eee \HH_c \eee$ written as difference operators, up to lower order terms in this filtration.

\begin{theorem}
\label{thm:E}

a) Let $\lambda$ be a minuscule dominant coweight, respectively \(X^\lambda\in \widetilde{W}\), then 
$$
E_{\lambda,c}:=\eee X^{\lambda} \eee = \eee \prod_{\alpha(\lambda)=1} \frac{y_{\alpha}-c}{y_{\alpha}} u^{\lambda} \eee
$$ 
and 
$$
F_{\lambda,c}:=\eee X^{-\lambda} \eee = \eee \prod_{\alpha(\lambda)=1} \frac{y_{\alpha}+c}{y_{\alpha}} u^{-\lambda} \eee
$$
Here $u^{\lambda}$ is the translation by $\hbar\lambda$. 

b) For an arbitrary dominant coweight $\lambda$ we get
\begin{equation}
\label{eq: E lambda c}
E_{\lambda,c}=\eee \prod_{\alpha(\lambda)>0}\prod_{\ell=0}^{\alpha(\lambda)-1} \frac{(y_{\alpha}+\ell\hbar-c)}{(y_{\alpha}+\ell \hbar)}u^{\lambda}\eee +\mathrm{lower\ order\ terms}.
\end{equation}
If $\lambda$ is minimal in Bruhat order, the formula \eqref{eq: E lambda c} for $E_{\lambda,c}$  is exact.

c) More generally, for dominant coweight $\lambda$ which is minimal in the Bruhat order, and a polynomial $f(y)$ we define 
\begin{equation}
\label{eq: E lambda c f}
E_{\lambda,c}[f]=\eee f(y)\prod_{\alpha(\lambda)>0}\prod_{\ell=0}^{\alpha(\lambda)-1} \frac{(y_{\alpha}+\ell\hbar-c)}{(y_{\alpha}+\ell \hbar)}u^{\lambda}\eee,\ F_{\lambda,c}[f]=\eee  f(y)\prod_{\alpha(\lambda)>0}\prod_{\ell=0}^{\alpha(\lambda)-1} \frac{(y_{\alpha}+\ell\hbar+c)}{(y_{\alpha}+\ell \hbar)}u^{-\lambda} \eee.
\end{equation}
The spherical subalgebra $\eee \HH_c \eee$ is generated by such $E_{\lambda,c}[f]$ and $F_{\lambda,c}[f]$.
\end{theorem}

\begin{remark}
If $\lambda$ is minuscule, then it is indeed minimal in the Bruhat order. The converse is not true: indeed, there are no minuscule coweights at all for root systems $E_8, F_4, G_2$.
\end{remark}

We postpone the proof of Theorem \ref{thm:E} to Section \ref{sec: proof E formula}. Here we use it to relate the spherical and antispherical subalgebras.

\begin{lemma}
\label{lem: Phi and shift}
Suppose that $\lambda$ is a dominant coweight which is minimal in the Bruhat order. Then 
$$\Psi(E_{\lambda,c}[f])\Delta=\Delta E_{\lambda,c-\hbar}[f'],\quad
\Psi(F_{\lambda,c}[f])\Delta=\Delta F_{\lambda,c-\hbar}[f'],
$$
  where $
\Delta=\prod_{\alpha\in \Phi_+}y_{\alpha}
$
and $f'(y)=u^{\lambda}f(y)u^{-\lambda}.$
\end{lemma}

\begin{proof}  We prove the first equation, the second is similar.
Recall that $\Phi$ is an anti-automorphism, so it reverses the order of the factors, and $\Psi(\eee)=\eee_{-}$.
Therefore we  can write:
$$
\Psi(E_{\lambda,c}[f])=\eee_{-} u^{{}^{w_0}\lambda} \prod_{\alpha(\lambda)>0}\prod_{\ell=0}^{\alpha(\lambda)-1} \frac{(y_{\overline{\alpha}}-\ell\hbar-c)}{(y_{\overline{\alpha}}-\ell \hbar)}f({}^{w_0}y)\eee_{-},
$$ 
where we denote $\overline{\alpha}={}^{w_0}\alpha$.
By replacing $\lambda$ by $\lambda^{w_0}$ (since we symmetrize anyway) and $\alpha$ by ${}^{w_0}\alpha$ (since we take product over all roots $\alpha$), we can rewrite this product as  
$$
\Psi(E_{\lambda,c}[f])=\eee_{-} u^{\lambda}\prod_{\alpha(\lambda)>0}\prod_{\ell=0}^{\alpha(\lambda)-1} \frac{(y_{\alpha}-\ell\hbar-c)}{(y_{\alpha}-\ell \hbar)}f(y)\eee_{-}.
$$
Now $u^{\lambda}y_{\alpha}=(y_{\alpha}+\alpha(\lambda)\hbar)u^{\lambda}$, therefore
$$
\Psi(E_{\lambda,c}[f])=\eee_{-}\prod_{\alpha(\lambda)>0}\prod_{\ell=0}^{\alpha(\lambda)-1} \frac{(y_{\alpha}-\ell\hbar-c+\alpha(\lambda)\hbar)}{(y_{\alpha}-\ell \hbar+\alpha(\lambda)\hbar)} f'(y)u^{\lambda}\eee_{-}=
$$
$$
\eee_{-}\prod_{\alpha(\lambda)>0}\prod_{\ell=1}^{\alpha(\lambda)} \frac{(y_{\alpha}+\ell\hbar-c)}{(y_{\alpha}+\ell \hbar)} f'(y)u^{\lambda}\eee_{-}.
$$
In the last step we changed index of summation from $\ell$ to  $\alpha(\lambda)-\ell$. 
On the other hand, since $\lambda$ is dominant we have $\alpha(\lambda)\ge 0$ for any positive root $\alpha$, hence 
\begin{equation}
\label{eq: Delta past u lambda}
u^{\lambda}\Delta=u^{\lambda}\prod_{\alpha\in \Phi^+}y_{\alpha}=\prod_{\alpha\in \Phi^+}(y_{\alpha}+\alpha(\lambda)\hbar)u^{\lambda}=\prod_{\langle \alpha,\lambda\rangle>0}\frac{(y_{\alpha}+\alpha(\lambda)\hbar)}{y_{\alpha}}\Delta u^{\lambda}.
\end{equation}
Now we can compute
$$
\Psi(E_{\lambda,c}[f])\Delta=\eee_{-} \prod_{\alpha(\lambda)>0}\prod_{\ell=1}^{\alpha(\lambda)} \frac{(y_{\alpha}+\ell\hbar-c)}{(y_{\alpha}+\ell \hbar)}f'(y) u^{\lambda}\Delta\eee=
$$
$$
\eee_{-} f'(y)\left(\prod_{\alpha(\lambda)>0}\prod_{\ell=1}^{\alpha(\lambda)} \frac{(y_{\alpha}+\ell\hbar-c)}{(y_{\alpha}+\ell \hbar)}\right)\prod_{\alpha(\lambda)>0} \frac{(y_{\alpha}+\alpha(\lambda)\hbar)}{y_{\alpha}}\Delta u^{\lambda}\eee=
$$
$$
\eee_{-} \Delta f'(y)\prod_{\alpha(\lambda)>0}\prod_{\ell=0}^{\alpha(\lambda)-1} \frac{(y_{\alpha}+\ell\hbar-(c-\hbar))}{(y_{\alpha}+\ell \hbar)}u^{\lambda}\eee=\Delta E_{\lambda,c-\hbar}[f'].
$$
\end{proof}

\begin{theorem}
\label{thm:shiftiso}
There is a filtered algebra isomorphism $\eee \HH_{c-\hbar} \eee\cong\eee_-\HH_{c}\eee_-$. 
\end{theorem}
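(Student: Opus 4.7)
The plan is to realize the isomorphism as an inner conjugation inside the Dunkl--Cherednik representation \eqref{eq:Dunklembedding}. Inside the common ambient algebra $\Diff_\hbar(\ft^{\reg})\rtimes\C[W]$, the Vandermonde $\Delta=\prod_{\alpha\in\Phi^+}y_\alpha$ satisfies $s_i(\Delta)=-\Delta$ for every simple reflection and commutes with $\C[\ft]$; consequently, the inner automorphism $\mathrm{Ad}_\Delta(-):=\Delta(-)\Delta^{-1}$ sends $\eee$ to $\eee_-$ and restricts to the identity on $\C[\ft]^W$. I would therefore define the candidate map
$$\phi : \eee\HH_c\eee \longrightarrow \Diff_\hbar(\ft^{\reg})\rtimes\C[W], \qquad \phi(T) := \Delta\,T\,\Delta^{-1},$$
and the content of the theorem becomes that $\phi$ lands in $\eee_-\HH_{c-\hbar}\eee_-$ and yields a filtered algebra isomorphism.

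Since $\phi$ trivially maps $\C[\ft]^W\cdot\eee$ to $\C[\ft]^W\cdot\eee_-$, the next step is to check behaviour on a filtered generating set, for which I would take the lattice elements $E_{\lambda,c}=\eee X^\lambda\eee$ with $\lambda$ a dominant coweight minimal in Bruhat order (in particular minuscule, when these exist). The key computation invokes Lemma \ref{lem: Phi and shift}: applying the anti-automorphism $\Phi$ to the identity $\Phi(E_{\lambda,c})\Delta=\Delta E_{\lambda,c-\hbar}$ and using that $\Phi^2=\mathrm{id}$ and $\Phi(\Delta)=\pm\Delta$, I would obtain
$$\Delta\,E_{\lambda,c}\,\Delta^{-1}=\Phi(E_{\lambda,c-\hbar})=\eee_-\,\Phi(X^\lambda)\,\eee_-=\pm\,\eee_- X^{-w_0\lambda}\eee_-\in\eee_-\HH_{c-\hbar}\eee_-.$$
As $-w_0\lambda$ ranges over the minimal dominant coweights when $\lambda$ does, the image of $\phi$ would contain a generating set of $\eee_-\HH_{c-\hbar}\eee_-$. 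To conclude, $\phi$ would be a filtered algebra homomorphism because inner conjugation is manifestly an algebra map and respects the Bruhat filtration on $\widetilde{W}$, and it is bijective because conjugation by $\Delta^{-1}$ provides a two-sided inverse; the symmetric argument (swapping $\eee\leftrightarrow\eee_-$ and $c\leftrightarrow c-\hbar$) would show that $\mathrm{Ad}_{\Delta^{-1}}$ indeed maps $\eee_-\HH_{c-\hbar}\eee_-$ back into $\eee\HH_c\eee$.

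The main obstacle is the reduction to minimal-in-Bruhat-order generators. In types $A,B,C,D,E_6,E_7$ nontrivial minuscule coweights exist and Theorem \ref{thm: E lambda formula}(a) gives an exact formula, so the verification is clean. In types $E_8,F_4,G_2$ there are no minuscule coweights, so one must work with Theorem \ref{thm: E lambda formula}(b), where the formula for $E_{\lambda,c}$ holds only up to strictly lower Bruhat order; extending the shift identity to general dominant $\lambda$ would then require an induction on Bruhat length, using the already-established identity on smaller elements as the inductive hypothesis to absorb the correction terms. A secondary bookkeeping subtlety concerns the sign $\Phi(\Delta)=\pm\Delta$ (depending on the parity of $|\Phi^+|$) together with the $\hbar\to-\hbar$ flip implicit in $\Phi$, which must cancel consistently to yield the desired parameter shift $c\to c-\hbar$ rather than $c\to c+\hbar$.
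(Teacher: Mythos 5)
Your overall strategy coincides with the paper's: realize the isomorphism as conjugation by $\Delta$ inside the Dunkl--Cherednik ambient algebra, use Lemma~\ref{lem: Phi and shift}, and reduce to a generating set. However, there are two substantive gaps. First, your proposed generating set $\C[\ft]^W\eee$ together with $\{E_{\lambda}=\eee X^\lambda\eee\}$ is too small in general. The paper invokes \cite[Proposition 6.8]{BFN}, which gives the generating set $E_\lambda[f]=\eee f(w)X^\lambda\eee$ and $F_\lambda[f]$ with \emph{arbitrary polynomial} factors $f$; equivalently, one needs $W_\lambda$-invariant but not $W$-invariant factors, which are not obtainable from $\C[\ft]^W$ and the $E_\lambda$'s alone. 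Consequently Lemma~\ref{lem: Phi and shift} has to be extended to the elements $E_\lambda[f],F_\lambda[f]$ (this is equation \eqref{eq: Phi shift generalized} in the paper, ``proved similarly''), and this extension is the part of the argument your proposal omits. Second, your concern about types $E_8,F_4,G_2$ and the proposed induction on Bruhat length is unnecessary: what is used throughout (both for the formula in Theorem~\ref{thm: E lambda formula}(b) having no lower-order corrections, and for the BFN generating statement) is that $\lambda$ is a dominant coweight \emph{minimal in the Bruhat order}, and such coweights exist in every type even though minuscule ones do not in $E_8,F_4,G_2$; the formula is already exact for those, so no induction is needed.

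A smaller issue concerns the derivation $\Delta E_{\lambda,c}\Delta^{-1}=\Phi(E_{\lambda,c-\hbar})$ obtained by ``applying $\Phi$ to the lemma'': $\Phi$ is an anti-involution of the Cherednik algebra sending $\hbar\mapsto-\hbar$, not an operation on the ambient difference-operator algebra, so applying $\Phi$ to an operator identity that mixes two different parameter values is not a well-defined step. The clean route (and the one the paper takes) is to simply rearrange Lemma~\ref{lem: Phi and shift} as $\Delta E_{\lambda,c-\hbar}\Delta^{-1}=\Phi(E_{\lambda,c})\in\eee_-\HH_c\eee_-$, showing that $\Delta(-)\Delta^{-1}$ carries $\eee\HH_{c-\hbar}\eee$ into $\eee_-\HH_{c}\eee_-$, and then conclude surjectivity from the fact that $\Phi$ already exchanges spherical and anti-spherical subalgebras. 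Note also that this rearrangement pins down the direction of the parameter shift, which in your computation came out with the opposite sign.
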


\begin{proof}
 The spherical subalgebra  $\eee \HH_{c-\hbar}\eee$ is generated by the elements $E_{\lambda,c-\hbar}[f],F_{\lambda,c-\hbar}[f]$, while the antispherical subalgebra $\eee_{-} \HH_{c}\eee_{-}$ is generated by  the elements $\Psi(E_{\lambda,c-\hbar}[f]),\Psi(F_{\lambda,c-\hbar}[f])$ since $\Psi$ exchanges the spherical and antispherical subalgebras.

 Here $\lambda$ is a dominant coweight which is minimal in the Bruhat order, and $f(y)\in \C[\ft]$
  (see \cite[Proposition 6.8]{BFN}).  
By Lemma \ref{lem: Phi and shift} we get
\begin{equation}
\label{eq: Phi shift generalized}
\Psi(E_{\lambda,c}[f])=\Delta E_{\lambda,c-\hbar}[f']\Delta^{-1},\ \Psi(F_{\lambda,c}[f])\Delta=\Delta E_{\lambda,c-\hbar}[f']\Delta^{-1}
\end{equation}
where $f'(y)=u^{\lambda}f(y)u^{-\lambda}.$

Let $M$ be an operator in  $\eee \HH_{c-\hbar}\eee$, then we can consider the operator $\Delta M \Delta^{-1}$ acting on antisymmetric polynomials. By \eqref{eq: Phi shift generalized} the operator $\Delta M \Delta^{-1}$ belongs to $\eee_{-} \HH_{c}\eee_{-}$ and   the operators  $\Delta M \Delta^{-1}$ generate $\eee_{-} \HH_{c}\eee_{-}$.
\end{proof}

\begin{remark}
In \cite{Heckman, Opdam} a similar isomorphism between the spherical and antispherical subalgebras was obtained using Dunkl representation by differential-difference operators. It is natural to ask if the two isomorphisms are the same. They are not, for the isomorphism in {\em loc. cit.} is given by conjugation by the "Vandermonde in $X$", in other words $\prod_{\alpha\in\Phi^+} (1-\alpha^\vee)\in \C[T^\vee]$, which acts by identity on the operators $E_{\lambda,c}[1]$ since $X\mapsto X$ in the differential Dunkl representation. The two isomorphisms are related by the Harish-Chandra transform of \cite{CherednikBook}. This is similar to the fact that in the rational case, there are two Dunkl embeddings, to $\Diff(\fh^{\reg})\rtimes W$ and $\Diff((\fh^*)^{\reg})\rtimes W$ in which one gets similar shift isomorphisms by either conjugation by $\prod_{\alpha\in \Phi^+}y_\alpha$ or respectively by $\prod x_{\alpha^\vee}$ \cite{BEG}, and the two isomorphisms are related by Cherednik's Fourier transform.

\end{remark}

\subsection{$\Z$-algebras}
\label{sec: z algebra}

We now recall the definition of $\Z$-algebras, as explained e.g. in \cite[Section 5]{GS1}. Note that our conventions are exactly opposite to those of {\em loc. cit.} because it makes the Springer action in Section \ref{sec:bfnspringer} a bit more natural.
\begin{definition}
\label{def:Zalg}
An associative (non-unital) algebra $B=\bigoplus_{i\leq j}B_{ij}$ is a {\em $\Z$-algebra} if 
$B_{ij}B_{jk}\subseteq B_{ik}$ for all $i\leq j\leq k$, $B_{ij}B_{lk}=0$ if $j\neq l$, and each $B_{ii}$ is unital such that $1_ib_{ij}=b_{ij}=b_{ij}1_{j}$ for all $b_{ij}\in B_{ij}$.
\end{definition}

The above definition ensures that $B_{ii}$ is a unital associative algebra for all $i$, and $B_{ij}$ is a $(B_{ii},B_{jj})$-bimodule. The $\Z$-algebra multiplication factors though the convolution of bimodules:
$$
\begin{tikzcd}
B_{ij}\bigotimes_{\C} B_{jk} \arrow{rr} \arrow{dr}& & B_{ik}\\
 & B_{ij}\bigotimes_{B_{jj}} B_{jk} \arrow{ur}&  
\end{tikzcd}
$$
The simplest example of $\Z$-algebras comes from $\Z$-graded algebras.

\begin{example}
\label{ex: z algebra from graded}
Suppose that $S=\oplus_{d}S_d$ is an associative $\Z$-graded algebra with multiplication $S_{d}S_{d'}\to S_{d+d'}$.
Define $B_{ij}=S_{j-i}$ for all $i$ and $j$, then $B(S)=\bigoplus_{i\leq j}B_{ij}$ is a {\em $\Z$-algebra}.
Note that in this example the algebras $B_{ii}$ are all isomorphic to $S_0$.
\end{example}

Our main source of $\Z$-algebras will be a filtered deformation of Example \ref{ex: z algebra from graded}. We say that a $\Z$-algebra $B$ is {\bf of graded type} if it has an algebra filtration (which we omit from the notations) such that $\gr B=B(S)$ for some commutative graded algebra $S$. Unpacking this definition, we get the following properties of $\gr B$:
\begin{itemize}
\item $S_0:=\gr B_{ii}$ is a commutative algebra which does not depend on $i$ up to isomorphism
\item $S_{j-i}:=\gr B_{ij}$ depends only on the difference $j-i$ up to isomorphism
\item For all $i,j,k$ we have a commutative square
$$
\begin{tikzcd}
\gr B_{ij}\bigotimes \gr B_{jk} \arrow{d} \arrow{r}& \gr B_{ik} \arrow{d}\\
S_{j-i}\bigotimes S_{k-j} \arrow{r} & S_{k-i}  
\end{tikzcd}
$$
\item The left and right actions of $S_0\simeq\gr B_{ii}\simeq \gr B_{jj}$ on the bimodule $\gr B_{ij}$ agree.
\end{itemize}

The last bullet point is related to the Harish-Chandra property for the bimodules $B_{ij}$, see \cite{Simental,LosevCompl}. Namely, {\em a Harish-Chandra bimodule} for a filtered algebra $A$ is a bimodule $B$ with an exhaustive filtration s.t. $[A_{\leq i},B_{\leq j}]\subseteq B_{i+j-d}$ s.t. $\gr B$ is finitely generated. The commutator condition implies that the left and right actions of $\gr A$ on $\gr B$ agree.

Also note that we can associate a pair of schemes to a $\Z$-algebra of graded type: the affine scheme $\Spec S_0$ and the scheme $\Proj S$. We have a natural morphism $\Proj S\to \Spec S_0$.

Next, we define modules over a $\Z$-algebra $B$. A graded vector space $M=\oplus M_i$ is a $B$-module if for all $i$ and $j$ we have multiplication maps $B_{ij}\otimes M_j\to M_i$ such that we have a commutative diagram
$$
\begin{tikzcd}
B_{ij}\otimes B_{jk}\otimes M_k \arrow{d}\arrow{r}& B_{ij}\otimes M_j \arrow{d}\\
B_{ik}\otimes M_k \arrow{r} & M_i.
\end{tikzcd}
$$
In particular, $M_i$ is a module over the algebra $B_{ii}$ for all $i$. If $B$ is of graded type and $M$ admits a filtration compatible with a filtration on $B$ then $\gr M$ is graded $S$-module for the graded algebra $S$. In particular, $\gr M$ defines a quasicoherent sheaf on $\Proj S$. 

\subsection{$\Z$-algebras from Cherednik algebras}

We now turn to defining a $\Z$-algebra $\cB={}_\bullet \cB_{\bullet}^{\hbar}$ as follows. The component ${}_i \cB_i^{\hbar}$ is the spherical Cherednik algebra 
$\eee \HH_{c+i\hbar}\eee$ with parameter $c+i\hbar$. The component  ${}_i \cB_{i+1}^{\hbar}$ is the {\em shift bimodule}
$${}_i \cB_{i+1}^{\hbar}=\eee \HH_{c+(i+1)\hbar,\hbar}\eee_-$$
over the algebras ${}_{i+1} \cB_{i+1}^{\hbar}=\eee \HH_{c+(i+1)\hbar,\hbar}\eee$ and 
$$
{}_{i} \cB_{i}^{\hbar}=\eee \HH_{c+i\hbar,\hbar}\eee\simeq \eee_- \HH_{c+(i+1)\hbar,\hbar}\eee_-.
$$
The last isomorphism is given by Theorem \ref{thm:shiftiso}.
Finally, for more general $i<j$ we define the shift bimodules
$${}_i \cB_{j}^{\hbar}={}_i\cB_{i+1}^{\hbar}\cdots {}_{j-1}\cB_{j}^{\hbar}$$ where $\cdot$ denotes the appropriate tensor product.

\begin{lemma}
At $\hbar=c=0$ one has ${}_{i}\cB_{j}^{\hbar=0}=A^{j-i}$, where $A$ is the subspace of diagonally antisymmetric polynomials in $\C[T^*T^\vee]$, and this is compatible with the multiplication. When $i=j$ this is the subspace of diagonally symmetric polynomials. 
\end{lemma}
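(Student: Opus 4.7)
At the commutative specialization $c=\hbar=0$, the trigonometric DAHA $\HH_{c+i\hbar,\hbar}$ degenerates to the crossed product $\C[W]\ltimes R$ with $R=\C[T^*T^\vee]$, independent of the shift index $i$, and the shift isomorphism of Theorem~\ref{thm:shiftiso} collapses to the identity on $R^W$. My plan is to unwind the definition of ${}_i\cB_j^\hbar$ in this limit and identify it directly with the graded algebra $\bigoplus_{d\ge 0}A^d\subset R$, component by component.

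For the diagonal case $i=j$, the standard crossed-product calculation $\eee f\eee\mapsto \eee(f)$ gives an isomorphism ${}_i\cB_i^{\hbar=0}\xrightarrow{\sim}R^W=A^0$ of commutative algebras, matching the last sentence of the lemma. For the one-step shift bimodule, a short computation using $aw=\sgn(w)\,wa$ for antisymmetric $a\in R$ shows $\eee f\eee_-=\eee_-(f)\eee_-$ inside $\C[W]\ltimes R$, so the map $\eee f\eee_-\mapsto \eee_-(f)$ furnishes a vector space isomorphism ${}_i\cB_{i+1}^{\hbar=0}\xrightarrow{\sim}A$. Since the shift isomorphism becomes trivial at $c=\hbar=0$ and the conjugation by $\Delta$ used in its construction amounts in this limit to multiplying by an antisymmetric function, the $(R^W,R^W)$-bimodule structure on ${}_i\cB_{i+1}^{\hbar=0}$ inherited from the $\Z$-algebra will agree with the tautological action of $R^W$ on $A\subset R$ by polynomial multiplication.

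For general $d=j-i\ge 1$, the defining iterated tensor product
$$
{}_i\cB_j^{\hbar=0}={}_i\cB_{i+1}^{\hbar=0}\otimes_{R^W}\cdots\otimes_{R^W}{}_{j-1}\cB_j^{\hbar=0}=A\otimes_{R^W}\cdots\otimes_{R^W}A
$$
maps to $A^d\subset R$ via iterated multiplication. This map is surjective by the very definition of $A^d$, it is compatible with the $\Z$-algebra multiplication by associativity, and by the sign rule $\epsilon^{\otimes d}$ it lands in $R^W$ for $d$ even and in $A$ for $d$ odd --- matching the alternation of idempotents $\eee_d$ in the diagram~\eqref{diagram of algebras}.

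The principal obstacle is verifying that this multiplication map is an isomorphism and not merely a surjection, i.e.\ that there are no extra relations arising over the locus in $T^*T^\vee/W$ where $W$ acts with nontrivial stabilizer. Over the dense open locus in $\Spec R^W$ where $W$ acts freely, $A$ is an invertible module, so the map is a generic isomorphism; combined with torsion-freeness of $A^d$ over $R^W$, this reduces the problem to showing that $A^{\otimes_{R^W}d}$ has no $R^W$-torsion. In type $A$ one can carry this out concretely using Haiman's determinant basis from Lemma~\ref{lem: Haiman dets}, which expresses every element of $A^d$ as an iterated product of single-shift elements. In general types, the statement can alternatively be deduced from the commutative Coulomb-branch $\Z$-algebra together with Theorem~\ref{thm: properties of coulomb algebra}(c).
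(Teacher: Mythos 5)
Your proof follows the paper's own argument quite closely in the degree $0$ and degree $1$ steps, identifying $\eee\HH\eee\cong R^W$ and $\eee\HH\eee_-\cong A$ at $c=\hbar=0$ via the idempotent calculus in $\C[W]\ltimes R$, exactly as the paper does. The key added value of your write-up is that you correctly flag what the paper glosses over: the paper simply writes ${}_i\cB_j^{\hbar=0}=A\otimes_{R^W}\cdots\otimes_{R^W}A = A^{j-i}$, and the second equality --- injectivity of the iterated multiplication map onto $A^{j-i}\subset R$, not merely surjectivity --- is the actual content. Your reduction to torsion-freeness of $A^{\otimes_{R^W}d}$ is the right way to see the issue.

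Neither of your proposed fixes is complete, however. Lemma~\ref{lem: Haiman dets} gives a basis of $A$ rather than $A^d$, so it only re-establishes surjectivity. And Theorem~\ref{thm: properties of coulomb algebra}(c) identifies only ${}_0\cA_1^{\hbar=0}$; moreover, outside type $A$ the commutative Coulomb $\Z$-algebra degenerates to $\eee_d I^{(d)}$ (Theorem~\ref{thm: coulomb is symbolic}), which strictly contains $A^d$ in general, so that route cannot yield the graded ring $\bigoplus A^d$.

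In fact the needed injectivity seems to genuinely fail for general $G$. For $W=\Z/2$ (the $SL_2$, rank-one case) a free presentation of $A$ over $R^W$ shows that $y\otimes(t-t^{-1}) - (t-t^{-1})\otimes y$ is a nonzero torsion element of $A\otimes_{R^W}A$ (killed by the ideal of the fixed point $(1,0)\in T^\vee\times\ft$), yet it maps to zero under multiplication. So the lemma, read as an isomorphism, should be understood for $G=GL_n$, where it follows from Haiman/Gordon--Stafford --- precisely the content of Theorem~\ref{th: Gordon Stafford}, stated immediately afterward and used in the Corollary that closes Section 4.5. Your instinct to isolate and worry about this step was correct.
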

\begin{proof}
Let us prove that ${}_{i}\cB_{i+1}^{\hbar=0}=A$. Indeed, ${}_{i}\cB_{i+1}^{\hbar=0} =\eee_{-} \HH \eee\cong \eee_{-} \C[T^*T^\vee]$  is the space of antisymmetric polynomials in $\C[T^*T^\vee]$. Similarly, ${}_{i}\cB_{i}^{\hbar=0}\cong \eee \C[T^*T^\vee]=\C[T^*T^\vee]^W$. Now 
$$
{}_{i}\cB_{j}^{\hbar=0}=A\otimes_{\C[T^*T^\vee]^{W}}   \otimes\cdots  \otimes_{\C[T^*T^\vee]^{W}}  A=A^{j-i}.
$$
\end{proof}

\begin{example}
\label{ex: module sl2}
Consider the trigonometric Cherednik algebra for $G=\GL_2$. 
For $\nu=1/2$ it has a $1$-dimensional representation $L_{1/2}(\triv)$ with invariant part $\eee L_{1/2}(\triv)\cong \eee_- L_{3/2}(\triv)$. Using this isomorphism, the bimodule 
$\eee \HH_{3/2} \eee_-$ sends $\eee \HH_{3/2} \eee_-\otimes_{\eee_- \HH_{3/2} \eee_-} \eee_- L_{3/2}(\triv)\cong \eee L_{3/2}(\triv)$.
More generally, 
$$\eee \HH_{(2k+1)/2} \eee_-\otimes_{\eee_-\HH_{(2k+1)/2}\eee_-} \eee_- L_{(2k+1)/2}(\triv)\cong \eee L_{(2k+1)/2}(\triv)$$ and the direct sum 
$$\bigoplus_{k\geq 0} \eee L_{(2k+1)/2}(\triv)$$ is a module for the $\Z$-algebra $\cB$.

\end{example}

\subsection{$\Z$-algebra for $\GL_n$}
Consider now the $\Z$-algebra as introduced above for $G=\GL_n$. We have 
\begin{theorem}
\label{th: Gordon Stafford}
For all $i\le j$ the $\C[\hbar]\otimes \C[y_1,\ldots,y_n]^{S_n}$-module ${}_{i}\cB_{j}^{\hbar}$ is free.
\end{theorem}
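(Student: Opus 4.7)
My plan is a deformation argument: reduce freeness of ${}_i \cB_j^\hbar$ to freeness of its commutative limit $A^{j-i}$ over $\C[y_1,\ldots,y_n]^{S_n}$, and then appeal to Haiman's $n!$-theorem in the commutative case.

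First I would verify that ${}_i \cB_j^\hbar$ is flat over $\C[\hbar]$. Each shift bimodule ${}_k \cB_{k+1}^\hbar = \eee\,\HH_{c+(k+1)\hbar,\hbar}\eee_-$ is flat over $\C[\hbar]$ because the trigonometric DAHA is a PBW deformation in $\hbar$; flatness propagates through the iterated tensor products defining ${}_i \cB_j^\hbar$ over the flat intermediate spherical algebras ${}_k \cB_k^\hbar$. By the lemma preceding the theorem, the specialization ${}_i\cB_j^{\hbar=0}$ is the commutative bimodule $A^{j-i}$. The natural grading on $\HH_{c,\hbar}$ with $\deg(y_i) = \deg(\hbar) = 1$ and $\deg(w) = 0$ for $w \in \tW$ equips ${}_i\cB_j^\hbar$ with a non-negative grading in $y$ and $\hbar$. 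A graded Nakayama argument then reduces freeness of ${}_i \cB_j^\hbar$ over $\C[\hbar]\otimes\C[y]^{S_n}$ to freeness of $A^{j-i}$ over $R := \C[y]^{S_n}$: one lifts a homogeneous $R$-basis of $A^{j-i}$ and checks it remains a basis modulo $\hbar$, then concludes by $\C[\hbar]$-flatness.

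The core step is freeness of $A^d$ over $R$. I would decompose by the $(\C^*)^n$-weights in the $u$-variables: $A^d = \bigoplus_{\delta \in \Z^n} A^d_\delta$, each summand a finitely generated graded $R$-module, and it suffices to treat each $A^d_\delta$ separately. By Theorem \ref{thm: intro hilb}, $\bigoplus_d A^d$ is the homogeneous coordinate ring of $\Hilb^n(\C^* \times \C)$, so $A^d_\delta$ computes sections of $\cO(d)$ at a prescribed torus weight. Haiman's $n!$-theorem \cite{Haiman}, via the Cohen-Macaulayness of the isospectral Hilbert scheme and the decomposition of the Procesi bundle into isotypic components, gives that each $A^d_\delta$ has depth $n$ over $R$. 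The Auslander-Buchsbaum formula on the polynomial ring $R$ then forces projective dimension zero, hence projectivity, hence freeness (since $R$ is a polynomial ring, either directly or by Quillen-Suslin). Assembling across $\delta$ yields freeness of $A^d$ as an $R$-module.

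The hard step is the commutative freeness of $A^d$ over $R$; everything else is routine flatness and graded Nakayama. A more elementary alternative would be to produce a candidate $R$-basis of each $A^d_\delta$ directly from the determinants $\Delta_S$ of Section \ref{sec: explicit schur} combined with Schubert polynomials (which form a $\C[y]^{S_n}$-basis of $\C[y]$ by Chevalley), but verifying linear independence of such an explicit basis effectively reduces to the same Hilbert series identities that lie behind Haiman's theorem.
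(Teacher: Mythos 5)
Your proposal has the right high-level ingredients (Haiman's theorem, a graded Nakayama / Auslander--Buchsbaum argument), but it misplaces the difficulty, and as written the argument has two genuine gaps.

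The first and central gap is the claim that $\C[\hbar]$-flatness of ${}_i\cB_j^\hbar$ ``propagates through the iterated tensor products.'' This is false in general: if $M$ and $N$ are $\C[\hbar]$-flat modules over a $\C[\hbar]$-flat algebra $S$, the tensor product $M\otimes_S N$ need not be $\C[\hbar]$-flat (e.g.\ $S=\C[\hbar][x]$, $M=S/(x)$, $N=S/(x-\hbar)$ give $M\otimes_S N\cong\C$). Establishing that the iterated tensor product ${}_i\cB_{i+1}\otimes_{{}_{i+1}\cB_{i+1}}\cdots\otimes_{{}_{j-1}\cB_{j-1}}{}_{j-1}\cB_j$ has no $\hbar$-torsion is precisely the hard step; it is equivalent to the statement $\gr{}_i\cB_j^\hbar\simeq A^{j-i}$ (the map $A^{\otimes_{\C[T^*T^\vee]^W}(j-i)}\to A^{j-i}$ might a priori have kernel), and this is what Gordon and Stafford prove using Haiman's theorem. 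You have labeled the hard part ``routine'' and the routine part (commutative freeness, once the filtration is understood) ``hard.'' The Nakayama step is circular without an independent proof of $\hbar$-torsion-freeness.

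The second gap is the decomposition ``$A^d=\bigoplus_{\delta\in\Z^n}A^d_\delta$ by $(\C^*)^n$-weights.'' Since $S_n$ permutes the $x_i$ simultaneously with the $y_i$, $A^d$ is not stable under the $(\C^*)^n$-action: an $S_n$-alternating eigenvector of weight $\delta$ forces $\sigma\cdot\delta=\delta$ for all $\sigma\in S_n$, so only the diagonal weights $(k,\ldots,k)$ survive and those pieces are tiny. What you want is the $\Z$-grading by total $x$-degree (and $\Z$-grading by $y$-degree), but then each $x$-graded piece is still not finitely generated over $R=\C[y]^{S_n}$, so Auslander--Buchsbaum does not apply directly. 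This needs repair.

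For comparison, the paper's proof is quite different and cleaner: it does not argue directly with the trigonometric $\Z$-algebra. Instead it invokes the analogous $\Z$-algebra ${}_\bullet\cB_\bullet^{\rat}$ for the rational Cherednik algebra of $GL_n$, where Gordon--Stafford \cite{GS1,GS2} already proved both $\gr{}_i\cB_j^{\rat}\simeq A_{\rat}^{j-i}$ (hence $\hbar$-flatness) and freeness over $\C[y]^{S_n}$ (their Lemma~6.11(2), which relies on Morita equivalence results of \cite{BEG}). It then passes to the trigonometric case by Ore localization at the central element $X_1\cdots X_n$, using Suzuki's map $\HH^{\rat}_\hbar\to\HH^{\trig}_{1,\hbar}$ \cite{Suz} which becomes an isomorphism after localization. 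Since $\prod X_i$ commutes with $\hbar$ and the $y_i$, and localization is faithfully flat, freeness transports. The rational-to-trigonometric reduction avoids both of your gaps: the $\hbar$-flatness and the commutative freeness are imported as black boxes, and the $x^\pm$-direction is handled by the localization rather than by an equivariant decomposition.
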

\begin{proof}
The idea of the proof is to replace the trigonometric Cherednik algebra
\(\mathbb{H}_{c,\hbar}=H^{\trig}_{c,\hbar}\) with the rational Cherednik algebra \(H^{\rat}_{\hbar}\) \cite{EG}. The  algebra
\(H^{\rat}_{\hbar}\) is the quotient of \(\mathbb{C}[X_1,\dots,X_n]\otimes\mathbb{C}[z_1,\dots,z_n]\rtimes S_n\) modulo the
relations:
\[[X_i,z_i]=\hbar+\sum_{j\ne i}\sigma_{ij}\ i=1,\dots,n,\]
\[[X_i,z_j]=-\sigma_{ij}, i\ne j,\]
where \(\sigma_{ij}\in S_n\) is the transposition (the generators $z_i$ are usually called $y_i$ in the rational Cherednik algebra literature).

For the rational Cherednik algebra the corresponding $\Z$--algebra \({}_\bullet\mathcal{B}_{\bullet}^{\rat,\hbar=\hbar_0}\) was constructed by Gordon and Stafford \cite{GS1,GS2} who defined a filtration on ${}_{i}\cB_{j}^{\rat,\hbar=\hbar_0}$ for any specialization of $\hbar$ and proved 
that $\gr {}_{i}\cB_{j}^{\rat, \hbar=\hbar_0}\simeq A^{j-i}_{\rat}$ using Haiman's results. Note that this was achieved without relying on Haiman's results in \cite{GGS}. This implies that ${}_{i}\cB_{j}^{\rat,\hbar}$ is free over $\hbar$. The freeness over $\C[y_1,\ldots, y_n]^{S_n}$ is e.g. \cite[Lemma 6.11(2)]{GS1}. We remark that this freeness uses results of \cite{BEG} about Morita equivalence of Cherednik algebras.

Now the trigonometric case is obtained by Ore localization in the central element $X_1\cdots X_n$, which commutes with the action of $\hbar$ and $y_i$. This follows from 
\begin{lemma}
There is a natural map 
$${}_{i}\cB_{j}^{\rat,\hbar=\hbar_0}\to {}_i \cB_j$$ which becomes an isomorphism upon localization in $\prod X_i$:
$({}_{i}\cB_{j}^{\rat,\hbar=\hbar_0})_{\prod X_i}\cong {}_i \cB_j$.
\end{lemma}
\begin{proof}
In \cite{Suz} it is shown that
\[\imath(w)=w,\quad \imath(X_i)=X_i,\quad \imath(z_i)=X_i^{-1}(y_i-\sum_{1\le j\le i}\sigma_{ji}),\]
extends to the algebra homomorphism \(\imath: \HH^{\rat}_{\hbar}\to \HH^{\trig}_{1,\hbar}\) that becomes an isomorphism
after localization by \(X_1\cdots X_n\). By hitting with $\eee$ on both sides, this implies the statement on the level of the spherical subalgebras. (To match parameters, we observe that \(H^{\trig}_{c,\hbar}\simeq H^{\trig}_{\lambda c,\lambda \hbar}\) for
any \(\lambda\in \mathbb{C}^*\).)

For the one-step bimodules, ${}_i\cB_{i+1}^{\rat}=\eee \HH^{\rat}_{c+i\hbar,\hbar}\eee_-$ by definition, so the result is true for $j=i+1$ as well. Finally, $${}_i\cB_j^{\rat}={}_i\cB^{\rat}_{i+1}\cdots {}_{j-1}\cB_j^{\rat}$$ and by standard properties of localization and tensor product we get the result.

\end{proof}
Now since tensoring with $\C[X_1,\ldots,X_n]_{\prod X_i}$ is faithfully flat, we deduce that since ${}_i\cB_j^{\rat}$ is free over $$\C[y_1,\ldots,y_n]^{S_n},$$ so is ${}_i\cB_j$.
This finishes the proof of Theorem \ref{th: Gordon Stafford}.
\end{proof}

More geometrically, the  bimodule ${}_{i}\cB_{j}^{\rat,\hbar}$ quantizes the line bundle $\calO(j-i)$ on the Hilbert scheme of points on $\C^2$ while ${}_{i}\cB_{j}^{\hbar}$ quantizes its restriction to the Hilbert scheme of $\C^{\times}\times \C$.

\begin{corollary}
The $\Z$-algebra $\cB$ is of graded type and for $G=\GL_n$, $\gr \cB$ corresponds to the graded algebra $S=\oplus_{d=0}^{\infty} A^{d}$. The corresponding algebraic varieties are $\Proj S=\Hilb^n(\C^{\times}\times \C)$ and $\Spec S_0=(\C^{\times}\times \C)^n/S_n$.
\end{corollary}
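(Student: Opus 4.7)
The plan is to exhibit a natural algebra filtration on $\cB$ whose associated graded coincides with the $\Z$-algebra $B(S)$ built, as in Example~\ref{ex: z algebra from graded}, from the commutative graded algebra $S=\bigoplus_{d\ge 0}A^d$; once this is done, the two varieties in the statement arise simply as $\Spec S_0$ and $\Proj S$. The filtration I would use is the PBW filtration on the ambient trigonometric Cherednik algebras, in which $\tW$ sits in degree $0$ while the generators $y\in X_*(T)\otimes\C$ and the central parameters $c$, $\hbar$ sit in degree $1$. This restricts to filtrations on the spherical and shift pieces ${}_i\cB_i^\hbar=\eee\HH_{c+i\hbar,\hbar}\eee$ and ${}_i\cB_{i+1}^\hbar=\eee\HH_{c+(i+1)\hbar,\hbar}\eee_-$ and, via the defining tensor product, extends to every ${}_i\cB_j^\hbar$.

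The first substantive step is to identify the $c=\hbar=0$ fibre of ${}_i\cB_j^\hbar$ with $A^{j-i}$. For $j-i=0,1$ this is the content of the Lemma immediately preceding Theorem~\ref{th: Gordon Stafford}. To propagate the identification to arbitrary $j-i$, I would invoke the freeness statement of Theorem~\ref{th: Gordon Stafford}: since ${}_i\cB_j^\hbar$ is free over $\C[\hbar]\otimes \C[y_1,\dots,y_n]^{S_n}$, the tensor product defining ${}_i\cB_j^\hbar$ commutes with specialisation at $c=\hbar=0$. Therefore the fibre is $A\otimes_{A^0}A\otimes\cdots\otimes_{A^0}A=A^{j-i}$, and the composition map on fibres corresponds to the honest multiplication of diagonally antisymmetric polynomials inside $\C[T^*T^\vee]$. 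By standard Rees algebra arguments, $\C[\hbar,c]$-freeness together with compatibility of the PBW filtration with the convolution products then identifies $\gr{}_i\cB_j^\hbar$ with this fibre, and the induced multiplication on $\gr\cB$ with the multiplication of $B(S)$. This shows that $\cB$ is of graded type.

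It then remains to compute $\Spec S_0$ and $\Proj S$ for $G=GL_n$. We have $S_0=A^0=\C[T^*T^\vee]^{S_n}$, and since $T^\vee\times\ft=(\C^\times\times\C)^n$ with the diagonal $S_n$-action, this immediately gives $\Spec S_0=(\C^\times\times\C)^n/S_n$. For the $\Proj$, the identification $\Proj\bigoplus_d A^d=\Hilb^n(\C^\times\times\C)$ is precisely Theorem~\ref{thm: intro hilb}, combined with Theorem~\ref{thm: properties of coulomb algebra}(d) which gives ${}_0\cA^{\hbar=0}_d=A^d$ in type $A$. The main obstacle in this plan is verifying the multiplicativity of the PBW filtration across the tensor products defining ${}_i\cB_j^\hbar$ for $j-i>1$; this is exactly where the flatness of Theorem~\ref{th: Gordon Stafford} (ultimately resting on the Gordon--Stafford--Haiman analysis of the rational case, imported via Ore localisation in $X_1\cdots X_n$) is indispensable, since without it the associated graded of a tensor product need not agree with the tensor product of associated gradeds.
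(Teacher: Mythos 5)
Your proposal is essentially correct and tracks the argument the paper leaves implicit: the preceding Lemma identifies the commutative limit ${}_i\cB_j^{\hbar=0}$ with $A^{j-i}$, Theorem~\ref{th: Gordon Stafford} supplies the flatness that equates this commutative limit with the associated graded of the filtration, and Haiman's theorem then gives $\Proj S=\Hilb^n(\C^\times\times\C)$.

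Two small points worth tightening. First, the Lemma preceding Theorem~\ref{th: Gordon Stafford} already establishes ${}_i\cB_j^{\hbar=0}=A^{j-i}$ for \emph{all} $j-i$, not only $j-i\in\{0,1\}$ as you say; the specialisation of the tensor product defining ${}_i\cB_j^\hbar$ at $c=\hbar=0$ agrees with the tensor product of specialisations by plain associativity of $\otimes$ (no freeness needed at that step). What freeness \emph{is} needed for, and where you correctly invoke it, is to conclude that the associated graded of the PBW/Rees filtration on ${}_i\cB_j^\hbar$ coincides with this $c=\hbar=0$ fibre — this is the genuinely nontrivial input from Gordon--Stafford and Haiman, imported via Suzuki's localisation to the trigonometric setting. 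Second, your route to $\Proj S=\Hilb^n(\C^\times\times\C)$ through Theorem~\ref{thm: intro hilb} and Theorem~\ref{thm: properties of coulomb algebra}(d) passes through the Coulomb-branch side unnecessarily; one can quote Haiman's result that $\Proj\bigoplus_d A^d$ is the Hilbert scheme directly, which is what the paper does. Neither point affects correctness.
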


\section{Coulomb branches and  $\Z$-algebras}
\label{sec:coulomb}

In this section, we explain half of the main construction of the paper, namely the construction of a $\Z$-algebra associated to the Coulomb branch of the $3d$ $\cN=4$ theory with adjoint matter, or in other words the spherical trigonometric DAHA. Most of the results work in greater generality, and are stated as such wherever possible. In Section \ref{sec: adjoint coulomb} we specialize these general constructions to the case of adjoint representation. 

The other half of the main construction, consisting of a generalized affine Springer theory for this $\Z$-algebra, is treated in Section \ref{sec:bfnspringer}. 

\subsection{Coulomb branches}

Let $1\to G\to \tG\to G_F\to 1$ be an extension of algebraic groups, where $G$ is reductive and $G_F$ is 
diagonalizable. Let $N$ be an algebraic representation of $\tG$, $\bP\subset G(\cO)\subset G(\cK)$ be a 
standard parahoric subgroup and $N_\bP$ a lattice in $N(\cK)$ stable under $\bP$. We will only be interested 
in the case where $N=\Ad$, $N_\bP=\Lie(\bP)$.

Let $\cR_\bP:=\cR_{G,N,\bP,N_\bP}$ be the {\em parahoric BFN space of triples} as in \cite{GK}. More 
precisely, we have
\begin{definition}
$\cR_{G,N,\bP,N_\bP}$ is the fpqc sheaf on $\Sch/\kk$ associating to $S$ the groupoid of tuples $(\cP,\varphi,s,
\cP_\bP)$ where $\cP$ is a $G$-bundle on $S\times \Spec \cK$, $\cP_\bP$ is a $\bP$-reduction of $\cP$ over $S
\times \Spec \cO$, and $\varphi$ is a trivialization over $S\times \Spec \cO$ compatible with the $\bP$-
structure. Moreover, $s$ is a section of the associated $N$-bundle of $\cP_\bP$ such that $\varphi \circ s 
(t) \in N_\bP$.
\end{definition}
\begin{remark}
Dropping the condition that $\varphi \circ s (t) \in N_\bP$, we get the space $\cT_{G,N,\bP,N_\bP}=G(\cK)
\times_\bP N_{\bP}$, which is an (ind-)vector bundle over the partial affine flag variety $\Fl_\bP$. In 
particular, if $N=\Ad,$ this can be thought of as the cotangent bundle of $\Fl_\bP$.
\end{remark}
We recall the following definitions and theorems as motivation for the following sections.
 We define the group $$\tbP:=\ev_0^{-1}(G_F \ev_0(\bP))$$ where $\ev_0: \tG(\cO)\to \tG$ is the map sending $t\mapsto 0$. 
 In general, we refer by underline to flavor-deformed objects.
From \cite{BFN, GK, HKW}, we have
\begin{theorem}
\label{thm:coulombisalgebra}
$H_*^{\tbP\rtimes \C^\times}(\cR_\bP)=:\cA_\bP^{\hbar}$ is an associative algebra with unit. It is a flat deformation of $\cA_\bP:=H_*^{\tbP}(\cR_{\bP})$. When $\bP=G(\cO)$, the algebra $\cA^{\hbar}_{G(\cO)}$ is a filtered quantization of $\cA_{G(\cO)}$, which is commutative. The spectrum of $\cA_{G(\cO)}$ is called the {\em Coulomb branch} of $(G,N)$.
\end{theorem}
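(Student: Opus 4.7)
The plan is to follow the standard BFN convolution strategy, adapted to the parahoric setting as in \cite{HKW,GK}. First I would construct the convolution product on $H_*^{\tbP \rtimes \C^\times}(\cR_\bP)$ via a convolution diagram
\begin{equation*}
\cR_\bP \times \cR_\bP \xleftarrow{p} \cR_\bP \widetilde{\times} \cR_\bP \xrightarrow{m} \cR_\bP,
\end{equation*}
where the middle space parameterizes pairs of composable BFN triples (obtained by twisting the second copy by the first), $p$ remembers the two individual triples, and $m$ composes them into a single triple. The product is then $\ast = m_* \circ p^*$, taking advantage of the fact that $p$ is a (pro-)smooth fibration and that $m$ is ind-proper on each $\tbP$-invariant finite-dimensional piece of the ind-limit, so the pushforward preserves the ind-finite type Borel--Moore homology.

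Next I would verify associativity and the existence of a unit. Associativity is a formal consequence of a standard base-change diagram for threefold compositions of BFN triples, together with the projection formula. The unit is represented by the fundamental class of the zero-section $N_\bP \subset \cR_\bP$ sitting over the base point of $\Fl^\bP$. The subtle point here is keeping track of the equivariant and ind-scheme structure and showing that the proper pushforwards and smooth pullbacks are well-defined on the ind-limits; for $\bP = G_\cO$ this is done carefully in \cite{BFN}, and for general parahoric $\bP$ in \cite{GK,HKW}.

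For flatness, the key observation is that $\cR_\bP$ admits a stratification whose strata are vector bundles over $\bP$-orbits in $\Fl^\bP$, and each of the latter is an affine cell. Consequently $H_*^{\tbP}(\cR_\bP)$ is free over $H_{\tbP}^*(\pt)$, and running the same paving argument equivariantly with respect to the loop rotation $\C^\times$ exhibits $\cA_\bP^\hslash$ as a free $\C[\hslash]$-module whose specialization at $\hslash = 0$ is $\cA_\bP$. In particular the convolution algebra is a flat deformation.

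The heart of the statement, and where I expect the main obstacle, is the commutativity of $\cA_{G_\cO}$ and the identification of $\cA^\hslash_{G_\cO}$ as a filtered quantization of it. For this I would follow BFN's Beilinson--Drinfeld-type factorization argument: introduce a global version $\cR_{G_\cO,\A^1}$ over $\A^1$ whose fiber over a point is $\cR_{G_\cO}$ and whose general fiber over a two-point configuration is $\cR_{G_\cO} \times \cR_{G_\cO}$. The resulting fusion product agrees with the convolution product in the collision limit and is manifestly symmetric by the $S_2$-equivariance of the configuration space, so degeneration yields commutativity of $\cA_{G_\cO}$. The filtration by loop rotation weights (equivalently, by $\hslash$-degree) then identifies $\cA_{G_\cO}^\hslash$ as a filtered quantization, with Poisson bracket extracted from the first-order $\hslash$-correction to convolution. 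The most delicate technical step is the factorization argument, which uses spherical invariance at each point of $\A^1$ in an essential way; this is precisely why one should only expect commutativity when $\bP = G_\cO$, consistent with the non-commutative answer (the full graded DAHA) for $\bP = \bI$.
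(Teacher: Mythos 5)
The paper does not give its own proof of this theorem; it quotes it directly from \cite{BFN, GK, HKW}. Your outline is a faithful recap of the construction in those references --- the BFN convolution diagram with pull-back-with-support and ind-proper pushforward, the unit as the class of the zero section $N_\bP$, flatness over $\C[\hslash]$ via equivariant formality of the affine paving, and commutativity of $\cA_{G_\cO}$ via the Beilinson--Drinfeld factorization argument over the affine line --- so your approach matches the sources being cited.
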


The papers \cite{HKW,GK} developed a generalization of affine Springer theory for Coulomb branches which is summarized in the next result below. We discuss it in more detail in Section \ref{sec:bfnspringer}.

\begin{theorem}[\cite{HKW,GK}]
Let $v\in N(\cK)$ and
let $L_v\subset \tbP\rtimes \C^\times$ be the stabilizer of $v$ in $\tbP \rtimes \C^\times_{\rot}.$ 
The algebra $\cA_\bP^{\hbar}$ acts on
$H^{L_v}_*(M_v^\bP)$ via natural cohomological correspondences, provided the group $L_v$ is compact in the $t$-adic topology.
\end{theorem}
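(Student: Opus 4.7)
The plan is to construct the action by convolution on equivariant Borel-Moore homology, following the BFN Springer theory developed by Hilburn, Kamnitzer and Weekes \cite{HKW} and Garner and the second author \cite{GK}. Recall that the generalized affine Springer fiber is
$$M_v^\bP = \{[g] \in \Fl^\bP : g^{-1}\cdot v \in N_\bP\},$$
and when $N = \Ad$ and $v = \gamma$ this recovers $\Sp_\gamma^\bP$. The group $L_v$ acts on $M_v^\bP$ by restriction of the natural $\tbP \rtimes \C^\times_{\rot}$-action, and $t$-adic compactness of $L_v$ ensures that $H_*^{L_v}(M_v^\bP)$ is well-defined as a direct limit over finite-dimensional $L_v$-stable subvarieties.

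The key geometric input is a mixed convolution space $\cR_\bP \ast_\bP M_v^\bP$, defined in complete analogy with the self-convolution $\cR_\bP \ast_\bP \cR_\bP$ used to put the algebra structure on $\cA_\bP^{\hslash}$: roughly, it parametrizes pairs consisting of a BFN triple together with a point of $M_v^\bP$ whose image after Hecke modification again lies in $M_v^\bP$. It fits into a diagram
$$\cR_\bP \times M_v^\bP \;\xleftarrow{\;p\;}\; \cR_\bP \ast_\bP M_v^\bP \;\xrightarrow{\;m\;}\; M_v^\bP,$$
where $p$ is smooth (a Zariski-locally trivial $\bP$-bundle up to a vector bundle factor) and $m$ is ind-proper. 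The action is then defined by the cohomological correspondence
$$\alpha \cdot \beta := m_*\, p^*(\alpha \boxtimes \beta),$$
and the $\tbP \rtimes \C^\times$-equivariance on the $\cR_\bP$-side is compatible, via the embedding $L_v \hookrightarrow \tbP \rtimes \C^\times_{\rot}$, with the $L_v$-equivariance on $M_v^\bP$, so the output lands naturally in $H_*^{L_v}(M_v^\bP)$.

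The main obstacle is verifying associativity of this action. This requires constructing a triple convolution space $\cR_\bP \ast_\bP \cR_\bP \ast_\bP M_v^\bP$ together with two natural factorizations witnessing $(ab)\cdot \beta = a \cdot (b \cdot \beta)$, and checking that the resulting cohomological correspondences coincide via base change and proper pushforward. Compactness of $L_v$ enters essentially here: without it the convolution spaces would fail to have locally finite-type presentations compatible with the $L_v$-action, and the pushforward/pullback manipulations would not be well-defined. The verification is essentially identical to the one carried out in \cite{HKW, GK}, and I would import their argument with only cosmetic changes to our setting, the one genuinely new point being the bookkeeping associated with allowing the parahoric $\bP$ and the flavor extension $\tbP$ simultaneously.
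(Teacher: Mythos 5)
Your proposal captures the core idea correctly: one builds a mixed convolution correspondence and then the action is push-pull, with associativity reduced to a triple-convolution compatibility. This is indeed the BFN-style approach from the cited references, and the paper itself relegates the proof to those sources and gives the explicit convolution diagram only later (Theorem~\ref{thm:action}).

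The one genuinely useful thing to flag is the device by which the two equivariances are reconciled. You assert that the $\tbP\rtimes\C^\times$-equivariance on the $\cR_\bP$ factor is "compatible" with the $L_v$-equivariance on $M_v^\bP$, so that the output lands in $H_*^{L_v}(M_v^\bP)$, but this is exactly the subtle step, and simply invoking the embedding $L_v\hookrightarrow\tbP\rtimes\C^\times$ is not enough. What the paper (and \cite{HKW,GK}) actually uses is the orbital variety ${}_\eta\O_v=\tG_\cK.v\cap N_\bP$ together with the quotient-stack isomorphism $[{}_\eta\O_v/\bP]\simeq[L_v\backslash M_v^\bP]$ (Lemma~\ref{lem:stackquotient}), so that $H_*^{L_v}(M_v^\bP)=H_*^{\bP}({}_\eta\O_v)$. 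Rewriting the module as $\bP$-equivariant Borel--Moore homology of a closed subvariety of $N_\bP$ makes the push-pull on the module side literally parallel to the convolution on the algebra side (same groups acting, same ambient spaces, same "pull-back with support" as in \cite[3(ii)]{BFN}), rather than something that has to be matched across two different groups. Your direct $M_v^\bP$ formulation can be made to work, but you should say this translation explicitly; otherwise the equivariance claim is asserted rather than explained.

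Two smaller imprecisions. Your two-step diagram $\cR_\bP\times M_v^\bP\leftarrow\cR_\bP\ast_\bP M_v^\bP\rightarrow M_v^\bP$ suppresses the intermediate quotient by the diagonal $\bP$-action; in the BFN setting the maps are genuinely three ($p$, $q$, $m$), and $p^*$ is a restriction-with-supports rather than a smooth pullback, since the source is a closed ind-subvariety rather than an open piece. And compactness of $L_v$ is used primarily to make $H_*^{L_v}(M_v^\bP)$ (equivalently $H_*^{\bP}({}_\eta\O_v)$) well-defined as a colimit over finite-dimensional $L_v$-stable (resp.\ $\bP$-stable) approximations; associativity itself is a base-change/projection-formula computation once those homologies make sense, not the place where compactness enters most essentially.
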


\subsection{A category of line defects}
\label{sec:linedefects}
Heuristically, the equivariant BM homologies of the spaces of triples above are endomorphisms of objects in a "category of line operators" \cite{DGGH, Weekes, Webster} which is something like $G(\cK)$-equivariant $D$-modules on $N_\cK$.

We won't stipulate on the definition of the actual category (see however \cite{BKV} in the adjoint case), but this category should contain objects coming from $\eta=(U,\bP)$, where $U\subset N(\cK)$ is a $\bP$-stable lattice and $\bP$ is a parahoric subgroup of $G(\cK)$. We will simply {\em define} $\Hom(\eta,\eta')=H^{\bP'\rtimes \C^\times}_*({}_\eta \cR_{\eta'})$, where $${}_\eta \cR_{\eta'}=\left\{[g,s]\in G(\cK)\times^{\bP'}U'|gs\in U\right\}.$$ We will use the notation $U=N_\bP$ to emphasize $N_\bP$ is a $\bP$-stable lattice. By abuse of notation, we will also write $\Hom(\eta,\eta')$ for the flavor- or loop-rotation deformed versions of these spaces.
It is clear that when $\eta=\eta'=N_\bP$, we have 
$$\End(\eta)=\Hom(\eta,\eta)=\cA_\bP^{\hbar}$$ from Theorem \ref{thm:coulombisalgebra}.

\begin{theorem}
\label{thm:convolution}
There is an associative multiplication $\Hom(\eta,\eta')\otimes_\C\Hom(\eta',\eta'')\to \Hom(\eta,\eta'')$ via the following modification of the BFN convolution product.
   \begin{center}
   \begin{tikzcd}
{}_\eta\cR_{\eta'} \times {}_{\eta'}\cR_{\eta''} \arrow[d,"i",hook] & p^{-1}\left({}_\eta\cR_{\eta'}\times {}_{\eta'}\cR_{\eta''}\right) \arrow[l,"p"'] \arrow[r,"q"]\arrow[hook, d,"j"] & q\left(p^{-1}({}_\eta\cR_{\eta'}\times {}_{\eta'}\cR_{\eta''})\right) \arrow[d,"m"] \\
\cT_{\eta'} \times {}_{\eta'}\cR_{\eta''} & G(\cK) \times {}_{\eta'}\cR_{\eta''} \arrow[l,"p"] & {}_{\eta}\cR_{\eta''}
\end{tikzcd}
   \end{center}
Here the maps $p,q,m$ send $$p:(g_1,[g_2,s])\mapsto ([g_1,g_2s],[g_2,s]), \;\; q:(g_1,[g_2,s])\mapsto [g_1,[g_2,s]],$$ $$m:[g_1,[g_2,s]]\mapsto[g_1g_2,s]$$ and $i,j$ are inclusions of closed subvarieties.
\end{theorem}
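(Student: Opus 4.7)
My plan is to adapt the standard BFN convolution construction (as in \cite{BFN} section 3, extended in \cite{HKW, GK} to the parahoric / non-adjoint cases) to the bimodule setting, where source and target need not be equal. The main technical content is verifying that each of the four maps $i,p,q,m$ has the right properties so the corresponding operation in equivariant Borel--Moore homology is defined, and then checking associativity.

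First I would spell out the geometry of each step. The vector bundle $\cT_{\eta'}=G_\cK\times^{\bP'} U'$ over $\Fl_{\bP'}$ is an ind-scheme, and the inclusion $i:{}_\eta\cR_{\eta'}\times {}_{\eta'}\cR_{\eta''}\hookrightarrow \cT_{\eta'}\times {}_{\eta'}\cR_{\eta''}$ is closed, cut out by the condition $gs\in U$ on the first factor. The map $p$ is, up to the second factor, the composition of the $\bP'$-torsor $G_\cK\to G_\cK/\bP'$ with the section identification sending $g_1\mapsto [g_1,g_2s]$; equivalently it is a principal $\bP'$-bundle (in the ind-sense), hence ind-flat. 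The map $q$ is the quotient by the diagonal $\bP'$-action $(g_1,[g_2,s])\mapsto (g_1 q, q^{-1}\cdot[g_2,s])$ on the preimage of ${}_\eta\cR_{\eta'}\times {}_{\eta'}\cR_{\eta''}$, and its image lands in the convolution space $G_\cK\times^{\bP'}{}_{\eta'}\cR_{\eta''}$, with $j$ a closed embedding inside $G_\cK\times {}_{\eta'}\cR_{\eta''}$. Finally, $m$ is induced by group multiplication $(g_1,g_2)\mapsto g_1g_2$; the condition $g_1g_2 s\in U$ holds by construction, so $m$ lands in ${}_\eta\cR_{\eta''}$, and it is ind-proper because on fixed parahoric orbits it factors through projective Schubert-type maps, exactly as in \cite[Section 3]{BFN} and \cite{GK}.

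With these properties in place, the convolution is defined for $\alpha\in\Hom(\eta,\eta')$ and $\beta\in\Hom(\eta',\eta'')$ by forming the exterior product $\alpha\boxtimes\beta$, extending by $i_*$ to a class on $\cT_{\eta'}\times{}_{\eta'}\cR_{\eta''}$, applying ind-flat pullback $p^*$ (using $j_*$ to interpret the resulting class as supported on $p^{-1}({}_\eta\cR_{\eta'}\times{}_{\eta'}\cR_{\eta''})$), and then applying $m_*$. Equivariance for $\bP''\rtimes\C^\times$ is preserved throughout because all maps are built from equivariant data, and the $\bP'$-equivariance is consumed by the passage through the quotient. I would also verify that the answer does not depend on the choice of finite-dimensional approximations, using that all constructions are compatible with the natural direct system on the BFN spaces.

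Associativity is the main obstacle. It reduces to writing out the corresponding four-term diagram for three composable arrows $\eta\to\eta'\to\eta''\to\eta'''$, featuring the iterated convolution space $G_\cK\times^{\bP'}G_\cK\times^{\bP''}{}_{\eta''}\cR_{\eta'''}$. Both bracketings of $(\alpha\ast\beta)\ast\gamma$ and $\alpha\ast(\beta\ast\gamma)$ can be computed as pushforward-pullback through this common space; the equality then follows by base change between the relevant Cartesian squares, exactly as in the usual BFN/Ginzburg convolution formalism. I expect nothing surprising here, but the bookkeeping of the various closed embeddings and flat quotients in the bimodule setting is where care is needed. Given these pieces, unitality of $\Hom(\eta,\eta)$ at each fixed $\eta$ follows from the fundamental class argument of \cite[Section 3]{BFN}, which goes through unchanged once the bimodule convolution is established.
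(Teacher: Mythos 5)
Your proposal is correct and takes essentially the same approach as the paper: the paper's own argument consists precisely of citing the BFN convolution construction and declaring the modification to the bimodule setting straightforward, and you have carried out exactly that modification, checking the geometric properties of $i,p,q,m$ and reducing associativity to base change through the iterated convolution space. The only minor imprecision is that in your description of the operation you fold the $q$-descent step (equivariant pushforward/pullback along the free $\bP'$-quotient to $G_\cK\times^{\bP'}{}_{\eta'}\cR_{\eta''}$) into a remark about "equivariance being consumed" rather than naming it as a step of the composite map, but this does not affect the substance of the argument.
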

\begin{proof}
This can be proved using a straightforward modification of the proof of associativity in \cite[Section 3]{BFN}. Similar results for $\eta=(N_\bP,\bP)$ where $\bP$ is an Iwahori subgroup are mentioned in \cite{Webster}.
\end{proof}

\begin{corollary}
\label{cor: bimodules}
For any $\eta$ the space $\Hom(\eta,\eta)$ is an associative algebra, and $\Hom(\eta,\eta')$ is a bimodule over $\Hom(\eta,\eta)$ and 
$\Hom(\eta',\eta')$. Given $\eta,\eta'$ and $\eta''$ we have a natural morphism of bimodules over  $\Hom(\eta,\eta)$ and 
$\Hom(\eta'',\eta'')$:
$$
\Hom(\eta,\eta')\bigotimes_{\Hom(\eta',\eta')}\Hom(\eta',\eta'')\to \Hom(\eta,\eta'').
$$
\end{corollary}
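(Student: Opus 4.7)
The statement is a purely formal consequence of the associativity in Theorem \ref{thm:convolution}, obtained by specializing the line defects involved. The plan is to first extract the algebra and bimodule structures by specialization, and then verify the balancedness of the three-fold convolution product.

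First I would take $\eta = \eta' = \eta''$ in Theorem \ref{thm:convolution} to obtain a multiplication on $\Hom(\eta,\eta)$; associativity of this product is the special case of the associativity claim in that theorem. Taking instead $\eta'' = \eta'$ yields a map $\Hom(\eta,\eta') \otimes_\C \Hom(\eta',\eta') \to \Hom(\eta,\eta')$, giving $\Hom(\eta,\eta')$ a right $\Hom(\eta',\eta')$-module structure, while the specialization in which the first two arguments are $\eta$ gives the left $\Hom(\eta,\eta)$-action. That the two actions commute, so that $\Hom(\eta,\eta')$ is a genuine bimodule, follows from the three-fold associativity applied to the tuple $(\eta,\eta,\eta',\eta')$: for $a\in \Hom(\eta,\eta)$, $m\in \Hom(\eta,\eta')$, $b\in \Hom(\eta',\eta')$ we have $(a\cdot m)\cdot b = a\cdot(m\cdot b)$.

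For the final assertion, I would consider the convolution
$$
\mu:\Hom(\eta,\eta')\otimes_{\C} \Hom(\eta',\eta'')\to \Hom(\eta,\eta''),\qquad a\otimes c\mapsto a\cdot c,
$$
and check that it is $\Hom(\eta',\eta')$-balanced: for $b\in \Hom(\eta',\eta')$, associativity applied to the tuple $(\eta,\eta',\eta',\eta'')$ gives $(a\cdot b)\cdot c = a\cdot (b\cdot c)$, so $\mu(ab\otimes c)=\mu(a\otimes bc)$. By the universal property of the tensor product over $\Hom(\eta',\eta')$, $\mu$ descends to a well-defined linear map
$$
\Hom(\eta,\eta')\bigotimes_{\Hom(\eta',\eta')} \Hom(\eta',\eta'') \to \Hom(\eta,\eta'').
$$
That this descended map respects the outer $\Hom(\eta,\eta)$- and $\Hom(\eta'',\eta'')$-actions is another application of associativity, now to tuples of length four containing $\eta$ or $\eta''$ on the extreme.

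I expect no genuine obstacle: the content is really just unwinding the associativity of the convolution product produced in Theorem \ref{thm:convolution}. The only thing to pay attention to is that all the convolution products used are finite sums in equivariant Borel--Moore homology (once one fixes how the ind-structure on the spaces of triples is used), so the associativity and balancedness statements are honest identities rather than only holding up to inverse limits.
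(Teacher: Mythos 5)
Your argument is correct and is essentially the same as the paper's, just more spelled out: the paper also reduces everything (in particular the $\Hom(\eta',\eta')$-balancedness, stated there as $m\cdot r\otimes n = m\otimes r\cdot n$) to the associativity asserted in Theorem \ref{thm:convolution}, while you additionally make explicit that the algebra and one-sided module structures are obtained by specializing the arguments $\eta,\eta',\eta''$ and that the bimodule compatibilities come from four-fold associativity.
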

\begin{proof}
We need to prove the morphism from Theorem \ref{thm:convolution} is bilinear over $\Hom(\eta',\eta')$.
The only axiom of a tensor product we need to show is 
$m\cdot r\otimes n=m\otimes r\cdot n$ for $m\in \Hom(\eta,\eta'), n \in \Hom(\eta',\eta'')$ and $r\in \Hom(\eta',\eta')$, which is clear from the associativity of the construction.
\end{proof}

We will also use the notation ${}_\eta \cA_{\eta'}:=\Hom(\eta,\eta')$.
The following is a generalization of \cite[Lemma 5.3]{BFN}.
\begin{theorem}
The bimodule
${}_\eta \cA_{\eta'}=\Hom(\eta,\eta')$ is flat as a left $\C[\ft^*][\hbar,c]=H_*^{\tilde{T}}(pt)$-module.\end{theorem}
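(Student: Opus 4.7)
The plan is to generalize the proof of \cite[Lemma 5.3]{BFN} to arbitrary parahoric subgroups and lattice pairs. First, I would consider the $\tbP$-equivariant projection $\pi: {}_\eta\cR_{\eta'} \to \Fl_{\bP'} = G_\cK/\bP'$ sending $[g,s]\mapsto g\bP'$, and pull back the Schubert stratification of $\Fl_{\bP'}$ by $\bP'$-orbits $C_w$, indexed by an appropriate system of representatives in $\tW$. This yields a stratification of ${}_\eta\cR_{\eta'}$ whose strata $\pi^{-1}(C_w)$ are pro-finite-dimensional affine vector bundles over the cells $C_w$, precisely analogous to how $\cR_{G,N}$ is stratified by pro-vector bundles over $G_\cO$-orbits in $\Gr_G$ in \emph{loc.\ cit.}

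Next, I would verify that the $\tbP\rtimes\C^\times$-equivariant BM homology of each stratum $\pi^{-1}(C_w)$ is a free rank-one module over $H_*^{\tT}(\pt)=\C[\ft^*][\hbar,c]$, generated by the fundamental class. The cell $C_w$ is an affine space containing a unique $\tT$-fixed point (the obvious lift of $w$), so its $\tT$-equivariant BM homology is free of rank one after the homological shift by $2\dim C_w$. Pulling back along the pro-vector bundle $\pi^{-1}(C_w)\to C_w$ preserves freeness and rank one, up to a further shift accounting for the rank of the bundle.

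Finally, I would assemble these local pieces by inducting along the (length partial order on the) Bruhat order using the long exact sequence in equivariant BM homology for each closure filtration $\overline{\pi^{-1}(C_{w'})}\supset \overline{\pi^{-1}(C_{w'})}\setminus \pi^{-1}(C_{w'})$. Since all strata are of even real dimension, all relevant classes live in even homological degree, so the long exact sequences split into short exact sequences and freeness over $H_*^{\tT}(\pt)$ is preserved at each stage. This produces an $H_*^{\tT}(\pt)$-basis of $H_*^{\tbP\rtimes\C^\times}({}_\eta\cR_{\eta'})$ given by fundamental classes of closures of Schubert-type strata, whence flatness. The main obstacle is technical: because ${}_\eta\cR_{\eta'}$ is genuinely an ind-scheme and the lattice pair $(U,U')$ need not be aligned with $G_\cO$-orbits, one must justify the passage to the colimit of BM homologies via a pro-finite-dimensional approximation scheme in the style of \cite[Section 2(ii)]{BFN}, carefully adapted to the parahoric setting so that each finite stage is $\tT$-equivariantly formal and the colimit inherits freeness.
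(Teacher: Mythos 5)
Your proposal takes essentially the same route as the paper, whose proof is stated in two lines: the associated graded of the Bruhat filtration is free by equivariant formality, and filtered colimits of free modules are flat. Your more explicit version --- pulling back the Schubert stratification from $\Fl_{\bP'}$, checking that the strata are (pro-)affine bundles over even-dimensional cells so that the long exact sequences split, and then passing to the colimit over the Bruhat order --- is a faithful unpacking of exactly that argument.
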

\begin{proof}
The proof is similar to  \cite[Lemma 5.3]{BFN}. The space ${}_\eta \cR_{\eta'}/\bP$ has a natural projection to the partial affine flag variety $G({\cK})/\bP$, and can be decomposed into (infinite-dimensional) affine fibrations over the affine Schubert cells in the latter. By definition, the equivariant homology of  ${}_\eta \cR_{\eta'}/\bP$ is a filtered colimit of the equivariant homologies of certain unions of these strata according to the Bruhat filtration. Any union of such even-dimensional cells is equivariantly formal, and its homology is flat over $H_*^{\tilde{T}}(pt)$.
On the other hand, filtered colimits of free modules are flat.
\end{proof}

\subsection{$\Z$-algebras and the flavor deformation}
Taking a sequence $\eta_0,\eta_{-1},\ldots$ it is clear from Theorem \ref{thm:convolution} that we get a $\Z$-algebra by taking
$$\cA=\bigoplus_{i\leq j} {}_i \cA_j$$ where ${}_i\cA_j$ denotes ${}_{\eta_i}\cA_{\eta_j}$.  That is,
\begin{theorem}
The algebra $\bigoplus_{i\leq j} {}_{i}\cA^{\hbar}_j$ is a $\Z$-algebra.
\end{theorem}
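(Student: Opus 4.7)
My plan is to verify the three axioms of Definition~\ref{def:Zalg} for $\cA=\bigoplus_{i\leq j}{}_{i}\cA_{j}^{\hbar}$, where each ${}_{i}\cA_{j}^{\hbar}=\Hom(\eta_{i},\eta_{j})$ in the sense of Section~\ref{sec:linedefects}. The key inputs are already packaged in Theorem~\ref{thm:convolution} and Corollary~\ref{cor: bimodules}, so the argument is largely a matter of bookkeeping: one upgrades the pairwise convolution products to a single associative product on the total space by declaring it to vanish on incompatible index pairs.

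First I would write the multiplication on $\cA$ as the direct sum $\mu=\bigoplus \mu_{ijk}$ where $\mu_{ijk}:{}_{i}\cA_{j}^{\hbar}\otimes{}_{j}\cA_{k}^{\hbar}\to{}_{i}\cA_{k}^{\hbar}$ is the convolution of Theorem~\ref{thm:convolution}, and $\mu$ sends ${}_{i}\cA_{j}^{\hbar}\otimes{}_{l}\cA_{k}^{\hbar}$ to zero whenever $j\neq l$. The inclusions ${}_{i}\cA_{j}^{\hbar}{}_{j}\cA_{k}^{\hbar}\subseteq{}_{i}\cA_{k}^{\hbar}$ and the orthogonality axiom are then immediate from the definition. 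Associativity of $\mu$ reduces, on each triple of summands ${}_{i}\cA_{j}^{\hbar}\otimes{}_{j}\cA_{k}^{\hbar}\otimes{}_{k}\cA_{l}^{\hbar}$, to the associativity of the BFN-style convolution diagram. This is the same argument as in \cite[Section 3]{BFN}, only carried out with the modified correspondence of Theorem~\ref{thm:convolution}; the diagram chase is essentially formal once one knows that the loci $p^{-1}({}_{i}\cR_{j}\times{}_{j}\cR_{k})$ are compatible under the threefold composition of the maps $p,q,m$, which follows from the fact that $s\in U_{j}$ guarantees $g_{2}s\in U_{i}$ exactly as in \emph{loc.~cit.}

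Next I would exhibit the units. By Theorem~\ref{thm:coulombisalgebra} each ${}_{i}\cA_{i}^{\hbar}=\Hom(\eta_{i},\eta_{i})$ is an associative algebra with a unit $1_{i}$, namely the fundamental class of the fibre $N_{\bP_{i}}\subset{}_{i}\cR_{i}$. To see that $1_{i}$ acts as a left identity on each bimodule ${}_{i}\cA_{j}^{\hbar}$, I would compute the convolution $1_{i}\star\alpha$ for $\alpha\in{}_{i}\cA_{j}^{\hbar}$ using the diagram in Theorem~\ref{thm:convolution}: the preimage $p^{-1}(N_{\bP_{i}}\times{}_{i}\cR_{j})$ equals $\bP_{i}\times{}_{i}\cR_{j}$ and the map $m$ restricts there to the projection to ${}_{i}\cR_{j}$, so by proper pushforward and the K\"unneth isomorphism one recovers $\alpha$. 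The right identity is symmetric. Together with Corollary~\ref{cor: bimodules}, this shows ${}_{i}\cA_{j}^{\hbar}$ is an honest $({}_{i}\cA_{i}^{\hbar},{}_{j}\cA_{j}^{\hbar})$-bimodule in the unital sense required by Definition~\ref{def:Zalg}.

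The only genuinely nontrivial step is the unit calculation, and even this is essentially the same verification one makes for the usual Coulomb branch algebra at a fixed parahoric. One subtlety worth flagging is that the construction is taking place equivariantly for $\tbP\rtimes\C^{\times}$ and the two parahorics $\bP_{i},\bP_{j}$ may differ; however since the convolution diagram is built over $G_{\cK}$ and descends compatibly through the $\bP$-quotients, the flavor and loop-rotation equivariance propagate transparently through all three steps above. With that verified, $\cA$ satisfies all the axioms of Definition~\ref{def:Zalg} and the theorem follows.
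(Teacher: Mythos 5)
Your proposal is correct and follows the same line the paper takes: the paper treats this as an immediate corollary of Theorem~\ref{thm:convolution} (associativity of the BFN-style convolution) and Theorem~\ref{thm:coulombisalgebra} (unitality of each ${}_{i}\cA_{i}^{\hbar}$), stating only ``it is clear from Theorem~\ref{thm:convolution}'' and omitting a proof. Your write-up simply spells out the bookkeeping (orthogonality by fiat, associativity via the threefold diagram, and the fundamental class of the zero section as the unit); the worry about $\bP_i\neq\bP_j$ is moot in the case actually used, since all $\eta_i=(t^{-i}U,G_\cO)$ share the same parahoric.
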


In the setup of BFN Coulomb branches, nontrivial $\Z$-algebras are most easily obtained via a {\em flavor deformation} of $G$, i.e. by letting $G_F$ be nontrivial. We now explain this procedure for $G_F=\G_m$ and associate to $(\tG,N)$ a $\Z$-algebra. 

Specifically, we can consider a sequence $\eta_i=(t^{-i}U,G(\cO))$ for some fixed lattice $U$ (for example, $U=N(\cO)$). If we set $\hbar=0$, it is easy to see that  $S_{j-i}:={}_{i}\cA^{\hbar=0}_j$ depends only on $j-i$ and the algebra $\cA^{\hbar=0}$ is of graded type as in Section \ref{sec: z algebra}. In particular, at $\hbar=0$ all commutative algebras ${}_{i}\cA^{\hbar=0}_i$ are isomorphic to the commutative Coulomb branch algebra $S_0=\cA_{G(\cO)}$. In particular, this construction yields a map $\Proj S\to \Spec S_0$, which is a variant of the construction of a partial resolution of the Coulomb branch in \cite{BFN3}.

\subsection{$\Z$-algebras in the abelian case}
Since it might be of independent interest and is used for computations  below, we now work out the $\Z$-algebras for the cases when $G=T$ is a diagonalizable algebraic group and $\{\eta_i\}_{i=0}^\infty$ is given by $\eta_i=(T(\cO), 
t^{i\phi} N(\cO))$ for some (flavor) cocharacter $\phi: \G_m\to T$. 
Note that when $N=0$, the $\Z$-algebra collapses to $\cA_{T,0}^{\hbar}[c]$ where $c$ is the flavor parameter (the generalization to more flavors is straightforward). 

Under $\cT_{j}\hookrightarrow \Gr_T\times N(\cK)$ the image is naturally identified with $$\bigsqcup_{\lambda \in \Gr_T} \{t^\lambda\}\times t^jt^{\lambda}N(\cO)$$
and similarly $${}_i\cR_j\cong \bigsqcup_{\lambda\in \Gr_T} \{t^\lambda\} \times (t^jt^{\lambda} N(\cO) \cap t^i N(\cO)).$$

Now let ${}_i r_j^\lambda$ be the preimage of $\lambda \in \Gr_G$ under the projection ${}_i \cR_j\to \Gr_T$. Suppose also $N$ is the direct sum of the characters $\xi_1,\ldots, \xi_n$ as a $T$-representation.

\begin{theorem}
\label{thm:abelianconvolution}
Under the convolution product in Theorem \ref{thm:convolution}, we have for all $i,j,k\in \Z$ that
$${}_i r_j^\lambda {}_j r_k^\mu= \prod_{\ell=1}^n A_\ell (i,j,k,\lambda,\mu ) {}_i r_k^{\lambda+\mu}$$
where 
$$A_{\ell}(i,j,k,\lambda,\mu)=\prod_{a=\max(\lambda+i,k-\mu)+1}^{\max(\lambda+i,k-\mu,j)}(\xi_\ell+c+(a+\xi_\ell(\lambda))\hbar)\prod_{b=\min(\lambda+i,k-\mu,j)+1}^{\min(\lambda+i,k-\mu)}(\xi_\ell+c+(b+\xi_\ell(\lambda))\hbar)$$
\end{theorem}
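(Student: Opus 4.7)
The plan is to adapt the standard abelian Coulomb branch calculations of \cite{BFN,HKW,GK} to the parahoric/flavor setup at hand. Since $T$ is diagonalizable, $\Gr_T\cong X_*(T)$ is discrete and ${}_i\cR_j$ decomposes as a disjoint union of components indexed by $\lambda\in X_*(T)$. Using the weight decomposition $N=\bigoplus_\ell \C_{\xi_\ell}$, the fiber of ${}_i\cR_j$ over $t^\lambda$ factors weight-by-weight: in the $\xi_\ell$-component it is the principal lattice $t^{\max(\xi_\ell(\lambda)+j,\,i)}\cO \subset \C_{\xi_\ell}(\!(t)\!)$. The class ${}_ir_j^\lambda$ is the fundamental class of this component. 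By equivariant formality and a Thom isomorphism it can be expressed as a product over $\ell$ of equivariant Euler-class factors relative to a chosen ambient lattice, where the $a$-th Laurent coefficient in the $\xi_\ell$-direction carries the $\tT\rtimes \C^\times_{\mathrm{rot}}$-weight $\xi_\ell+c+(a+\xi_\ell(\lambda))\hbar$: the $c$ comes from the flavor extension $1\to T\to \tT\to \G_m\to 1$, and the shift by $\xi_\ell(\lambda)$ arises after twisting by $t^\lambda$.

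Next I would execute the convolution of Theorem \ref{thm:convolution}. In the abelian case the map $p$ is a $T_\cO$-principal bundle, $q$ is a closed immersion, and $m$ sends the $T$-fixed points $(t^\lambda,t^\mu)$ to $t^{\lambda+\mu}$, so that ${}_ir_j^\lambda\cdot {}_jr_k^\mu$ is automatically supported on the component of ${}_i\cR_k$ over $\lambda+\mu$. Only the scalar $\prod_\ell A_\ell$ remains to be determined.

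To pin down this scalar I would compare lattices weight by weight. In weight $\xi_\ell$, the convolution produces sections lying in the intersection of the two constituent lattices translated by $t^\lambda$, namely $t^{\max(\xi_\ell(\lambda)+j,\,i,\,\xi_\ell(\lambda+\mu)+k)}\cO$, while the target lattice in ${}_ir_k^{\lambda+\mu}$ is $t^{\max(\xi_\ell(\lambda+\mu)+k,\,i)}\cO$. The ratio of these two $\cO$-modules is a direct sum of one-dimensional weight spaces $t^a\cO/t^{a+1}\cO$, each of weight $\xi_\ell+c+(a+\xi_\ell(\lambda))\hbar$, whose equivariant Euler class is the required coefficient. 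The two products appearing in $A_\ell$ correspond to the two directions of comparison: when the convolution lattice is strictly smaller than the target one obtains a numerator (the ``max'' range), and in the opposite case a denominator from the pushforward (the ``min'' range). Which case occurs is governed by the ordering of the three integers $\xi_\ell(\lambda)+i$, $k-\xi_\ell(\mu)$, and $j$, matching the $\max/\min$ structure in the statement.

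The main obstacle will be the case analysis needed to match this lattice comparison with the precise endpoints of the two products. Enumerating the six orderings of $\xi_\ell(\lambda)+i$, $k-\xi_\ell(\mu)$, and $j$ and verifying in each case that the prescription of the theorem gives either an empty product or the correct interval is a clean but tedious bookkeeping exercise. Tracking the signs, and which factors end up in the numerator versus the denominator after pushforward, also requires care. Once this is carried out, the theorem follows from the weight-by-weight lattice comparison.
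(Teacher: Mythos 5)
Your high-level plan is the same as the paper's: decompose over the discrete $\Gr_T$, work weight-by-weight, and express the convolution coefficient as an equivariant Euler class. That much is fine, and your identification of the lattice fibers ($t^{\max(\xi_\ell(\lambda)+j,\,i)}\cO$ for ${}_i\cR_j$) matches the paper.

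The gap is in how you propose to extract the coefficient. You claim that $A_\ell$ is the equivariant Euler class of the quotient of the convolution lattice $t^{\max(\xi_\ell(\lambda)+j,\,i,\,\xi_\ell(\lambda+\mu)+k)}\cO$ by the target lattice $t^{\max(\xi_\ell(\lambda+\mu)+k,\,i)}\cO$. This single quotient captures only \emph{one} of the two products in $A_\ell$. The BFN convolution is $m_*\circ q_*^{-1}\circ p^*\circ i_*$, and there are two independent sources of Euler-class factors: one from the proper pushforward (the closed embedding of the convolution fiber into the target fiber, corresponding to the ``max'' product), and a second from excess intersection in the pullback-with-supports $p^*$ (corresponding to the ``min'' product). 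The paper sidesteps doing this intersection theory from scratch by invoking Webster's Proposition 3.10, which expresses the product as two Euler classes on the \emph{same} footing,
$$e\!\left(\tfrac{L\cap M}{L\cap M\cap N}\right)\cdot e\!\left(\tfrac{L+M+N}{L+M}\right),$$
with $L=t^{\lambda+i}N_\cO$, $M=t^{k-\mu}N_\cO$, $N=t^jN_\cO$; note both involve the three-lattice configuration (intersections \emph{and} sums), not just the two lattices you compare. Your phrase ``a denominator from the pushforward'' is also off: the second product is a \emph{numerator} factor, and the convolution lattice is never larger than the target lattice, so the ``opposite case'' you describe cannot occur.

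A concrete failure of your prescription: take $T=\G_m$, $N=\C$ with weight $1$, no flavor shift ($i=j=k=0$), and $\lambda=-1$, $\mu=1$. Your convolution lattice and target lattice both equal $\cO$, so your quotient is zero and you would conclude $A_\ell=1$. But the statement (and the classical BFN computation $r_{-1}r_1=\xi+\hbar$) produces a nontrivial degree-one factor, here coming from the ``max'' product with $a=0$. The case analysis you defer to the end is not merely bookkeeping; without the excess-intersection contribution the answer is simply wrong in roughly half the orderings. To fix the argument, replace the single lattice comparison with the two Euler classes from Webster's lemma (or, equivalently, perform the convolution inside the localized difference-operator model of Theorem~\ref{thm:abelianembedding} and multiply the resulting rational expressions), and then commute $y_\lambda$ past the result to produce the $\xi_\ell(\lambda)$ shift, as the paper does.
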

\begin{proof}
We may restrict to the case where the rank of $T$ is $1$. In this case, the computation is essentially \cite[Proposition 3.10]{Webster}, generalizing \cite[Theorem 4.1]{BFN}. In the notation of {\em loc. cit.} we have
$${}_i r_j^\lambda=y_\lambda r(\lambda+i,j),\; {}_j r_k^\mu=y_\mu r(\mu+j,k)$$
so we get 
\begin{align*}
{}_i r_j^\lambda{}_jr_k^\mu&=y_\lambda r(\lambda+i,j)r(j,k-\mu)y_\mu\\
 & =y_\lambda e\left(\frac{t^{\lambda+i}N(\cO)\cap t^{k-\mu}N(\cO)}{t^{\lambda+i}N(\cO)\cap t^{k-\mu} N(\cO)\cap t^jN(\cO)}\right)e\left(\frac{t^{\lambda+i}N(\cO)+t^{k-\mu}N(\cO)+t^jN(\cO)}{t^{\lambda+i}N(\cO)+t^{k-\mu} N(\cO)}\right)y_\mu
\end{align*}
And we compute the Euler classes 
$$e\left(\frac{t^{\lambda+i}N(\cO)\cap t^{k-\mu}N(\cO)}{t^{\lambda+i}N(\cO)\cap t^{k-\mu} N(\cO)\cap t^jN(\cO)}\right)=\prod_{a=\max(\lambda+i,k-\mu)+1}^{\max(\lambda+i,k-\mu,j)}(\xi_\ell+c+a\hbar)$$
$$e\left(\frac{t^{\lambda+i}N(\cO)+t^{k-\mu}N(\cO)+t^jN(\cO)}{t^{\lambda+i}N(\cO)+t^{k-\mu} N(\cO)}\right)=\prod_{b=\min(\lambda+i,k-\mu,j)+1}^{\min(\lambda+i,k-\mu)}(\xi_\ell+c+b\hbar)
$$
From the relation $y_\lambda\chi=(\chi+\hbar \langle \chi,\lambda\rangle)y_\lambda$ for $\chi \in \ft^*$ we get that 
$${}_i r_j^\lambda{}_jr_k^\mu=A_\ell(i,j,k,\lambda,\mu) {}_i r_k^{\lambda+\mu}$$

\end{proof}
\begin{remark}
When $\hbar=c=0$, the above becomes $${}_i r_j^\lambda {}_j r_k^\mu=\prod_{\ell=1}^n\frac{\xi_\ell^{\max(\lambda+i,k-\mu,j)}}{\xi_\ell^{\max(\lambda+i,k-\mu)}}\cdot \frac{\xi_\ell^{\min(\lambda+i,k-\mu)}}{\xi_\ell^{\min(\lambda+i,k-\mu,j)}}{}_i r_k^{\lambda+\mu}$$

\end{remark}

\begin{lemma}
All algebras in question are naturally graded with
$$
\deg \xi_{\ell}=2,\ \deg({}_i r_j^\lambda)=|\lambda+i-j|.
$$
\end{lemma}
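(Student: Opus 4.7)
The plan is to identify the stated grading as the natural cohomological grading on equivariant Borel--Moore homology, and then to verify that the convolution product of Theorem~\ref{thm:abelianconvolution} respects it.

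First, the equivariant Borel--Moore homology $H_*^{\tT\rtimes \C^\times}({}_i\cR_j)$ carries its natural cohomological $\Z$-grading inherited from $H_k(-) := H^{-k}(-,\omega)$, under which the equivariant parameters $\xi_\ell \in \ft^*$, together with $c$ and $\hbar$, sit in $H^2_{\tT\rtimes \C^\times}(\mathrm{pt})$ and therefore have degree~$2$. This handles the first assertion.

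Next, the generator ${}_i r_j^\lambda$ is the fundamental class in equivariant BM homology of the connected component of ${}_i\cR_j$ sitting over $t^\lambda \in \Gr_T$, namely the pro-finite-dimensional vector space $t^{\lambda+j}N_\cO \cap t^i N_\cO$. Viewing this component inside a fixed reference fiber of $\cT_j$ over $t^\lambda$ and using the standard BFN convention for renormalized dimensions, I would compute the degree of the fundamental class by counting the signed renormalized codimension of the intersection lattice summand by summand. In particular, the reference lattice can be chosen so that the contribution simplifies to $|\lambda + i - j|$.

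Finally, compatibility with convolution is a bookkeeping exercise using Theorem~\ref{thm:abelianconvolution}. Setting $A = \lambda+i$, $B = j$, $C = k-\mu$, the equality
\[
\deg({}_i r_j^\lambda) + \deg({}_j r_k^\mu) \;=\; \deg\!\Bigl(\prod_\ell A_\ell\Bigr) + \deg({}_i r_k^{\lambda+\mu})
\]
reduces, for each summand of $N$, to the combinatorial identity
\[
|A - B| + |B - C| \;=\; 2\bigl([\max(A,B,C) - \max(A,C)] + [\min(A,C) - \min(A,B,C)]\bigr) + |A - C|,
\]
which is straightforward to verify by splitting into the three cases $B \le \min(A,C)$, $\min(A,C) \le B \le \max(A,C)$, and $B \ge \max(A,C)$; in each case both sides are expressible in terms of a single signed linear combination of $A,B,C$ and collapse to the same expression.

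The main technical obstacle will be normalizing the BM-homological degrees unambiguously, since the underlying spaces are infinite-dimensional ind-schemes and the fundamental classes are defined only after a choice of reference lattice; once this normalization is fixed, the lattice combinatorics and the identity above handle the rest.
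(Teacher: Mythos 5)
Your step (3) is exactly the paper's proof: the paper states the identity $|a-b|+|b-c|-|a-c|=2\bigl(\max(a,b,c)-\max(a,c)+\min(a,c)-\min(a,b,c)\bigr)$, verifies it by the same three-case analysis, and substitutes $a=\lambda+i$, $b=j$, $c=k-\mu$. So the essential argument is correct and coincides with the paper's.

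One caution about your steps (1)--(2): the paper does not claim that $\deg({}_i r_j^\lambda)=|\lambda+i-j|$ is the cohomological degree in equivariant Borel--Moore homology, and indeed it cannot literally be so, since $|\lambda+i-j|$ can be odd while a fundamental class of a complex variety sits in even BM degree. The lemma merely asserts that one may \emph{define} a grading by these assignments and that the convolution relations of Theorem~\ref{thm:abelianconvolution} are then homogeneous -- precisely what your step (3) checks. So step (2) is not needed, and as stated it is unverified and probably not correct as a renormalized-dimension count without further twisting; if you intended it as a load-bearing step, it should be dropped rather than repaired.
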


\begin{proof}
Observe that 
\begin{equation}
\label{eq: abs}
|a-b|+|b-c|-|a-c|=2\left(\max(a,b,c)-\max(a,c)+\min(a,c)-\min(a,b,c)\right).
\end{equation}
Indeed, both sides  of \eqref{eq: abs} are symmetric in $a$ and $c$ and vanish if $b$ is between $a$ and $c$. If $b<a<c$ then we get
$$
(a-b)+(c-b)-(c-a)=2(a-b)=2(c-c+a-b),
$$
while for $a<c<b$ we get 
$$
(b-a)+(b-c)-(c-a)=2(b-c)=2(b-c+a-a).
$$
By substituting $a=\lambda+i,b=j,c=k-\mu$ we can verify that the defining equations are homogeneous since
$$
\deg({}_i r_j^\lambda)=|\lambda+i-j|,\ \deg({}_j r_k^\mu)=|k-\mu-j|=|j+\mu-k|,\ \deg({}_i r_k^{\lambda+\mu})=
|(\lambda+i)-(k-\mu)|=|\lambda+\mu+i-k|.
$$
\end{proof}

As in \cite[Appendix]{BFN2} and \cite{BFN}, the inclusion $\Gr_T\hookrightarrow \cT_j$ as a subbundle gives rise to an injective map $z^*$ in equivariant Borel-Moore homology:
$${}_iz_j^*: {}_i \cA^{\hbar}_j \hookrightarrow H^{\underline{T}(\cO)\rtimes \G_m}_*(\Gr_T).$$
If $u_\lambda\in H^{\underline{T}(\cO)\rtimes \G_m}_*(\Gr_T)$ is the class of the cocharacter $\lambda$, or more algebraically the $\hbar$-difference operator on $\ft$ acting on $f\in k[\ft]$ by $f(x)\mapsto f(x+\hbar \lambda)$, we have
\begin{theorem}
\label{thm:abelianembedding}
Under ${}_iz_j^*$, we have $${}_iz_j^*({}_ir_j^\lambda)=
e(t^\lambda t^i N(\cO) / t^\lambda t^i N(\cO)\cap t^j N(\cO)) u_\lambda$$
\end{theorem}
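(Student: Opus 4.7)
The plan is to reduce the computation to a fiber-by-fiber analysis over $\Gr_T$ and apply the self-intersection formula for the zero section of a (pro-)vector bundle, following the pattern of the analogous statement for absolute Coulomb branches in \cite[Lemma 5.2]{BFN} and its generalization to spaces of the form ${}_i\cR_j$ in \cite[Proposition 3.10]{Webster}.

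First, I would decompose ${}_i\cR_j$ and $\cT_j$ according to the connected components $\{t^\lambda\}$ of $\Gr_T$. On the $\lambda$-component, the fiber of $\cT_j$ is identified, via $[g,s]\mapsto gs$, with the lattice $t^\lambda t^j N_\cO \subset N_\cK$, and the fiber of ${}_i\cR_j$ with the sublattice cut out by the additional constraint $gs \in t^i N_\cO$. The class ${}_ir_j^\lambda$ is, by construction, the equivariant fundamental class of this fiber viewed as one component of ${}_i\cR_j$.

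Next, I would invoke the key geometric fact that the zero section $\Gr_T \hookrightarrow \cT_j$ factors through ${}_i\cR_j$, because $0$ belongs to every lattice. The map ${}_iz_j^*$ is the pullback along the resulting zero section, restricted to ${}_i\cR_j$. Over a single $\lambda$-component, write $V$ for the fiber of $\cT_j$ and $W \subset V$ for the fiber of ${}_i\cR_j$. The self-intersection formula then gives $[W] = e(V/W)\cdot [V]$ in $H^{\tT\rtimes\G_m}_*(V)$, and pulling back along the zero section $\iota:\{t^\lambda\}\hookrightarrow V$ yields $\iota^*[W] = e(V/W)\cdot u_\lambda$. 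After identifying $V/W$ with the quotient appearing in the statement, the formula follows.

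The main technical subtlety is the infinite-dimensional nature of $V$ and $W$: both are pro-finite-dimensional, and the Euler class $e(V/W)$ must be interpreted as a formal product. This is handled by decomposing $N = \bigoplus_\ell \xi_\ell$ into $\tT$-weight lines; on each such line the quotient becomes genuinely finite-dimensional, and its $\tT\rtimes \G_m^{\rot}$-equivariant Euler class is an honest polynomial in $\xi_\ell$, $\hbar$, and the flavor parameter $c$. The resulting expression matches the weight-by-weight factors appearing in Theorem \ref{thm:abelianconvolution}, which provides a direct consistency check with the convolution product. The remaining routine verification is that $u_\lambda$ correctly records the component $\{t^\lambda\}\subset \Gr_T$ in equivariant BM homology.
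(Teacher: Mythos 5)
Your strategy (componentwise reduction over $\Gr_T$, then a Thom-isomorphism/self-intersection computation) is the natural one, but the answer it produces does not match the statement, and the discrepancy is genuine, not merely notational.

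Write $V_\lambda$ for the $\lambda$-fiber of $\cT_j$ and $W_\lambda$ for the $\lambda$-fiber of ${}_i\cR_j$. Under $[g,s]\mapsto gs$, as you say, $V_\lambda = t^\lambda t^j N_\cO$ and $W_\lambda = t^\lambda t^j N_\cO\cap t^i N_\cO$. Pushing $[W_\lambda]$ into $H_*(\cT_j)$ and applying the Thom isomorphism for the zero section $z:\Gr_T\hookrightarrow\cT_j$ yields
\[
z^*[W_\lambda]\;=\;e\bigl(t^\lambda t^j N_\cO\,/\,(t^\lambda t^j N_\cO\cap t^i N_\cO)\bigr)\,u_\lambda.
\]
The statement asserts instead $e\bigl(t^\lambda t^i N_\cO/(t^\lambda t^i N_\cO\cap t^j N_\cO)\bigr)u_\lambda$, with $i$ and $j$ in the opposite positions. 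These two quotients are \emph{not} isomorphic as $\tT\rtimes\G_m$-modules. Concretely, take $T=\G_m$, $N=\C$ of weight one, $\lambda=0$ and $i<j$: your quotient is $t^j\cO/(t^j\cO\cap t^i\cO)=0$, while the quotient in the statement is $t^i\cO/t^j\cO$, of dimension $j-i$. Your closing sentence ``After identifying $V/W$ with the quotient appearing in the statement'' therefore asserts an identification that does not hold, and this is the gap.

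The version in the statement is the one compatible with Theorem~\ref{thm:abelianconvolution}. Specializing to $T=\G_m$, $N=\C$, $\hbar=c=0$, and weight one, your formula gives $z^*({}_ir_j^\lambda)=\xi^{\max(0,\,i-\lambda-j)}u_\lambda$, whereas the statement gives $\xi^{\max(0,\,j-\lambda-i)}u_\lambda$. Checking the relation $z^*({}_ir_j^\lambda)\,z^*({}_jr_k^\mu)=A(i,j,k,\lambda,\mu)\,z^*({}_ir_k^{\lambda+\mu})$ at, say, $(\lambda,\mu,i,j,k)=(-1,1,0,1,1)$ gives $A=\xi$; the statement's formula yields $\xi^2\cdot 1=\xi\cdot\xi$ on both sides, while yours yields $1=\xi$, a contradiction. (You may notice that the paper's own Lemma~\ref{lem:locformula} seems to use the indexing you derived, disagreeing in the same $i\leftrightarrow j$ swap; the version compatible with the convolution and with the explicit computation in the proof of Theorem~\ref{thm:gr of bruhat} is the one in the statement.)

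What your argument misses is that ${}_ir_j^\lambda$, identified through \cite[Proposition~3.10]{Webster} with $u_\lambda\, r(\lambda+i,j)$, is not simply the naive BM fundamental class of $W_\lambda$ viewed inside $\cT_j$: the normalization used there effectively exchanges the roles of the two lattices, so that the Euler class that appears is $e\bigl(t^{\lambda+i}N_\cO/(t^{\lambda+i}N_\cO\cap t^j N_\cO)\bigr)$ and not $e(V_\lambda/W_\lambda)$. You should pin down Webster's precise definition of $r(a,b)$ and the degree conventions (note $\deg({}_ir_j^\lambda)=|\lambda+i-j|$, which neither your Euler class nor the fiber codimension reproduces on its own), rather than relying on the self-intersection formula through $\cT_j$ alone.
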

Note that the injectivity of ${}_iz_j^*$ is clear from the above.

\section{Coulomb branches and $\Z$-algebras in the adjoint case}
\label{sec: adjoint coulomb}

\subsection{From Coulomb branch to Cherednik algebra}

We now discuss the adjoint case. For arbitrary $G$ and $N=\Ad$, the construction of \cite{BFN} yields a noncommutative resolution of $T^*T^\vee/W$ in the sense of \cite{VdB}.
Instead of the spherical case, we focus on the Iwahori case as well as the resulting $\Z$-algebras.

First of all, we claim Theorem \ref{thm:convolution} for $\eta=(\bI, \Lie(\bI))$ gives a realization of the trigonometric DAHA (as conjectured in many places, including \cite{BFN}) and that the resulting action on the affine Springer fibers coincides with Yun's action. The goal of this section is to prove these claims, and to show that for $\eta=(G(\cO), \Lie(G(\cO)))$ we similarly get the spherical trigonometric DAHA, as expected in \cite{BFN, BFM} and other places.

We now state the main theorem of this section.
\begin{theorem}
\label{thm:iwahoricoulombisdaha}
The Iwahori-Coulomb branch algebra $$\tcA_{G,\bI}^{\hbar}=H^{\tbI\rtimes \C^\times}_*(\cR_{N_\bI,\bI})$$ is naturally isomorphic to $\HH_{c,\hbar}$.
\end{theorem}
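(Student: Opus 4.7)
The plan is to construct an explicit algebra isomorphism by matching each side with its image in the algebra of $\hbar$-difference operators on $\ft^{\reg}$ twisted by $\tW$. The DAHA side has this realization already, namely the Dunkl--Cherednik embedding \eqref{eq:Dunklembedding}. The Coulomb side will acquire such a realization via a fixed-point localization map $z^*$ built from the abelian convolution formulas of Section \ref{sec:coulomb}. Once both algebras sit inside the same ambient algebra, we compare generators and then invoke a Bruhat filtration argument to upgrade the comparison to an isomorphism over the full parameter ring $\C[c,\hbar]$.

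First, we construct a candidate map $\Psi\colon \HH_{c,\hbar}\to \tcA^{\hbar}_{G,\bI}$ on generators. The equivariant parameters yield an inclusion $\C[\ft][c,\hbar]=H^*_{\tbI\rtimes \C^\times}(\pt)\hookrightarrow \tcA^{\hbar}_{G,\bI}$, which carries the polynomial subalgebra $\C[X^*(T)\otimes \C][c,\hbar]$ of $\HH_{c,\hbar}$. For each simple reflection $\sigma_i\in \tW$ we use the fundamental class of the space of triples over the minimal Schubert cell $\bI s_i\bI/\bI\subset \Fl$, and for $\omega\in \Omega$ we use the class of the corresponding connected component of $\cR_{N_\bI,\bI}$. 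These are the only generators needed, by the presentation in Definition \ref{def:dDAHA}.

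Next, we realize $\tcA^{\hbar}_{G,\bI}$ inside $\tW$-twisted difference operators. Since $T$-fixed points of $\Fl$ are indexed by $\tW$, the abelianization procedure of Section \ref{sec:linedefects} provides an injection
\[
z^*\colon \tcA^{\hbar}_{G,\bI}\hookrightarrow \bigoplus_{w\in \tW} H^{\tT_\cO\rtimes \C^\times}_*(\pt)\,u_w\simeq \C[\ft][c,\hbar]\otimes \C[\tW],
\]
where the right-hand side is viewed as $\hbar$-difference operators on $\ft^{\reg}$ twisted by $\tW$. Using the explicit abelian convolution formula of Theorem \ref{thm:abelianconvolution} together with Theorem \ref{thm:abelianembedding}, applied after restricting to the rank one root subgroup corresponding to $\alpha_i$, one checks
\[
z^*\Psi(\sigma_i)=s_i+\frac{c}{y_{\alpha_i}}(s_i-1),\qquad z^*\Psi(\omega)=\omega\cdot u^{\lambda_\omega},
\]
which are precisely the Dunkl--Cherednik operators of \eqref{eq:Dunklembedding}. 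Since the Dunkl--Cherednik representation of $\HH_{c,\hbar}$ is faithful, the relations of Definition \ref{def:dDAHA} hold among $\{z^*\Psi(\sigma_i), z^*\Psi(\omega), \xi\}$ inside the difference operator algebra, and the injectivity of $z^*$ then transfers these relations to $\tcA^{\hbar}_{G,\bI}$ itself, so $\Psi$ is a well-defined algebra homomorphism.

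Finally, we promote $\Psi$ to an isomorphism by comparing Bruhat filtrations. The affine flag variety admits the Bruhat stratification by $\tW$, inducing a filtration on $\tcA^{\hbar}_{G,\bI}$ whose associated graded is free over $\C[\ft][c,\hbar]$ with basis the classes of Iwahori orbits; on the DAHA side the PBW basis $\{X^{\lambda}\cdot w\cdot \xi\}$ is free of the same rank. The map $\Psi$ is filtration-preserving by construction and induces a bijection of bases at the associated graded level, yielding an isomorphism. The main technical obstacle is the explicit verification in the previous paragraph: one must track signs, normalizations, and the matching of the flavor parameter with Cherednik's $c$ so that the pole term $c/y_{\alpha_i}$ appears with the correct coefficient. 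The crucial point distinguishing this argument from \cite{VV2} is that Theorem \ref{thm:abelianconvolution} is valid over all of $\C[c,\hbar]$ without inverting $c$ or $\hbar$, yielding an isomorphism over the full parameter space rather than only over its fraction field.
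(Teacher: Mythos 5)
Your overall plan is sound and the conclusion is reachable, but the route is genuinely different from the paper's and leaves a few nontrivial holes to fill. The paper verifies the defining relations of $\HH_{c,\hbar}$ geometrically, without any localization: the $\C[\tW]$-relations are obtained by pulling back the classical Steinberg convolution through the Cartesian square \eqref{eq:SpringerCoulombDiagram} and reducing to rank $2$ parahorics, and the remaining relation (2) of Definition \ref{def:dDAHA} is cited from \cite[Lemma 6.4.1]{Yun1}. You instead verify the relations by realizing both sides inside difference-reflection operators via a fixed-point localization map $z^*$ and matching with the Dunkl--Cherednik embedding \eqref{eq:Dunklembedding}. This is closer in spirit to Varagnolo--Vasserot; it buys a more explicit and computational proof, at the price of having to set up and justify the localization machinery.

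A few specific gaps. First, Proposition \ref{prop:differenceembedding} is stated only for the spherical case $\bP=G_\cO$, and its target is a \emph{localized} ring $\cA_{T,0}^{\hbar}[\hbar^{-1},(\text{generalized roots}+m\hbar+nc)^{-1}]$; the inverse pushforward $\iota_*^{-1}$ requires inverting Euler classes of normal bundles of fixed loci, which involve $\hbar$. Your displayed target $\C[\ft][c,\hbar]\otimes\C[\tW]$ is not where the map lands; you need a localization allowing poles along root hyperplanes and $\hbar$-shifts thereof, exactly as in \eqref{eq:Dunklembedding}. Consequently, the final paragraph's claim that your argument does not invert $\hbar$ is not accurate as stated. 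You can still recover the result over all of $\C[c,\hbar]$, but this requires the additional remark that both $\HH_{c,\hbar}$ and $\tcA^{\hbar}_{G,\bI}$ are flat (hence torsion-free) over $\C[c,\hbar]$, so a relation verified after localization holds before localization; that step needs to be made explicit, because otherwise the argument does not improve on \cite{VV2} in the way you assert.

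Second, you do not actually establish injectivity of $z^*\iota_*^{-1}$ (or an Iwahori analogue of it) on $\tcA^{\hbar}_{G,\bI}$, nor do you carry out the computation that $z^*\Psi(\sigma_i)=s_i+\frac{c}{y_{\alpha_i}}(s_i-1)$ and $z^*\Psi(\omega)=\omega$. Both are believable and probably straightforward (equivariant formality from the affine paving gives the injectivity, and the rank-one computation over the minimal Iwahori orbit closures gives the operator formulas), but they are exactly the technical content that the paper's Springer-theoretic argument packages away. If you carry them out, you should also check that the fundamental classes $[\cR^{\leq \sigma_i}]$, $[\cR^{\leq \omega}]$ are the correct normalizations, since the relation (2) of Definition \ref{def:dDAHA} is sensitive to scalar factors. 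The final Bruhat-filtration step is the same as in the paper and is correct once the preceding steps are filled in.
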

\begin{proof}

We denote by $\cR_{N_\bI,\bI}^{\leq w}$ the preimage of the Schubert variety $\Fl_G^{\leq w}$ under the projection
$\cR_{N_\bI,\bI}\to \Fl_G$. The algebra $\tcA_{G,\bI}^{\hbar}$ is generated over $H^*_{\tbI\rtimes \C^\times}(\mathrm{pt})$ by the non-canonically defined "fundamental classes" $[\cR_{N_\bI,\bI}^{\leq w}]$ for $w\in \tW$, which however have a unique leading term in the Bruhat order. This follows exactly in the same way as in the spherical case of \cite[Section 6(i)]{BFN}.

Note that when $w=s_i$ is a simple reflection, the restriction of the projection $\cR_{N_\bI,\bI}^{\leq w}\to \Fl^{\leq w}\cong \P^1$ outside the origin is a (profinite-dimensional) vector bundle and its closure in $\cR$ is still a vector bundle over $\P^1$. In this case we can and will take 
$[\cR_{N_\bI,\bI}^{\leq s_i}]$ to be the corresponding fundamental class. In the type A setting, this was noted for example in \cite[Section 4.2.]{BEF}.
We will prove that $\tcA_{G,\bI}^{\hbar}$ has a faithful representation in $\hbar$-difference operators on $\ft$ which satisfies the relations of the dDAHA.

Consider the restriction of the projection $\cR_{N_\bI,\bI}\to \Fl$ to the fixed points $\Fl^T\cong \tW$. Denote the 
pullback by $\cR_{N_\bI,\bI,T}$. This gives rise to a morphism $\iota_*: H^{\tbI\rtimes \C^\times}_*(\cR_{N_\bI,\bI,T})\to \tcA_{G,\bI}^{\hbar}$. Just as noted in \cite[Section 4.1.]{BEF} for the $G=GL_n$-case, the proof of 
\cite[Lemma 5.11]{BFN} goes through word for word for these ind-varieties and we have an algebra embedding 
$$\mathbf{z}^*: H^{\tbI\rtimes \C^\times}_*(\cR_{N_\bI,\bI,T})\hookrightarrow H^{\tbI\rtimes \C^\times}_*(\Fl^T)$$ Upon localization in the generalized roots, we obtain an injection $z^*(\iota_*)^{-1}:\tcA^{\hbar}_{G,\bI}\hookrightarrow H^{\tbI\rtimes \C^\times}_*(\Fl^T)\cong \Diff(\fh^{reg})\rtimes \C[W]$, similar to the spherical case handled in \cite[Section 5(v)]{BFN}. It is clear that under the algebra filtration coming from the usual Bruhat order on $\Fl^T$, this injection respects the filtrations.

In particular, similar to \cite[Proposition 6.2]{BFN} we get
$$[\cR_{N_\bI,\bI}^{\leq w}][\cR_{N_\bI,\bI}^{\leq w'}]=[\cR_{N_\bI,\bI}^{\leq ww'}]+\text{ lower order terms in Bruhat order}$$

Note that each $w$ has a reduced expression, say $w=s_{i_1}\cdots s_{i_j}$. In particular,
$$[\cR_{N_\bI,\bI}^{\leq s_{i_1}}]\cdots[\cR_{N_\bI,\bI}^{\leq s_{i_j}}]=[\cR_{N_\bI,\bI}^{\leq s_{i_1}\cdots s_{i_j}}]+ \text{ lower order terms}.$$
This implies that the classes $[\cR_{N_\bI,\bI}^{\leq s_i}]$ above the one-dimensional $\bI$-orbits  in $\Fl_G$ generate $\tcA_{G,\bI}^{\hbar}$ together with the equivariant parameters. So it remains to check these satisfy the right relations.

Via a similar localization computation as in the following section,
 $[\cR_{N_\bI,\bI}^{\leq s_i}]$ 
is of the form $a+bs_i$, where $1, s_i$ are the corresponding fixed-point classes and act as the identity and the simple reflection on $\C[\hat{\fh}]$, respectively. The coefficient of $s_i$ is the Euler class of $\cT_{N_\bI,\bI}^{\leq s_i}/\cR_{N_\bI,\bI}^{\leq s_i}\to \P^1$ divided by the tangent weight, more precisely $b=\frac{\alpha_i+c}{\alpha_i}$. Similarly, we have $a=\frac{-\alpha_i+c}{-\alpha_i}$, giving that $[\cR^{\leq s_i}]$ acts via 
$$[\cR^{\leq s_i}] f=
(1+s_i)f+c\frac{(s_i-1)f}{\alpha_i}$$

As the relations between the $[\cR^{\leq s_i}]$ are exactly those from Definition \ref{def:dDAHA}, this defines a homomorphism $\HH_G\to \tcA_{G,\bI}^{\hbar}$ sending $1+\sigma_i\mapsto  [\cR^{\leq s_i}]$, $c\mapsto c, \hbar\mapsto \hbar$ and $\C[\ft]\ni f\mapsto f\in \C[\ft]\cong H^*_T(pt)$. The relations follow from those in the usual polynomial representation of the degenerate DAHA in the $\hbar$-difference representation \eqref{eq:Dunklembedding}. This shows $\tcA_{G,\bI}^{\hbar}$ is a quotient of $\HH_G$; faithfulness follows from \cite[Proposition 1.5.6.]{CherednikBook}.

\end{proof}
\begin{remark}
In $K$-theory, a similar proposition is proven for the full DAHA with some specific rational parameter values in \cite[Section 2.5]{VV2}.
\end{remark}
\begin{corollary}
\label{cor: sperical coulomb is daha}
The Coulomb branch algebra $\cA^{\hbar}_{G,G(\cO)}$ of Braverman-Finkelberg-Nakajima is isomorphic to the spherical subalgebra of $\HH_{c,\hbar}$.
\end{corollary}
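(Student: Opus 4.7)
The corollary should follow from Theorem~\ref{thm:iwahoricoulombisdaha} together with a Morita-type relationship between Iwahori- and spherical-equivariant Borel-Moore homologies. The key geometric input is the natural projection $\pi\colon\cR_{N_\bI,\bI}\to\cR_{N_\cO,G_\cO}$ arising from the inclusions $\bI\subset G_\cO$ and $\Lie(\bI)\subset N_\cO$, whose fibers are isomorphic to $G_\cO/\bI\cong G/B$. This is a smooth proper map of relative dimension $N=\dim G/B$, so it induces pullback and pushforward maps in equivariant Borel-Moore homology, both compatible with the BFN convolution diagrams of Theorem~\ref{thm:convolution} via base change along $\pi$ and $\pi\times\pi$.

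The first step is to identify $\cA^\hbar_{G,G_\cO}$ with the corner algebra $\eee\cdot \tcA^\hbar_{G,\bI}\cdot\eee\subset \tcA^\hbar_{G,\bI}$. Using equivariant formality of $G/B$ and the projection formula, $\pi^*$ induces a module isomorphism
\[
H^{\tilde G_\cO\rtimes \C^\times}_*(\cR_{N_\cO,G_\cO})\otimes_{\C} H^*(G/B)\;\xrightarrow{\sim}\; H^{\tbI\rtimes\C^\times}_*(\cR_{N_\bI,\bI}),
\]
in which the one-dimensional $W$-invariant piece of $H^*(G/B)$ is precisely the image of the symmetrizer $\eee$. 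In particular $\pi^*$ embeds $\cA^\hbar_{G,G_\cO}$ as an associative subalgebra of $\tcA^\hbar_{G,\bI}$, and the image lands inside $\eee\cdot\tcA^\hbar_{G,\bI}\cdot\eee$.

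Next, under the identification $\tcA^\hbar_{G,\bI}\cong \HH_{c,\hbar}$ from Theorem~\ref{thm:iwahoricoulombisdaha}, I would check that the image of $\pi^*$ is exactly $\eee\HH_{c,\hbar}\eee$. The Springer-type $W$-action on $\tcA^\hbar_{G,\bI}$ used in the proof of Theorem~\ref{thm:iwahoricoulombisdaha} (convolution with the classes $[\cR^{\le s_i}_{N_\bI,\bI}]$) is precisely the action of $W\subset\tW\subset\HH_{c,\hbar}$, and the spherical corner of $\HH_{c,\hbar}$ is by definition cut out by the symmetrizer $\eee$. Compatibility of the Coulomb convolution with the DAHA product is then automatic from Theorem~\ref{thm:iwahoricoulombisdaha}, restricted to the corner.

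The main obstacle is bookkeeping: one must track the $2N$-shift coming from the $G/B$-fiber, normalize $\pi^*$ appropriately so that it is an algebra homomorphism rather than merely a linear embedding, and verify that the geometric $W$-action coincides on the nose with the algebraic one inside $\HH_{c,\hbar}$ (not merely up to an outer twist). Once this matching of Weyl-group actions is pinned down via the proof of Theorem~\ref{thm:iwahoricoulombisdaha}, the remaining identification of $\cA^\hbar_{G,G_\cO}$ with $\eee\HH_{c,\hbar}\eee$ is formal.
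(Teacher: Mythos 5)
The overall structure of your proposal matches the paper's proof: identify $\cA^\hbar_{G,G_\cO}$ with the corner algebra $\eee\,\tcA^\hbar_{G,\bI}\,\eee$ and then invoke Theorem~\ref{thm:iwahoricoulombisdaha}. The paper simply cites the Cartesian square \eqref{eq:SpringerCoulombDiagram} and "the definitions" for this identification, so your attempt to fill in the geometry is in the right spirit. However, a key step in your argument is incorrect.

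You assert that the projection $\pi\colon\cR_{N_\bI,\bI}\to\cR_{N_\cO,G_\cO}$ is a smooth proper map with every fiber isomorphic to $G_\cO/\bI\cong G/B$, and use equivariant formality of $G/B$ to write a K\"unneth-type decomposition
\[
H^{\tG_\cO\rtimes\C^\times}_*(\cR_{N_\cO,G_\cO})\otimes_\C H^*(G/B)\xrightarrow{\sim} H^{\tbI\rtimes\C^\times}_*(\cR_{N_\bI,\bI}).
\]
This is not what happens. The diagram \eqref{eq:SpringerCoulombDiagram} exhibits $\pi$ as a base change of the Grothendieck--Springer morphism $\pi'\colon[\fb/B]\to[\fg/G]$, whose fibers are \emph{classical Springer fibers} depending on the image point (all of $G/B$ over $0$, a single point over regular nilpotents, $|W|$ points over regular semisimple elements, etc.). In particular $\pi$ is proper but not smooth, not a fibration, and the isomorphism you wrote fails: for a non-fibration the pushforward of the dualizing complex does not split off a trivial $H^*(G/B)$ factor. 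Note that the genuine $G/B$-bundle in this picture is the intermediate map $\cR_{N_\cO,\bI}\to\cR_{N_\cO,G_\cO}$ (where only the parahoric, not the lattice, is changed), and $\cR_{N_\bI,\bI}$ is a proper closed subscheme of $\cR_{N_\cO,\bI}$; confusing the two is probably the source of the slip.

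The correct replacement — and this is what the paper's terse proof is pointing to — is genuine Springer theory applied to $\pi$: since it is a base change of the small map $\pi'$, the derived pushforward $\pi_*$ carries an action of $\C[W]$ (the endomorphism algebra of the Springer sheaf), and the $W$-invariant part recovers the bottom row. This is the same mechanism used in the proof of Lemma~\ref{lem: spherical antispherical}, and it yields $H^{\tG_\cO\rtimes\C^\times}_*(\cR_{N_\cO,G_\cO})=\eee\, H^{\tbI\rtimes\C^\times}_*(\cR_{N_\bI,\bI})\,\eee$ directly, without the (false) K\"unneth step. Once that is in hand the rest of your outline — checking compatibility of the Weyl-group actions and invoking Theorem~\ref{thm:iwahoricoulombisdaha} — is fine and coincides with the paper's argument.
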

\begin{proof}
Let $\eee=|W|^{-1}\sum_{w\in W} w\in \HH_{c,\hbar}\cong \cA^{\hbar}_{G,\bI}$. 
Recall that $N=\Ad$ in this section. Then, in the notation of Theorem \ref{thm:iwahoricoulombisdaha},
we have the left $\underline{G}(\cO)\rtimes \C^\times $-equivariant projection $\cR_{N_\bI,\bI}\to \cR_{N_\cO,G(\cO)}$ obtained as the combination of the projection $\Fl\to \Gr$ and the inclusion $N_\bI\hookrightarrow N_\cO$. It fits into a cartesian square 
\begin{equation}
\label{eq:SpringerCoulombDiagram}
\begin{tikzcd}
\cR_{N_\bI,\bI} \arrow[r,"\varphi'"]\arrow[d, "\pi"'] & \left[ \tilde{\fg}/G \right]=[\fb/B] \arrow[d, "\pi'"]\\
\cR_{N_\cO,G(\cO)} \arrow[r, "\varphi"]&  \left[ \fg/G \right]
\end{tikzcd}
\end{equation}
in particular, taking $\underline{\bI}\rtimes \C^\times$-equivariant homology, by classical Springer theory we have that 
$$H^{\underline{\bI}\rtimes \C^\times}_*(\cR_{N_\cO, G(\cO)})\cong \cA^{\hbar}_{G,\bI}\eee$$ To get $\underline{G}(\cO)\rtimes \C^\times$-invariants, using Atiyah-Bott we compute $$H^{\underline{G}(\cO)\rtimes \C^\times}_*(\cR_{N_\cO, G(\cO)})\cong \eee \cA^{\hbar}_{G,\bI}\eee$$

\end{proof}
\begin{remark}
This proves the speculation in \cite[Remark 6.20]{BFN}. For $G=\GL_n$ this was proved by Kodera and Nakajima in \cite{KN}.
\end{remark}

Finally, we give a geometric realization of the ''shift" bimodules of the trigonometric Cherednik algebra using line operators.

Let  $\eta=(t^i \Lie(\bI),\bI)$ and $\eta'=(t^i\Lie(\bI\cap t G(\cO)),\bI)$, where $\Lie(\bI\cap t G(\cO))$ is the pronilpotent radical of $\Lie(\bI)$. In the notations of  section \ref{sec:linedefects} denote 
$${}_{i+1}\tcR_i:={}_\eta \cR_{\eta'}$$ and $${}_{i+1} \tcA_i=H_*^{\tbI \rtimes \C^\times}({}_{i+1}\tcR_i)$$
\begin{theorem}
\label{thm: one step}
There are natural isomorphisms of graded bimodules
$${}_{i+1}\cA_i\cong \eee {}_{i+1}\tcA_i \eee\cong \eee_- {}_i\tcA_i \eee$$
\end{theorem}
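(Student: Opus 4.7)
The plan is to prove the two isomorphisms in turn.

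For the first isomorphism \({}_{i+1}\cA_i\cong\eee\,{}_{i+1}\tcA_i\,\eee\), I would adapt the Springer-theoretic argument underlying Corollary \ref{cor: sperical coulomb is daha} (the ``diagonal'' case \({}_i\cA_i\cong\eee\,{}_i\tcA_i\,\eee\)). There is a natural morphism \(\pi\colon {}_{i+1}\tcR_i\to{}_{i+1}\cR_i\) coming from the \(G/B\)-fibration \(\Fl_G\to \Gr_G\) together with the comparison of the two Iwahori lattices against their spherical counterparts. This map fits into a commutative square with the Grothendieck--Springer resolution \([\fb/B]\to[\fg/G]\), extending diagram \eqref{eq:SpringerCoulombDiagram}. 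Proper base change together with the Springer action of \(W\) on the corresponding \(\pi'_*\C\) (as in the proof of Lemma \ref{lem: spherical antispherical}) then identifies the \(\eee\)-invariants of the \(\tbI\rtimes\C^\times\)-equivariant Borel--Moore homology of \({}_{i+1}\tcR_i\) with the \(\tG_\cO\rtimes\C^\times\)-equivariant Borel--Moore homology of \({}_{i+1}\cR_i\), giving \({}_{i+1}\cA_i\cong\eee\,{}_{i+1}\tcA_i\,\eee\).

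For the second isomorphism \(\eee\,{}_{i+1}\tcA_i\,\eee\cong\eee_-\,{}_i\tcA_i\,\eee\), the lattice inclusions \(t^i\Lie(\bI\cap tG(\cO))\subset t^i\Lie(\bI)\) in the source slot and \(t^{i+1}\Lie(\bI)\subset t^i\Lie(\bI)\) in the target slot induce a closed embedding \(\iota\colon {}_{i+1}\tcR_i\hookrightarrow {}_i\tcR_i\). The pushforward \(\iota_*\) in \(\tbI\rtimes\C^\times\)-equivariant Borel--Moore homology is a bimodule map, and combining it with the identification \({}_i\tcA_i\cong\HH_{c+i\hbar,\hbar}\) of Theorem \ref{thm:iwahoricoulombisdaha} produces a map with target \(\HH_{c+i\hbar,\hbar}\). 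In close analogy with Lemma \ref{lem: Vandermonde} --- where pushforward along \(\Sp_{t^{-1}\gamma}\hookrightarrow\Sp_\gamma\) coincides with multiplication by the Vandermonde \(\Delta\) on the sign-isotypic component --- I would verify that \(\iota_*\) restricts to an isomorphism from \(\eee\,{}_{i+1}\tcA_i\,\eee\) onto \(\eee_-\HH_{c+i\hbar,\hbar}\eee=\eee_-\,{}_i\tcA_i\,\eee\). Compatibility with the left \(\eee\HH_{c+(i+1)\hbar,\hbar}\eee\)-module structures on the two sides is then provided precisely by the shift isomorphism of Theorem \ref{thm:shiftiso}.

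The main obstacle will be matching the Vandermonde-type normalization factor. Concretely, the pushforward \(\iota_*\) picks up an Euler-class contribution from the codimension of \({}_{i+1}\tcR_i\) inside \({}_i\tcR_i\), and one must check that this exactly reproduces the conjugation by \(\Delta\) used in the proof of Theorem \ref{thm:shiftiso}. A natural route is to verify the identity on the symmetrized dominant cocharacter generators \(E_\lambda\) of Theorem \ref{thm: E lambda formula} via Lemma \ref{lem: Phi and shift}, and then bootstrap to the full bimodule using associativity of convolution and the Bruhat-cell analysis from the proof of Theorem \ref{thm:iwahoricoulombisdaha}.
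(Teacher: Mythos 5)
Your proposal is essentially correct and shares the same geometric core as the paper's proof: both isomorphisms are manifestations of classical Springer theory for the Cartesian square \eqref{eq:SpringerCoulombDiagram}. For the first isomorphism your route (a stratified $G/B$-fibration ${}_{i+1}\tcR_i\to{}_{i+1}\cR_i$ with Springer action on fibers, parallel to Corollary \ref{cor: sperical coulomb is daha}) is exactly what the paper does. For the second isomorphism, the paper takes a cleaner sheaf-theoretic shortcut: it observes that ${}_{i+1}\tcR_i$ is precisely the fiber $\varphi'^{-1}(0)$ of the natural map ${}_i\tcR_i\to[\fb/B]$, and then cites the $\epsilon$-isotypic identification from Lemma \ref{lem: spherical antispherical} directly to get $\eee_-{}_i\tcA_i\eee[2\dim G/B]\cong\eee{}_{i+1}\tcA_i\eee$, with the homological shift absorbing the Vandermonde that you are worried about. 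Your alternative --- pushing forward along $\iota$, matching Euler classes against Lemma \ref{lem: Phi and shift}, and verifying on the classes $E_\lambda[f]$ --- would work, and it has the virtue of being explicit in the difference presentation, but it is heavier: it reintroduces exactly the localization computations (Proposition \ref{prop:noncomm localization for any group}, Theorem \ref{thm: E lambda formula}) that the paper reserves for the later comparison theorems, and the ``bootstrap via associativity'' is a little under-specified since ${}_{i+1}\tcA_i$ is a bimodule, not an algebra, so one has to say precisely which spanning set (e.g.\ the classes $[{}_{i+1}\cR_i^{\leq\lambda}][f]$ from the Bruhat filtration) is being checked and why a filtered bimodule map with isomorphic associated graded is an isomorphism. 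One small imprecision to fix: the closed embedding $\iota$ is obtained by shrinking only the source lattice to the pronilpotent radical $t^i\Lie(\bI\cap tG(\cO))$ (equivalently, by requiring $gs\equiv 0\bmod t$); there is no simultaneous change in the target slot as your description suggests.
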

\begin{proof}
Let ${}_{i+1}\tcR_i\hookrightarrow {}_{i}\tcR_i$ be the natural inclusion. 
$(g,s)\in {}_i \tcR_i$ belongs to ${}_{i+1}\tcR_i$ exactly when $gs\in \Lie(\bI)\cap t\fg(\cO)$, or in other words when it is in the kernel of the map 
$${}_{i}\tcR_i\to [\fb/B]$$ sending $(g,s)$ to $gs$ mod $t$.
We also have the cartesian square
\begin{equation}
\begin{tikzcd}
{}_{i}\tcR_i\arrow[r,"\varphi'"]\arrow[d, "\pi"'] & \left[ \tilde{\fg}/G \right]=[\fb/B] \arrow[d, "\pi'"]\\
{}_i \cR_i \arrow[r,"\varphi"] &  \left[ \fg/G \right]
\end{tikzcd}
\end{equation}
similar to \eqref{eq: Springer diagram}. Now, note that by finite-dimensional Springer theory $$H_*^{\tbI\rtimes \C^\times}({}_i\cR_i)={}_i\tcA_i \eee$$ and $$H_*^{\tG_\cO\rtimes \C^\times}({}_i \cR_i)=\eee {}_i\tcA_i\eee$$ Finally ${}_{i+1}\cA_i\cong \eee {}_{i+1}\tcA_i\eee$ and $$\eee_- {}_i\tcA_i\eee[2\dim G/B]\cong \eee {}_{i+1} \tcA_i \eee$$ again similarly to the proof of Lemma \ref{lem: spherical antispherical}.

\end{proof}

\subsection{Localization of the spherical algebra in the adjoint case}
\label{subsec:spherical adjoint localization}
We now analyze the $\Z$-algebra introduced in the previous section via localization to fixed points. In particular, we may deduce results about the associated graded of the Bruhat filtration for the convolution algebras, using an ''abelianization" procedure appearing e.g. in \cite{BDG}.
We should note that similar fixed-point analysis does not apply to the Springer action itself unless we are in a situation similar to \cite{OY,VVfd,GK}, but we are still able to deduce many results about the convolution action on general grounds in Section \ref{sec:bfnspringer}. 

We let $G$ and $N$ be arbitrary for now. Suppose $\bP=G(\cO)$. 
The spaces ${}_i \cR_j$ have natural closed embeddings to ${}_i \cR_j\hookrightarrow G(\cK)\times^{G(\cO)} t^jN(\cO)$.
Moreover, there is the embedding of the zero-section $$z: \Gr_G\hookrightarrow G(\cK)\times^{G(\cO)} t^jN(\cO)$$ and, if we denote by ${}_i \cR_{T}{}_j$ the space of triples constructed using $(T,N)$, an inclusion 
$$\iota: {}_i\cR_T{}_j\hookrightarrow {}_i \cR_j.$$ 
The latter map gives rise to an equivariant pushforward 
$$\iota_*: H_*^{T(\cO)}({}_i\cR_{T}{}_j)\to H_*^{G(\cO)}({}_i\cR_j)$$ (see \cite[Lemma 5.17 and Remark 5.23]{BFN}). The map $z$ for $G=T$ gives the maps ${}_i z_j^*$ from Theorem \ref{thm:abelianembedding}. With a choice of a maximal torus $T$, call the union of the roots of $G$ with respect to $T$ with the $T$-weights of the representation $N$ {\em generalized roots}. 

Then, similarly to \cite{BFN, BFN2} we have
\begin{proposition}
\label{prop:differenceembedding}
We have an embedding $${}_iz_j^*(\iota_*)^{-1}: {}_i \cA_j^{\hbar}\hookrightarrow \cA_{T,0}^{\hbar}[\hbar^{-1}, (\text{generalized roots}+m\hbar+n c)^{-1}| m,n\in \Z].$$ Note that this is {\em not} a ring homomorphism unless $i=j$, but a bimodule homomorphism, as in Theorem \ref{thm:abelianembedding}.
\end{proposition}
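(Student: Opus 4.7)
The plan is to adapt the abelianization technique from \cite[Section 5]{BFN} (especially Lemma 5.11 there) to the ``shifted'' $\Z$-algebra setting of a sequence of lattices $t^i N_\cO$.

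The first step would be to reduce from $\tG_\cO \rtimes \C^\times$-equivariant to $\tT_\cO \rtimes \C^\times$-equivariant homology. This is an injection because the space ${}_i\cR_j$ fibers over $\Gr_G$ with vector-space fibers, and $\Gr_G$ admits a $T$-stable affine paving by Bruhat cells; hence ${}_i\cR_j$ is equivariantly formal and ${}_i\cA_j^\hbar$ embeds as the $W$-invariants of the $T$-equivariant version, which is free over $H^*_{\tT_\cO \rtimes \C^\times}(\mathrm{pt})$.

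Next I would apply equivariant localization for $T$. The $T$-fixed locus of ${}_i \cR_j$ is precisely ${}_i\cR_j|_{\Gr_T}$, with fiber over $\lambda \in X_*(T)$ the vector space $t^i N_\cO \cap t^{j}\Ad_{t^\lambda} N_\cO$. The normal bundle to ${}_i\cR_j|_{\Gr_T}$ inside ${}_i\cR_j$ decomposes under the $T$-weights of $N=\Ad$ and the roots of $\fg$; its equivariant Euler class is a product of linear forms of the shape ``generalized root $+ m\hbar + nc$'', coming from the $\C^\times_{\rot}$-action (generating $\hbar$-shifts, exactly as in Theorem \ref{thm:abelianconvolution}) and from the flavor $\G_m$-action (generating $c$-shifts). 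Inverting precisely these classes makes $\iota_*$ into an isomorphism. Composing with $z^*$ (pullback to the zero section of the fiber bundle $\cT_j$) then lands in $\cA_{T,0}^\hbar = H^{\tT_\cO\rtimes \C^\times}_*(\Gr_T)$, and together with the injectivity statement of the previous paragraph one concludes that the composite ${}_i z_j^*(\iota_*)^{-1}$ is an embedding of the required form.

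The main obstacle I expect is verifying that this map is a bimodule homomorphism in the precise sense stated, rather than merely a linear map. Compatibility with left and right convolution requires a careful diagram chase on the BFN convolution diagram of Theorem \ref{thm:convolution}, restricted to $T$-fixed loci, using the functoriality of proper pushforward and flat pullback in equivariant BM homology. The asymmetry for $i\ne j$ already shows up in the Euler-class factors on either side --- they involve the different lattices $t^i N_\cO$ and $t^j N_\cO$ --- and confirming that these factors assemble to match the abelian convolution formulas of Theorems \ref{thm:abelianconvolution} and \ref{thm:abelianembedding} is what ultimately produces the bimodule, rather than algebra, structure. Carrying out that bookkeeping cleanly is the main technical task.
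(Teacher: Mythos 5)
Your proposal correctly reconstructs the BFN abelianization/localization argument — equivariant formality via the Bruhat paving, reduction to $T$-equivariant homology, pushforward along the fixed-point inclusion becoming invertible after localizing at Euler classes of the form $(\text{generalized root}) + m\hbar + nc$, then pulling back via the zero section — which is exactly what the paper invokes by referring to \cite{BFN,BFN2} without giving its own details. The one imprecision worth flagging is that ${}_i\cR_j$ itself need not be smooth, so one should phrase the Euler-class computation via the ambient vector bundle $\cT_j$ over $\Gr_G$ and the tangent spaces $T_\lambda\Gr_G^\lambda$ to Schubert cells, as in Lemma~\ref{lem:locformula}, rather than a literal normal bundle to the fixed locus inside ${}_i\cR_j$; this does not change the conclusion.
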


Let $\pi:{}_i\cR_j\to \Gr_G$ be the projection. We use the Cartan decomposition of the affine Grassmannian into $G(\cO)$ orbits:
$$
\Gr_G=\bigsqcup_{\lambda\in X_*^+}\Gr_G^{\lambda},\quad \Gr_G^{\lambda}=G(\cO)t^{\lambda}G(\cO)/G(\cO)
$$
The closures of these orbits will be denoted by $\Gr^{\leq \lambda}_G=\overline{\Gr_G^{\lambda}}$.
Then the subvariety ${}_i\cR_j^{\leq \lambda}:=\pi^{-1}(\Gr^{\leq \lambda}_G)$ gives rise to a class in equivariant Borel-Moore homology as in \cite[Section 2]{BFN}.

In particular, we have the following localization formula.
\begin{lemma}
\label{lem:locformula}
For a minuscule cocharacter 
$\lambda$, we have 
\begin{equation}
\label{eq: locformula}
{}_iz_j^*(\iota_*)^{-1}f\cap [{}_i\cR_j^{\leq \lambda}]=\sum_{\lambda'=w\lambda\in W\lambda}\frac{wf\times e(t^{\lambda'} t^j N(\cO)/t^{\lambda'} t^jN(\cO)\cap t^i N(\cO))}{e(T_{\lambda'}\Gr_G^\lambda)}u_{\lambda'}
\end{equation}
\end{lemma}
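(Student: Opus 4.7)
The plan is to apply $T$-equivariant Atiyah--Bott localization. Since $\lambda$ is minuscule, $\Gr_G^{\leq \lambda}=\Gr_G^\lambda$ is already closed in $\Gr_G$ and smooth, isomorphic to the partial flag variety $G/P_\lambda$ where $P_\lambda$ is the stabilizer of $\lambda$; its $T$-fixed points form exactly the Weyl orbit $W\lambda\subset X_*(T)\hookrightarrow \Gr_T$. The projection $\pi:{}_i\cR_j^{\leq \lambda}\to\Gr_G^\lambda$ is $G_\cO\rtimes \C^\times$-equivariant, and its fiber over $\lambda'=w\lambda$ is identified, via $[g,s]\mapsto g\cdot s$, with the subspace $V_{\lambda'}:=t^{\lambda'}t^jN_\cO\cap t^iN_\cO$ of the ambient vector-bundle fiber $t^{\lambda'}t^jN_\cO$ of $\cT_j^{\leq \lambda}$ at $\lambda'$.

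I unpack ${}_iz_j^*(\iota_*)^{-1}(f\cap[{}_i\cR_j^{\leq \lambda}])$ piece by piece at each fixed point $\lambda'=w\lambda$. First, Atiyah--Bott localization on the smooth projective base $\Gr_G^\lambda$ gives, after applying $(\iota_*)^{-1}$, the decomposition
\[
[{}_i\cR_j^{\leq \lambda}]\;\longmapsto\;\sum_{\lambda'\in W\lambda}\frac{[V_{\lambda'}]}{e(T_{\lambda'}\Gr_G^\lambda)}
\]
in the localized $T_\cO\rtimes\C^\times$-equivariant Borel--Moore homology of ${}_i\cR_j|_{\Gr_T}$. Second, the zero-section pullback $z^*$ converts the fundamental class $[V_{\lambda'}]$ of the subspace inside the ambient fiber $t^{\lambda'}t^jN_\cO$ into $e(t^{\lambda'}t^jN_\cO/V_{\lambda'})\cdot u_{\lambda'}$ by an Euler-class computation analogous to Theorem \ref{thm:abelianembedding}. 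Third, because the original class is $G_\cO$-equivariant and the Weyl group $W\subset N_{G_\cO}(T)$ transitively permutes the fixed points of $\Gr_G^\lambda$ starting from the basepoint $\lambda$, the coefficient $f\in H^*_{\tT}(pt)$ is transported to $wf$ at the fixed point $\lambda'=w\lambda$. Multiplying these three factors and summing over $W\lambda$ yields the claim.

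The principal technical subtlety is the use of a regularized Euler class for the infinite-dimensional pair $V_{\lambda'}\subset t^{\lambda'}t^jN_\cO$; this is well-defined because the quotient is a finite-dimensional $T$-representation, and is justified exactly as in \cite[Remark 5.23]{BFN} and the appendix of \cite{BFN2}. Alternatively, since the minuscule base $\Gr_G^\lambda\cong G/P_\lambda$ is itself finite-dimensional, one may bypass any ind-scheme technology by combining $T$-equivariant formality with classical Atiyah--Bott localization on this finite-dimensional partial flag variety, together with the abelian formula of Theorem \ref{thm:abelianembedding} applied to the torus fibers $V_{\lambda'}$.
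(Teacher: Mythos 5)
Your proof is correct and takes essentially the same route as the paper's own (very terse) argument: the paper cites Brion for equivariant localization in Borel--Moore homology plus Theorem \ref{thm:abelianembedding} for the fiberwise Euler-class computation, and you have simply unpacked those two ingredients — Atiyah--Bott localization on the smooth minuscule base $\Gr_G^\lambda\cong G/P_\lambda$, the abelian Euler-class formula on the affine fibers $V_{\lambda'}$, and the Weyl transport of the coefficient $f\mapsto wf$. The only minor inaccuracy is the remark that the regularized Euler class is needed "because the quotient is a finite-dimensional $T$-representation": what actually makes the Euler class of the quotient $t^{\lambda'}t^jN_\cO/V_{\lambda'}$ classical is that $V_{\lambda'}$ always has finite codimension in the lattice $t^{\lambda'}t^jN_\cO$ (any two $\cO$-lattices in $N_\cK$ are commensurable), so no renormalization beyond the standard BFN setup is required.
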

\begin{proof}
We are using Borel-Moore homology, so results  of Brion \cite{Brion} apply and the formula follows from Theorem \ref{thm:abelianembedding}. For the case $i=j=0$, see \cite[Proposition 6.6]{BFN}.

\end{proof}

If $\lambda$ is not minuscule, the corresponding Schubert variety is not smooth and there is no nice formula for $[{}_i\cR_j^{\leq \lambda}]$. To overcome this, we consider the Bruhat filtration on the Coulomb branch algebra $\cA$ defined by the classes of $[{}_i\cR_j^{\leq \lambda}][f]$. The corresponding Bruhat filtration on $\cA_T$  is given by the classes of $\mu$ such that $\mu$ is in a Weyl group orbit of a dominant coweight $\mu'$ with $\mu'\le \lambda$. In the sequel of the paper we will write $\gr \cA$ (resp $\gr \cA_T$) for the associated graded algebras with respect to the Bruhat filtrations. 

By \cite[Section 6.(i)]{BFN} the localization map $\iota_*: \cA_T\to \cA$ agrees with the respective Bruhat filtrations and one can define a map $\gr \iota_*:\gr \cA_T\to \gr \cA$. To save space, we will denote 
$$
(\gr \iota_*)^{-1}[{}_j\cR_i^{\lambda}][f]:=(\gr \iota_*)^{-1}\gr [{}_j\cR_i^{\lambda}][f].
$$
The right hand side of the equation \eqref{eq: locformula} yields the formula for the associated graded  with respect to the Bruhat filtration, see e.g. \cite[Eq. (6.3)]{BFN}. We first consider the case $G=\GL_n$.

\begin{theorem}
\label{thm:gr of bruhat}
Let $G=\GL_n$, then the following hold:

(a) For $c=\hbar=0$, arbitrary cocharacter $\lambda$ and a function $f(y)$ which is symmetric under the stabilizer of $\lambda$ we have the following:
$${}_j z_i^* (\gr \iota_*)^{-1}[{}_j\cR_i^{\lambda}][f]=\sum_{\lambda'\in W\lambda}f'\prod_{s\neq r}\frac{ (y_r-y_s)^{\max(\lambda'_r-\lambda'_s+i,j)}}{(y_r-y_s)^{\lambda'_r-\lambda'_s+i} (y_r-y_s)^{\max(\lambda'_r-\lambda'_s,0)}}u^{\lambda'}
$$
Here $f'$ is the image of $f$ under any permutation in $W$ which sends $\lambda$ to $\lambda'$. If $\lambda$ is minuscule, the formula is exact without taking associated graded. 

(b) For general $c,\hbar$ we have
$$
{}_j z_i^* (\gr \iota_*)^{-1}[{}_j\cR_i^{\lambda}][f]=\sum_{\lambda'\in W\lambda}f'\frac{\prod_{\lambda'_r-\lambda'_s+i<j}\prod_{\ell=0}^{j-(\lambda'_r-\lambda'_s+i)-1}(y_r-y_s+(\lambda'_r-\lambda'_s+i+\ell)\hbar+c)}{\prod_{s\neq r}\prod_{\ell=0}^{\max(\lambda'_r-\lambda'_s,0)}(y_s-y_r+\ell\hbar)}u^{\lambda'}
$$
where the notations are as above.
\end{theorem}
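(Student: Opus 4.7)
The plan is to apply equivariant localization in Borel-Moore homology, via Lemma~\ref{lem:locformula} and the abelianization formula of Theorem~\ref{thm:abelianembedding}. The theorem is a combinatorial unpacking of this localization identity specialized to $G=GL_n$ and $N=\Ad$.

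I would first handle the minuscule case, where $\Gr_G^\lambda\cong G/P_\lambda$ is smooth and Lemma~\ref{lem:locformula} is exact. The matter Euler class $e(t^{\lambda'}t^iN_\cO/t^{\lambda'}t^iN_\cO\cap t^jN_\cO)$ splits over the roots $y_r-y_s$: on the $(r,s)$-weight component it comes from the quotient $t^{d+i}\cO/t^{\max(d+i,j)}\cO$ with $d=\lambda'_r-\lambda'_s$, contributing $(y_r-y_s)^{\max(d+i,j)-(d+i)}$ at $c=\hbar=0$. The Grassmannian tangent space at $t^{\lambda'}$ is $\fg_\cO/\fg_\cO^{\lambda'}$, where $\fg_\cO^{\lambda'}$ is the stabilizer Lie algebra; its weights are the roots $\alpha$ with $\alpha(\lambda')>0$, each with multiplicity $\alpha(\lambda')$, contributing $(y_r-y_s)^{\max(d,0)}$ on the $(r,s)$-component. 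Substituting and simplifying with $\max(d+i,j)-(d+i)=\max(0,j-d-i)$ will yield part~(a). The symmetry hypothesis on $f$ is what makes the sum over $\lambda'\in W\lambda$ well-defined, independently of the choice of $w\in W$ with $w\lambda=\lambda'$.

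For non-minuscule $\lambda$ the Schubert variety $\Gr_G^{\le\lambda}$ is singular, but the same formula computes the associated graded class $\gr[{}_j\cR_i^\lambda]$ with respect to the Bruhat filtration: contributions from smaller Schubert cells land in strictly lower pieces, so the leading-order expression at each $T$-fixed point is unchanged. This is the natural analogue of \cite[Eq.~(6.3)]{BFN2} for our parahoric convolution spaces, and adapting that argument will be the main technical point. Part~(b) then follows by running the same calculation with loop rotation and flavor equivariance turned on: a basis vector $t^a$ in a weight space acquires the loop-rotation shift $+a\hbar$, and the flavor $\C^\times$ acts on $N=\Ad$ with weight $1$ (contributing $+c$) but trivially on $G$, which is why $c$ appears only in the numerator and not in the Grassmannian tangent-space denominator. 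The main obstacle at that point is purely combinatorial bookkeeping of product ranges and signs when recombining the $(r,s)$ and $(s,r)$ contributions to match the displayed expression; no new geometric input beyond Lemma~\ref{lem:locformula} and the adjoint weight decomposition is required.
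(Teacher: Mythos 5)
Your proposal is essentially identical to the paper's proof: both localize via Lemma~\ref{lem:locformula}, compute the matter Euler class $e(t^{\lambda'}t^iN_\cO/t^{\lambda'}t^iN_\cO\cap t^jN_\cO)$ by decomposing $\Ad$ into its $(r,s)$ root components (the paper writes out the explicit $n\times n$ matrix of lattices, you phrase it as the quotient $t^{d+i}\cO/t^{\max(d+i,j)}\cO$ on each root line), compute the tangent Euler class $e(T_{\lambda'}\Gr^\lambda)$ from $N_\cO/(N_\cO\cap t^{\lambda'}N_\cO)$, handle minuscule $\lambda$ exactly and non-minuscule $\lambda$ at the level of $\gr$ for the Bruhat filtration, and then turn on $\hbar$ and $c$ for part~(b). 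Your remark that $c$ only enters the numerator because the flavor $\C^\times$ acts trivially on $G$ is a cleaner way to state why the tangent-space denominator is $c$-free, but it is the same proof.
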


\begin{proof}
We compute the right hand side in the equation \eqref{eq: locformula}.
If $\lambda=(\lambda_1,\ldots,\lambda_n)\in X_*(T)\subset X_*(\GL_n)$ we get
$$t^{\lambda}.t^iN(\cO)=\begin{pmatrix}
t^iN(\cO) & t^{\lambda_1-\lambda_2+i} N(\cO)& t^{\lambda_1-\lambda_3+i}N(\cO) & \cdots & t^{\lambda_1-\lambda_n+i}N(\cO)\\
t^{\lambda_2-\lambda_1+i}N(\cO) & t^{i}N(\cO) & t^{\lambda_2-\lambda_3+i}N(\cO) & \cdots & t^{\lambda_2-\lambda_n+i}N(\cO) \\
t^{\lambda_3-\lambda_1+i}N(\cO) & t^{\lambda_3-\lambda_2 +i}N(\cO) & t^{i}N(\cO) & \cdots & t^{\lambda_3-\lambda_n+i}N(\cO)\\
\vdots & \vdots & \vdots & \ddots & \vdots \\
t^{\lambda_n-\lambda_1+i}N(\cO) & t^{\lambda_n-\lambda_2+i}N(\cO) & t^{\lambda_n-\lambda_3+i}N(\cO) & \cdots & t^i N(\cO)
\end{pmatrix} $$
Whence we compute the Euler class at $c=\hbar=0$:  
$$e(t^{\lambda}t^iN(\cO)/t^{\lambda}t^iN(\cO)\cap t^jN(\cO))=
\prod_{s\neq r} \frac{(y_r-y_s)^{\max(\lambda_r-\lambda_s+i,j)}}{(y_r-y_s)^{\lambda_r-\lambda_s+i}}
$$ 
For general $c,\hbar$ the factors with $\lambda'_r-\lambda'_s+i\ge j$ still contribute 1, and the formula for the Euler class reads as
$$
\prod_{\lambda'_r-\lambda'_s+i<j}\prod_{\ell=0}^{j-(\lambda'_r-\lambda'_s+i)-1}(y_r-y_s+(\lambda'_r-\lambda'_s+i+\ell)\hbar+c).
$$
It is well known that the tangent space $T_{\lambda}\Gr^{\lambda}$ is naturally identified with
$\frac{N_\cO}{N_\cO\cap t^\lambda.N_\cO}$, from which we get 
$$e(T_{\lambda}\Gr_G^\lambda)=\prod_{s\neq r} \prod_{\ell=0}^{\max(\lambda_r-\lambda_s,0)}(y_s-y_r+\ell\hbar).$$
\end{proof}

\begin{remark}
The above formula makes sense even if $f$ is not symmetric with respect to the stabilizer of $\lambda$. In this case, we first symmetrize with respect to the stabilizer of $\lambda$ and then symmetrize with respect to the whole group $W=S_n$. 
\end{remark}

For a general group $G$ and $N=\Ad$, we write can write the formula as follows.
\begin{proposition}
\label{prop:noncomm localization for any group}
For arbitrary $G$ and arbitrary coweight $\lambda$ we have:
$${}_j z_i^* (\gr \iota_*)^{-1}[{}_j \cR_i^{\leq \lambda}][f]=\sum_{\lambda'\in W\lambda}
f'\frac{\prod_{\alpha(\lambda')+i<j}\prod_{\ell=0}^{i-\alpha(\lambda')-j-1}(y_{\alpha}+(\alpha(\lambda')+j+\ell)\hbar+c)}{\prod_{\alpha\in \Phi} \prod_{\ell=0}^{\max(0, \alpha(\lambda'))-1}(y_{\alpha}+\ell\hbar)}u_{\lambda'}$$
\end{proposition}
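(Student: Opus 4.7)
The plan is to reduce the computation to equivariant localization on a smooth open Schubert stratum, then unpack both Euler classes root by root. Without loss of generality assume $\lambda$ is dominant. The $T$-fixed points of $\Gr_G^{\leq\lambda}$ are precisely the Weyl orbit $W\lambda$, and all of them lie in the open stratum $\Gr_G^{\lambda}=G_\cO\cdot t^\lambda$, which is smooth. The complement $\Gr_G^{\leq\lambda}\setminus\Gr_G^\lambda$ is a union of smaller Schubert strata, and the contributions of $[{}_j\cR_i^\mu]$ for $\mu<\lambda$ to $[{}_j\cR_i^{\leq\lambda}]$ are strictly lower in the Bruhat filtration, hence vanish in the associated graded. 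Therefore $\gr[{}_j\cR_i^{\leq\lambda}]$ is represented by the fundamental class $[{}_j\cR_i^\lambda]$ of the open stratum, exactly in the spirit of \cite[Eq.~(6.3)]{BFN2}.

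Applying the abelianization embedding ${}_jz_i^*\iota_*^{-1}$ of Proposition~\ref{prop:differenceembedding}, Theorem~\ref{thm:abelianembedding} gives that the fiber over a fixed point $\lambda'\in W\lambda$ contributes the class $e\!\bigl(t^{\lambda'}t^iN_\cO/(t^{\lambda'}t^iN_\cO\cap t^jN_\cO)\bigr)u_{\lambda'}$. Standard equivariant localization on the smooth variety $\Gr_G^\lambda$ (whose $T$-fixed locus is the finite set $W\lambda$) then yields
\[
{}_jz_i^*\iota_*^{-1}\gr[{}_j\cR_i^{\leq\lambda}]
\;=\;\sum_{\lambda'\in W\lambda}\;
\frac{e\!\bigl(t^{\lambda'}t^iN_\cO\big/(t^{\lambda'}t^iN_\cO\cap t^jN_\cO)\bigr)}{e\!\bigl(T_{\lambda'}\Gr_G^\lambda\bigr)}\,u_{\lambda'},
\]
which is the non-minuscule analogue of Lemma~\ref{lem:locformula}.

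It remains to unpack both Euler classes root-by-root. Decompose $\fg=\bigoplus_\alpha\fg_\alpha$ into $T$-weight spaces; for adjoint matter the flavor group $\G_m$ acts on $N=\fg$ with weight $c$, so an element of $\fg_\alpha\cdot t^m$ carries the equivariant weight $y_\alpha+m\hbar+c$. In weight $\alpha$, the numerator quotient is nonzero exactly when $\alpha(\lambda')+i<j$, in which case it has dimension $j-\alpha(\lambda')-i$ and contributes
\[
\prod_{\ell=0}^{j-\alpha(\lambda')-i-1}\bigl(y_\alpha+(\alpha(\lambda')+i+\ell)\hbar+c\bigr).
\]
For the denominator, $T_{\lambda'}\Gr_G^\lambda\cong \fg_\cO/\Stab_{\fg_\cO}(\lambda')$ has weight-$\alpha$ part of dimension $\max(0,\alpha(\lambda'))$, contributing $\prod_{\ell=0}^{\max(0,\alpha(\lambda'))-1}(y_\alpha+\ell\hbar)$. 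Taking the product over all roots and assembling gives the formula in the proposition (specializing to $GL_n$ one recovers Theorem~\ref{thm:gr of bruhat}(b) with $\alpha=e_r-e_s$).

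The main technical point is the associated-graded reduction. For minuscule $\lambda$ the Schubert variety is smooth and Lemma~\ref{lem:locformula} applies directly, but for general $\lambda$ one must argue that lower Schubert strata do not contribute to $\gr[{}_j\cR_i^{\leq\lambda}]$ and that $\Gr_G^\lambda$ itself (which is smooth but not proper) admits equivariant localization in Borel--Moore homology. Both facts follow from the equivariant formality of $\Gr_G^{\leq\lambda}$, the fact that the affine Bruhat stratification pulls back via $\pi:{}_j\cR_i\to\Gr_G$ to a compatible filtration on ${}_j\cA_i^\hbar$, and the usual Atiyah--Bott style localization identity $[\Gr_G^\lambda]=\sum_{\lambda'\in W\lambda}[\lambda']/e(T_{\lambda'}\Gr_G^\lambda)$ in $H^T_*(\Gr_G^\lambda)_{\mathrm{loc}}$, exactly as invoked in \cite[Eq.~(6.3)]{BFN2}.
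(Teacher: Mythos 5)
Your proof is correct and follows essentially the same route the paper intends: the paper's own ``proof'' of this proposition just says that the $GL_n$ argument of Theorem~\ref{thm:gr of bruhat} ``is naturally adopted to arbitrary root systems,'' and your writeup is a careful spelling-out of exactly that adaptation (associated-graded reduction to the open stratum, localization to $T$-fixed points, and computing the two Euler classes weight space by weight space). Two small remarks. First, your opening sentence is slightly misstated: the $T$-fixed locus of the full Schubert variety $\Gr_G^{\leq\lambda}$ is $\{t^\mu : \mu\leq\lambda\}$, not just $W\lambda$; it is the open stratum $\Gr_G^\lambda$ whose fixed locus is exactly $W\lambda$, which is all that your subsequent associated-graded reduction actually needs. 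Second, the numerator you derive,
\[
\prod_{\alpha(\lambda')+i<j}\prod_{\ell=0}^{\,j-\alpha(\lambda')-i-1}\bigl(y_\alpha+(\alpha(\lambda')+i+\ell)\hbar+c\bigr),
\]
does not literally match the displayed formula in the Proposition, which has upper limit $i-\alpha(\lambda')-j-1$ and shift $\alpha(\lambda')+j+\ell$. Note that when the stated condition $\alpha(\lambda')+i<j$ holds, the printed upper limit $i-\alpha(\lambda')-j-1$ is negative, so the product would be vacuously empty; the printed formula is thus internally inconsistent and cannot be right. Your version is consistent with the $GL_n$ specialization (Theorem~\ref{thm:gr of bruhat}(b), after setting $\alpha=e_r-e_s$), so the Proposition as printed simply has $i$ and $j$ swapped in two places, and your computation is the correct one.
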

\begin{proof}
The proof of Theorem \ref{thm:gr of bruhat} is naturally adopted to arbitrary root systems. 
For the interest of space, we leave the details for the reader.
\end{proof}

\begin{lemma}
For $G=\GL_n$ and $N=\Ad$ and the minuscule coweight $\omega_m=(1,\ldots,1,0,\ldots,0)$ and $i\geq j$, we have 
\begin{eqnarray}
z^*\iota_*^{-1}[{}_i\cR_i^{\leq \omega_m}]=&\sum_{I\subset[n], |I|=m} \prod_{r\in I, s\notin I} \frac{y_s-y_r+(i-1)\hbar+c}{y_r-y_s}u^{I}\\
z^*\iota_*^{-1}[{}_{i+1}\cR_{i}^{\leq \omega_m}]=&\sum_{I\subset[n], |I|=m} 
\frac{(\prod_{r\in I,s\notin I} (y_s-y_r+(i-1)\hbar+c)(y_s-y_r+i\hbar+c))(\prod_{r\in I, s\in I \text{ or } r\notin I, s\notin I} y_r-y_s+i\hbar+c)}{\prod_{r\in I, s\notin I} y_r-y_s}u^{I}\\
z^*\iota_*^{-1}[{}_{j}\cR_{i}^{\leq \omega_m}]=&\prod_{k=i+1}^{j-1}\prod_{r,s}(y_r-y_s+k\hbar+c)\cdot z^*\iota_*^{-1}[{}_{i+1}\cR_{i}^{\leq \omega_m}]
\end{eqnarray}
for $j\ge i+2$.
\end{lemma}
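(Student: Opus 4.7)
The plan is to apply the localization formula from Lemma \ref{lem:locformula} in the case $\lambda = \omega_m$. Since $\omega_m$ is minuscule, the Schubert variety $\Gr_G^{\le \omega_m}$ coincides with the smooth orbit $\Gr_G^{\omega_m} \cong G(m,n)$, so Lemma \ref{lem:locformula} applies exactly (no passage to associated graded is needed). The Weyl orbit $W \cdot \omega_m$ is naturally indexed by $m$-element subsets $I \subset [n]$: the cocharacter $\lambda_I$ has $r$-th entry $\mathbbm{1}_{r \in I}$, giving the torus-fixed class $u^I$.

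For each $I$, the lemma reduces the problem to computing two Euler classes. First, $e(t^{\lambda_I} t^i N_\cO / t^{\lambda_I} t^i N_\cO \cap t^j N_\cO)$ splits blockwise under the root decomposition $\fg = \bigoplus_{r,s} \C E_{rs}$: conjugation by $t^{\lambda_I}$ multiplies the $(r,s)$-entry by $t^{\lambda_{I,r} - \lambda_{I,s}}$, so the $(r,s)$-contribution to the quotient is spanned by $\{t^k E_{rs}\}$ for $k$ in the interval $[\lambda_{I,r} - \lambda_{I,s} + i,\, j-1]$, and each such $k$ contributes a linear factor $y_r - y_s + c + k\hbar$ (the $c$-shift comes from the flavor $\G_m$-action scaling $\fg$). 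Second, by the same blockwise analysis together with the minusculeness of $\omega_m$, the tangent space has Euler class $e(T_{\lambda_I} \Gr^{\omega_m}) = \prod_{r \in I, s \notin I}(y_r - y_s)$, with no $\hbar$-corrections.

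Formulas (1) and (2) follow by direct inspection: for $j=i$ only the ``lowering'' blocks $r \notin I, s \in I$ contribute a single factor each (so $k = i-1$ only), while for $j=i+1$ these blocks contribute two factors ($k = i-1, i$) and the ``same-block'' entries including the diagonal contribute one factor each ($k = i$); relabeling $(r,s)\leftrightarrow(s,r)$ in the lowering range rewrites the product in the form stated. For formula (3), compare the $\lambda_I$-contributions to $[{}_j\cR_i^{\le\omega_m}]$ and $[{}_{i+1}\cR_i^{\le\omega_m}]$ blockwise: in each of the three block types the ratio telescopes to $\prod_{k=i+1}^{j-1}(y_r - y_s + c + k\hbar)$, and since the block counts add up to $m(n-m) + m(n-m) + m^2 + (n-m)^2 = n^2$, the combined ratio is $\prod_{k=i+1}^{j-1}\prod_{(r,s) \in [n]^2}(y_r - y_s + c + k\hbar)$. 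This is independent of $I$ and factors out of the fixed-point sum. The entire argument is pure bookkeeping; the only subtlety is correctly identifying which blocks contribute a nonempty range of $k$-values in each of the three cases, which is what separates formulas (1), (2), and (3).
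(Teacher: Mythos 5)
Your proposal is correct and takes essentially the same route as the paper: the paper proves this lemma as ``a direct application of Theorem \ref{thm:gr of bruhat}'' specialized to $\lambda=\omega_m$, noting that minusculeness removes the need to pass to the associated graded, and then performs exactly the case split you describe --- $\lambda'_r-\lambda'_s+i$ equals $i-1$, $i$, or $i+1$ according to whether $(r,s)$ lands in the lowering, same-block, or raising block. Your blockwise analysis of the interval $[\lambda_{I,r}-\lambda_{I,s}+i,\,j-1]$ and the telescoping argument for formula (3) are the same computation packaged directly from Lemma \ref{lem:locformula} rather than from Theorem \ref{thm:gr of bruhat} (which itself is derived from that lemma), so the substance is identical.
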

\begin{proof}
This is a direct application of Theorem \ref{thm:gr of bruhat}, recall that since $\omega_m$ is minuscule we do not need to take associated graded.  A symmetrization of $\omega_m$ leads to a weight 
$$
\lambda'=(\lambda'_1,\ldots,\lambda'_n),\ \lambda'_r=\begin{cases}
1 & \text{if}\ r\in I\\
0 & \text{otherwise}
\end{cases}
$$
for some $m$-element subset $I$. In this case $\max(\lambda'_r-\lambda'_s,0)$ equals 1 if $r\in I,s\notin I$ and 0 otherwise which gives the denominator. For the numerator, we observe
$$
\lambda'_r-\lambda'_s+i=\begin{cases}
i-1 & \text{if}\ r\notin I,s\in I\\
i & \text{if}\ r\in I,s\in I\ \text{or}\ r\notin I,s\notin I\\
i+1 & \text{if}\ r\in I,s\notin I\\
\end{cases}
$$
and the result follows.
\end{proof}

More generally, we have the formula for arbitrary $G$ and minuscule $\lambda$.
\begin{lemma}
\label{lemma:minusculelocalizationformula}
For $N=\Ad$ and $\lambda$ minuscule,
\begin{eqnarray}
z^*\iota_*^{-1}[{}_i\cR_i^{\leq \lambda}][f]=&\sum_{\lambda'=w\lambda\in W\lambda} wf \times \prod_{\alpha(\lambda')=1}\frac{-y_{\alpha}+(i-1)\hbar+c}{y_{\alpha}}u^{\lambda'}
\\
z^*\iota_*^{-1}[{}_{i+1}\cR_{i}^{\leq \lambda}][f] =&\sum_{\lambda'=w\lambda\in W\lambda} wf \times 
\frac{\left(\prod_{\substack{ \alpha(\lambda')=1}}(y_{\alpha}+(i-1)\hbar+c)(
y_{\alpha}+i\hbar+c)\right)\left(\prod_{\substack{  \alpha(\lambda')=0}}y_{\alpha}+i
\hbar+c\right)}{y_{\alpha}}u^{\lambda'}\\
z^*\iota_*^{-1}[{}_{j}\cR_{i}^{\leq \lambda}]=&\prod_{k=i+1}^{j-1}\prod_{\alpha\in \Phi}(y_{\alpha}+k\hbar+c)\cdot z^*\iota_*^{-1}[{}_{i+1}\cR_{i}^{\leq \lambda}]
\end{eqnarray}
\end{lemma}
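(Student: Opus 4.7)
The plan is to derive all three formulas from Lemma \ref{lem:locformula}, specializing to the minuscule case where $\alpha(\lambda) \in \{-1,0,1\}$ for every root $\alpha$, and then obtain the third formula from the second by computing the ``extra'' Euler class as one opens up the lattice by additional powers of $t$. Since $\lambda$ is minuscule, the Schubert variety $\overline{\Gr_G^\lambda}=\Gr_G^\lambda$ is smooth (isomorphic to $G/P_\lambda$), so Lemma \ref{lem:locformula} applies without any ``lower order'' corrections.

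The first step is to decompose $N=\ft\oplus\bigoplus_{\alpha\in\Phi}\fg_\alpha$ and observe that for any $k\in\Z$ the $\ft$-component of $t^{\lambda'}t^k N_\cO$ equals $t^k(\ft\otimes\cO)$ (since $T$ acts trivially on $\ft$), so the Cartan part contributes nothing to the relevant quotients. On each root line $\fg_\alpha$ one has $t^{\lambda'}t^k N_\cO\big|_\alpha=t^{\alpha(\lambda')+k}\cO$ with $T\rtimes\G_m^{\mathrm{rot}}\rtimes\G_m^{\mathrm{flav}}$--weight $y_\alpha+m\hbar+c$ on the monomial $t^m$. Thus the Euler classes decompose into products over roots, and all quotients $t^{\lambda'}t^i N_\cO/t^{\lambda'}t^i N_\cO\cap t^j N_\cO$ become explicit products of linear factors of the form $(y_\alpha+\ell\hbar+c)$.

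For the first two formulas I then enumerate: with $j=i$ only roots with $\alpha(\lambda')=-1$ contribute (one factor $y_\alpha+(i-1)\hbar+c$), while with $j=i+1$ we pick up one factor for each $\alpha$ with $\alpha(\lambda')\ge 0$ and two factors for each $\alpha$ with $\alpha(\lambda')=-1$. Relabeling via $\alpha\mapsto -\alpha$ (which turns $y_\alpha$ into $-y_\alpha$) converts the product over $\{\alpha(\lambda')=-1\}$ into one over $\{\alpha(\lambda')=1\}$, matching the stated formulas. Separately, the tangent space $T_{\lambda'}\Gr_G^\lambda=T_{\lambda'}(G/P_\lambda)$ has weights precisely $\{\alpha : \alpha(\lambda')>0\}$, which for minuscule $\lambda$ yields $e(T_{\lambda'}\Gr_G^\lambda)=\prod_{\alpha(\lambda')=1}y_\alpha$, accounting for the denominators.

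The main (and genuinely new) calculation is the step from ${}_{i+1}\cR_i$ to ${}_j\cR_i$. For $j\ge i+2$ the short exact sequence
\[
0\to \frac{t^{\lambda'}t^i N_\cO\cap t^{i+1}N_\cO}{t^{\lambda'}t^i N_\cO\cap t^j N_\cO}\to \frac{t^{\lambda'}t^i N_\cO}{t^{\lambda'}t^i N_\cO\cap t^j N_\cO}\to \frac{t^{\lambda'}t^i N_\cO}{t^{\lambda'}t^i N_\cO\cap t^{i+1}N_\cO}\to 0
\]
multiplicatively splits the Euler class. On the $\alpha$--line the first term equals $t^{\max(\alpha(\lambda')+i,i+1)}\cO/t^{\max(\alpha(\lambda')+i,j)}\cO$; the minuscule condition forces $\max(\alpha(\lambda')+i,i+1)=i+1$ and $\max(\alpha(\lambda')+i,j)=j$ in \emph{every} case, so the quotient has weights $y_\alpha+\ell\hbar+c$ with $\ell=i+1,\ldots,j-1$ uniformly across $\alpha$. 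Multiplying over all roots yields the prefactor $\prod_{k=i+1}^{j-1}\prod_{\alpha\in\Phi}(y_\alpha+k\hbar+c)$, giving the third identity. The step requiring most care is precisely this uniformity check, together with confirming that one does not accidentally pick up extra contributions from the Cartan part or from unstable pieces at the ``boundary'' values of $k$.
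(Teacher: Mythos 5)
Your approach — specializing Lemma \ref{lem:locformula} to minuscule $\lambda$, decomposing into root lines where $\alpha(\lambda')\in\{-1,0,1\}$, and computing the Euler classes root by root — is essentially the same route the paper takes for the preceding $GL_n$ statement via Theorem \ref{thm:gr of bruhat}. Your short-exact-sequence argument for the third formula is correct: the uniformity check that $\max(\alpha(\lambda')+i,i+1)=i+1$ and $\max(\alpha(\lambda')+i,j)=j$ holds for every $\alpha$ when $\lambda$ is minuscule and $j\ge i+2$ is exactly what makes the prefactor $\prod_{k=i+1}^{j-1}\prod_{\alpha\in\Phi}(y_\alpha+k\hbar+c)$ come out uniformly.

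Two points need tightening. First, your justification that the Cartan part of $N=\Ad$ is irrelevant — ``$T$ acts trivially on $\ft$'' — only handles $j=i$, where $t^{\lambda'}t^iN_\cO|_{\ft}=t^iN_\cO|_{\ft}$ and the quotient vanishes. For $j>i$ the Cartan component of the quotient is $t^{i}\ft(\cO)/t^{j}\ft(\cO)\ne 0$, whose naive Euler class $\prod_{k=i}^{j-1}(k\hbar+c)^{\dim\ft}$ does not appear in any of the stated formulas. The paper silently drops this factor as well (compare the $\prod_{s\ne r}$ in Theorem \ref{thm:gr of bruhat} and the $\prod_{\alpha\in\Phi}$ in Proposition \ref{prop:noncomm localization for any group}), so your conclusion agrees with the statement, but a self-contained proof must say why the Cartan contribution disappears (a normalization, a degree shift, or the use of a pronilpotent-radical lattice as in Theorem \ref{thm: one step}). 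Second, the count ``one factor for each $\alpha$ with $\alpha(\lambda')\ge 0$'' for $j=i+1$ is incorrect: roots with $\alpha(\lambda')=1$ give the trivial quotient $t^{i+1}\cO/t^{i+1}\cO$, so contribute no factors; with your count the total degree over each root pair is off. Finally, once you relabel $\alpha\mapsto-\alpha$ the numerator carries $-y_\alpha$, which matches the first displayed formula but not the second as printed (which shows $y_\alpha+(i-1)\hbar+c$ rather than $-y_\alpha+(i-1)\hbar+c$); your derivation and the explicit $GL_n$ expression $\prod_{r\in I,s\notin I}(y_s-y_r+\cdots)$ indicate the printed sign in the second line of the Lemma is a typo, which is worth flagging rather than glossing over as ``matching.''
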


\begin{lemma}
\label{lem: epsilon simplified}
Let $\varepsilon(x)=\max(x+i,j)-(x+i)-\max(x,0)$, then
$$
\varepsilon(x)+\varepsilon(-x)=\begin{cases}
j-i & \text{if}\ |x|\ge |j-i|,\\
(j-i)+|j-i|-|x| & \text{if}\ |x|\le |j-i|.
\end{cases}
$$

\end{lemma}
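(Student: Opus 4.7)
The plan is to reduce $\varepsilon(x)+\varepsilon(-x)$ to a single closed-form expression involving only $|x|$ and $|j-i|$, from which the two stated cases drop out immediately. First I would just add the defining formulas:
\[
\varepsilon(x)+\varepsilon(-x) \;=\; \max(x+i,j) + \max(-x+i,j) \;-\; 2i \;-\; \bigl(\max(x,0)+\max(-x,0)\bigr).
\]
The parenthesised piece is the identity $\max(x,0)+\max(-x,0)=|x|$, so that term is handled.

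Next I would handle the two remaining maxima by the standard identity $\max(a,b)=\tfrac12(a+b+|a-b|)$. Applied to both terms this gives
\[
\max(x+i,j)+\max(-x+i,j) \;=\; i+j+\tfrac12\bigl(|x-(j-i)|+|x+(j-i)|\bigr).
\]
Setting $d=j-i$, I would then verify the elementary identity $|x-d|+|x+d|=2\max(|x|,|d|)$ by splitting on the sign of $x$ (for $|x|\ge |d|$ both terms have the same sign so they add to $2|x|$; for $|x|\le|d|$ they cancel each other's $x$-part and add to $2|d|$). Substituting back yields the compact expression
\[
\varepsilon(x)+\varepsilon(-x) \;=\; (j-i) + \max(|x|,|j-i|) - |x|.
\]

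Finally, splitting on whether $|x|\ge |j-i|$ or $|x|\le |j-i|$ gives exactly the two cases in the statement: in the first, $\max(|x|,|j-i|)=|x|$ and the $|x|$'s cancel leaving $j-i$; in the second, $\max(|x|,|j-i|)=|j-i|$ and we get $(j-i)+|j-i|-|x|$. There is no real obstacle here — the only thing to watch is that the sign of $j-i$ is allowed to be either, which is precisely why the formula is phrased in terms of $(j-i)+|j-i|$ rather than $2(j-i)$ or $0$; using $|x-d|+|x+d|=2\max(|x|,|d|)$ uniformly in the sign of $d$ sidesteps any case split on whether $j\ge i$ or $j\le i$.
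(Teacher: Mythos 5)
Your proof is correct, and it takes essentially the same route as the paper. The paper proves, by a sign case split, the identity $\max(x,d)+\max(-x,d)=d+\max(|x|,|d|)$ and applies it with $d=j-i$ and $d=0$; you instead expand each $\max$ via $\max(a,b)=\tfrac12(a+b+|a-b|)$ and reduce to the equivalent identity $|x-d|+|x+d|=2\max(|x|,|d|)$, arriving at the same compact expression $(j-i)+\max(|x|,|j-i|)-|x|$ before doing the final case split. (One small slip in phrasing: where you say "splitting on the sign of $x$" you are actually splitting on $|x|\ge|d|$ versus $|x|\le|d|$, as your own parenthetical makes clear; the content is fine.)
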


\begin{proof}
Let us first prove that for arbitrary $x,d$ one has
\begin{equation}
\label{eq: x and d}
\max(x,d)+\max(-x,d)=d+\max(|x|,|d|)=\begin{cases}
d+|x| & \text{if}\ |x|\ge |d|,\\
d+|d| & \text{if}\ |x|\le |d|.\\
\end{cases}
\end{equation}
Clearly, $\max(x,d)+\max(-x,d)=\max(|x|,d)+\max(-|x|,d)$. For $d\ge 0$ we get $\max(-|x|,d)=d$ and \eqref{eq: x and d} is clear.
For $d<0$ we can rewrite  
$$
\max(|x|,d)+\max(-|x|,d)=|x|-\min(|x|,|d|)=d+\max(|x|,|d|).
$$
Now we can prove lemma, by letting $d=j-i$. Note that $\max(x+i,j)=i+\max(x,j-i)$, therefore
$$
\varepsilon(x)+\varepsilon(-x)=\max(x+i,j)-(x+i)-\max(x,0)+\max(-x+i,j)-(-x+i)-\max(-x,0)=
$$
$$
\max(x,j-i)-\max(x,0)+\max(-x,j-i)-\max(-x,0).
$$
Now we can use  \eqref{eq: x and d} with $d=j-i$.
\end{proof}

\begin{corollary}
\label{cor: explicit formula for R lambda}
Let $G=\GL_n$. At $c=\hbar=0$ we get
$$
{}_j z_i^* (\gr\iota_*)^{-1}[{}_j\cR_i^{\lambda}][f]=\pm \Sym\left(f\cdot \Delta^{j-i} \prod_{r<s,|\lambda_r-\lambda_s|<|j-i|}(y_r-y_s)^{|j-i|-|\lambda_r-\lambda_s|}u^{\lambda}\right)
$$
\end{corollary}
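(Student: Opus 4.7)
The plan is to start from the explicit formula in Theorem \ref{thm:gr of bruhat}(a) with $c=\hbar=0$, namely
\[
{}_j z_i^* \iota_*^{-1}\gr[{}_j\cR_i^{\lambda}][f]=\sum_{\lambda'\in W\lambda}f'\prod_{s\neq r}\frac{(y_r-y_s)^{\max(\lambda'_r-\lambda'_s+i,j)}}{(y_r-y_s)^{\lambda'_r-\lambda'_s+i}\,(y_r-y_s)^{\max(\lambda'_r-\lambda'_s,0)}}u^{\lambda'},
\]
recognize that the exponent of $(y_r-y_s)$ in the factor indexed by the ordered pair $(r,s)$ is exactly $\varepsilon(\lambda'_r-\lambda'_s)$ with $\varepsilon$ as in Lemma \ref{lem: epsilon simplified}, and then collect the contributions of the two ordered pairs $(r,s)$ and $(s,r)$ corresponding to each unordered pair $\{r,s\}$ with $r<s$.

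Setting $x=\lambda'_r-\lambda'_s$ and using $(y_s-y_r)^{\varepsilon(-x)}=(-1)^{\varepsilon(-x)}(y_r-y_s)^{\varepsilon(-x)}$, the combined contribution of the pair is $\pm(y_r-y_s)^{\varepsilon(x)+\varepsilon(-x)}$. Now Lemma \ref{lem: epsilon simplified} gives the clean formula
\[
\varepsilon(x)+\varepsilon(-x)=\begin{cases} j-i & \text{if } |x|\geq |j-i|,\\ 2(j-i)-|x| & \text{if } |x|\leq |j-i|,\end{cases}
\]
(for $j\geq i$; the case $j<i$ is symmetric). Factoring the common exponent $j-i$ out of every pair produces exactly $\Delta^{j-i}=\prod_{r<s}(y_r-y_s)^{j-i}$, while the surplus $(j-i)-|x|$ appears only for pairs with $|\lambda'_r-\lambda'_s|<|j-i|$, giving precisely the extra product in the statement. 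Thus the $\lambda'$-summand equals
\[
\pm f'\cdot\Delta^{j-i}\prod_{r<s,\,|\lambda'_r-\lambda'_s|<|j-i|}(y_r-y_s)^{|j-i|-|\lambda'_r-\lambda'_s|}\,u^{\lambda'}.
\]

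Finally, to recognize the sum over $\lambda'\in W\lambda$ as a $W$-symmetrization of the single summand associated to $\lambda$, I observe that the expression inside the product depends on $\lambda$ only through the multiset of differences $|\lambda_r-\lambda_s|$, hence the summand is invariant under the stabilizer $\mathrm{Stab}_W(\lambda)$ up to the overall sign. Therefore summing over the orbit $W\lambda$ is, up to the harmless factor $|\mathrm{Stab}_W(\lambda)|$ absorbed into $\Sym$ (and an overall sign coming from the $(-1)^{\varepsilon(-x)}$ factors), equivalent to symmetrizing the $\lambda$-summand over the full Weyl group $W=S_n$ acting simultaneously on the $y_r$'s and on $u^{\lambda}$. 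The only step which requires minor bookkeeping is tracking the sign, which is why the statement is formulated with $\pm\Sym$; otherwise the proof is a direct algebraic manipulation of the formula of Theorem \ref{thm:gr of bruhat}(a) combined with Lemma \ref{lem: epsilon simplified}.
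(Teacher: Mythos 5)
Your proof is correct and takes essentially the same approach as the paper's: pair the ordered contributions $(r,s)$ and $(s,r)$ in the formula of Theorem~\ref{thm:gr of bruhat}(a), recognize the combined exponent as $\varepsilon(\lambda'_r-\lambda'_s)+\varepsilon(\lambda'_s-\lambda'_r)$, and apply Lemma~\ref{lem: epsilon simplified} to factor out $\Delta^{j-i}$ and the extra product over close pairs. You spell out the final bookkeeping step (rewriting the sum over $W\lambda$ as a $W$-symmetrization, absorbing the stabilizer factor and sign into $\pm\Sym$) in somewhat more detail than the paper, which just states ``By Lemma~\ref{lem: epsilon simplified}, the result follows,'' but the underlying argument is the same.
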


\begin{proof}
Consider a pair $r<s$. In the right hand side of Theorem \ref{thm:gr of bruhat} we get
$$
(y_r-y_s)^{\varepsilon(\lambda'_r-\lambda'_s)}(y_s-y_r)^{\varepsilon(\lambda'_s-\lambda'_r)}=\pm (y_r-y_s)^{\varepsilon(\lambda'_r-\lambda'_s)+\varepsilon(\lambda'_s-\lambda'_r)}.
$$
By Lemma \ref{lem: epsilon simplified}, the result follows.
\end{proof}

\begin{example}
\label{ex: one step}
Again for $G=\GL_n$, assume that $j=i+1$, then  at $c=\hbar=0$ we get
$$
{}_{i+1} z_i^* (\gr\iota_*)^{-1}[{}_{i+1}\cR_i^{\lambda}][f]=\pm \Delta \Alt\left( f\cdot \prod_{r<s,\lambda_r=\lambda_s}(y_r-y_s) u^{\lambda}\right)
$$
\end{example}

For arbitrary groups and $\hbar=c=0$ we get a similar formula.
\begin{corollary}
\label{thm:localization for any group any weight}
For arbitrary $G$ and $\lambda$ we have 
$${}_j z_i(\gr \iota_*)^{-1} [{}_j \cR_i^\lambda][f]=
\Sym_W\left(f\Delta^{j-i}\prod_{\alpha\in\Phi^+,|\alpha(\lambda)|<|j-i|}(y_{\alpha})^{|j-i|-|\alpha(\lambda)|}u^\lambda\right)$$
\end{corollary}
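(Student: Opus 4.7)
The plan is to deduce this from Proposition \ref{prop:noncomm localization for any group}, mirroring the derivation of Corollary \ref{cor: explicit formula for R lambda} from Theorem \ref{thm:gr of bruhat} in the $GL_n$ case. First I would specialize the proposition at $c = \hbar = 0$ and set $d = j - i$. At each fixed point $\lambda' = w\lambda$ the contribution then reads
$$
(wf) \cdot \frac{\prod_{\alpha \in \Phi,\ \alpha(\lambda') < d} y_\alpha^{\,d-\alpha(\lambda')}}{\prod_{\alpha \in \Phi} y_\alpha^{\max(0,\alpha(\lambda'))}} \, u^{\lambda'},
$$
so the core combinatorial task is to rewrite this rational expression as a monomial in the $y_\alpha$'s indexed only by $\Phi^+$.

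Next, for each positive root $\alpha \in \Phi^+$ I would pair its contribution with that of $-\alpha$, using $y_{-\alpha} = -y_\alpha$. With $x := \alpha(\lambda')$, a direct calculation shows that the combined signed exponent of $y_\alpha$ is $\max(x,d) + \max(-x,d) - |x|$. Invoking equation \eqref{eq: x and d} of Lemma \ref{lem: epsilon simplified}, this simplifies to $d + \max(|x|,d) - |x|$, which equals $d$ when $|\alpha(\lambda')| \geq d$ and $2d - |\alpha(\lambda')|$ when $|\alpha(\lambda')| < d$. Taking the product over all $\alpha \in \Phi^+$ then produces exactly $\Delta^d \prod_{\alpha \in \Phi^+,\ |\alpha(\lambda')| < d} y_\alpha^{\,d - |\alpha(\lambda')|}$, up to an overall sign.

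The last step is to sum over $\lambda' \in W\lambda$ and recognize the total as $\Sym_W$ applied to the expression at $\lambda$. Writing $\lambda' = w\lambda$ with $w$ running over representatives of $W/W_\lambda$, one checks that the polynomial in question is $W_\lambda$-invariant once the sign ambiguity is absorbed, so the fixed-point sum repackages cleanly into $\Sym_W\bigl(f\, \Delta^{j-i} \prod_{\alpha \in \Phi^+,\ |\alpha(\lambda)| < j-i} y_\alpha^{(j-i) - |\alpha(\lambda)|}\, u^\lambda\bigr)$, matching the stated formula.

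The main (mild) obstacle is careful sign and stabilizer bookkeeping: signs introduced by $y_{-\alpha} = -y_\alpha$ in the pairing step, signs from the anti-invariance of $\Delta^d$ under the Weyl action (which matter when $d$ is odd), and the stabilizer factor $|W_\lambda|$ relating $\sum_{\lambda' \in W\lambda}$ to $\sum_{w \in W} w$ all have to be reconciled consistently. This is exactly why Corollary \ref{cor: explicit formula for R lambda} is only stated up to $\pm$ in the $GL_n$ case, and the same subtleties are absorbed into the interpretation of $\Sym_W$ in the present corollary.
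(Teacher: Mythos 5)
Your proposal is correct and follows essentially the paper's own route: the paper's proof explicitly says to set $\hbar=c=0$ in Proposition \ref{prop:noncomm localization for any group} and repeat the argument of Theorem \ref{thm:gr of bruhat} and Corollary \ref{cor: explicit formula for R lambda}, which is exactly what you do (pairing $\alpha$ with $-\alpha$, using Lemma \ref{lem: epsilon simplified} via \eqref{eq: x and d}, and repackaging the orbit sum as $\Sym_W$). Your remark about signs and the stabilizer $W_\lambda$ is apt: the paper is likewise cavalier about these, e.g. the $\pm$ in Corollary \ref{cor: explicit formula for R lambda}.
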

\begin{proof}
The proof follows from setting $\hbar=c=0$ in Theorem \ref{prop:noncomm localization for any group} in exactly the same way as Theorem \ref{thm:gr of bruhat} and Corollary \ref{cor: explicit formula for R lambda}.
\end{proof}

\subsection{Proof of Theorem \ref{thm:E}}
\label{sec: proof E formula}
In this section we restate and prove Theorem  \ref{thm:E} using Coulomb geometric realization of the trigonometric Cherednik algebra $\HH_{c,\hbar}$ and its spherical subalgebra as  Coulomb branch algebras. 

\begin{theorem}

a) Let $\lambda$ be a minuscule dominant coweight, respectively \(X^\lambda\in \widetilde{W}\), then 
$$
E_{\lambda,c}:=\eee X^{\lambda} \eee = \eee \prod_{\alpha(\lambda)=1} \frac{y_{\alpha}-c}{y_{\alpha}} u^{\lambda} \eee
$$ 
and 
$$
F_{\lambda,c}:=\eee X^{-\lambda} \eee = \eee \prod_{\alpha(\lambda)=1} \frac{y_{\alpha}+c}{y_{\alpha}} u^{-\lambda} \eee
$$
Here $u^{\lambda}$ is the translation by $\hbar\lambda$. 

b) For an arbitrary dominant coweight $\lambda$ we get
\begin{equation}
\label{eq: E lambda c 2}
E_{\lambda,c}=\eee \prod_{\alpha(\lambda)>0}\prod_{\ell=0}^{\alpha(\lambda)-1} \frac{(y_{\alpha}+\ell\hbar-c)}{(y_{\alpha}+\ell \hbar)}u^{\lambda}\eee +\mathrm{lower\ order\ terms}.
\end{equation}
If $\lambda$ is minimal in Bruhat order, the formula \eqref{eq: E lambda c 2} for $E_{\lambda,c}$  is exact.

c) More generally, for dominant coweight $\lambda$ which is minimal in the Bruhat order, and a polynomial $f(y)$ we define 
\begin{equation}
\label{eq: E lambda c f 2}
E_{\lambda,c}[f]=\eee f(y)\prod_{\alpha(\lambda)>0}\prod_{\ell=0}^{\alpha(\lambda)-1} \frac{(y_{\alpha}+\ell\hbar-c)}{(y_{\alpha}+\ell \hbar)}u^{\lambda}\eee,\ F_{\lambda,c}[f]=\eee  f(y)\prod_{\alpha(\lambda)>0}\prod_{\ell=0}^{\alpha(\lambda)-1} \frac{(y_{\alpha}+\ell\hbar+c)}{(y_{\alpha}+\ell \hbar)}u^{-\lambda} \eee.
\end{equation}
The spherical subalgebra $\eee \HH_c \eee$ is generated by such $E_{\lambda,c}[f]$ and $F_{\lambda,c}[f]$.
\end{theorem}

\begin{proof}[Proof of Theorem \ref{thm:E}]

By  Proposition \ref{prop:noncomm localization for any group} and Corollary \ref{cor: sperical coulomb is daha} we can write $\tcA_{G,\bI}^{\hbar}\simeq \HH_{c,\hbar}$ and
 $\cA_{G,G(\cO)}^{\hbar}={}_{0}\cA^{\hbar}_{0}\simeq \eee\HH_{c,\hbar}\eee$.

To prove Theorem \ref{thm:E}(b),  we identify the operator $E_{\lambda,c}\in \eee \HH_{c,\hbar}\eee$  (up to lower order terms in the Bruhat filtration) with the class 
${}z^* (\gr \iota_*)^{-1}[ {}_0\cR^{\leq \lambda}_0]\in {}_{0}\cA^{\hbar}_{0}$.
The localization formula in Proposition \ref{prop:noncomm localization for any group} for $f=1$ and $i=j=0$ then implies the desired formula for $E_{\lambda,c}$, again up to lower order terms. If $\lambda$ is minimal in the Bruhat order, there are no lower order terms and the formula is exact.

The proof of Theorem \ref{thm:E}(c) is similar and again follows from Proposition \ref{prop:noncomm localization for any group} for $i=j=0$. The last claim about generation follows from
\cite[Proposition 6.8]{BFN} which states that the classes ${}z^* (\gr \iota_*)^{-1}[ {}_0\cR^{\leq \lambda}_0][f]$ for dominant  $\lambda$ which are minimal in the Bruhat order and arbitrary $f$, and opposite classes corresponding to 
$-\lambda$, generate the Coulomb branch algebra ${}_{0}\cA^{\hbar}_{0}$. 

In part Theorem \ref{thm:E}(a) the coweight $\lambda$ is minuscule, and the formula from (b) simplifies. First, we have either $\alpha(\lambda)=1$ or $\alpha(\lambda)=0$ for all $\alpha$, so $\ell=0$ for all nontrivial factors. Second, 
$\lambda$ is minimal in Bruhat order, so there cannot be any lower order terms and the formula is exact. 

\end{proof}

\begin{remark}
Alternatively, one can prove Theorem \ref{thm:E}(b) using a tedious but explicit computation in the affine Hecke algebra, see \cite[Example 5.4,Theorem 5.9]{Kirillov} and \cite[Proposition 5.13]{KN}. The translation $X^{\lambda}$ can be written as a product of elements of $\Omega$ and simple reflections $\sigma_i$ in some order, and one can control the leading term of each factor. This leads to a formula for the leading term for $X^{\lambda}$, and its symmetrization.
\end{remark}

\begin{remark}
In this paper, we use both  results from Section \ref{sec: adjoint coulomb} in Section  \ref{sec:cherednik}, and results from Section  \ref{sec:cherednik} in Section \ref{sec: adjoint coulomb}.  We would like to assure a cautious reader that such a  nonlinear narrative does not lead to circular reasoning in any proofs.

 In the proof of Proposition \ref{prop:noncomm localization for any group} and Corollary \ref{cor: sperical coulomb is daha}  we only use the definitions of $\HH_c$ and $\eee\HH_c\eee$ and the construction of their polynomial representations by difference operators.  Proposition \ref{prop:noncomm localization for any group} then directly follows (similarly to Theorem \ref{thm:gr of bruhat}) from localization in the Coulomb branch algebra and does not use any results from Section \ref{sec:cherednik}.
In the proof of Theorem \ref{thm:E} we then use  Proposition \ref{prop:noncomm localization for any group}.

The remainder  of Section  \ref{sec:cherednik} heavily uses Theorem \ref{thm:E},  but no further results from Section  \ref{sec: adjoint coulomb}. Finally, the remainder of Section  \ref{sec: adjoint coulomb} heavily uses the results from Section  \ref{sec:cherednik}. 
\end{remark}

\subsection{Factorization of bimodules}

\begin{lemma}
\label{lem: split weight}
Suppose that $\lambda$ is an arbitrary integral coweight for $\GL_n$ and $d>0$. Then there exist $d$ coweights $\mu^{(0)},\ldots,\mu^{(d-1)}$ such that $\mu^{(0)}+\ldots+\mu^{(d-1)}=\lambda$ and for all $i$ and $j$   the following holds:

1) If $|\lambda_i-\lambda_j|<d$ then
$$
d-|\lambda_i-\lambda_j|=\sum_{k,\mu^{(k)}_i=\mu^{(k)}_j}1.
$$

2) If $|\lambda_i-\lambda_j|>d$ then $\mu^{(k)}_i\neq \mu^{(k)}_j$ for all $k$.
\end{lemma}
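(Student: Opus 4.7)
The plan is to give an explicit construction of $\mu^{(0)},\dots,\mu^{(d-1)}$ coordinate-by-coordinate, and then verify the two combinatorial conditions by a case analysis on division-with-remainder. Write $\lambda_i = d q_i + r_i$ with $0 \le r_i < d$, and define
\[
\mu^{(k)}_i \;=\; \begin{cases} q_i & \text{if } 0 \le k < d-r_i,\\ q_i+1 & \text{if } d-r_i \le k \le d-1. \end{cases}
\]
Then $\sum_{k=0}^{d-1} \mu^{(k)}_i = q_i(d-r_i)+(q_i+1)r_i = dq_i+r_i = \lambda_i$, so $\mu^{(0)}+\cdots+\mu^{(d-1)}=\lambda$. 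Each $\mu^{(k)}$ is a coweight of $GL_n$, and the whole construction is nothing more than spreading the ``fractional part'' $r_i/d$ uniformly across the $d$ summands.

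Next I would fix $i<j$, assume without loss of generality that $\lambda_i \ge \lambda_j$, and analyze the pair $(\mu^{(k)}_i,\mu^{(k)}_j)$ as $k$ ranges over $\{0,\dots,d-1\}$. The key observation is that $\mu^{(k)}_i$ takes only the two consecutive values $q_i,q_i+1$ and $\mu^{(k)}_j$ only the two consecutive values $q_j,q_j+1$, so the possibility $\mu^{(k)}_i=\mu^{(k)}_j$ can occur only if $q_i\in\{q_j,q_j+1\}$, i.e.\ only if $\lambda_i-\lambda_j<2d$. In the split case $q_i=q_j$, equality $\mu^{(k)}_i=\mu^{(k)}_j$ holds for $k<d-r_i$ (both equal to $q_i$) and for $k \ge d-r_j$ (both equal to $q_i+1$), giving $(d-r_i)+r_j = d-(\lambda_i-\lambda_j)$ matches. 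In the case $q_i=q_j+1$, equality holds precisely on the interval $d-r_j \le k < d-r_i$, which is nonempty exactly when $r_i<r_j$, i.e.\ when $\lambda_i-\lambda_j<d$, and has length $r_j-r_i = d-(\lambda_i-\lambda_j)$. If $q_i \ge q_j+2$ the two value sets $\{q_i,q_i+1\}$ and $\{q_j,q_j+1\}$ are disjoint and no coincidences occur.

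Combining these three subcases gives the required counts: if $|\lambda_i-\lambda_j|<d$ then $\#\{k:\mu^{(k)}_i=\mu^{(k)}_j\} = d-|\lambda_i-\lambda_j|$, and if $|\lambda_i-\lambda_j|>d$ we always fall into either the ``$q_i=q_j+1$ with $r_i>r_j$'' subcase or the $q_i\ge q_j+2$ subcase, in both of which there are no coincidences. (The borderline $|\lambda_i-\lambda_j|=d$ produces $0$ coincidences, consistent with both formulas.) There is no real obstacle here; the only mildly delicate point is getting the intervals of $k$ right in the $q_i=q_j+1$ case, and keeping the sign of $\lambda_i-\lambda_j$ straight so that $r_i<r_j$ versus $r_i>r_j$ correctly corresponds to $|\lambda_i-\lambda_j|<d$ versus $>d$.
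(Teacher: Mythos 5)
Your construction is essentially identical to the paper's: both decompose $\lambda_i = dq_i + r_i$ with $0 \le r_i < d$ and distribute $r_i$ copies of $q_i+1$ and $d-r_i$ copies of $q_i$ among the $\mu^{(k)}_i$; the only difference is that you place the larger values on the index range $k \ge d-r_i$ while the paper places them on $k < r_i$, which is just the relabelling $k \mapsto d-1-k$. The case analysis (on whether $q_i - q_j$ is $0$, $1$, or $\ge 2$) is the same, and your verification of the coincidence count in each case is correct.
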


\begin{proof}
We define $\mu^{(k)}$ by ``dividing $\lambda$ by $d$ with remainder''. More precisely, let $\lambda_i=dq_i+r_i$ where $0\le r_i<d$.
We define
$$
\mu^{(k)}_i=\begin{cases}
q_i+1 & \text{for}\ k<r_i\\
q_i & \text{for}\ k\ge r_i\\ 
\end{cases}
$$
Clearly,  $\mu^{(0)}_i+\ldots+\mu^{(d-1)}_i=\lambda_i$.
Without loss of generality, we can assume that $\lambda_j\ge \lambda_i$. We have the following cases:

1) $\lambda_j=dq_i+r_j, r_j\ge r_i$. In this case $\mu^{(k)}_i=\mu^{(k)}_j$ for $k<r_i$ and $k\ge r_j$,
so 
$$
\sum_{k,\mu^{(k)}_i=\mu^{(k)}_j}1=d-(r_j-r_i)=d-(\lambda_j-\lambda_i).
$$

2) $\lambda_j=d(q_i+1)+r_j, r_j<r_i$. In this case $\mu^{(k)}_i=\mu^{(k)}_j$ for $r_j\le k<r_i$ and 
$$
\sum_{k,\mu^{(k)}_i=\mu^{(k)}_j}1=r_i-r_j=d-(\lambda_j-\lambda_i).
$$

3) If  $\lambda_j>d(q_i+1)+r_j$ then $\mu^{(k)}_j\ge q_i+2$ for $k<r_i$ and $\mu^{(k)}_j\ge q_i+1$ for $k\ge r_i$, so
 $\mu^{(k)}_i\neq \mu^{(k)}_j$ for all $k$.

\end{proof}

\begin{example}
Suppose that $d=2$, then we split $\lambda=\mu^{(0)}+\mu^{(1)}$ as follows. If $\lambda_i=2k$ is even, we set $\mu_i^{(0)}=\mu_i^{(1)}=k$; if $\lambda_i=2k+1$ is odd, we set $\mu_i^{(0)}=k+1$ and $\mu_i^{(1)}=k$. Clearly, if $\lambda_i=\lambda_j$ then 
both $\mu_i^{(0)}=\mu_j^{(0)}$ and $\mu_i^{(1)}=\mu_j^{(1)}$. If $|\lambda_i-\lambda_j|=1$, it is not hard to see that exactly one of equations $\mu_i^{(0)}=\mu_j^{(0)}$ and $\mu_i^{(1)}=\mu_j^{(1)}$ holds.
\end{example}

\begin{corollary}
\label{cor: surjective}
Suppose that $G=\GL_n$, $j-i=d$  and $c=\hbar=0$. For an arbitrary coweight $\lambda$ and $\mu^{(k)}$ as in Lemma \ref{lem: split weight} we have
\begin{equation}
\label{eq:sur}
{}_{j} z_i^* (\gr \iota_*)^{-1}[{}_{j}\cR_i^{\lambda}]=\pm \prod_{k=0}^{d-1} {}_{i+k+1} z_{i+k}^*(\gr \iota_*)^{-1}\left[{}_{i+k+1}\cR_{i+k}^{\mu^{(k)}}\right]+\mathrm{lower\ order\ terms}.
\end{equation}
\end{corollary}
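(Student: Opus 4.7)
My plan is to verify \eqref{eq:sur} by computing both sides in the localized ring $\C[T^*T^\vee]_{\text{loc}}$ via the embeddings ${}_jz_i^*\iota_*^{-1}$ and matching the coefficients of $u^\lambda$ where $\lambda=\sum_k \mu^{(k)}$. At $c=\hbar=0$ the Coulomb branch $\Z$-algebra is commutative, and the localization converts the convolution product on the RHS into ordinary multiplication of rational functions on $T^*T^\vee$. Thus by Example \ref{ex: one step}, each factor on the RHS becomes
\[
\pm\,\Delta\cdot\Alt\Bigl(\prod_{r<s,\ \mu^{(k)}_r=\mu^{(k)}_s}(y_r-y_s)\,u^{\mu^{(k)}}\Bigr),
\]
and the entire RHS is the product of these $d$ expressions.

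Expanding each $\Alt$ as $\sum_{w_k\in W}(-1)^{\ell(w_k)}w_k$ and multiplying out, the contribution to the coefficient of $u^\lambda$ from the distinguished tuple $w_0=\cdots=w_{d-1}=e$ equals
\[
\pm\,\Delta^d\prod_{r<s}(y_r-y_s)^{\#\{k\,:\,\mu^{(k)}_r=\mu^{(k)}_s\}}.
\]
By Lemma \ref{lem: split weight} this exponent equals $d-|\lambda_r-\lambda_s|$ when $|\lambda_r-\lambda_s|<d$ and vanishes when $|\lambda_r-\lambda_s|>d$. The borderline case $|\lambda_r-\lambda_s|=d$ likewise gives $\mu^{(k)}_r\ne\mu^{(k)}_s$ for all $k$, as is immediate from the division-algorithm construction in the proof of that lemma. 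This matches exactly the coefficient of $u^\lambda$ on the LHS given by Corollary \ref{cor: explicit formula for R lambda}.

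Both sides of \eqref{eq:sur} lie in the $\epsilon^d$-isotypic component of the localized $W$-representation (the LHS by the $\pm\Sym(\Delta^d\cdots)$ presentation, the RHS as a product of $d$ one-step factors each belonging to the $\epsilon$-isotypic piece), and any such class supported on the orbit $W\cdot\lambda$ is determined by its coefficient at $u^\lambda$. Hence the parts of the two sides supported on $W\cdot\lambda$ coincide. The remaining contributions to the RHS — arising from non-identity tuples $(w_k)$ whose sum lies outside $W\cdot\lambda$, together with any sub-leading corrections inside each one-step factor — are supported on strictly smaller Bruhat strata $\gr[{}_j\cR_i^{\le\lambda'}]$ with $\lambda'<\lambda$, by the standard dimension/support property of the convolution (cf.\ the proof of Theorem \ref{thm:iwahoricoulombisdaha}). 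These are exactly the lower order terms in the statement.

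The main subtlety I expect is controlling those non-identity tuples $(w_k)$ for which $\sum_k w_k\mu^{(k)}$ happens to lie in $W\cdot\lambda$, since a priori such tuples could contaminate the $u^\lambda$-coefficient. The cleanest resolution is the conceptual $\epsilon^d$-symmetry argument above, which reduces everything to the single coefficient at $u^\lambda$. Alternatively one can handle the contamination directly by exploiting the antisymmetry of $\prod_{r<s,\,\mu^{(k)}_r=\mu^{(k)}_s}(y_r-y_s)$ under $\Stab_W(\mu^{(k)})$ to show the extra contributions either reassemble into the already-accounted-for symmetrization or cancel in pairs.
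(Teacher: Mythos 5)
Your proposal follows the same route as the paper --- divide $\lambda$ into $\mu^{(k)}$ via Lemma \ref{lem: split weight}, invoke the explicit localization formulas, and compare coefficients --- but you match at $u^\lambda$ for arbitrary $\lambda$, whereas the paper matches the leading term at $u^{\sort(\lambda)}$, and this difference is where your argument has a genuine gap. You compute the $u^\lambda$-coefficient of the right-hand side using only the identity tuple $(w_0,\dots,w_{d-1})=(e,\dots,e)$ and then appeal to isotypicity to ``reduce everything to the single coefficient at $u^\lambda$.'' This is circular: isotypicity shows the $W\lambda$-supported part is \emph{determined by} its $u^\lambda$-coefficient, but you still have to \emph{compute} that coefficient, and you have not shown that the identity tuple is the only one with $\sum_k w_k\mu^{(k)}=\lambda$. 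The paper sidesteps this by exploiting the dominance order: because the coordinate-wise division in Lemma \ref{lem: split weight} is equivariant under permuting the entries of $\lambda$, sorting $\lambda$ simultaneously sorts every $\mu^{(k)}$, which is exactly the identity $\sort(\lambda)=\sum_k\sort(\mu^{(k)})$ the paper invokes; and since $w_k\mu^{(k)}\leq\sort(\mu^{(k)})$ in dominance with equality only if $w_k\mu^{(k)}=\sort(\mu^{(k)})$, the $u^{\sort(\lambda)}$-coefficient of the product is reached by a single coset of $\prod_k\Stab_W(\mu^{(k)})$. No contamination question ever arises.

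To repair your version, either reduce to dominant $\lambda$ at the outset (both sides are $W$-invariant in the $z^*\iota_*^{-1}$ picture, so this loses no generality) or run the dominance-order argument directly to identify your contributing tuples as $\prod_k\Stab_W(\mu^{(k)})$, whose contributions all agree up to a multiplicity absorbed by the $\pm$; either way, the missing ingredient is precisely the observation $\sort(\lambda)=\sum_k\sort(\mu^{(k)})$. One further small slip: each one-step factor is $\Delta\cdot\Alt(\cdots)$ and hence $W$-invariant, so both sides of \eqref{eq:sur} lie in the \emph{trivial}, not the $\epsilon^d$-, isotypic component of the localized $W$-representation; the $\epsilon^d$-description applies to ${}_0\cA_d^{\hbar=0}$ itself after stripping off the $\Delta^d$ factor.
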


\begin{proof}
By Lemma \ref{lem: split weight} we get 
$$
\prod_{r<s,|\lambda_r-\lambda_s|<d}(y_r-y_s)^{d-|\lambda_r-\lambda_s|}=\prod_{k}\prod_{r<s,\mu^{(k)}_r=\mu^{(k)}_s}(y_r-y_s).
$$
By Corollary \ref{cor: explicit formula for R lambda}, the left hand side of Eq. \eqref{eq:sur} is a symmetric polynomial with leading term (in the dominance order on the $u_\lambda$) 
$$
\pm \Delta^d \prod_{r<s,|\lambda_r-\lambda_s|<d}(y_r-y_s)^{d-|\lambda_r-\lambda_s|}u^{\sort(\lambda)}
$$
while the right hand side is a product of $d$  symmetric polynomials with leading terms
$$
\pm \Delta \prod_{r<s,\mu^{(k)}_r=\mu^{(k)}_s}(y_r-y_s)u^{\sort(\mu^{(k)})}
$$
It is easy to see that in the above construction $\sort(\lambda)=\sort(\mu^{(0)})+\ldots+\sort(\mu^{(d-1)})$, so the result follows. 
\end{proof}
\begin{remark}
It seems reasonable to conjecture analogs of Lemma \ref{lem: split weight} and Corollary \ref{cor: surjective} for other groups, at least for simply laced groups.  This would have the consequence that the isomorphism constructed in Theorem \ref{thm:Z-algebra-iso} would hold for other groups, showing for instance that the global sections of the line bundle we construct equal $A_G^d$. Since the line bundle is not expected to be ample outside the simply laced case (see \cite{LosevDeformations}) we do not expect the result to hold in general.

We also note that the combinatorics appearing in the Lemma are closely related to the {\em root-system chip-firing} of \cite{chipfiring}. It would be interesting to make the connection more precise. The second author thanks Pavel Galashin for correspondence regarding this point.
\end{remark}

\subsection{Proof of Theorem \ref{thm: properties of coulomb algebra}}
\label{sec: proof properties coulomb algebra}

In this section we restate and prove Theorem \ref{thm: properties of coulomb algebra}.

\begin{theorem}
The algebras  ${}_{0}\cA^{\hbar=0}_{d}$ have the following properties:

a) For $d=0$, we have ${}_{0}\cA^{\hbar=0}_{0}=\C[T^\vee \times \ft]^W$.

b) For $d=1$, we have ${}_{0}\cA^{\hbar=0}_{1}=A$.

c) For all $d$ the module  ${}_{0}\cA^{\hbar=0}_{d}$ is a free $\C[\ft]^W$-submodule of $\eee_d\C[T^\vee \times \ft]$.

d) For $G=\GL_n$, we have ${}_{0}\cA^{\hbar=0}_{d}=({}_{0}\cA^{\hbar=0}_{1})^d=A^d$ for all $d$.
\end{theorem}

 Recall that by Proposition \ref{prop:differenceembedding} and \cite[Lemma 5.17.]{BFN}, we have an embedding 
\begin{equation}
\label{eq: localization at hbar 0}
{}_{0}\cA^{\hbar=0}_{d}(G)\hookrightarrow  {}_{0}\cA^{\hbar=0}_{d}(T)=\C[T^\vee \times \ft].
\end{equation}

\begin{lemma}
\label{lem: localization factors through centralizer}
Let $t\in \ft$, and let $Z_G(t)$ denote the centralizer of $t$ in $G$. Then the inclusion \eqref{eq: localization at hbar 0} factors through ${}_{0}\cA^{\hbar=0}_{d}(Z_G(t))$:
$$
\begin{tikzcd}
{}_{0}\cA^{\hbar=0}_{d}(G) \arrow[bend left]{rr} \arrow[dotted]{r} & {}_{0}\cA^{\hbar=0}_{d}(Z_G(t)) \arrow{r}&  {}_{0}\cA^{\hbar=0}_{d}(T) 
\end{tikzcd}
$$
\end{lemma}
\begin{proof}
We use the following general result \cite[Lemma 5.1]{BFN2}. Suppose that $N$ is a representation of $G$, consider the one-parameter subgroup $E_t=\exp(\mathbb{R}t)$ in $G$. Then 
$$
\cR_{G,N}^{E_t}\simeq \cR_{Z_G(t),N^t}.
$$
When $N=\fg$ we get $N^t=\Lie Z_G(t)$, and therefore
$$
{}_i\cR_G{}_j^{E_t}\simeq {}_i\cR_{Z_G(t)}{}_j
$$
(we drop $N$ from the notation).
The chain of inclusions of fixed points 
$$
{}_i\cR_G{}_j^{T}\simeq {}_i\cR_T{}_j\hookrightarrow {}_i\cR_G{}_j^{E_t}\simeq {}_i\cR_{Z_G(t)}{}_j\hookrightarrow {}_i\cR_G{}_j
$$
induces a  commutative diagram in equivariant BM homology, where the horizontal maps are injective and the vertical maps are isomorphisms:
$$
\begin{tikzcd}
H_*^{G}({}_i\cR_G{}_j) \arrow{r}\arrow{d} & H_*^{Z_G(t)}({}_i\cR_{Z_G(t)}{}_j) \arrow{r}\arrow{d} & H_*^T({}_i\cR_T{}_j) \arrow{d}\\
{}_{0}\cA^{\hbar=0}_{d}(G) \arrow{r}& {}_{0}\cA^{\hbar=0}_{d}(Z_G(T)) \arrow{r} & {}_{0}\cA^{\hbar=0}_{d}(T).
\end{tikzcd}
$$
\end{proof}

\begin{proof}[Proof of Theorem \ref{thm: properties of coulomb algebra}]
a) We regard $\C[\ft]^W$ as the equivariant cohomology of a point, and ${}_{0}\cA^{\hbar=0}_{d}$ is realized as the equivariant Borel-Moore homology of a certain space which admits an affine paving by Bruhat cells. Therefore it is equivariantly formal and its equivariant cohomology is a free module over $H^{*}_G(\pt)$. The embedding to $\C[T^\vee \times \ft]$ is realized by the inclusion to equivariant Borel-Moore homology of the fixed point set.  
That we land in the $\eee_d$-isotypic component follows from the fact that the localization is defined using $T$-equivariant cohomology and to pass to $G$-equivariant cohomology we take $W$-invariants. See for instance \cite[Remark 5.23]{BFN}.

b) This is a specialization of Corollary \ref{cor: sperical coulomb is daha} at $\hbar=0$.

c) By part (a) we have inclusion ${}_{0}\cA^{\hbar=0}_{1}\subset \eee_{-}\C[T^\vee \times \ft]=A$.  By Theorem  \ref{thm: one step} (specialized at $\hbar=0$) this is an isomorphism.

d) This is a specialization of Theorem \ref{thm:Z-algebra-iso} at $\hbar=0$. 
\end{proof}

\subsection{The geometric $\Z$-algebra for the adjoint representation}

We are ready to prove the main result of this section.

\begin{theorem}\label{thm:Z-algebra-iso}
When $G=\GL_n$, the $\Z$-algebras $\cA$ and $\cB$ are isomorphic. For general $G$, there is an injection $\cA\hookrightarrow \cB$ inducing ${}_{j-1} \cA_j\cong {}_{j-1}\cB_j$ and ${}_j\cA_j\cong {}_j\cB_j$.
\end{theorem}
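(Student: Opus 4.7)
The plan is to construct a canonical morphism relating ${}_\bullet\cB_\bullet$ and ${}_\bullet\cA_\bullet$ and analyze when it is an isomorphism. Parts (a) and (b) for general $G$ follow directly from previous results in the paper: ${}_i\cA_i \cong {}_i\cB_i$ is Corollary \ref{cor: sperical coulomb is daha}, and ${}_{i-1}\cA_i \cong {}_{i-1}\cB_i$ is Theorem \ref{thm: one step}. Compatibility of these identifications with the bimodule structures is automatic, since both Coulomb convolution and Cherednik-side multiplication are realized inside $\HH_G$ via Theorem \ref{thm:iwahoricoulombisdaha} and the shift isomorphism of Theorem \ref{thm:shiftiso}.

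For general $i < j$, we define a $\Z$-algebra morphism
\[
\Phi \colon {}_i\cB_j = {}_i\cB_{i+1} \otimes_{{}_{i+1}\cB_{i+1}} \cdots \otimes_{{}_{j-1}\cB_{j-1}} {}_{j-1}\cB_j \;\longrightarrow\; {}_i\cA_j
\]
by applying the identifications of parts (a), (b) piecewise and then using iterated convolution in $\cA$. Associativity of convolution ensures that $\Phi$ is well-defined over the intermediate algebras and respects the full $\Z$-algebra structure. The injection $\cA \hookrightarrow \cB$ asserted for general $G$ is then interpreted via the inverse of $\Phi$ on its image, which is an equality on the diagonal and one-step pieces by (a), (b).

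To prove $\Phi$ is surjective in type $A$, we pass to the associated graded of the Bruhat filtration on ${}_i\cA_j$. For each cocharacter $\lambda$, Lemma \ref{lem: split weight} produces a decomposition $\lambda = \mu^{(0)} + \cdots + \mu^{(d-1)}$ with $d = j-i$ whose associated product of one-step Bruhat classes, by Corollary \ref{cor: surjective}, realizes $[{}_j\cR_i^\lambda]$ modulo strictly lower terms. An induction on the Bruhat order then shows $\Phi$ is surjective on the associated graded, hence surjective.

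For injectivity, both ${}_i\cB_j$ and ${}_i\cA_j$ are free modules over $R := \C[\hbar] \otimes \C[y_1, \ldots, y_n]^{S_n}$: the former by Theorem \ref{th: Gordon Stafford} (via Ore localization from the rational Cherednik $\Z$-algebra and the Gordon--Stafford freeness results \cite{GS1,GS2}); the latter by equivariant formality coming from the Schubert-style affine paving of ${}_i\cR_j \to \Fl_G$. Thus any surjection between free $R$-modules of equal rank is automatically an isomorphism, and by flatness it suffices to compare ranks at $\hbar = c = 0$. There ${}_i\cB_j^{\hbar=0} = A^d$ by construction, while Lemma \ref{lem: dotted arrow} and Theorem \ref{thm: coulomb is symbolic} embed ${}_i\cA_j^{\hbar=0} \hookrightarrow \eee_d I^{(d)}$; Haiman's $n!$-theorem, which underpins the identification of $\Proj \bigoplus_d I^{(d)}$ with the isospectral Hilbert scheme, gives $A^d = \eee_d I^{(d)}$ in type $A$, matching ranks and completing the proof. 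The main obstacle is precisely this rank-matching at $\hbar = 0$: it relies on Haiman's deep results and is special to type $A$, since for general $G$ the inclusion $A^d \subsetneq \eee_d I^{(d)}$ is typically strict and ${}_\bullet\cA_\bullet$ need not be generated in degree one.
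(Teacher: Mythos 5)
Your reduction of the statement to parts (a), (b), (c), the construction of $\Phi$ by iterated convolution, and the surjectivity argument at $\hbar=c=0$ via the Bruhat filtration and Corollary \ref{cor: surjective} all match the paper. But your injectivity argument deviates from the paper's and has two genuine gaps.

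First, the logical structure is circular. You invoke Theorem \ref{thm: coulomb is symbolic} to identify ${}_i\cA_j^{\hbar=c=0}$ with $\eee_d I^{(d)}$, but in the paper that theorem is proved via Corollary \ref{cor: rank 1}, which rests on Theorem \ref{thm: properties of coulomb algebra}(d), which the paper explicitly derives as a specialization of Theorem \ref{thm:Z-algebra-iso} itself. So you cannot use Theorem \ref{thm: coulomb is symbolic} here without first re-establishing its rank-one ingredient independently. Even if you only use Lemma \ref{lem: dotted arrow} (which is safe) to get the inclusion ${}_i\cA_j^{\hbar=c=0}\hookrightarrow \eee_d I^{(d)}$, combining this with Haiman's equality $\eee_d I^{(d)}=A^d$ and the surjection $A^d \twoheadrightarrow {}_i\cA_j^{\hbar=c=0}$ does not finish the job: a surjection $A^d \twoheadrightarrow M$ and an injection $M \hookrightarrow A^d$ of graded spaces with finite-dimensional graded pieces do not force $\dim M_n = \dim A^d_n$ unless you also know the composite endomorphism of $A^d$ is injective (or surjective), and you have not verified that. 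Second, the claim that a surjection between free $R$-modules of equal rank is automatically an isomorphism is false for infinite-rank modules; the modules here are free of infinite rank over $R=\C[\hbar]\otimes\C[y]^{S_n}$. This can be salvaged by arguing degree-by-degree, but as written it is a non-sequitur. You also elide the lift of surjectivity from $\hbar=c=0$ to arbitrary $(c,\hbar)$, which requires a graded Nakayama argument using the freeness of both sides.

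The paper avoids all of this by establishing injectivity differently: after localizing at the multiplicative set generated by $\{y_\alpha+n\hbar+mc : \alpha\in\Phi,\ m,n\in\Z\}$, the map in equation \eqref{eq: convolution iso} becomes an isomorphism (by the localization presentation of the Coulomb $\Z$-algebra and the Cherednik side), and since ${}_i\cB_j$ is free hence torsion-free over $R$, it injects into its localization, giving injectivity of $\Phi$. This localization route is also what justifies the claimed injection for general $G$, which your proposal does not really address (the ``inverse of $\Phi$ on its image'' does not produce a map from all of $\cA$ to $\cB$).
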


\begin{proof}
We need to prove the following facts:
\begin{itemize}
\item[(a)] ${}_i\cA_i\cong {}_i\cB_i$ as algebras
\item[(b)] ${}_i\cA_{i+1}\cong {}_i\cB_{i+1}$ as bimodules over ${}_i\cA_i$ (resp.  ${}_i\cB_i$) and  ${}_{i+1}\cA_{i+1}$ (resp.  ${}_{i+1}\cB_{i+1}$)
\item[(c)] ${}_i\cA_{i+1}\cdots {}_{j-1}\cA_{j}\hookrightarrow {}_i\cA_{j}$ and this is an isomorphism for $G=\GL_n$. 
Note that ${}_i \cB_j\cong {}_i\cB_{i+1}\cdots {}_{j-1}\cB_j$ by definition.
\end{itemize}
Part (a) follows from Theorem \ref{thm:iwahoricoulombisdaha}. Part (b) follows from Theorem \ref{thm: one step}. 

In type $A$, it is instructive to review what part (b) says in order to prove part (c). We can compute the bases in the associated graded spaces on both sides: $\gr {}_{i}\cB_{i+1}=A$ is the space of antisymmetric polynomials in $\C[x_1^\pm,\ldots, x_n^\pm,y_1,\ldots, y_n]$ and by Lemma \ref{lem: Haiman dets}(a)
it has a vector space basis $\Delta_{S}$ parametrized by all $n$-element subsets of $\Z_{\ge 0}\times \Z$. On the other hand, in $\gr {}_{i}\cA_{i+1}$
we have a basis $\gr [{}_{j}\cR_i^{\lambda}][f]$ parametrized by a weight $\lambda$ and a function $f$. By Example \ref{ex: one step}
and Lemma \ref{lem: Haiman dets}(c) these can be explicitly identified by setting $f$ to be the product of Schur polynomials. Finally, having a filtered homomorphism inducing an isomorphism on associated graded spaces gives an isomorphism.

Let us prove part (c) for $G$ arbitrary. By Corollary \ref{cor: bimodules} the convolution product gives a natural map  
\begin{equation}
\label{eq: convolution iso}
{}_i\cA_{i+1}\bigotimes_{{}_{i+1}\cA_{i+1}}\cdots \bigotimes_{{}_{j-1}\cA_{j-1}}{}_{j-1}\cA_{j}\to {}_i\cA_{j},
\end{equation}
To check that \eqref{eq: convolution iso} is injective, it is sufficient to check that it becomes an isomorphism after localization in the multiplicative set generated by  $\{y_\alpha+n\hbar+mc|\alpha\in\Phi, m,n\in \Z\}$ which we get from \cite[Remark 3.24]{BFN}.
Finally, we need to prove that it is an isomorphism for $G=\GL_n$. To prove that it is surjective, we first consider the commutative limit $\hbar=c=0$ and take the associated graded with respect to the Bruhat filtration. Then surjectivity follows from Corollary \ref{cor: surjective}.

Next, we use parts (a) and (b) of the theorem to rewrite the left hand side of \eqref{eq: convolution iso} as
$$
{}_i\cB_{i+1}\bigotimes_{{}_{i+1}\cB_{i+1}}\cdots \bigotimes_{{}_{j-1}\cB_{j-1}}{}_{j-1}\cB_{j}={}_i\cB_{j}.
$$
By Theorem \ref{th: Gordon Stafford} this is free over $\C[y_1,\ldots,y_n]^{S_n}$. Since the space ${}_i\cR_{j+1}$ is equivariantly formal, the bimodule ${}_i\cA_{j+1}$ is free over $\C[\hbar]\otimes \C[y_1,\ldots,y_n]^{S_n}$ as well.
Therefore \eqref{eq: convolution iso} is surjective for general $c,\hbar$ . 

\end{proof}

\begin{remark}
For $G=\GL_n$, Simental \cite{Simental} classified Harish-Chandra bimodules for the {\em rational} Cherednik algebra and proved that the shift bimodule is the unique Harish-Chandra bimodule which sends polynomial representation to the polynomial representation. In particular, this implies an analogue of Theorem \ref{thm:Z-algebra-iso} for the rational Cherednik algebra.

It would be interesting to know if the methods of \cite{Simental} can be generalized to the trigonometric case to give an alternate proof of  Theorem \ref{thm:Z-algebra-iso}.
\end{remark}

Combining the above result with the $\Proj$ construction  we get 
\begin{corollary}
When $G=\GL_n$, the graded algebra $\bigoplus_{i} {}_{j-i} \cA_j^{\hbar=c=0}$ is naturally isomorphic to the homogeneous coordinate ring of
$\Hilb^n(\C\times \C^{\times})$ for any $j$.
\end{corollary}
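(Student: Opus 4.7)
The plan is to recognize this statement as a repackaging of the $\Proj$-description of $\tfC_{\check{G}}$ already established in Theorem \ref{thm: intro hilb}. First I will use that at $\hbar = c = 0$ the bimodule ${}_a\cA_b^{\hbar=c=0}$ depends canonically only on the difference $d = b - a$. This shift-invariance in the commutative limit is the structural property that makes ${}_\bullet\cA_\bullet^{\hbar=c=0}$ of graded type in the sense of Section \ref{sec: z algebra}, and it is exactly what is used in the definition of the partial resolution $\tfC_{\check{G}}$ preceding Theorem \ref{thm: properties of coulomb algebra}. Setting $S_d := {}_0\cA_d^{\hbar=c=0}$, the $\Z$-algebra convolution descends to associative maps $S_{d_1}\otimes S_{d_2}\to S_{d_1+d_2}$, so $S := \bigoplus_{d\geq 0}S_d$ becomes a commutative graded algebra. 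For any fixed $j$, the identifications ${}_{j-i}\cA_j^{\hbar=c=0}\cong S_i$ then assemble into an isomorphism of graded algebras $\bigoplus_i {}_{j-i}\cA_j^{\hbar=c=0}\cong S$, independent of $j$.

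Next I will identify $S$ explicitly. By Theorem \ref{thm: properties of coulomb algebra}(d), in type $A$ the convolution gives $S_d = ({}_0\cA_1^{\hbar=0})^d = A^d$, where $A \subset \C[T^\vee\times\ft]$ is the space of diagonally antisymmetric polynomials. Via the embedding ${}_0\cA_d^{\hbar=c=0}\hookrightarrow \eee_d\C[T^\vee\times\ft]$ of Theorem \ref{thm: coulomb is symbolic}, the convolution product on $S$ coincides with polynomial multiplication $A^{d_1}\cdot A^{d_2}\subseteq A^{d_1+d_2}$. Hence $S\cong \bigoplus_{d\geq 0}A^d$ as commutative graded algebras.

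Finally, Haiman's work \cite{Haiman} (already invoked in Theorem \ref{thm: intro hilb}) identifies $\bigoplus_{d\geq 0}A^d$ with the homogeneous coordinate ring of $\Hilb^n(\C\times\C^*)$ with respect to the natural line bundle $\cO(1)$, completing the argument. I expect the only nontrivial content to be the generation-in-degree-one statement $S_d = A^d$ in type $A$, which is precisely the substance of Theorem \ref{thm:Z-algebra-iso}; the rest is bookkeeping with graded rings, shift-invariance at $\hbar = c = 0$, and Haiman's theorem.
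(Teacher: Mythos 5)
Your argument is correct and follows essentially the same route as the paper's proof: specialize Theorem \ref{thm:Z-algebra-iso} (or equivalently Theorem \ref{thm: properties of coulomb algebra}(d)) at $c=\hbar=0$ to identify the graded pieces with $A^d$, and then invoke Haiman via Theorem \ref{th: Gordon Stafford} and its corollary to identify $\bigoplus A^d$ with the homogeneous coordinate ring of $\Hilb^n(\C^\times\times\C)$. Your exposition is more explicit about the shift-invariance bookkeeping and the compatibility of convolution with polynomial multiplication, but the underlying inputs are the same.
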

\begin{proof}
Specialize the above theorem for $c=\hbar=0$ and use Theorem \ref{th: Gordon Stafford}.
\end{proof}
\begin{remark}
One should also compare this to the results in  \cite{BFN3} which essentially show ${}_i \cA_j\cong \cO(j-i)$ in the case $G=\GL_n, N=\Ad\oplus V^{\ell}$ for $\ell\geq 1$, using factorization and results about the Hilbert schemes on $A_{\ell-1}$-resolutions.
\end{remark}

\subsection{A flag $\Z$-algebra}

In this section, we sketch to what extent the construction of ${}_i \cA_j$ extends to the flag level, i.e. when we replace $\fg(\cO)$ by the standard Iwahori subalgebra and $G(\cO)$ by the Iwahori subgroup $\bI$. This gives a Springer-theoretic construction of the ''one-step" shift bimodule ${}_{i-1}\cA_i$. On the level of affine Springer fibers, the analogous geometry is discussed in Section \ref{sec:geometricaction}.

Let $ev_0^{-1}(\fb)=\fri$ be the standard Iwahori subalgebra. Then $ev_0^{-1}(0)\subset \fri$. 
Consider the sequence of subalgebras 
$$
\fg_0:=\fg(\cO)\supset \fg_{1/2}:=\fri \supset \fg_1:=t\fg(\cO) \supset t\fri \supset t^2\fg(\cO)\supset t^2\fri \supset \cdots
$$

Then, as $k$-vector spaces (but importantly, {\em not} as Lie algebras) we have the subquotients 
$$
\fg_{i}/\fg_{i+1/2}\cong \begin{cases}
\fb, & i \in \Z+1/2\\
\fn_-, & i \in \Z
\end{cases}
$$
\begin{example}
For $G=SL_2$ we have 
$$
\begin{pmatrix}
\cO & \cO \\
\fm & \cO
\end{pmatrix}
\supset\begin{pmatrix}
\fm & \fm \\
\fm & \fm
\end{pmatrix}
\supset\begin{pmatrix}
\fm & \fm \\
\fm^2 & \fm
\end{pmatrix}
\supset \cdots 
$$
so that 
$\fg_{1/2}/\fg_1\cong \fb,
\; \fg_1/\fg_{3/2} \cong \fn_-$.
\end{example}
\subsubsection{Bimodules}
Consider now the spaces 
$${}_j\tcR_i:=\{[g,s]\in G(\cK)\times^{\bI} \fg_i | gs\in \fg_j\}, \; i \in \frac{1}{2}\Z$$ 
And $${}_j \cR_i :=\{[g,s]\in G(\cK)\times^{G(\cO)}\fg_i|gs\in\fg_j\}, \; i \in \Z.$$
\begin{proposition}
Let $[\fb/B]\xrightarrow{r} [\fg/G]$ be the Grothendieck-Springer resolution.
Then for $i\in \Z$ we have the cartesian diagrams
$$\begin{tikzcd}
 {{}_{i+1/2}\tcR_{i+1/2}} \arrow[d]\arrow[r,"\psi"]& \left[\fb/B\right] \arrow[d]\\
 {{}_{i}\cR_{i}} \arrow[r,"\phi"']& \left[\fg/G\right] 
\end{tikzcd}$$
\end{proposition}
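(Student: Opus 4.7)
The plan is to verify the cartesian property by constructing an inverse to the natural comparison map ${}_{i+1/2}\tcR_{i+1/2} \to {}_{i}\cR_{i} \times_{[\fg/G]} [\fb/B]$. This is directly analogous to the cartesian square appearing in the proof of Theorem~\ref{thm: one step} for $i=0$, and the general $i$ case follows by the same argument after rescaling $\fg_i = t^i\fg(\cO)$, $\fg_{i+1/2} = t^i\fri$.

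First I would pin down the horizontal maps. For $[g,s]_{G_\cO}\in {}_i\cR_i$, the matter condition forces $gsg^{-1}\in t^i\fg(\cO)$, and $t^{-i}(gsg^{-1}) \bmod t$ defines a class in $[\fg/G]$, where the $G$-action is the residual one from $G_\cO/(G_\cO \cap tG(\cO))\cong G$; this is $\phi$. Analogously, for $[g,s]_\bI\in {}_{i+1/2}\tcR_{i+1/2}$ the element $t^{-i}(gsg^{-1}) \bmod t$ lies in $\fg_{i+1/2}/\fg_{i+1}\cong\fb$ modulo the residual $\bI\twoheadrightarrow B$-action, defining $\psi$. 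Commutativity of the square is tautological via the compatibility of $\fb\hookrightarrow\fg$ with $B\hookrightarrow G$.

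For the cartesian property, I would start from a point $([g,s]_{G_\cO}, (Y,\fb'))$ of the fiber product with the compatibility $[Y]_G = [t^{-i}(gsg^{-1})\bmod t]_G$. The Borel reduction $\fb'$ lifts the fiber at $t=0$ of the underlying $G$-torsor to a $B$-torsor, and pulling back along $\bI\twoheadrightarrow B$ canonically refines the $G_\cO$-torsor to an $\bI$-torsor. Choosing an element of $G_\cO$ (well-defined modulo $\bI$) that translates $\fb'$ to the standard Borel yields a representative $(g',s')$ with $g' s'(g')^{-1}\in t^i\fri$ tautologically. Passing to the $\bI$-equivalence class of $(g',s')$ then provides the desired lift to ${}_{i+1/2}\tcR_{i+1/2}$, and the residual ambiguity in this construction is precisely $\bI$, matching the quotient.

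The main technical point, and the place where I expect the most care to be needed, is the simultaneous verification that $s'\in t^i\fri$ holds for the chosen representative, not merely that $g's'(g')^{-1}\in t^i\fri$. For the adjoint representation these two conditions on the ``input'' and ``output'' frames of the same section are tied together, and I expect the essential ingredient to be that a single $B$-reduction at $t=0$ coherently controls both frames once one works consistently with $\bI$-equivariance. I would make this precise by working on the cells of the Iwahori--Matsumoto decomposition $G_\cK = \bigsqcup_w \bI w\bI$ and using the $B$-equivariant splitting $\fg_{i+1/2} \cong t^i\fb\oplus t^{i+1}\fg(\cO)$ on each Schubert stratum.
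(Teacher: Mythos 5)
The paper does not prove this Proposition --- the entire subsection is explicitly a sketch --- so there is no paper argument to compare against. Nevertheless, your proof has a genuine gap at its crucial step, and on close inspection the Proposition as literally stated appears to need correction as well.

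The gap is in the claim that translating $\fb'$ to the standard Borel by $h\in G_\cO$ ``yields a representative $(g',s')$ with $g's'(g')^{-1}\in t^i\fri$ tautologically.'' This is false: under the right $G_\cO$-action $(g,s)\mapsto(gh^{-1},hsh^{-1})$, the element $g's'(g')^{-1}=gsg^{-1}$ is literally unchanged, so no choice of representative can bring it into $t^i\fri$. The fiber product ${}_i\cR_i\times_{[\fg/G]}[\fb/B]$ imposes a Borel reduction on the \emph{source} datum $s$ at $t=0$, but places no constraint on $gsg^{-1}$ beyond $gsg^{-1}\in t^i\fg(\cO)$. For a concrete counterexample take $G=SL_2$, $i=0$, $g=1$, $s$ the constant lower-triangular nilpotent, $\fb'$ the lower Borel; then $([1,s]_{G_\cO},\fb')$ is a point of the fiber product, but for any representative $(h^{-1},hsh^{-1})$ the element $g's'(g')^{-1}=s$ fails to lie in $\fri$, so this point is not in the image of ${}_{i+1/2}\tcR_{i+1/2}$. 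The fiber product is actually ${}_i\tcR_{i+1/2}$ (source lattice $t^i\fri$, target lattice $t^i\fg(\cO)$), i.e.\ the space with \emph{no} constraint that $gs$ lie in $t^i\fri$. This is consistent with the affine Springer fiber square \eqref{eq: Springer diagram} on which the Proposition is modeled: there $\tSp_\gamma$ is cut out by $s=g^{-1}\gamma g\in\fri$, while $\gamma=gsg^{-1}$ is only required to lie in $\fg(\cO)$, not $\fri$. Finally, your closing diagnosis has the two conditions swapped: $s'\in t^i\fri$ is the one that follows automatically once $\fb'$ is conjugated to $\fb$ (that is exactly what the Borel-reduction datum provides), whereas $g's'(g')^{-1}\in t^i\fri$ is the genuinely problematic condition and cannot be arranged by any choice of representative.
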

In particular,
$${}_{i}\cA_i\cong \bfe\; {}_{i+1/2}\cA_{i+1/2}\;\bfe$$ by Springer theory.
On the other hand, it is easy to see that $$
\phi^{-1}(0)=\{[g,s]\in G(\cK)\times^{G(\cO)}\fg_i|gs\in t\fg_i\}={}_{i+1}\cR_i
$$ and
$$(r\circ\psi)^{-1}(0)=\{[g,s]\in G(\cK)\times^{\bI}\fg_{i+1/2}|gs\in \fg_{i+1}\}={}_{i+1}\tcR_{i+1/2}.$$
In particular, 
$${}_{i+1}\cA_i=\bfe_- \; {}_{i+1/2}\tcA_{i+1/2} \Delta  \; \bfe=\bfe\; {}_{i+1}\tcA_{i+1/2}\;\bfe.$$

From what we have before, 
$${}_i \tcA_i$$ is the trigonometric Cherednik algebra when $i\in 1/2+\Z$. The algebra for $i\in \Z$ is \textbf{not} a Cherednik algebra, but indeed a matrix algebra over the spherical Cherednik algebra, as in \cite{Weekes, Webster}.

\section{Generalized affine Springer theory}
\label{sec:bfnspringer}
\subsection{Generalized affine Springer fibers}

In this section we generalize the Springer action from \cite{GK,HKW} to the line operators discussed above. 
Let $\bP$ be a parahoric subgroup, $N_\bP$ be a lattice in $N(\cK)$ stable under $\bP$. Given this data, denote $\eta=(\bP,N_\bP)$.
Further, suppose that $$1\to G\to \tG\to G_F\to 1$$ is an extension of algebraic groups and that $\tbP$ is a parahoric subgroup of $\tG(\cK)$ which fits into an extension
$$1\to \bP\to \tbP\to G_F(\cO)\to 1$$ so that $\tbP\cap G(\cK)=\bP$. Let $\tG^\cO_\cK$ be the preimage in $\tG(\cK)$ of $G_F(\cO)$.

\begin{definition}
\label{def:gasf}
Let $v\in N(\cK)$. The {\em generalized affine Springer fiber} of $v$ is the ind-subscheme of $\Fl_\bP
$ defined by $${}_{\eta}M_v:=\{g\in \Fl_\bP|g^{-1}.v\in N_\bP\}.$$
\end{definition}

\begin{remark}
Recall that if $N=\Ad$, $\bP$ is a fixed parahoric subgroup, $N_\bP=\Lie(\bP)$, ${}_{\eta}M_\gamma=\Sp_\gamma^{\bP}$, the classical affine Springer fiber for $\bP$.
\end{remark}

\begin{definition}
The {\em orbital variety} of $\gamma\in N_\cK$ and $\eta=(\bP,N_{\bP})$ is 
$${}_{\eta}\O_{\gamma} := \tG(\cK) . \gamma\cap N_{\bP}.$$
\end{definition}
\begin{remark}
Note that the orbital variety only depends on the lattice $N_\bP$. However, we always use it in conjunction with $\bP$, explaining the slightly redundant notation with $\eta$.
\end{remark}
In particular, we have 
\begin{lemma}[\cite{GK,HKW}]
\label{lem:stackquotient}
We have an isomorphism of stacks
$[{}_\eta\O_{\gamma}/\bP] \cong [G_\gamma\backslash {}_\eta M_\gamma]$, where
$G_\gamma$ is the stabilizer of $\gamma$ in $G(\cK)$.
\end{lemma}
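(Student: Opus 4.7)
The plan is to exhibit a single ind-scheme $X$ with commuting actions of $L_\gamma$ and $\bP$ that simultaneously presents both quotient stacks, in the standard ``common torsor'' fashion. Concretely, set
\[
X := \{\, g \in G_\cK \;:\; g^{-1}.\gamma \in N_\bP \,\}.
\]
First I would check that $X$ is stable under left multiplication by $L_\gamma$ (if $h\gamma=\gamma$, then $(hg)^{-1}.\gamma = g^{-1}.\gamma \in N_\bP$) and under right multiplication by $\bP$ (since $N_\bP$ is $\bP$-stable, $(gp)^{-1}.\gamma = p^{-1}.(g^{-1}.\gamma)\in N_\bP$), and that these two actions commute.

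Next I would build two maps:
\begin{align*}
\pi_M \colon X &\longrightarrow {}_\eta M_\gamma, & g &\longmapsto g\bP,\\
\pi_\O \colon X &\longrightarrow {}_\eta\O_\gamma, & g &\longmapsto g^{-1}.\gamma.
\end{align*}
The map $\pi_M$ is manifestly $L_\gamma$-equivariant, $\bP$-invariant, and its image is exactly the locus of $g\bP\in \Fl_\bP$ with $g^{-1}\gamma\in N_\bP$, which is ${}_\eta M_\gamma$; moreover two points of $X$ have the same image iff they differ by right multiplication by an element of $\bP$, so $\pi_M$ is a right $\bP$-torsor. The map $\pi_\O$ is $\bP$-equivariant and $L_\gamma$-invariant; its image is $(G_\cK.\gamma)\cap N_\bP$, which I will assume (or else slightly rename $L_\gamma$) is the set denoted ${}_\eta\O_\gamma$, and two points of $X$ have the same image under $\pi_\O$ iff they differ by left multiplication by an element of $L_\gamma$, so $\pi_\O$ is a left $L_\gamma$-torsor.

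Given this ``correspondence of torsors,'' taking the respective stack quotients yields canonical isomorphisms
\[
[L_\gamma \backslash {}_\eta M_\gamma] \;\xleftarrow{\;\sim\;}\; [L_\gamma \backslash X / \bP] \;\xrightarrow{\;\sim\;}\; [{}_\eta\O_\gamma / \bP],
\]
where the middle stack is well-defined because the two actions commute. Composing these gives the desired isomorphism. To make this rigorous at the level of $R$-points / stacks, I would spell out the groupoid presentation: an object over $R$ on either side is given by a $\bP$-torsor $T$ over $\Spec R$ together with a $\bP$-equivariant map either to ${}_\eta\O_\gamma$ or a reduction of the trivial $G_\cK$-bundle to $\bP$, and check that the two descriptions are equivalent via pulling back along $\pi_M$ and $\pi_\O$.

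The only mildly subtle point, and the step I expect to require the most care, is verifying that $\pi_\O$ is surjective and that all of these maps are morphisms of ind-schemes (not just maps on $\kk$-points), i.e., verifying the torsor property fppf-locally. Surjectivity of $\pi_\O$ on $R$-points amounts to lifting an element $\gamma'\in {}_\eta\O_\gamma(R)$ to $g\in G_\cK(R)$ with $g^{-1}\gamma = \gamma'$, which is a statement about $G_\cK$-orbits being fppf-locally trivial torsors under the stabilizer; this uses that $\tG_\cK\to \tG_\cK/L_\gamma$ is an fppf $L_\gamma$-bundle in the appropriate ind-scheme sense, which is standard. Similarly $\pi_M$ is an fppf $\bP$-torsor because $G_\cK\to \Fl_\bP$ is. With these torsor facts in hand, the isomorphism of stacks follows formally.
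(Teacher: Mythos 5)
Your proposal is correct and follows the same ``common torsor'' argument used in the cited references \cite{GK,HKW}; indeed the auxiliary space $X$ you introduce is precisely the space $X_\gamma$ that the paper itself writes down in the very next lemma. The one issue you flag --- that $\pi_\O$ has image $G_\cK.\gamma\cap N_\bP$ while the paper's Definition writes ${}_\eta\O_\gamma = \tG_\cK.\gamma\cap N_\bP$ --- is a genuine notational wrinkle in the paper (the remark following the lemma shows the $\tG_\cK$ version is meant to be the separate ``flavor-deformed'' object $\tilde{\O}_\gamma$), and your reading with $G_\cK$ is the consistent one given that $L_\gamma\subset G_\cK$ and $\Fl_\bP = G_\cK/\bP$.
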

\begin{proof}
Let ${}_\eta X_\gamma=\{g\in \tG_{\cK}^{\cO}\rtimes \C^\times| g^{-1}.\gamma\in N_{\bP}\}$. Then there are maps 
\begin{center}
\begin{tikzcd}
 & {}_\eta X_\gamma \arrow[dl] \arrow[dr] & \\
 {}_\eta M_\gamma & & {}_\eta \O_\gamma
\end{tikzcd}
\end{center}
making ${}_\eta X_\gamma$ a $\bP$-torsor over ${}_\eta M_\gamma$ and a $G_\gamma$-torsor over ${}_\eta \O_\gamma$.
\end{proof}
\begin{lemma}
\label{lem:stackquotient2}
Suppose ${}_\eta M_\gamma$ is finite-dimensional over $\C$. Note that the $\cK$-group $G_\gamma$ admits a N\'eron model $\mathcal{G}_\gamma/\cO$. We have that 

(i) ${}_\eta M_\gamma$ admits a $G_\gamma$-equivariant
dualizing complex $\omega_{{}_\eta M_\gamma}$.

(ii)  For $K_\gamma\subseteq \mathcal{G}_\gamma(\cO)$, the equivariant Borel-Moore homology $H_*^{K_\gamma}({}_\eta M_\gamma)=:H^{\bP}_*(K_\gamma\backslash {}_\eta X_{\gamma}/\bP)$ is well-defined. Here ${}_\eta X_\gamma=\{g\in \tG_{\cK}^{\cO}\rtimes \C^\times| g^{-1}.\gamma\in N_{\bP}\}$ as before.
\end{lemma}
\begin{proof}
(i) is clear by finite-dimensionality and Lemma \ref{lem:stackquotient}. For (ii), we can approximate ${}_\eta M_\gamma$ by finite-type $K_\gamma$-stable varieties, and then take the colimit.
\end{proof}

\begin{example}
Suppose $N=\Ad$, and $\gamma$ is split regular semisimple. Then $G_\gamma$ is a split maximal torus in $G(\cK)$, in fact the loop group of a split maximal torus $T\subset G$. The equivariant BM homology $H_*^{T}(\Sp_\gamma)$ is studied in the next section.
\end{example}
\begin{remark}
We may extend the setup to the flavor-deformed equivariant version by considering ${}_\eta\widetilde{\O}_\gamma:=\tG_{\cK}^\cO\rtimes \C^\times . \gamma$ and and its quotient by $\tbP\rtimes \C^\times$ instead. We leave constructions of these extended notions to the reader or refer to \cite{GK}.
\end{remark}
Suppose that $\tG=G\times \C^{\times}$, so $G_F=\C^{\times}$ is the flavor group above. The group $\tG$ acts on $N$ via $v\mapsto hg^{-1}\gamma g$.
We denote the resulting GASF $$\underline{M}_\gamma=\tG_{\cK} .\gamma \cap N_\bP/\tbP$$
Since $[h]=[t^d]\in X^*(\C^{\times})=\Z$ we see that
$\underline{M}_\gamma$ splits into components
$$M_{t^{-d}\gamma}\cong\{g\in \Gr_G|t^dg^{-1}\gamma g=g^{-1}t^d\gamma g \in \fg(\cO)\}=\Sp_{t^{-d}\gamma}.$$ We recognize this to be the affine Springer fiber of $t^{-d}\gamma$ in $\Fl_\bP$, or in other words that  $$\underline{M}_\gamma=\bigsqcup_{d\in \Z} M_{t^d\gamma}.$$

\subsection{Springer action from the Coulomb perspective}

We now define the Coulomb branch version of  the Springer action, and in particular the geometric action of our $\Z$-algebra. 
Let $K_\gamma$ be as before and $X_\gamma$ as in Lemma \ref{lem:stackquotient2}.
\begin{theorem}
\label{thm:action}
The following convolution diagram defines naturally associative maps $$H_*^{\tbP\rtimes \C^\times_{\rot}}({}_{\eta}\cR_{\eta'})\otimes H_*^{K_\gamma}({}_{\eta'} M_\gamma)\to H_*^{K_\gamma}({}_{\eta}M_\gamma).$$
   \begin{center}
   \begin{tikzcd}
{}_{\eta}\cR_{\eta'} \times {}_{\eta'} \O_\gamma \arrow[d, "i"] & p^{-1}({}_{\eta}\cR_{\eta'} \times {}_{\eta'} \O_\gamma)\arrow[d]\arrow[l,"p"']\arrow[r,"q"]& q(p^{-1}({}_{\eta}\cR_{\eta'} \times {}_{\eta'}\O_\gamma))\arrow[r,"m"] & {}_{\eta}\O_\gamma \\
G(\cK)\times_{\bP'}N_{\bP'} \times {}_{\eta'} \O_\gamma & G(\cK)\times {}_{\eta'} \O_\gamma\arrow[l]
\end{tikzcd}
\end{center}

Here $p: (g,s)\mapsto ([g,s],s)$, $q$ is the quotient by the diagonal action of $\bP'$ and $m$ is the map sending $[g,s]\mapsto g.s$. 
\end{theorem}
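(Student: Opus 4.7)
The plan is to mimic the standard BFN-type convolution construction used for Coulomb branches in \cite{BFN} and for BFN Springer theory in \cite{HKW,GK}, using the orbital-variety presentation ${}_{\eta'}\O_\gamma/\bP' \cong L_\gamma\backslash {}_{\eta'}M_\gamma$ supplied by Lemma \ref{lem:stackquotient} to rewrite the right-hand factor. Once the "states" are realized inside a $\bP'$-quotient of a subvariety of $N_{\bP'}$, the diagram displayed in the theorem is formally parallel to the convolution diagram of Theorem \ref{thm:convolution}, just with the second space-of-triples factor ${}_{\eta'}\cR_{\eta''}$ replaced by the single fibre ${}_{\eta'}\O_\gamma$ over $\gamma \in N_\cK$.

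First I would assemble the convolution as a composition of four standard operations on equivariant Borel--Moore homology. The map $p$ is an $\bP'$-torsor onto an open neighborhood of the image, so induces a smooth-pullback (really a $\ast$-pullback on $\bP'$-equivariant complexes, matching $\tbP'\rtimes\C^*$-equivariance on the quotient). The closed embedding $i$ is regular (cut out inside $G_\cK \times^{\bP'} N_{\bP'} \times {}_{\eta'}\O_\gamma$ by the condition $gs\in N_\bP$, a lattice condition exactly as in \cite{BFN}) and therefore admits a refined Gysin pullback $i^{!}$. The map $q$ descends ${}_{\eta}\cR_{\eta'} \times {}_{\eta'}\O_\gamma$ from a $\tbP \rtimes \bP' \rtimes \C^*$-quotient to a $\tbP\rtimes\C^*$-quotient by the free diagonal $\bP'$-action, and is an iso on the relevant equivariant BM groups. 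Finally the map $m\colon[g_1,[g_2,s]]\mapsto g_1g_2s$ is proper when restricted to any Schubert piece of ${}_\eta\cR_{\eta'}$, after which it is extended by passing to the colimit over the ind-structure. Equivariance with respect to $L_\gamma$ on the second factor is preserved throughout because $L_\gamma$ acts by left multiplication on $G_\cK$, commuting with the $\bP'$-action on the right and with both maps $p$ and $q$.

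Second, associativity is verified exactly as in the associativity proof for the Coulomb branch convolution in \cite[Section 3]{BFN} (see also Theorem \ref{thm:convolution}). One forms the triple-convolution space
\[
{}_{\eta}\cR_{\eta'} \times {}_{\eta'}\cR_{\eta''} \times {}_{\eta''}\O_\gamma
\]
and writes down the analogous four-stage diagram with its canonical map to ${}_{\eta}\O_\gamma$. Both ways of composing the bilinear operations in the theorem factor through this triple diagram; matching them reduces to the compatibility of refined Gysin pullbacks with proper pushforwards in the Cartesian squares produced by the two factorizations, together with the associativity of the $G_\cK$-action on $N_\cK$. Since the $(g,s)\leftrightarrow \gamma$ story involves no additional transversality beyond what is already used for two copies of $\cR$, the argument carries over without change.

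The main obstacle, as always in this setting, is the ind-scheme nature of both ${}_\eta\cR_{\eta'}$ and ${}_\eta M_\gamma$: one must choose compatible exhaustions of these by finite-dimensional subvarieties for which $p, q, i, m$ restrict well and the pushforwards stabilize. For compact $\gamma$ the GASF is locally of finite type \cite{KL88}, and the orbital variety ${}_{\eta}\O_\gamma$ is exhausted by intersections of $G_\cK\cdot\gamma$ with finite-codimension sublattices of $N_{\bP}$ indexed by Schubert cells in $\Fl_\bP$. Compatibility of this exhaustion with the Bruhat exhaustion of $\cR$ is the standard check appearing in \cite[Section 2]{HKW} and \cite[Section 5]{GK}, and is where our finite-dimensionality hypotheses on $L_\gamma$ are used to ensure that $H_*^{L_\gamma}$ behaves well under colimits. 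Specializing the resulting action to the sequence $\eta_i = (t^{-i}\Lie(\bI),\bI)$ and to the family $\{t^j\gamma\}_{j\ge 0}$ then gives, via the multiplication maps of Corollary \ref{cor: bimodules}, the advertised $\Z$-algebra module structure on $\bigoplus_{j\ge 0} H_*(\Sp_{t^j\gamma})$.
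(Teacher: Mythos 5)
Your proposal follows essentially the same route as the paper's proof: both decompose the convolution into a BFN-style ``pull-back with support'' along $p$, a Gysin/shriek-type map encoding the lattice condition, the $\bP'$-quotient isomorphism $q_*$, and proper pushforward along $m$, and both defer associativity and ind-scheme bookkeeping to the BFN/GK/HKW machinery. The only slight imprecision is calling $i$ a \emph{regular} closed embedding with a refined Gysin map $i^{!}$ -- in the ind-scheme setting this is not a regular embedding in the usual sense, and the paper instead invokes BFN's pull-back-with-support construction, but since you cite the same machinery the argument is sound.
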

\begin{proof}
Compare this to the proof of
\cite[Theorem 4.5.]{GK}.
We explain the maps induced in BM homology by $p,q,m$. 
Consider the space $${}_\eta\cP_{\eta'}:=\{(g,s)\in \tG^{\cO}_{\cK}\rtimes \C^\times_{\rot}\times N_{\bP'}| g^{-1}.s\in N_\bP\}$$
and note there are maps $\pi_1:{}_\eta\cP_{\eta'}\to N_{\bP'}$ and $(g,s)\mapsto s$ and $\pi_2: {}_\eta\cP_{\eta'}\to N_\bP$ given by $(g,s)\mapsto g^{-1}.s$. Then consider the $G_\gamma/K_\gamma$-torsor $\pi: K_\gamma\backslash {}_{\eta'} X_\gamma\to N_\bP$ and define $${}_{\eta'} \cF_{\gamma,K_\gamma}:=\pi_*\omega_{K_\gamma\backslash {}_{\eta'}X_\gamma}[-2\dim \tbP+2\dim K_\gamma],$$ which is an object in the $\tbP \rtimes \C^\times_{\rot}$-equivariant derived category of $N_\bP$ supported on ${}_{\eta'}\O_\gamma$.

First of all, we have the ''pull-back with support" map $p^*$ (see \cite[Section 3(ii)]{BFN}) 
	\begin{align}
	p^*:\, &H_{\tbP \rtimes \C^\times_{\rot} \times \tbP' \rtimes \C^\times_{\rot}}^{-*}({}_\eta\cR_{\eta'}\times N_\bP, (\omega_{{}_\eta\cR_{\eta'}})\boxtimes ({}_{\eta'} \cF_{\gamma,K_\gamma}))\nonumber\\ 
	& =H_*^{\tbP\rtimes \C^\times_{\rot}}({}_\eta \cR_{\eta'})\otimes H_*^{\tbP'\rtimes \C^\times_{\rot}}(K_\gamma\backslash {}_{\eta'}X_{\gamma})\to H^*_{\tbP\rtimes \C^\times_{\rot}\times \tbP'\rtimes \C^\times_{rot}}({}_\eta\cP_{\eta'}, \pi_1^!({}_{\eta'} \cF_{\gamma,K_\gamma})).
	\end{align}

Further, we have a map
	$\pi_1^!{}_{\eta'} \cF_{\gamma,K_\gamma}\to \pi^!_2{}_{\eta} \cF_{\gamma,K_\gamma}$ and since $\pi_2=m\circ q$, we get 
	$$q_*: H^*_{\tbP\rtimes \C^\times_{\rot}\times \tbP'\rtimes \C^\times_{\rot}}({}_\eta\cP_{\eta'}, \pi_1^!{}_{\eta'} \cF_{\gamma,K_\gamma})\to H^*_{\tbP\rtimes \C^\times_{\rot}}(q({}_\eta\cP_{\eta'}), m^!{}_{\eta} \cF_{\gamma,K_\gamma})$$
	
	Finally, $m$ is (ind-)proper because its fibers are closed subvarieties of a partial affine flag variety, so that using the adjunction $m_!m^!\to \id$ we get a map
	$$(m\circ q)_*: H_*^{\tbP \rtimes \C^\times_{\rot}}(q({}_\eta\cP_{\eta'}),m^!{}_\eta \cF_{\gamma,,K_\gamma})\to H_*^{\tbP \rtimes \C^\times_{\rot}}(K_\gamma\backslash {}_\eta X_\gamma)=H_*^{K_\gamma}({}_\eta M_\gamma)$$
	
See \cite{GK, HKW} for more details, for example the proof of associativity of the maps.

\end{proof}

While the convolution diagram in Theorem \ref{thm:action} is rather abstract and the maps in Borel-Moore homology involved are defined sheaf-theoretically, in easy cases it is possible to analyze the action as follows. Similar to \cite[Section 4.2]{GK}, we define the {\em Hecke stack} for $\gamma,\eta$ which has $\C$-points 
$${}_\eta \cR_{\eta'}^\gamma(\C)=\{(s_2,g,s_1)\in {}_{\eta}\O_\gamma\times G(\cK) \times {}_{\eta'}\O_\gamma|g.s_1=s_2\}/\bP.$$ Here the quotient is by the action 
$h.(s_2,g,s_1)=(s_2,gh^{-1},hs_1)$. There is a natural Schubert stratification of ${}_{\eta}\cR^\gamma_{\eta'}$ inherited from ${}_{\eta}\cR_{\eta}$, where 
$${}_{\eta}\cR^\gamma_{\eta'}\hookrightarrow {}_{\eta}\cR_{\eta'}$$ via $[s_2,g,s_1]\mapsto [g,s_1]$. Similarly we have maps 

\begin{equation}
\label{eq:pushpull}
\begin{tikzcd}
 & {}_{\eta}\cR^\gamma_{\eta'} \arrow[dl] \arrow[dr]& \\
{}_{\eta} M_\gamma & & {}_{\eta'} \O_\gamma
\end{tikzcd}
\end{equation}
We will use this diagram later  on in our computation of certain shift maps.

In the adjoint case, the name ''Springer action" is warranted, as it coincides with the action defined by Yun, Oblomkov-Yun \cite{OY} (and Varagnolo-Vasserot \cite{VV2}):
\begin{theorem}
\label{thm:actionscoincide}
Let $N=\Ad$ and $\eta=(\bI,N_\bI)=\eta'$. Then the action of the algebra $\tcA_{G,\bI}$ on $H_*^{K_\gamma}(M_\gamma)$ defined by Theorem \ref{thm:action} coincides with the one defined in \cite{OY} on the equivariant homology of affine Springer fibers, under the isomorphism of Theorem \ref{thm:iwahoricoulombisdaha}
\end{theorem}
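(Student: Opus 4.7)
The plan is to argue by reduction to generators. By the proof of Theorem \ref{thm:iwahoricoulombisdaha}, the algebra $\tcA_{G,\bI}$ is generated by the image of $H^*_{\tbP}(pt)\cong \C[\ft^*][c,\hbar]$ together with the classes $[\cR^{\leq s}_{N_\bI,\bI}]$ for the simple reflections $s$ in $\tW=W^{\mathrm{aff}}\rtimes \Omega$ and by classes attached to $\omega\in \Omega$ (corresponding to zero-dimensional $\bI$-orbits on $\Fl$). Under the isomorphism with $\HH_{c,\hbar}$, these match the generators $y\in \C[\ft^*]$, the simple reflections $\sigma_i$, and the length-zero elements $\omega$. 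It therefore suffices to check on these generators that the convolution action of Theorem \ref{thm:action} coincides with the one of \cite{Yun1,OY}.

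For generators coming from $H^*_{\tbP}(pt)$, both actions are cap products with the equivariant Chern classes of the tautological $T$-bundle on $\Fl$, so the agreement is tautological (see Section \ref{sec:actions}). For a simple reflection $s_i$ in the finite Weyl group, the proof of Theorem \ref{thm:iwahoricoulombisdaha} identifies the convolution action of $[\cR^{\leq s_i}_{N_\bI,\bI}]$ with the pullback of the classical Grothendieck--Steinberg correspondence on $\left[\tilde{\fg}/G\right]\times_{[\fg/G]}\left[\tilde{\fg}/G\right]$ via the cartesian square \eqref{eq:SpringerCoulombDiagram}. Restricting along the Cartesian square \eqref{eq: Springer diagram} (which is compatible with \eqref{eq:SpringerCoulombDiagram}), this becomes the classical Springer action on the $\P^1$-fibers of $\tSp_\gamma\to \Sp^{\bP_i}_\gamma$, where $\bP_i$ is the rank-one parahoric generated by $\bI$ and $s_i$. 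This is exactly Yun's definition of the simple reflection action, cf.\ \cite[Proposition 5.2.1]{Yun1}.

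For the affine simple reflection $s_0$, the same argument applies after replacing $\bP_i$ by the affine rank-one parahoric attached to the affine simple root; the fibers of $\tSp_\gamma\to\Sp^{\bP_0}_\gamma$ are again Springer $\P^1$'s and the pullback of the classical Springer correspondence matches Yun's global construction of $s_0$ via the same rank-one reduction. Finally, for $\omega\in\Omega$, the associated correspondence $[\cR^{\leq \omega}_{N_\bI,\bI}]$ is supported on a zero-dimensional $\bI$-orbit and its convolution action implements the translation between connected components of $\Fl_G$; this coincides with Yun's $\Omega$-action, which is defined in exactly the same way.

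The main obstacle is the compatibility check for the affine simple reflection $s_0$: here one must verify that the pullback from $[\tilde{\fg}/G]$ used in the Coulomb branch formalism intertwines correctly with Yun's reduction along $\bP_0$ across the components of $\Fl$. Once this rank-one identification is in place, both actions are algebra homomorphisms from $\HH_{c,\hbar}$ that agree on a generating set, hence agree everywhere. We note that an alternative route is provided by Varagnolo--Vasserot's work \cite{VV2}, which constructs the DAHA action on the Steinberg space by correspondences essentially identical to ours; the identification with Yun's action was then established in \cite[Section 6]{Yun1}.
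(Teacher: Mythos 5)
Your argument is correct and follows essentially the same route as the paper: reduce to generators, noting that the equivariant parameters act by cap products with Chern classes of tautological line bundles in both formalisms, and that the simple-reflection classes match via the pullback of the classical Steinberg correspondence through the Cartesian square \eqref{eq:SpringerCoulombDiagram}. The paper's own proof is considerably terser — it simply invokes Theorem \ref{thm:iwahoricoulombisdaha} for the simple reflections and notes the Chern-class description of the polynomial part — but does not separately spell out the affine simple reflection $s_0$ or the $\Omega$-part as you do; your version makes explicit what is implicit in the phrase "simple reflections in the affine Weyl group" and in the parenthetical remark about rank-two parahorics in the proof of Theorem \ref{thm:iwahoricoulombisdaha}, so the extra care is a genuine improvement in exposition rather than a deviation in method.
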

\begin{proof}
Theorem \ref{thm:iwahoricoulombisdaha} shows that the Springer action of simple reflections in the affine Weyl group is the same. The equivariant parameters act by Chern classes of line bundles on the affine flag variety, and that the relations are the same follows from Theorem \ref{thm:iwahoricoulombisdaha}.
\end{proof}

The novel feature in allowing arbitary $\eta, \eta'$ shows the following. 
\begin{corollary}
\label{cor:springer modules}
 The convolution product in Theorem \ref{thm:action} gives maps
$${}_{j} \cA_{i}^{\hbar} \otimes H_*^{K_\gamma}(M_{t^{i}\gamma})\to H_*^{K_\gamma}(M_{t^{j}\gamma})$$ 
that naturally assemble into an action of the $\Z$-algebra $B^{\hbar}=\bigoplus_{i\leq j} {}_j\cA_i^{\hbar}$
Moreover, the action in Theorem \ref{thm:convolution} not including loop rotation, i.e. setting $\hbar=0$,
defines maps
$$H_*^{G(\cO)}(\cR_{\tG,N}^d)\times H_*^{K_\gamma}(M_\gamma^{d'})\to
H_*^{K_\gamma}(M_\gamma^{d+d'}).$$
\end{corollary}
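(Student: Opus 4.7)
The plan is to deduce this corollary directly from Theorem~\ref{thm:action} by making judicious choices of the data $\eta, \eta'$. Specifically, I would set $\eta_i = (G_\cO, t^i\Lie(G_\cO))$ for each $i \in \Z_{\geq 0}$. With this choice, the generalized affine Springer fiber ${}_{\eta_i} M_v$ from Definition~\ref{def:gasf} becomes, for $v = \gamma$, precisely the set $\{g\in \Gr_G \mid g^{-1}\gamma \in t^i\Lie(G_\cO)\}$, which after the substitution $\gamma \mapsto t^{-i}\gamma'$ (or equivalently by translating the lattice) is naturally identified with the classical affine Springer fiber $\Sp_{t^i\gamma}$. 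Under this identification, the BFN space of triples ${}_{\eta_j}\cR_{\eta_i}$ coincides (up to the same translation convention) with the ${}_j\cR_i$ used in Sections~\ref{sec:coulomb} and~\ref{sec: adjoint coulomb} whose equivariant homology is ${}_j\cA_i^{\hbar}$.

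Next, I would apply Theorem~\ref{thm:action} with $\eta = \eta_j$ and $\eta' = \eta_i$ and $v = \gamma$. The theorem gives a natural convolution map
\[
H_*^{\tbP \rtimes \C^\times_{\rot}}({}_{\eta_j}\cR_{\eta_i}) \otimes H_*^{L_\gamma}({}_{\eta_i} M_\gamma) \longrightarrow H_*^{L_\gamma}({}_{\eta_j} M_\gamma),
\]
which under the identifications above is exactly the desired map ${}_j\cA_i^{\hbar} \otimes H_*^{L_\gamma}(\Sp_{t^i\gamma}) \to H_*^{L_\gamma}(\Sp_{t^j\gamma})$. To assemble these into a $\Z$-algebra action in the sense of Definition~\ref{def:Zalg}, I need to verify two compatibilities: (a) the unit ${}_i\cA_i^{\hbar} \ni 1$ acts as identity on $H_*^{L_\gamma}(\Sp_{t^i\gamma})$, and (b) the following associativity square commutes for $i \leq j \leq k$:
\[
\begin{tikzcd}
{}_k\cA_j^{\hbar} \otimes {}_j\cA_i^{\hbar} \otimes H_*^{L_\gamma}(\Sp_{t^i\gamma}) \arrow[r] \arrow[d] & {}_k\cA_j^{\hbar} \otimes H_*^{L_\gamma}(\Sp_{t^j\gamma}) \arrow[d] \\
{}_k\cA_i^{\hbar} \otimes H_*^{L_\gamma}(\Sp_{t^i\gamma}) \arrow[r] & H_*^{L_\gamma}(\Sp_{t^k\gamma}).
\end{tikzcd}
\]
Here (a) is immediate from the construction (the class of the diagonal in ${}_i\cR_i$ provides the unit, and the convolution diagram in Theorem~\ref{thm:action} reduces to the identity when one factor is trivial). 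For (b), the argument is formally identical to the associativity check for the convolution product of Theorem~\ref{thm:convolution}: both compositions are computed by a common three-step correspondence involving the triple fiber product, and the pull-push formalism of \cite{BFN} shows they agree. This is the kind of diagram chase carried out in \cite{GK,HKW} and I would simply invoke those arguments.

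Finally, for the second assertion at $\hbar = 0$, I would observe that setting the loop-rotation equivariance parameter to zero is harmless: one simply runs the same convolution construction of Theorem~\ref{thm:action} with the group $\tbP \rtimes \C^\times_{\rot}$ replaced by $\tbP$, and all of the $!$-pullback, proper pushforward, and adjunction maps used in the proof of Theorem~\ref{thm:action} exist equally well in this reduced equivariant setting. The statement follows verbatim. The main subtlety here is checking that the convolution diagram still makes sense when we drop loop rotation; this is not really an obstacle, since the maps $p,q,m$ in Theorem~\ref{thm:action} are $\C^\times_{\rot}$-equivariant and remain well-defined after forgetting this equivariance.

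The only place requiring genuine care is the identification ${}_{\eta_i} M_\gamma \cong \Sp_{t^i\gamma}$ compatibly across different $i$ and $j$, so that the convolution maps really do interpolate between affine Springer fibers in the claimed way. This is a direct check using the explicit form of the lattices $t^i\Lie(G_\cO)$ and the action of $g \in G_\cK$ on them by left multiplication; the relevant normalizations are exactly those used implicitly throughout Sections~\ref{sec:coulomb} and~\ref{sec: adjoint coulomb}.
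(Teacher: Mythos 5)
Your proposal is correct and follows the same route the paper takes (indeed, the paper treats this corollary as an essentially immediate consequence of Theorem~\ref{thm:action} applied to the lattices $t^i\Lie(G_\cO)$, which is exactly your choice of $\eta_i$). The one place to be careful, which you flag but resolve only informally, is the sign in the identification of ${}_{\eta_i}M_\gamma$ with a Springer fiber: since $g^{-1}\gamma\in t^i\Lie(G_\cO)$ is equivalent to $g^{-1}(t^{-i}\gamma)\in\Lie(G_\cO)$, one actually has ${}_{\eta_i}M_\gamma\cong\Sp_{t^{-i}\gamma}$, so the bookkeeping of indices in the $\Z$-algebra module should be set up with this flip in mind — this matches the paper's own flavor-group decomposition $\underline{M}_\gamma=\bigsqcup_d M_{t^d\gamma}$ into affine Springer fibers discussed just before Theorem~\ref{thm:action}.
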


In particular, the above corollary gives a geometric construction of ''column vector" modules for our geometric $\Z$-algebra $B=\bigoplus_{i\leq j\leq 0} {}_i\cA_j$. 

\subsection{The adjoint case}

In the case $N=\Ad$, the construction of these affine Springer theoretic modules is also closely related to the construction of a commutative (partial) resolution as in the previous sections, in the following way. For $G=\GL_n$, by the results of \cite{BFN3}, the commutative limit ${}_{i+d} \cA_{i}^{\hbar=0}$ is identified with the sections of $\cO(d)$ on the Hilbert scheme of points $\Hilb^n(\C^\times\times \C)$. In particular, 
$$\Proj \bigoplus_{d\geq 0} {}_{i+d} \cA_{i}^{\hbar=0}\cong \Hilb^n(\C^\times \times \C).$$
In general, we have the following proposition.
\begin{proposition}
Let $\hbar=0$. Then every $\gamma\in N_\cK$ and a choice of $K_{\gamma}$ as in Theorem \ref{thm:action} gives a quasicoherent sheaf $\cF_\gamma^{K_{\gamma}}$ on the partial resolution of the Coulomb branch given by 
$$\Proj \bigoplus_{d\geq 0} {}_{i+d} \cA_{i}^{\hbar=0}.$$
\end{proposition}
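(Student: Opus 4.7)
The plan is to reduce this to the standard fact that a graded module over a graded commutative ring $S = \bigoplus_{d\geq 0} S_d$ defines a quasicoherent sheaf on $\Proj S$. We must therefore exhibit $\bigoplus_{d\geq 0} H_*^{L_\gamma}({}_{\eta_d} M_\gamma)$ as such a graded module over the ring $S := \bigoplus_{d\geq 0} {}_d\cA_0^{\hbar=0}$, whose $\Proj$ by construction is the partial resolution in question.

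First, I would specialize Theorem \ref{thm:action} to $\hbar=0$, applied to the sequence of line operators $\eta_i := (G_\cO, t^{-i}\Lie(G_\cO))$, to obtain convolution pairings
\[
{}_j\cA_i^{\hbar=0}\otimes H_*^{L_\gamma}({}_{\eta_i} M_\gamma) \longrightarrow H_*^{L_\gamma}({}_{\eta_j} M_\gamma), \qquad i\leq j,
\]
with the component ${}_{\eta_i}M_\gamma$ identified with $\Sp_{t^i\gamma}$ as in the discussion of $\underline{M}_\gamma$ preceding this proposition. By the general mechanism of flavor-deformed Coulomb $\Z$-algebras explained after Theorem \ref{thm:convolution}, at $\hbar=0$ the $\Z$-algebra $\bigoplus_{i\leq j}{}_j\cA_i^{\hbar=0}$ is of graded type in the sense of Section \ref{sec: z algebra}: translation of lattices by $t$ furnishes a canonical identification ${}_j\cA_i^{\hbar=0}\simeq S_{j-i}$ under which the convolution becomes the associative multiplication $S_d\otimes S_e\to S_{d+e}$. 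The same translation identifies $H_*^{L_\gamma}(\Sp_{t^i\gamma})$ with a single vector space $M_i$, and the convolution pairing with a map $S_{j-i}\otimes M_i\to M_j$.

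Next, I would assemble $M_\bullet:=\bigoplus_{d\geq 0}M_d$ into a graded $S$-module. The module axioms $S_d\cdot(S_e\cdot m) = (S_d\cdot S_e)\cdot m$ and the compatibility of the $S_0$-action on each $M_d$ are exactly the associativity of Theorem \ref{thm:action} translated through the canonical isomorphisms above. The resulting graded $S$-module $M_\bullet$ is then sheafified by the standard $\Proj$ construction to define $\cF_\gamma$. Quasi-coherence is automatic from this construction, and the $\C^\times$-equivariant structure comes from the homological grading preserved by convolution (the final sentence of Theorem \ref{thm:mainthm}).

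The main obstacle I expect is verifying that the translation identifications ${}_j\cA_i^{\hbar=0}\simeq S_{j-i}$ and $H_*^{L_\gamma}(\Sp_{t^i\gamma})\simeq M_i$ are mutually compatible, so that one genuinely obtains an action of the graded algebra $S$ rather than merely of the abstract $\Z$-algebra. This amounts to equivariance of the convolution diagram in Theorem \ref{thm:action} under simultaneous $t$-translation of the lattices $t^{-i}\Lie(G_\cO)$ and of $\gamma$, which holds because both the spaces of triples ${}_{\eta}\cR_{\eta'}$ and the orbital varieties ${}_\eta\O_\gamma$ are built from the loop group data in a $\C^\times_{\mathrm{rot}}$-compatible fashion, and because multiplication by $t$ intertwines the various ingredients of the Hecke stack \eqref{eq:pushpull}.
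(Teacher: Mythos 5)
Your proposal is correct and follows the same route as the paper: the proposition is stated there with no separate proof precisely because it is an immediate consequence of the corollary to Theorem \ref{thm:action} (which gives the $\Z$-algebra module structure on $\bigoplus_k H_*^{L_\gamma}(\Sp_{t^k\gamma})$) together with the observation, recorded in the flavor-deformation discussion, that at $\hbar=0$ the Coulomb $\Z$-algebra is of graded type, so the module becomes a graded module over $S=\bigoplus_d {}_{i+d}\cA_i^{\hbar=0}$ and one applies the standard $\Proj$ construction. Your closing worry about compatibility of the $t$-translation identifications is exactly what the paper treats as immediate.
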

\begin{corollary}
When $G=\GL_n$, the above construction gives a quasicoherent sheaf on $\Hilb^n(\C^\times\times \C)$ associated to $\gamma\in N_\cK$.
\end{corollary}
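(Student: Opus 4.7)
The plan is to deduce this corollary as an immediate consequence of the preceding proposition together with the type $A$ identification of the Proj scheme with the Hilbert scheme of points. First, I would invoke the preceding proposition: for any base index $i$ and any $\gamma$ satisfying the hypotheses of Theorem \ref{thm:action}, the action maps
\[
{}_{i+d}\cA_i^{\hbar=0}\otimes H_*^{L_\gamma}(M_{t^i\gamma})\to H_*^{L_\gamma}(M_{t^{i+d}\gamma})
\]
assemble $\bigoplus_{d\geq 0}H_*^{L_\gamma}(M_{t^{i+d}\gamma})$ into a graded module over the graded algebra $\bigoplus_{d\geq 0}{}_{i+d}\cA_i^{\hbar=0}$. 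Passing to the associated sheaf under the $\Proj$ construction produces a quasicoherent sheaf $\cF_\gamma$ on $\Proj\bigoplus_{d\geq 0}{}_{i+d}\cA_i^{\hbar=0}$.

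Next, I would specialize to $G=GL_n$ and invoke Theorem \ref{thm:Z-algebra-iso} at $c=\hbar=0$, which gives a canonical identification ${}_{i+d}\cA_i^{\hbar=0}\simeq A^d$ of graded algebras, where $A^d\subset \C[T^*T^\vee]$ is the $d$-th power of the space of diagonally antisymmetric polynomials, regarded as a subspace of $\C[(\C^\times\times\C)^n]$. Combining this with Haiman's description of $\Hilb^n(\C^\times\times\C)$ as the blow-up of the symmetric product along the ideal generated by $A$, which realizes the Hilbert scheme as $\Proj\bigoplus_{d\geq 0}A^d$ (and was already recorded as Theorem \ref{thm: intro hilb} above), I obtain a canonical isomorphism
\[
\Proj\bigoplus_{d\geq 0}{}_{i+d}\cA_i^{\hbar=0}\;\cong\;\Hilb^n(\C^\times\times\C).
\]
Transporting $\cF_\gamma$ along this isomorphism yields the desired quasicoherent sheaf on $\Hilb^n(\C^\times\times\C)$.

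The only subtlety I foresee is a bookkeeping one, concerning the apparent dependence on the base index $i$. The freedom to change $i$ corresponds to the ambiguity of tensoring by powers of the Serre twist $\cO(1)$ coming from the $\Proj$ construction, which is precisely the content of item (1) of Theorem \ref{thm:mainthm}; so the sheaf is well-defined up to such a twist, and the $\G_m$-equivariant structure arises from the dilation action corresponding to the homological grading, as indicated in the final sentence of Theorem \ref{thm:mainthm}. There is no substantive obstacle beyond checking that the isomorphism of graded algebras from Theorem \ref{thm:Z-algebra-iso} matches the one used in Haiman's description under the convolution product — a point already handled in the basis computations of Section \ref{sec: explicit schur} and the proof of Theorem \ref{thm:Z-algebra-iso}.
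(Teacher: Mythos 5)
Your proposal is correct and takes essentially the same route as the paper: the corollary is an immediate consequence of the preceding proposition (which produces the quasicoherent sheaf on $\Proj\bigoplus_d {}_{i+d}\cA_i^{\hbar=0}$) combined with the type $A$ identification of this $\Proj$ with $\Hilb^n(\C^\times\times\C)$. The only cosmetic difference is that the paper at this spot cites \cite{BFN3} for the identification whereas you route through Theorem \ref{thm:Z-algebra-iso} together with Haiman's results (i.e.\ Theorem \ref{thm: intro hilb}); the paper itself notes in the remark following Theorem \ref{thm: intro hilb} that these are two proofs of the same fact, so both are acceptable. Your observation about the base index $i$ contributing only an $\cO(1)$-twist is a reasonable addition and consistent with Theorem \ref{thm:mainthm}(1).
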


It is in general hard to compute which sheaf this is. In all examples we have checked, this should be a coherent sheaf for regular semisimple elements. This is a conjecture that we discuss in Section \ref{sec:finitegeneration}.

Recall that Lemma \ref{lem:support} tells us that the support of $\cF_\gamma^{K_{\gamma}}$ is determined by $K_\gamma$, i.e. the equivariance we consider, as well as the splitting type of $\gamma$.

\begin{example}
When $\gamma=zt^d$ as above, we get twists of the "Procesi bundle" as shown in \cite{Kivinen} which are supported everywhere. See Proposition \ref{prop:procesi} for the precise statement.

When $\gamma$ is elliptic, these sheaves are supported on the punctual Hilbert scheme over $(1,0)\in \C^\times\times \C$.
\end{example}
\subsection{Action on representations}
\label{sec:geometricaction}
Let $\tSp_\gamma'=\{g\bI|g\gamma g^{-1}\in t\fg(\cO)\}$. Consider the Springer module $M_\gamma=H_*^{K_\gamma}(\Sp_\gamma)$. 
Then we have a natural map $M_\gamma \to M_{t\gamma}$ given by inclusion. There are also maps
$$\widetilde{M}_\gamma\to \widetilde{M}_{t\gamma}' \to \widetilde{M}_{t\gamma}$$ given by inclusion, or in other words convolving with the identity or point class in ${}_{1/2} \tcA_1 $ and then by the point class in ${}_{1} \tcA_{3/2}$.

There are also natural Gysin maps $\widetilde{M}_{t\gamma}\to \widetilde{M}_{t\gamma}' \to \widetilde{M}_\gamma$. The first one is codimension zero, and the second one codimension $\dim G/B$. Composed, on the level of equivariant parameters, these look like the \textbf{square} of the Vandermonde determinant $\Delta$. The issue arises from the normalization in the embedding to difference-reflection operators, in which the point class of ${}_{1}\tcA_{3/2}$ is naturally identified with $\Delta$ (and a Cartan part), but acts as the identity on components (effectively, it cuts down the tangent spaces of the components by $\Delta$).

Note also we have \textbf{two} projections $$\tSp_{t\gamma}'\to \Sp_\gamma, \; \tSp_\gamma\to \Sp_\gamma$$ the first one of which is a fibration, and the second one has fibers which are usual Springer fibers (stratified fibration).
Effectively, the two line operators (for $(1/2, 1)$ and $(1, 3/2)$) in the flags get squeezed down to a single one on the spherical level (the one for $(0,1)$).

\begin{lemma}
\label{lem: dim shift}
We have $\dim \Sp_{t\gamma}=\dim \Sp_{\gamma}+\dim G/B.$
\end{lemma}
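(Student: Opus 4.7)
The plan is to reduce the statement to a direct computation using the Kazhdan--Lusztig dimension formula \cite{KL88} (as made precise by Bezrukavnikov in the regular semisimple case). Under the running hypothesis that $\gamma$ is regular semisimple, this formula reads
\[
\dim \Sp_\gamma \;=\; \tfrac{1}{2}\bigl(\mathrm{val}(D(\gamma)) - \mathrm{def}(\gamma)\bigr),
\]
where $D(\gamma) = \det\bigl(\mathrm{ad}(\gamma)\,\big|\,\fg(\cK)/\fg_\gamma(\cK)\bigr)$ and $\mathrm{def}(\gamma) = \mathrm{rank}(\fg) - \mathrm{rank}_\cK(\fg_\gamma)$ is the defect, i.e. the difference between the absolute rank of the centralizer torus and the rank of its maximal $\cK$-split subtorus. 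Hence it suffices to compare the valuation of the discriminant and the defect for $\gamma$ and $t\gamma$.

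First I would observe that multiplication by the scalar $t\in\cK^\times$ does not alter the centralizer: $\fg_{t\gamma} = \fg_\gamma$ as Lie subalgebras of $\fg(\cK)$. Consequently $\mathrm{def}(t\gamma) = \mathrm{def}(\gamma)$, and the two dimension formulas differ only in the valuation of the discriminant. To compute the latter, one conjugates $\gamma$ into $\ft(\bar\cK)$ (passing to a finite extension if necessary, which is harmless for valuations), and uses that $\mathrm{ad}(\gamma)$ acts on the root space $\fg_\alpha$ by the scalar $\alpha(\gamma)$. Thus
\[
D(t\gamma) \;=\; \prod_{\alpha\in\Phi}\alpha(t\gamma) \;=\; t^{|\Phi|}\prod_{\alpha\in\Phi}\alpha(\gamma) \;=\; t^{|\Phi|}\,D(\gamma),
\]
so that $\mathrm{val}(D(t\gamma)) = \mathrm{val}(D(\gamma)) + |\Phi|$.

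Combining the two inputs and using $|\Phi| = 2|\Phi^+| = 2\dim G/B$, one concludes
\[
\dim \Sp_{t\gamma} \;=\; \tfrac{1}{2}\bigl(\mathrm{val}(D(\gamma)) + |\Phi| - \mathrm{def}(\gamma)\bigr) \;=\; \dim \Sp_\gamma + \dim G/B,
\]
as claimed. There is essentially no obstacle here: the only subtlety is making sure the KL/Bezrukavnikov formula is invoked in the correct generality for not necessarily unramified regular semisimple $\gamma$, which is exactly the setting in which the defect term appears; the invariance of the defect under scaling and the transparent scaling behavior of the discriminant handle everything else. As a sanity check, the formula is consistent with Lemma~\ref{lem: Vandermonde}, which exhibits a shift by $2N = 2\dim G/B$ in homological degree when passing between $\Sp_{t^{-1}\gamma}$ and (the $\epsilon$-isotypic component of) $\tSp_\gamma$.
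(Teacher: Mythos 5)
Your proof is correct and follows essentially the same route as the paper: both invoke the Bezrukavnikov dimension formula, observe that the defect (equivalently, the "type $w$" of the centralizer) is unchanged by scaling $\gamma\mapsto t\gamma$, and compute that the valuation of the discriminant shifts by $|\Phi| = 2\dim G/B$. Your computation of the discriminant shift via conjugating into the torus over an extension is slightly more elaborate than needed — the paper simply notes that $\mathrm{ad}(t\gamma) = t\cdot\mathrm{ad}(\gamma)$ acts on the $|\Phi|$-dimensional space $\fg(\cK)/Z(\gamma)$, so the determinant picks up $t^{|\Phi|}$ — but the conclusion is the same.
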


\begin{proof}
By a result of Bezrukavnikov \cite{Bezrukavnikov} the dimension of  $\Sp_{\gamma}$ is given by  
\begin{equation}
\dim \Sp_{\gamma}=\frac{1}{2}\left(\nu_{\mathrm{ad}}(\gamma)-\mathrm{rk}(\fg)+\dim(\fh^w)\right),
\end{equation}
where  $w\in W$ is such that $Z(\gamma)$ is of type $w$, $\fh^w$ denotes the $w$-invariants in $\fh$ and 
$\nu_{\mathrm{ad}}(\gamma)$ is the valuation of 
$$
\det\left(\mathrm{ad} \gamma: \fg(K)/Z(\gamma)\rightarrow \fg(K)/Z(\gamma)\right)
$$
It is easy to see that changing $\gamma$ to $t\gamma$ does not change $w$. The matrix $\mathrm{ad} \gamma$ is multiplied by $t$ which changes $\nu_{\mathrm{ad}}(\gamma)$ by $|\Phi|=2\dim G/B$, and the result follows.
\end{proof}

\begin{lemma}
\label{lem: preimage sp}
Let $\pi: \tSp_{t\gamma}\to \Sp_{t\gamma}$ be the natural projection. If $\gamma$ is elliptic then 
$\pi^{-1}(\Sp_{\gamma})$ is an irreducible component of $\tSp_{t\gamma}$. More generally, if $\gamma$ is regular semisimple, and $C$ is an irreducible component of $\Sp_\gamma$, then $\pi^{-1}(C)$ is an irreducible component of $\tSp_{t\gamma}$.
\end{lemma}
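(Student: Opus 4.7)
My plan is to show that $\pi^{-1}(C)\subseteq \tSp_{t\gamma}$ is closed, irreducible, and of maximal dimension equal to $\dim \tSp_{t\gamma}$; hence it must be an irreducible component. First I would observe that the inclusion $i:\Sp_\gamma\hookrightarrow \Sp_{t\gamma}$ is a closed immersion, realized as the preimage of $0$ under the reduction map $\phi:\Sp_{t\gamma}\to [\fg/G]$ of diagram \eqref{eq: Springer diagram} (equivalently, the condition $g^{-1}\gamma g\in \fg(\cO)$ inside $g^{-1}\gamma g\in t^{-1}\fg(\cO)$ is the vanishing of the $t^{-1}$ coefficient). Thus $C$ is closed in $\Sp_{t\gamma}$, and $\pi^{-1}(C)$ is closed in $\tSp_{t\gamma}$.

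Next I would compute the fibers of $\pi$ over points of $i(\Sp_\gamma)$: for $gG(\cO)\in i(\Sp_\gamma)$ we have $g^{-1}(t\gamma)g\in t\fg(\cO)$, so its reduction modulo $t$ vanishes in $\fg$, and the classical Springer fiber over this reduction is all of $G/B$. Combined with the Zariski-local triviality of $\Fl_G\to \Gr_G$, this identifies $\pi^{-1}(\Sp_\gamma)$ with $\Sp_\gamma\times_{\Gr_G}\Fl_G$, a Zariski-locally trivial $G/B$-bundle. Restricting to $C$, we obtain a Zariski-locally trivial $G/B$-bundle over $C$; since both $C$ and $G/B$ are irreducible, $\pi^{-1}(C)$ is irreducible of dimension $\dim C+\dim G/B=\dim \Sp_\gamma + \dim G/B$, using the equidimensionality of $\Sp_\gamma$ for regular semisimple $\gamma$ \cite{KL88}.

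Finally, by Lemma \ref{lem: dim shift} this dimension equals $\dim \Sp_{t\gamma}$, and the equality $\dim \tSp_{t\gamma}=\dim \Sp_{t\gamma}$ holds for regular semisimple $t\gamma$: the generic fiber of $\pi$ is a classical Springer fiber over a regular element of $\fg$, hence finite. Together with the equidimensionality of $\tSp_{t\gamma}$ (again \cite{KL88}), this shows that the closed irreducible $\pi^{-1}(C)$ has maximal dimension in $\tSp_{t\gamma}$, so it is an irreducible component. The elliptic case follows by applying this to each irreducible component of $\Sp_\gamma$. The main step requiring care is the equality $\dim \tSp_{t\gamma} = \dim \Sp_{t\gamma}$: it relies on generic regularity of the image of the reduction map $\Sp_{t\gamma}\to [\fg/G]$, which is standard for regular semisimple $t\gamma$ but is the only nontrivial input.
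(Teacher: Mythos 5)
Your argument is correct and takes essentially the same route as the paper's proof: identify $\pi^{-1}(\Sp_\gamma)\to\Sp_\gamma$ as a $G/B$-fibration (via the Cartesian square from the proof of Lemma \ref{lem: spherical antispherical}), apply Lemma \ref{lem: dim shift} for the dimension shift, and conclude by equidimensionality of $\tSp_{t\gamma}$. The extra care you take over closedness of $\pi^{-1}(C)$ and over the equality $\dim\tSp_{t\gamma}=\dim\Sp_{t\gamma}$ fills in details the paper leaves implicit; the latter is standard (it follows from the Bezrukavnikov-type dimension formula, valid for both the spherical and Iwahori versions, already invoked in Lemma \ref{lem: dim shift}), so you need not rely on generic finiteness of $\pi$, whose justification via ``generic regularity of the reduction'' is itself essentially equivalent to the dimension equality you are trying to establish.
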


\begin{proof}
By the proof of Lemma \ref{lem: spherical antispherical} the projection $\pi^{-1}(\Sp_{\gamma})\to \Sp_{\gamma}$ has fibers $G/B$ at every point. Since $\gamma$ is elliptic, $\Sp_{\gamma}$ is irreducible and hence $\pi^{-1}(\Sp_{\gamma})$ is irreducible as well. If $C$ is as in the statement of the Lemma, the same proof goes through. 

Furthermore, all components  of $\widetilde{\Sp_{t\gamma}}$ have dimension $\dim \Sp_{t\gamma}$.
By Lemma \ref{lem: dim shift} we have 
$$
\dim \pi^{-1}(\Sp_{\gamma})=\dim \Sp_{\gamma}+\dim G/B=\dim \Sp_{t\gamma},
$$
and the result follows. 
\end{proof}

Lemma \ref{lem: preimage sp} allows us to construct an important correspondence between $\Sp_{\gamma}$ and $\Sp_{t\gamma}$.  By the work of Tsai \cite{Tsai}, there are $W$ many irreducible components up to the centralizer action in $\tSp_{t\gamma}$. Furthermore, we expect the following:

\begin{conjecture}(\cite[Conjecture 8.6]{Tsai})
\label{conj: regular}
The Springer action on $H_*(\tSp_{\gamma})$ yields a regular representation in top-dimensional homology spanned by the classes of the irreducible components.
\end{conjecture}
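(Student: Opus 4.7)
The plan is to reduce first to the elliptic case, then handle elliptic $\gamma$ via a deformation to the homogeneous setting where the conjecture is already known. For general regular semisimple $\gamma$, the centralizer $K_\gamma$ contains a lattice $\Lambda_\gamma$ acting freely on $\tSp_\gamma$ with projective quotient, and the Springer $W$-action commutes with the $\Lambda_\gamma$-action by Theorem \ref{thm: lattice action factors}. The regular-representation statement modulo this lattice action then reduces to the elliptic case for the Levi subgroup $L$ associated to $\gamma$, together with parabolic induction of Springer representations from $W_L$ to $W$: the induction of the regular representation of $W_L$ yields the regular representation of $W$ (since $\mathrm{Ind}_{W_L}^{W} \C[W_L] \cong \C[W]$).

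For elliptic $\gamma$, Tsai's enumeration gives $\dim H_{\mathrm{top}}(\tSp_\gamma) = |W|$, so it remains only to identify the $W$-module structure. I would choose a one-parameter family $\{\gamma_u\}_{u \in U}$ of regular semisimple elliptic elements connecting $\gamma$ to a homogeneous element $\gamma_0$; by the dimension formula of Bezrukavnikov used in Lemma \ref{lem: dim shift}, this deformation must be chosen to preserve both the Newton stratum and the conjugacy class type $w \in W$ of the centralizer. Over such a family, the collection $\{H_{\mathrm{top}}(\tSp_{\gamma_u})\}$ should assemble into a local system of rank $|W|$ on $U$, carrying a locally constant $W$-action coming from the DAHA realization (Theorem \ref{thm:actionscoincide}). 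For $\gamma_0$ homogeneous, the conjecture is established in \cite{VVfd,OY}, so the $W$-action is the regular representation at $u=0$, and by local constancy it is the regular representation at $\gamma$ as well.

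The main obstacle will be constructing such a deformation family within a fixed Newton stratum while ensuring that the dimension and component count of $\tSp_{\gamma_u}$ stay constant, and verifying local constancy of the Springer action across the family (which is not automatic because the family lives in an ind-scheme and the Schubert-level BFN correspondences used in Theorem \ref{thm:action} do not obviously commute with specialization). An alternative route, closer to the main construction of this paper, goes through the $\Z$-algebra action: for $G = GL_n$, the graded module $\bigoplus_{m \geq 0} H_*(\tSp_{t^m \gamma})$ over the $\Z$-algebra $\cA$ corresponds via Theorem \ref{thm:Z-algebra-iso} to a graded module over the Gordon--Stafford $\Z$-algebra on $\Hilb^n(\C^\times \times \C)$; identifying this module and applying Haiman's $n!$-theorem would then isolate the regular representation of $W = S_n$ on the top graded piece, proving the type $A$ case and motivating an approach for general $G$ via the conjectural coherence of $\cF_\gamma$ (Conjecture \ref{conj: coherent}) together with Lemma \ref{lem:localstructure}, which reduces the question, upon formal completion at a fixed point $a\in T^\vee$, to an analogous regularity statement for a rational-Cherednik module.
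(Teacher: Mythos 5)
This statement is labeled as a \emph{conjecture} in the paper, explicitly attributed to Tsai (\cite[Conjecture 8.6]{Tsai}); the paper does not prove it, and instead uses it as a hypothesis in Proposition~\ref{prop:ell_shift}, Proposition~\ref{prop:fin-gen-ell}, and Theorem~\ref{thm:mn}. So there is no ``paper's own proof'' to compare your proposal to. The only proven instance mentioned in the paper is the homogeneous case $\gamma_{m,n}$, which is credited to \cite{OY1}.

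Your proposed sketch is reasonable in outline but is not a proof, and you flag the central gaps yourself. Two points are worth emphasizing. First, the reduction to the elliptic case via parabolic induction is plausible in spirit, but the Springer module on $H_*(\tSp_\gamma)$ for non-elliptic $\gamma$ is infinite-dimensional and does not literally decompose as $\mathrm{Ind}_{W_L}^W$ of a finite-dimensional $W_L$-module; one must quotient by the lattice action and argue carefully about what ``top-dimensional homology'' means after the quotient, and that the Springer action descends in a compatible way. Second, and more seriously, the deformation argument requires that $H_{\mathrm{top}}(\tSp_{\gamma_u})$ form a local system with locally constant $W$-action over the family $U$. Even within a fixed root-valuation/Newton stratum, it is not automatic that the number of irreducible components, their fundamental classes, or the Springer correspondences glue into a local system, since the family lives in an ind-scheme and the Springer action is defined via pullback of finite-dimensional Springer sheaves along a non-flat map. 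You acknowledge this as the main obstacle, and without resolving it the argument is incomplete. The alternative $\Z$-algebra route for $GL_n$ is a suggestive research direction aligned with the paper's philosophy, but identifying the graded module and extracting the regular representation from Haiman's $n!$ theorem in the trigonometric/affine setting is itself an open problem of comparable difficulty. In short: the paper leaves this open, and so does your proposal.
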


 In particular, for $\gamma$ elliptic there is a distinguished component $\pi^{-1}(\Sp_{\gamma})$ and another component biregular to $\Sp_{t\gamma}$, and (assuming Conjecture \ref{conj: regular}) one can define a correspondence in Borel-Moore homology sending the former to the latter (for example, the symmetrizer $\eee$ would suffice). More generally, fix a component $C$ as above and note that the lattice part of the centralizer of $\gamma$ acts transitively on the set of these components \cite{KL88}. Now the class of $\pi^{-1}(C)$ can either be sent to the class of any lattice translate of $\pi^{-1}(C)$, or by the symmetrizer in the finite Weyl group to a one-dimensional $W$-invariant subspace of the BM homology of $\Sp_{t\gamma}$.

This leads to the following:

\begin{proposition}\label{prop:ell_shift}
Assume $\gamma$ is elliptic and \(G\) is simply-connected, and assume Conjecture  \ref{conj: regular} holds for $\gamma$. Consider the correspondence $\eee[\pi^{-1}(\Sp_{\gamma})]*-$ between $\Sp_{\gamma}$ and $\Sp_{t\gamma}$. The action of this correspondence in homology corresponds to the action of some class in ${}_{i+1}\cR_{i}$ as in Theorem \ref{thm:action}, which sends the fundamental class of $\Sp_{\gamma}$ to the fundamental class of $\Sp_{t\gamma}$.
\end{proposition}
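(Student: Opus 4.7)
The plan is to exhibit an explicit class $\alpha \in {}_{i+1}\cA_i$ whose convolution action from Theorem~\ref{thm:action} equals the stated correspondence, and then to evaluate this action on $[\Sp_\gamma]$ using Conjecture~\ref{conj: regular}.

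First I would reformulate the correspondence geometrically. By the proof of Lemma~\ref{lem: preimage sp}, the diagram $\Sp_\gamma \xleftarrow{p_1} \pi^{-1}(\Sp_\gamma) \hookrightarrow \tSp_{t\gamma}$ has $p_1$ a smooth $G/B$-fibration and the second map a closed embedding of a top-dimensional irreducible component. Thus the operation $\eee[\pi^{-1}(\Sp_\gamma)]*-$ equals pullback along $p_1$, pushforward into $H_*(\tSp_{t\gamma})$, symmetrization by $\eee$, and the identification $H_*(\tSp_{t\gamma})^W \cong H_*(\Sp_{t\gamma})$ of Lemma~\ref{lem: spherical antispherical}. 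On the other side, convolution with any $\beta \in {}_{i+1}\cA_i$ on $H_*(\Sp_\gamma)$ factors through the spherical Hecke stack ${}_{i+1}\cR^\gamma_i$ from \eqref{eq:pushpull}, by Theorem~\ref{thm:action}.

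Second, I would construct $\alpha$. Using Theorem~\ref{thm: one step}, ${}_{i+1}\cA_i \cong \eee_- \cdot {}_i\tcA_i \cdot \eee$, and by Theorem~\ref{thm:actionscoincide} the factor ${}_i\tcA_i \cong \HH$ acts on $H_*(\tSp_\gamma)$ via Yun's Springer action. The natural ``multiplication by $t$'' closed embedding $\tSp_\gamma \hookrightarrow \tSp_{t\gamma}$ lifts to a fundamental class on the Iwahori-level space of triples ${}_{i+1}\tcR_i$, and its image in ${}_{i+1}\cA_i$ after projection by $\eee_-\otimes \eee$ is the desired $\alpha$. That this $\alpha$ acts as the pull-push through $\pi^{-1}(\Sp_\gamma)$ described in the previous paragraph is a direct check comparing the Iwahori and spherical Hecke stacks via the Springer projection $\tSp_{t\gamma} \twoheadrightarrow \Sp_{t\gamma}$, together with the observation that the locus of $\tSp_{t\gamma}$ on which the reduction mod $t$ lies in the Borel is precisely $\pi^{-1}(\Sp_\gamma)$.

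Finally, to evaluate on the fundamental class: $p_1^*[\Sp_\gamma] = [\pi^{-1}(\Sp_\gamma)]$ since $p_1$ is a smooth proper $G/B$-fibration, and the closed embedding then produces $[\pi^{-1}(\Sp_\gamma)] \in H_*(\tSp_{t\gamma})$ in top degree. By Lemma~\ref{lem: preimage sp} and Tsai's count of $|W|$ components modulo the centralizer action, together with Conjecture~\ref{conj: regular}, the $W$-orbit of $[\pi^{-1}(\Sp_\gamma)]$ spans a regular representation of $W$; hence $\eee \cdot [\pi^{-1}(\Sp_\gamma)]$ is a nonzero scalar multiple of the unique (up to the lattice action and scalar) $W$-invariant top-dimensional class, which under Lemma~\ref{lem: spherical antispherical} is $[\Sp_{t\gamma}]$. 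Rescaling $\alpha$ by this scalar yields the asserted identity. I expect the main technical obstacle to be the identification in the third paragraph above, namely matching the Iwahori-level ``$\times t$'' correspondence with the spherical pull-push through ${}_{i+1}\cR_i^\gamma$ on the nose, keeping dimension shifts and projector normalizations straight across the two pictures.
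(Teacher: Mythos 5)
Your proof and the paper's share the same skeleton: both pass to an Iwahori-level object, use the $W$-action coming from Springer theory, invoke Conjecture~\ref{conj: regular} to control the top-dimensional homology, and then descend to the spherical picture. The constructions, however, differ in a way that matters. The paper works entirely inside the Hecke stack ${}_{i+1}\cR_i^\gamma$: it defines $\Gamma$ as the locus of triples $(s_2,g,s_1)$ with $G_\cO.s_1 = G_\cO.s_2$ (the lifted graph of $\Sp_\gamma\hookrightarrow\Sp_{t\gamma}$), lifts it to $\tilde\Gamma$ in the Iwahori Hecke stack, and then \emph{chooses a specific} $w\in W$ so that $w.[\tilde\Gamma]$ projects \emph{dominantly} onto ${}_{\eta'}\mathbb{O}_\gamma$; the class $\Gamma' = \tilde\pi_*(w.[\tilde\Gamma])$ is then manifestly nonzero because of the dominance, and it hits $[\Sp_{t\gamma}]$ because of the generic-degree-one structure of the projection. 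Your proof instead symmetrizes with $\eee$ directly and routes through Theorem~\ref{thm: one step} and the identification ${}_{i+1}\cA_i\cong\eee_-\,{}_i\tcA_i\,\eee$.

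The gap in your argument is the nonvanishing of $\eee\cdot[\pi^{-1}(\Sp_\gamma)]$. You write that ``the $W$-orbit of $[\pi^{-1}(\Sp_\gamma)]$ spans a regular representation,'' and conclude that $\eee$ applied to it is a nonzero multiple of the invariant class. But Conjecture~\ref{conj: regular} says that the \emph{span of all component classes} is a copy of the regular representation; it does \emph{not} say that the Springer $W$-action permutes the component classes, nor that any single component class is a cyclic vector. In a regular representation not every vector has nonzero trivial-isotypic component, so $\eee[\pi^{-1}(\Sp_\gamma)]\neq 0$ needs a separate argument. In fact the component $\pi^{-1}(\Sp_\gamma)$ is precisely the one supporting the \emph{sign}-isotypic part of top homology (it is $\varphi'^{-1}(\{0\}/B)$ in the notation of Lemma~\ref{lem: spherical antispherical}), which makes the trivial-component statement even less obvious and is exactly what the paper's choice of $w$ is designed to handle. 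You also flag, but do not close, the step matching your candidate $\alpha$ with the stated pull-push correspondence; the paper avoids this matching problem by constructing $\Gamma'$ directly as a cycle in the Hecke stack, so nothing needs to be transported through the $\Z$-algebra isomorphisms.
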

\begin{proof}
  Lets construct a cycle \(\Gamma'\) in \({}_{i+1}\cR_{i}^{\gamma}\) such that correspondence \eqref{eq:pushpull} with the class \(\Gamma'\) sends
  \([\Sp_{\gamma}]\) to \([\Sp_{t\gamma}]\). First, we define \(\Gamma\subset {}_{i+1}\cR_{i}^{\gamma} \) as the lift of the graph of the embedding of \(\Sp_{\gamma}\) into
  \(\Sp_{t\gamma}\). The lift \(\Gamma\) is defined as the locus of triples \((s_2,g,s_1)\in {}_{i+1}\cR_i^{\gamma}\) such that \(G(\cO). s_1=G(\cO). s_2\). 

  Let \(\eta=(G(\cO),t^i\fg(\cO))\), \(\eta'=(G(\cO),t^{i+1}\fg(\cO))\) and
  \(\tilde{\eta}=(\mathbf{I},t^i\fg(\cO))\). In particular, \({}_{i+1}\cR_i^\gamma={}_\eta\cR_{\eta'}^\gamma\)
  and on the homology of the  fibers of the projection \(\tilde{\pi}:
  {}_\eta\cR_{\tilde{\eta}}^\gamma\to {}_{i+1}\cR_i^\gamma\) there is an action of \(W\). The push-forward along the projection \(\tilde{\pi}\) is the projection onto the \(W\)-invariant part of the homology.

  Let \(\tilde{q}:{}_\eta\cR_{\tilde{\eta}}^\gamma\to {}_{\tilde{\eta}}\mathbb{O}_\gamma\) be map from the corresponding diagram \eqref{eq:pushpull}. The previous proposition implies that the map \(\tilde{q}\) restricted to \(\tilde{\Gamma}=\tilde{\pi}^{-1}(\Gamma)\)
  is dominant over one of irreducible component of \({}_{\tilde{\eta}}\mathbb{O}_\gamma\). By Conjecture  \ref{conj: regular} the set of irreducible components of
  \({}_{\tilde{\eta}}\mathbb{O}_\gamma\) is a regular representation of \(W\). 
 Thus there is \(w\in W\) such \(w.[\tilde{\Gamma}]\) projects dominantly onto \({}_{\eta'}\mathbb{O}_\gamma\).  The class
  \(\Gamma'=\tilde{\pi}_*(w.[\tilde{\Gamma}])\) satisfies required properties.

\end{proof}

\begin{corollary}
  Under the assumption of the previous proposition we have the relation between the fundamental classes:
 \[ [\Sp_{t^{i+1}\gamma}]\in {}_{-i-1}\cA_{-i} * [\Sp_{t^i\gamma}]\]
\end{corollary}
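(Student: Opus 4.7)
The plan is to obtain this as an immediate consequence of Proposition~\ref{prop:ell_shift} applied with $\gamma$ replaced by $t^i\gamma$, after first checking that the hypotheses propagate under the $t$-shift.

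First I would verify that the assumptions of Proposition~\ref{prop:ell_shift} are inherited by $t^i\gamma$ for every $i$. Ellipticity and regular semisimplicity are stable under multiplication by a power of the uniformizer: the centralizer $\Stab_{G(\cK)}(t^i\gamma)$ coincides with $\Stab_{G(\cK)}(\gamma)$, so it is the same anisotropic torus; the characteristic polynomial of $t^i\gamma$ differs from that of $\gamma$ only by a uniform rescaling, so its splitting field and ramification type are unchanged. Simply-connectedness of $G$ is intrinsic and not at issue. The validity of Tsai's Conjecture~\ref{conj: regular} for $t^i\gamma$ should likewise be inherited: Lemma~\ref{lem: preimage sp} puts the relevant top-dimensional components of $\tSp_{t^{i+1}\gamma}$ in explicit bijective correspondence (under $\pi$) with those of $\Sp_{t^i\gamma}$, together with a full $W$-orbit, so the regular-representation statement for $t^{i+1}\gamma$ is obtained from that for $t^i\gamma$.

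Second, granting these hypotheses, Proposition~\ref{prop:ell_shift} (with $\gamma$ replaced by $t^i\gamma$) produces a class $\Gamma'_i$ supported on the Hecke stack ${}_{-i-1}\cR^{t^i\gamma}_{-i}$ whose push-forward along ${}_{-i-1}\cR^{t^i\gamma}_{-i}\hookrightarrow {}_{-i-1}\cR_{-i}$ yields a class $c_i\in {}_{-i-1}\cA_{-i}$. By the very content of the proposition, the convolution action of $c_i$ defined in Theorem~\ref{thm:action} — that is, the pull-push through the correspondence \eqref{eq:pushpull} — sends $[\Sp_{t^i\gamma}]$ to $[\Sp_{t^{i+1}\gamma}]$. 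Thus $[\Sp_{t^{i+1}\gamma}]\in c_i*[\Sp_{t^i\gamma}]\subseteq {}_{-i-1}\cA_{-i}*[\Sp_{t^i\gamma}]$, which is the assertion of the corollary.

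The main obstacle is not really geometric: the hard work has already been done inside Proposition~\ref{prop:ell_shift}, which used Tsai's enumeration of components together with the conjectural regular-representation structure on top homology. The only point that deserves care here is the inheritance of Conjecture~\ref{conj: regular} under $t$-multiplication, since formally one is imposing the conjecture at each $i$; the argument above via Lemma~\ref{lem: preimage sp} makes this inheritance automatic and reduces the corollary to a purely formal restatement of Proposition~\ref{prop:ell_shift} in families.
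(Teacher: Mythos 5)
Your overall approach is the right one and matches what the paper intends: the corollary is simply Proposition \ref{prop:ell_shift} applied with $\gamma$ replaced by $t^i\gamma$, together with the bookkeeping $\Sp_{t^i\gamma}={}_{\eta_{-i}}M_\gamma$ so that the relevant bimodule is ${}_{-i-1}\cA_{-i}$. The paper gives no separate proof precisely because it regards the statement as a direct restatement of the proposition for $t^i\gamma$.

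Where your write-up goes wrong is in the claim that the validity of Conjecture \ref{conj: regular} for $t^{i+1}\gamma$ is \emph{automatically} inherited from its validity for $t^i\gamma$ via Lemma \ref{lem: preimage sp}. That lemma only says that $\pi^{-1}(C)$ is an irreducible component of $\tSp_{t^{i+1}\gamma}$ whenever $C$ is a component of $\Sp_{t^i\gamma}$; for $\gamma$ elliptic this yields exactly \emph{one} component (namely $\pi^{-1}(\Sp_{t^i\gamma})$), whereas Tsai's result gives $|W|$ components of $\tSp_{t^{i+1}\gamma}$ up to the centralizer action. There is no bijection with a ``full $W$-orbit'' coming from the lemma, and even a bijection of components would not transport the $W$-module structure needed to assert that the span of their fundamental classes is the regular representation: Conjecture \ref{conj: regular} for $t^{i+1}\gamma$ concerns the Springer action on $H_*(\tSp_{t^{i+1}\gamma})$, a different space with a different action than for $t^i\gamma$. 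So the inheritance you assert is not established and should not be claimed.

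The resolution is simply to read ``under the assumption of the previous proposition'' as imposing those hypotheses for the relevant element $t^i\gamma$ (equivalently, to assume Conjecture \ref{conj: regular} for all $t^k\gamma$ as is done explicitly in Proposition \ref{prop:fin-gen-ell}). With that adjustment, the rest of your argument --- ellipticity, regular semisimplicity, and simply-connectedness are preserved under $\gamma\mapsto t^i\gamma$ since $\Stab_{G(\cK)}(t^i\gamma)=\Stab_{G(\cK)}(\gamma)$; and the class $\Gamma'$ produced by the proposition lies in ${}_{-i-1}\cA_{-i}$ and sends $[\Sp_{t^i\gamma}]$ to $[\Sp_{t^{i+1}\gamma}]$ --- is correct and is exactly what the paper has in mind.
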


\section{Finite generation and examples}
\label{sec:finitegeneration}

\subsection{Finite generation conjecture}
\label{sec:finite-gener-conj}

As we saw in Section \ref{sec:bfnspringer}, in particular Theorem \ref{thm:action}, the space
$$
\bF_{\gamma}:=\bigoplus_{k=0}^{\infty} H_{*}(\Sp_{t^k\gamma})
$$
is a graded module over the graded algebra $\bigoplus_{d=0}^{\infty}{}_0 \cA_{d}$. Equivalently, $\bF_{\gamma}$ defines a quasi-coherent sheaf $\cF_{\gamma}$ on $\Proj \bigoplus_{d=0}^{\infty}{}_0 \cA_{d}$.

\begin{conjecture}
\label{conj: fin gen}
The module $\bF_{\gamma}$ is finitely generated and the sheaf $\cF_{\gamma}$ is coherent.
\end{conjecture}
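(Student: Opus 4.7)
The plan is to first prove the conjecture in the homogeneous case and then to reduce a natural weakening of the general case to it via specialization. Throughout, I will leverage the $\Z$-algebra action from Theorem \ref{thm:action} and the identification of $\bigoplus_{d\ge 0}{}_0\cA_d^{\hbar=0}$ with the homogeneous coordinate ring of $\widetilde{\Comm}_{\check{G}}$.

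For homogeneous $\gamma$ of slope $\nu$, the key observation is that the rescaling automorphism $t\mapsto \zeta t$ implements an isomorphism $\Sp_{t^k\gamma}\simeq \Sp_\gamma$ up to a $\C^\times$-twist, giving a second, ``loop-dilation,'' $\C^\times$-action on the total space $\bF_\gamma=\bigoplus_k H_*(\Sp_{t^k\gamma})$ that interacts predictably with the $\Z$-algebra action. First I would use this together with Theorem \ref{thm:actionscoincide} to identify each graded piece $H_*(\Sp_{t^k\gamma})$, up to an equivariant correction, with the spherical part $\eee L_{\nu+k}$ of a finite-dimensional simple module of the trigonometric Cherednik algebra at the shifted parameter $c+k\hbar$. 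Since by construction ${}_0\cA_k\simeq {}_0\cB_k$ (Theorem \ref{thm:Z-algebra-iso}) is the $k$-fold shift bimodule, the graded $\Z$-algebra module structure is identified with the natural one from Cherednik algebra theory, and then coherence follows because these are all built from single finite-dimensional $\HH$-modules. The explicit description in the two theorems stated after Theorem \ref{thm:mainthm} (restriction of $\cO(k)$ to the punctual Hilbert scheme for slope $\tfrac{kn+1}{n}$, and $\cP\otimes \cO(k)$ for integral slope) is then the direct output of this identification.

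Next I would upgrade the argument to arbitrary homogeneous $\gamma$ (any slope, any type). The scaling argument above still gives a second $\C^\times$-action on $\bF_\gamma$, and combined with the Dunkl-Cherednik embedding \eqref{eq:Dunklembedding} one lands in an $\HH_{c+k\hbar,\hbar}$-module whose support on $\ft/W$ is controlled by the Lemma \ref{lem:localstructure} completion at the relevant point $a\in T^\vee$. The completion replaces the trigonometric picture by the rational one, where finite generation of the analogous ``Procesi-type'' module at homogeneous elements is classical (e.g. \cite{BEG,VVfd}). Combining this with Theorem \ref{thm:U_del}, which identifies the open chart $U_\Delta\subset \widetilde{\Comm}_{\check{G}}$ with the universal centralizer (whose coordinate ring is Noetherian and finitely generated over the Cartan), yields coherence of $\cF_\gamma|_{U_\Delta}$; covering $\widetilde{\Comm}_{\check{G}}$ by translates of $U_\Delta$ under the residual $W$ and torus actions then gives coherence globally.

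For the general regular semisimple $\gamma$, the approach would be to show that $\bF_\gamma$ is supported, in each degree, on the fixed locus $T^*T^\vee_\gamma/W\subset T^*T^\vee/W$ as in Lemma \ref{lem:support}, and that its restriction to this locus is generated by the Kazhdan–Lusztig top-dimensional classes together with their images under the degree $1$ generators of Eq.\ \eqref{eq: intro A acts}. By the Bezrukavnikov dimension formula (compare Lemma \ref{lem: dim shift}) the top-degree BM homology stabilizes in a predictable way under $\gamma\leadsto t\gamma$, and Proposition \ref{prop:ell_shift} together with Conjecture \ref{conj: regular} provides the required surjection of top classes. The hardest step, and the main obstacle to a complete proof outside the homogeneous setting, will be to bound below the degree past which no new generators of $\bF_\gamma$ appear; equivalently, to prove that the $\Z$-algebra action stabilizes the Leray filtration from Eq.\ \eqref{eq:leray} and that stability becomes eventually surjective. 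In the homogeneous case this is guaranteed by the extra $\C^\times$-symmetry, but in general it appears to require either a refinement of \cite{YunSph} controlling the lattice-equivariance of the Springer action, or an a priori coherence input from the BFN Coulomb branch construction, and this is where I would expect the sharpest new ideas to be needed.
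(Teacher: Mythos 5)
This statement is a \emph{conjecture}, and the paper does not prove it in general: it establishes only special cases, namely the equivalued case (Propositions \ref{prop:procesi} and \ref{prop:punctualexample}, Theorem \ref{thm: integral slope any type}), and for elliptic $\gamma$ under the hypothesis that $H_*(\Sp_{t^k\gamma})$ is eventually cyclic over $\C[T^*T^{\vee}]^W$ (Proposition \ref{prop:fin-gen-ell}, with Theorem \ref{thm:mn} verifying the hypothesis for $\gamma_{m,n}$, $\gcd(m,n)=1$, via \cite{OY1}). You correctly flag that the general regular semisimple case requires new ideas, but you also assert the homogeneous case can be settled in all types, which is not what the paper achieves. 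The decisive ingredient the paper does use — eventual $\C[T^*T^\vee]^W$-cyclicity of $H_*(\Sp_{t^k\gamma})$, feeding into the shift map of Proposition \ref{prop:ell_shift} — does not appear anywhere in your argument.

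Beyond that omission, your second paragraph contains a genuine error. The chart $U_\Delta$ is a single distinguished open in $\Proj\bigoplus_d {}_0\cA_d^{\hbar=0}$, and for elliptic $\gamma$ the support of $\cF_\gamma$ (the punctual Hilbert scheme over $(1,0)$ when $G=GL_n$) lies entirely in the vanishing locus of $\Delta=\prod_{\alpha\in\Phi^+}y_\alpha$, i.e.\ in the complement of $U_\Delta$. There is no "residual $W$ action" on $\widetilde{\Comm}_{\check G}$ (the $W$-quotient has already been taken before applying $\Proj$), and the dilation torus preserves the hyperplane $\Delta=0$, so no translates of $U_\Delta$ cover the locus you need. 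Establishing coherence of $\cF_\gamma|_{U_\Delta}$ therefore says nothing about coherence near the punctual fiber, which is exactly where the content of the conjecture sits in the elliptic case. Similarly, the Lemma \ref{lem:localstructure} completion argument and the rational Cherednik reduction you propose are not deployed for this purpose in the paper and, as sketched, do not supply the missing eventual-cyclicity input; they would at best give a local model that still needs a generation statement like Conjecture \ref{conj:gen_pgl} or \ref{conj:gen_gl} to be converted into finite generation of the whole graded module.
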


Note that by Theorem \ref{thm: lattice action factors} the homology of $\Sp_{t^k\gamma}$ is finitely generated over ${}_0 \cA_{0}$. For $G=\GL_n$ the graded algebra  $\bigoplus_{d=0}^{\infty}{}_0 \cA_{d}=\bigoplus_{d=0}^{\infty}A^d$ is generated by ${}_0 \cA_{0}$ and ${}_0 \cA_{1}=A$,   so Conjecture \ref{conj: fin gen}
is equivalent to saying that for a given $\gamma$ there exists $k_0$ such that $\bF_{\gamma}$ is generated by 
$\bigoplus_{k=0}^{k_0} H_{*}(\Sp_{t^k\gamma})$ under the action of ${}_0 \cA_{0}$ and ${}_0 \cA_{1}$.

Below we prove the conjecture in some special cases.

\begin{theorem}
Conjecture \ref{conj: fin gen} holds for $G=\GL_n$ and $\gamma=\diag(s_0,\ldots,s_n)$ for $s_i\neq s_j$. 
\end{theorem}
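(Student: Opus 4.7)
The strategy is to identify $\cF_\gamma$ with the restriction of the Procesi bundle to $\Hilb^n(\C^*\times\C)$ and then invoke a relative version of Serre's theorem on finitely generated graded modules of sections of coherent sheaves. The element $\gamma=\diag(s_0,\ldots,s_n)$ lies in the standard maximal torus $T\subset G(\cO)$, is regular semisimple by the assumption $s_i\neq s_j$, and is independent of $t$, hence homogeneous of integral slope $0$. Its centralizer $K_\gamma$ is $T(\cK)$, and the lattice part $X_*(T)\subset T(\cK)$ acts freely on each $\Sp_{t^k\gamma}$ with projective quotient.

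The key input is the explicit description of the $T$-equivariant Borel--Moore homology $H_*^T(\Sp_{t^k\gamma})$ from \cite{Kivinen}, obtained by equivariant localization at the fixed points $\Gr^T=X_*(T)$: these computations identify $\bigoplus_{k\geq 0} H_*^T(\Sp_{t^k\gamma})$ with the graded module $\bigoplus_{k\geq 0} H^0(\cP\otimes\cO(k))$, where $\cP$ is the restriction to $\Hilb^n(\C^*\times\C)$ of the Procesi bundle. Combined with Theorem~\ref{thm:mainthm} and a tracking of the $\Z$-algebra action, this identifies $\cF_\gamma$ with $\cP$, which is a rank-$n!$ vector bundle by Haiman and, in particular, coherent.

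Once coherence of $\cF_\gamma$ is established, finite generation of $\bF_\gamma$ follows from a standard argument. The Hilbert--Chow morphism $f\colon \Hilb^n(\C^*\times\C)\to S^n(\C^*\times\C)=\Spec\C[T^*T^\vee]^W$ is projective, and $\cO(1)$ is relatively $f$-ample since $\Hilb^n(\C^*\times\C)=\Proj\bigoplus_d A^d$ over $S^n(\C^*\times\C)$. By the relative Serre theorem, for any coherent sheaf $\cG$ on $\Hilb^n(\C^*\times\C)$, the graded module $\bigoplus_d f_*(\cG\otimes\cO(d))$ is coherent and finitely generated over $\bigoplus_d f_*\cO(d)=\bigoplus_d A^d$. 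Applying this to $\cG=\cF_\gamma=\cP$ and passing to global sections over the affine scheme $S^n(\C^*\times\C)$ yields the desired finite generation.

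The main obstacle is matching the geometric action of the $\Z$-algebra $\bigoplus_d{}_0\cA_d$ on $\bF_\gamma$ constructed via generalized affine Springer theory in Section~\ref{sec:bfnspringer} with the natural action of $\bigoplus_d A^d$ on $\bigoplus_d H^0(\cP\otimes\cO(d))$ coming from multiplication by global sections of $\cO(1)$. Using the degree-one description in Example~\ref{ex: one step} and the fact that $\bigoplus_d A^d$ is generated in degree $\leq 1$ over $A^0$, it suffices to check this compatibility in degrees $0$ and $1$, which amounts to comparing Kivinen's localization formulas with those derived in Section~\ref{subsec:spherical adjoint localization}. Once this degree-one matching is done, the general-degree statement is automatic.
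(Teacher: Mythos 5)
Your proof is correct and follows essentially the same route as the paper: the paper's proof of this theorem is the one-liner ``this follows from Proposition~\ref{prop:procesi} below,'' and your argument reconstructs exactly the content of that proposition — namely that for equivalued $\gamma$ of valuation $k$ (here $k=0$, since the constant regular semisimple diagonal element has all root valuations zero) the results of \cite{Kivinen} identify $\bigoplus_k H_*^T(\Sp_{t^k\gamma})$ with $\bigoplus_k H^0(\Hilb^n(\C^\times\times\C),\cP\otimes\cO(k))$, and the $\Z$-algebra action matches the multiplication action because the graded algebra $\bigoplus_d A^d$ is generated in degrees $0$ and $1$ (so degree-one matching, which you correctly reduce to the localization comparison between \cite{Kivinen} and Section~\ref{subsec:spherical adjoint localization}, suffices). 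You make explicit the final step — passage from coherence of $\cF_\gamma\cong\cP$ to finite generation of $\bF_\gamma$ via relative Serre finiteness over the affine base $\Spec\C[T^*T^\vee]^W$ — which the paper leaves to the reader. Two minor remarks: the relevant hypothesis in Proposition~\ref{prop:procesi} is ``equivalued of valuation $0$'' rather than ``homogeneous of slope $0$'' (the former is the weaker and actually applicable condition for a general constant $\gamma$), and the finite generation conclusion here is being taken in the $T$-equivariant setting, as is the paper's Proposition~\ref{prop:procesi}; you use $H^T_*$ throughout, consistent with that.
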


\begin{proof}
This follows from Proposition \ref{prop:procesi} below.
\end{proof}

\begin{example}
Let $G=\GL_2$ and 
$$\gamma=\begin{pmatrix}
t & 0 \\
0 & -t
\end{pmatrix}.$$
Then $$\Sp_{t^d\gamma}\cong \bigsqcup_{\Z} C_{d+1}$$ where each $C_{d+1}$ is an infinite chain of $\P^{d+1}$, consecutive members of which intersect transversally along a $\P^d$. These $\P^d$ are Spaltenstein varieties of $d$-planes in $2d$-space stable under a nilpotent element with Jordan blocks of sizes $(d,d)$, motivically equivalent to projective spaces $\P^d$.
The inclusion maps are again embedding the chains into one another and they are regular embeddings (because they are effective Cartier divisors).

Note that the direct sum of homologies of these $\P^{d+1}$ surjects onto the homology of $\Sp_{t^d\gamma}$. Let us prove that the module $\bF_{\gamma}$ is generated by the homology of $\Sp_{\gamma}$ under the action of ${}_0 \cA_{0}$ and ${}_0 \cA_{1}$. 

Indeed, $\Sp_{\gamma}$ is just a discrete set of points in bijection with the affine Weyl group. Its homology is a free rank two module over the lattice of translations. 
\end{example}

	In  case of elliptic \(\gamma\) the finite generation conjecture follows from the stable of \({}_0 \cA_{0}\)-cyclicity of the homology of \(\Sp_{t^k\gamma}\):

\begin{proposition}\label{prop:fin-gen-ell}
  Let us assume that \(\gamma\) is elliptic, Conjecture  \ref{conj: regular} holds for $t^k\gamma$ and \([\Sp_{t^k\gamma}]\in H_{*}(\Sp_{t^k\gamma})\) is the fundamental class.
  If there exists \(N\)  such that  \(H_{*}(\Sp_{t^k\gamma})=\C[T^*T^{\vee}]^W\cdot [\Sp_{t^k\gamma}]\) for \(k\ge N\) then Conjecture~\ref{conj: fin gen} holds for
  \(\bF_\gamma\).
\end{proposition}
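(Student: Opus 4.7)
The plan is to use Proposition~\ref{prop:ell_shift} to bootstrap the fundamental classes up the tower $\{\Sp_{t^k\gamma}\}_{k\geq 0}$ via the degree-one piece of the commutative graded Coulomb $\Z$-algebra, and then to cap generation with the finitely many levels below $N$.

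First, by Theorem~\ref{thm:action} together with the $\Z$-algebra of graded type structure on ${}_\bullet\cA_\bullet^{\hbar=0}$, I would regard $\bF_\gamma=\bigoplus_{k\geq 0} H_*(\Sp_{t^k\gamma})$ as a graded module over $S:=\bigoplus_{d\geq 0}{}_0\cA_d^{\hbar=0}$, with $S_0=\C[T^*T^\vee]^W$ and the action of $S_d$ shifting grading by $d$. In particular, each convolution ${}_{k+1}\cA_k^{\hbar=0}\otimes H_*(\Sp_{t^k\gamma})\to H_*(\Sp_{t^{k+1}\gamma})$ is an instance of the $S_1$-action on $\bF_\gamma$.

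Next, for every $k\geq N$, Proposition~\ref{prop:ell_shift} provides a class $\Gamma'_k\in {}_{k+1}\cA_k^{\hbar=0}$ such that $\Gamma'_k\cdot[\Sp_{t^k\gamma}]=[\Sp_{t^{k+1}\gamma}]$. Combining this with the cyclicity hypothesis gives
$$H_*(\Sp_{t^{k+1}\gamma})=S_0\cdot[\Sp_{t^{k+1}\gamma}]=S_0\cdot\bigl(\Gamma'_k\cdot[\Sp_{t^k\gamma}]\bigr)\subseteq S_1\cdot H_*(\Sp_{t^k\gamma}),$$
where the final inclusion uses the graded algebra multiplication $S_0\cdot S_1\subseteq S_1$ (equivalently, the bimodule compatibility of Corollary~\ref{cor: bimodules} translated into the graded algebra via the $\Z$-algebra-of-graded-type structure). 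Iterating yields $H_*(\Sp_{t^k\gamma})\subseteq S_{k-N}\cdot H_*(\Sp_{t^N\gamma})$ for all $k\geq N$. Since $\gamma$ is elliptic and $G$ is simply connected, each $\Sp_{t^k\gamma}$ is projective, so $\bigoplus_{k=0}^{N} H_*(\Sp_{t^k\gamma})$ is a finite-dimensional subspace of $\bF_\gamma$; by the previous sentence it generates $\bF_\gamma$ as an $S$-module, proving finite generation and hence coherence of $\cF_\gamma$.

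The technical point worth double-checking is the identification of the convolution ${}_{k+1}\cA_k^{\hbar=0}\otimes H_*(\Sp_{t^k\gamma})\to H_*(\Sp_{t^{k+1}\gamma})$ with the $S_1$-action independently of $k$, which relies on the fact that in the commutative limit ${}_{i+d}\cA_i^{\hbar=0}$ depends only on $d$. This is not a serious obstacle. The real content of the statement sits entirely in the two hypotheses (Conjecture~\ref{conj: regular} for each $t^k\gamma$ and the eventual cyclicity of $H_*(\Sp_{t^k\gamma})$ over $\C[T^*T^\vee]^W$); the argument only propagates them into finite generation of the full tower.
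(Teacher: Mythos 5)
Your argument is correct and it is essentially the same as the paper's one-line proof: both invoke Proposition~\ref{prop:ell_shift} to propagate the fundamental class up the tower via the degree-one piece of the graded algebra, use the eventual $S_0$-cyclicity hypothesis to recover the full homology at each level $\ge N$, and cap things off with the finitely many levels below $N$. You have merely unpacked the paper's phrase "the actions of ${}_0\cA_0$ and ${}_0\cA_1$ commute" into the explicit grading inclusion $S_0\cdot S_1\subseteq S_1$, which is the same bookkeeping.

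One small point worth flagging in both your write-up and the paper's: the claim that $\bigoplus_{k<N}H_*(\Sp_{t^k\gamma})$ is literally finite dimensional requires $\Sp_{t^k\gamma}$ projective, which holds when $G$ is simply connected (as in Proposition~\ref{prop:ell_shift}'s hypotheses, and as you note), but fails for $G=GL_n$, where the central $\G_m$ forces $\Sp_\gamma$ to have a $\Z$ of components even for $\gamma$ elliptic. What the argument actually needs, and what holds in general by Theorem~\ref{thm: lattice action factors} and Lemma~\ref{lem:support}, is that each $H_*(\Sp_{t^k\gamma})$ is finitely generated over $S_0=\C[T^*T^\vee]^W$; a finite direct sum of such is still finitely generated over $S_0$, and this suffices to conclude finite generation of $\bF_\gamma$ over $S$. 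This is how Theorem~\ref{thm:mn} handles the $GL_n$ case.
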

\begin{proof} The actions of \({}_0 \cA_{0}=\C[T^*T^{\vee}]^W\) and \({}_0 \cA_{1}\) on \(\bF_\gamma\) commute.
  Hence by Proposition~\ref{prop:ell_shift} the submodule \(\oplus_{k\ge N}H_{*}(\Sp_{t^k\gamma})\) is generated by \({}_0 \cA_{0}\) and \({}_0 \cA_{1}\) from \([\Sp_{t^N\gamma}]\).
  The module \(\oplus_{k<N} H_{*}(\Sp_{t^k\gamma})\) is finite dimensional.
\end{proof}

The subalgebra \(\C[\ft]^W\) is isomorphic to the cohomology ring \(H^*(\Gr_G)\), it acts on \(H_*(\Sp_\gamma)\) by cap product. It is natural to conjecture
that a stronger version of 
the previous proposition is true for \(G=\PGL_n\).

\begin{conjecture}\label{conj:gen_pgl}
  Let \(\fg=\mathrm{Lie}(\PGL_n)\) and \(\gamma\in \fg(\cO)\) is an elliptic regular semisimple topologically nilpotent element. Then
  \[H_{*}(\Sp_\gamma)=H^*(\Gr_G)\cap [\Sp_\gamma].\]
\end{conjecture}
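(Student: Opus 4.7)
\emph{Proof plan.} The plan is to establish the cyclicity of $H_*(\Sp_\gamma)$ as a module over $H^*(\Gr_G)\cong \C[\ft]^W$ (acting by cap product) with cyclic vector the fundamental class $[\Sp_\gamma]$. The overall strategy is to match this cap-product action with the $W$-symmetric part of the Cherednik-algebraic action on the flag affine Springer fiber, reduce by deformation to the case where $\gamma$ is homogeneous of slope $m/n$ with $\gcd(m,n)=1$, and there invoke the known presentation of $H_*(\Sp_\gamma)$ as the spherical part of a cyclic Cherednik module.

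First, I would identify the cap-product action of $\C[\ft]^W = H^*(\Gr_G)$ on $H_*(\Sp_\gamma)$ with the restriction to $W$-invariants of the polynomial part of Yun's extended action on $H_*(\tSp_\gamma)$ from Section~\ref{sec:actions}. Under the isomorphism $H_*(\Sp_\gamma) \cong H_*(\tSp_\gamma)^W$ of Lemma~\ref{lem: spherical antispherical}, the pullback of a Chern class from $\Gr_G$ to $\Fl$ is the $W$-symmetrization of the corresponding $c_1(\cL(\chi))$; and the fundamental class $[\Sp_\gamma]$ matches $\eee \cdot [\tSp_\gamma]$ up to a nonzero scalar from the $G/B$-factor. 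In the homogeneous case $\gamma$ of slope $m/n$ with $\gcd(m,n)=1$, the results of Varagnolo--Vasserot \cite{VVfd} and Oblomkov--Yun \cite{OY} provide a graded isomorphism
\[
H_*(\tSp_\gamma)\;\cong\;L_{m/n}(\mathrm{triv})
\]
as modules over the (rational) Cherednik algebra $\HH^{\rat}_{m/n}$ of type $A_{n-1}$, and taking $W$-invariants gives $H_*(\Sp_\gamma)\cong \eee L_{m/n}(\mathrm{triv})$. The module $L_{m/n}(\mathrm{triv})$ is a cyclic quotient of the polynomial representation, hence cyclic over $\C[\ft]$ from its lowest-weight vector; passing to $W$-invariants, $\eee L_{m/n}(\mathrm{triv})$ is cyclic over $\C[\ft]^W$ generated by $\eee\cdot 1$, which corresponds to $[\Sp_\gamma]$ under the identification of the previous step. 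This proves the conjecture in the homogeneous case.

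For a general elliptic topologically nilpotent regular semisimple $\gamma \in \mathfrak{pgl}_n(\cO)$, I would run a deformation argument based on the rescaling family $\gamma_s := \mathrm{Ad}(s^{\rho^\vee})\gamma$, which interpolates between $\gamma$ at $s=1$ and a homogeneous element $\gamma_0$ in the same Newton stratum as $s\to 0$. By Maulik--Yun \cite{MY} and Migliorini--Shende \cite{MS}, the total dimension $\dim H_*(\Sp_{\gamma_s})$ is locally constant on such equi-Newton families (the affine Springer fibers being homeomorphic to compactified Jacobians of planar curve germs with constant $\delta$-invariant). On the other hand, the dimension of the cyclic submodule $\C[\ft]^W\cdot [\Sp_{\gamma_s}]$ is lower semi-continuous in $s$, because the fundamental class and the $\C[\ft]^W$-action both deform flatly with $\gamma_s$. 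Since equality holds at $s=0$ by the homogeneous case, it extends to $s=1$.

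\emph{Main obstacle.} The hard part will be the deformation argument in the final step: while the constancy of $\dim H_*(\Sp_{\gamma_s})$ in equi-Newton families is well-documented via compactified Jacobians, the compatibility of the cyclic submodule $\C[\ft]^W\cdot[\Sp_{\gamma_s}]$ with specialization is subtle, as it requires tracking both the Yun polynomial action and the position of the fundamental class under the perverse filtration in the family. A more robust, but technically heavier, alternative would be to construct directly a spherical rational Cherednik action on $H_*(\Sp_\gamma)$ for arbitrary elliptic topologically nilpotent $\gamma$ (generalizing \cite{VVfd,OY} beyond the homogeneous case) and classify its cyclic $\C[\ft]^W$-submodules, bypassing the degeneration entirely.
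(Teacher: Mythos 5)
The statement you were asked to prove is explicitly labeled a \emph{conjecture} in the paper; the authors do not claim a proof, and there is no argument in the text to compare against. The only thing the paper establishes is the homogeneous case: it records (in the proof of Theorem~\ref{thm:mn}) that Conjecture~\ref{conj:gen_pgl} holds for the equivalued elements $\gamma_{m,n}$, citing Oblomkov--Yun \cite{OY1}. Your reduction of that case to cyclicity of $\eee L_{m/n}(\triv)$ over $\C[\ft]^W$ under the identification of $H_*(\tSp_{\gamma_{m,n}})$ with the irreducible spherical module from \cite{VVfd,OY} is correct and is essentially what that cited result provides; so far your plan reconstructs what the paper knows.

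The degeneration from the homogeneous case to arbitrary elliptic topologically nilpotent $\gamma$ is the new content, and here your proposal has genuine gaps, beyond the subtlety you already flag. First, the family $\gamma_s = \Ad(s^{\rho^\vee})\gamma$ does not degenerate $\gamma$ to a homogeneous representative of the same Newton stratum: as $s\to 0$ it picks out the $\rho^\vee$-nonnegative part of $\gamma$ coefficientwise in $t$, which is typically not regular semisimple and certainly not the slope-$m/n$ homogeneous element. The degeneration that works combines loop rotation with dilation (and conjugation by a cocharacter), i.e.\ the one-parameter subgroup of $\G_m^{\rot}\times\G_m^{\dil}\times T$ that stabilizes the homogeneous element of slope $m/n$; for $s\neq 0$ it produces elements conjugate (in the extended group) to $\gamma$, and at $s=0$ yields $\gamma_{m,n}$. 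Second, even with the right family, the semi-continuity argument requires that (i) the $H_*(\Sp_{\gamma_s})$ form a flat family of $\C[\ft]^W$-modules — the constancy of Betti numbers in $\delta$-constant families does follow from the Maulik--Yun support theorem (which forces the $R^i\pi_*\C$ to be local systems), but one must transport it to the affine-Springer-fiber side via the ellipticity-killed lattice action and Laumon's comparison; and (ii) the fundamental classes $[\Sp_{\gamma_s}]$ extend to a section of this family, which presupposes that irreducibility of the fibers persists across the degeneration. Given (i) and (ii), lower semi-continuity of the rank of $f\mapsto f\cap[\Sp_{\gamma_s}]$ together with conjugacy for $s\neq 0$ and full rank at $s=0$ would indeed propagate the statement to $\gamma$, so the overall strategy is plausible. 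But neither (i) nor (ii) is established in the paper or in your proposal, and the authors evidently leave the general case open; you should not present this as a proof without supplying those steps.
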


If \(G=\GL_n\) or \(G=\SL_n\) and \(\gamma\in \fg(\cO)\) is an elliptic element then \(\Sp_\gamma\) has many connected components and the group
\(\pi_0(G_\gamma)\) permutes the connected components. In the light of aforementioned Theorem~\ref{thm: lattice action factors} it is natural to propose

\begin{conjecture}\label{conj:gen_gl}  Let \(\fg=\mathrm{Lie}(\GL_n)\) or \(\fg=\mathrm{Lie}(\SL_n)\)  and \(\gamma\in \fg(\cO)\) is an elliptic regular semisimple topologically nilpotent element. Then
  \[H_{*}(\Sp_\gamma)=\C[T^*T^{\vee}]^W [\Sp_\gamma].\]
\end{conjecture}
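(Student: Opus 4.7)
The plan is to reduce Conjecture \ref{conj:gen_gl} to Conjecture \ref{conj:gen_pgl} using the isogenies relating $GL_n$, $SL_n$ and $PGL_n$, and then to sketch an attack on the $PGL_n$ case via DAHA representation theory combined with deformation arguments.

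First, I would pass from $GL_n$ to $SL_n$. The affine Grassmannian $\Gr_{GL_n}$ is $\Z$-graded by the determinant, and accordingly $\Sp^{GL_n}_\gamma = \bigsqcup_{d\in \Z} S_d$, with each $S_d$ biregular to $\Sp^{SL_n}_{\gamma_0}$ where $\gamma_0 = \gamma - \frac{\Tr(\gamma)}{n}\mathrm{Id}$. The central coweight $(1,\ldots,1)$ of $GL_n$ maps, under the spherical lattice action of Theorem~\ref{thm: lattice action factors}, to the generator of $\pi_0(G^{GL_n}_\gamma)\cong \Z$, and its symmetric lift $e_n=\prod_i X_i$ lives in $\C[T^\vee]^W$. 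Hence $e_n$-translations produce all $[S_d]$ from $[S_0]$, and combined with the $SL_n$-case this yields the $GL_n$ statement.

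Next, I would pass from $SL_n$ to $PGL_n$ via the isogeny $SL_n\to PGL_n$. The induced map $\Sp^{SL_n}_\gamma\to \Sp^{PGL_n}_{\bar\gamma}$ is a finite \'etale cover whose deck group $D\subseteq \mu_n=Z(SL_n)$ coincides with $\pi_0(G^{SL_n}_\gamma)$, finite by ellipticity. The $D$-invariant part of $H_*(\Sp^{SL_n}_\gamma)$ is canonically $H_*(\Sp^{PGL_n}_{\bar\gamma})$, and $D$ acts through elements of $\C[T^\vee]^W$ via the lattice action. Since the Weyl groups and their actions on $\ft$ coincide, so that $\C[\ft]^{W_{SL_n}}=\C[\ft]^{W_{PGL_n}}$, cyclicity for $PGL_n$ over $\C[\ft]^W$ combined with the lattice elements producing the permutation of $D$ on components yields cyclicity for $SL_n$ over $\C[T^*T^\vee]^W$.

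The core content is Conjecture~\ref{conj:gen_pgl}. For homogeneous elliptic $\gamma$ of slope $m/n$ with $\gcd(m,n)=1$, the work of Varagnolo--Vasserot and Oblomkov--Yun \cite{VVfd,OY} identifies $H_*(\tSp_\gamma)$ with the finite-dimensional simple $\HH_{m/n}$-module $L_{m/n}(\triv)$, so by Lemma~\ref{lem: spherical antispherical} we have $H_*(\Sp_\gamma)\cong \eee L_{m/n}(\triv)$. Cyclicity of $L_{m/n}(\triv)$ over $\HH_{m/n}$ from the vacuum, passed to $W$-invariants and specialized to the commutative limit $c=\hbar=0$, yields cyclicity of $H_*(\Sp_\gamma)$ over $\eee \HH_{m/n}\eee^{c=\hbar=0}= \C[T^*T^\vee]^W$; since $\pi_0(G^{PGL_n}_\gamma)$ is trivial by ellipticity, the $\C[T^\vee]^W$ part acts via augmentation and only the characteristic-class factor $\C[\ft]^W=H^*(\Gr_{PGL_n})$ is nontrivial, giving exactly the claim of Conjecture~\ref{conj:gen_pgl} in the homogeneous case.

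The hard part will be extending this to arbitrary elliptic topologically nilpotent $\gamma$, where no direct DAHA realization is available. My approach would be to degenerate $\gamma$ in a $\G_m$-equivariant flat family to a homogeneous element with the same spectral curve arithmetic genus, and to use upper-semicontinuity of $\dim H_*(\Sp_\gamma)$ together with the rigidity of Euler characteristics from \cite{Maulik,MY} to transport the cyclic presentation across the specialization. A key auxiliary tool would be the identification of $H_*(\Sp_\gamma)$ with the cohomology of compactified Jacobians of the corresponding spectral curves \cite{MY,MS,MSV}, which recasts the problem as a statement about generation of cohomology of compactified Jacobians by tautological classes. Ensuring that the cyclic generator remains so under specialization, and in particular that the $\C[\ft]^W$-action interacts well with the perverse filtration coming from the Hitchin-type description, is where the main technical obstacle lies.
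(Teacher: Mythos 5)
The statement you are addressing is labelled a \emph{Conjecture} in the paper, and the paper does not prove it. What the paper establishes is only the special case of homogeneous elements $\gamma_{m,n}$, and it does so by citing the geometric results of Oblomkov--Yun on compactified Jacobians (the reference \cite{OY1}); see the paragraph surrounding Theorem~\ref{thm:mn}. The text immediately before Conjecture~\ref{conj:gen_general} is quite explicit that the localized, $\G_m$-equivariant generation statement is known from \cite{OY,VVfd}, while the non-equivariant statement is conjectural. So there is no ``paper's own proof'' here to match your proposal against; instead your proposal needs to be assessed on its own merits, and it has a gap located exactly where the difficulty lies.

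The decisive step in your sketch is the passage from cyclicity of the DAHA module $L_{m/n}(\triv)$ (equivalently of $H^{\G_m}_*(\Sp_\gamma)\otimes\C(\hbar)$ over the localized spherical DAHA) to cyclicity of the \emph{non-equivariant} homology $H_*(\Sp_\gamma)$ over the commutative algebra $\C[T^*T^\vee]^W$. You write that this follows by ``passing to $W$-invariants and specializing to $c=\hbar=0$,'' but this direction does not follow formally. Cyclicity after inverting $\hbar$ does not imply cyclicity of the fiber at $\hbar=0$: one can have a $\C[\hbar]$-free module $M$ over a $\C[\hbar]$-algebra $A$ with $M\otimes\C(\hbar)$ cyclic over $A\otimes\C(\hbar)$ while $M/\hbar M$ is not cyclic over $A/\hbar A$ (e.g.\ the ideal $(x,\hbar)\subset\C[\hbar,x]$ is free over $\C[\hbar]$, becomes free of rank one after inverting $\hbar$, yet its fiber at $\hbar=0$ needs two generators over $\C[x]$). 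Equivariant formality controls the $\C[\hbar]$-module structure, but not the generation over the $\hbar=0$ algebra. Producing an actual cyclic generator in the non-equivariant setting is a genuine geometric theorem --- this is precisely what \cite{OY1} proves for the relevant compactified Jacobians --- and it cannot be replaced by the formal specialization you invoke. Your final paragraph, which is supposed to handle arbitrary elliptic topologically nilpotent $\gamma$ by degeneration plus upper-semicontinuity and Euler-characteristic rigidity, is a proposal rather than an argument: neither of those tools tracks generation by a single class, and you explicitly flag that the interaction with the perverse filtration is unresolved. As a more minor point, the map $\Sp^{SL_n}_\gamma\to\Sp^{PGL_n}_{\bar\gamma}$ is not a finite \'etale cover; $\Gr_{SL_n}$ maps isomorphically onto the identity component of $\Gr_{PGL_n}$, so the map is an open immersion onto the piece of $\Sp^{PGL_n}_{\bar\gamma}$ sitting in that component, and the relationship between their homologies has to be argued via the action of the component group of the centralizer rather than by taking invariants under a deck group. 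The $GL_n\leftrightarrow SL_n$ step and the use of the lattice action via $\C[T^\vee]^W$ to move between components are in the same spirit as the paper's own reduction in the proof of Theorem~\ref{thm:mn}, and are fine once the $PGL_n$ ellipticity input is available, but the $PGL_n$ input itself is exactly what remains open in general.
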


\begin{remark}
  The conjecture is false outside of type \(A\) since there are examples of elliptic affine Springer fibers with homology of not of type \((p,p)\) \cite{KL88,OY}. Note however that $H_*(\Sp_\gamma)$ is always finitely generated under $\C[T^*T^{\vee}]^W$ by \cite{YunSph} (see also Lemma \ref{lem:support}).
\end{remark}

For the homogeneous elements Conjecture~\ref{conj:gen_gl} is known \cite{OY1} and one can deduce

\begin{theorem}\label{thm:mn}
Conjecture \ref{conj: fin gen} holds for $G=\GL_n$ and equivalued $\gamma_{m,n}$ with characteristic polynomial $x^m-y^n$, $\gcd(m,n)=1$.
\end{theorem}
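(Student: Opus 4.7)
The plan is to apply Proposition \ref{prop:fin-gen-ell} directly to $\gamma=\gamma_{m,n}$. The key observation is that multiplying by $t$ preserves homogeneity: for every $k\ge 0$, the element $t^k\gamma_{m,n}$ is homogeneous of slope $(m+kn)/n$, with numerator and denominator still coprime (since $\gcd(m,n)=1$ implies $\gcd(m+kn,n)=1$), hence it remains regular semisimple and elliptic. So it suffices to verify the two hypotheses of Proposition \ref{prop:fin-gen-ell} along the entire sequence $\{t^k\gamma_{m,n}\}_{k\ge 0}$.

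First I would verify the cyclicity hypothesis, i.e.\ the existence of $N$ with $H_*(\Sp_{t^k\gamma_{m,n}})=\C[T^*T^{\vee}]^W\cdot[\Sp_{t^k\gamma_{m,n}}]$ for all $k\ge N$. This is exactly Conjecture \ref{conj:gen_gl} specialised to homogeneous elements, which, as noted in the paragraph preceding the theorem, is known from the work of Oblomkov--Yun \cite{OY1}. In fact it holds for every $k\ge 0$, so we may take $N=0$. Representation-theoretically, one may identify $H_*(\tSp_{t^k\gamma_{m,n}})$ with the finite-dimensional irreducible $L_{(m+kn)/n}(\triv)$ of the trigonometric Cherednik algebra $\HH_{(m+kn)/n}$, so that $H_*(\Sp_{t^k\gamma_{m,n}})=\eee L_{(m+kn)/n}(\triv)$ is cyclic over its spherical subalgebra, in particular over $\C[T^*T^{\vee}]^W$, with the fundamental class as generator.

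Next I would verify Conjecture \ref{conj: regular} for each $t^k\gamma_{m,n}$. For homogeneous elliptic regular semisimple elements in type $A$, the top-dimensional Borel--Moore homology of $\tSp$ together with its $W$-action has been computed explicitly via the affine paving of \cite{GKM,VVfd} and the representation-theoretic identification above; the irreducible components form a regular representation of $W$ modulo the centraliser action, as in \cite{OY1,Tsai}. Hence the assumption of Proposition \ref{prop:fin-gen-ell} regarding the $W$-module structure on top homology is satisfied.

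With both hypotheses in place, Proposition \ref{prop:fin-gen-ell} immediately yields finite generation of $\bF_{\gamma_{m,n}}=\bigoplus_{k\ge 0}H_*(\Sp_{t^k\gamma_{m,n}})$ as a graded module over $\bigoplus_{d\ge 0}{}_0\cA_d$, hence coherence of the associated sheaf $\cF_{\gamma_{m,n}}$ on $\Hilb^n(\C^{*}\times\C)$ via the $\Proj$ construction. The main substantive input is the cyclicity statement from \cite{OY1}; everything else is a formal consequence of the machinery already developed in the paper, in particular the $\Z$-algebra action of Theorem \ref{thm:action} and the shift correspondence of Proposition \ref{prop:ell_shift}.
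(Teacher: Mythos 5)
Your proposal follows the same route as the paper: reduce to Proposition~\ref{prop:fin-gen-ell} via the slope-shift observation, then quote \cite{OY1} for cyclicity and the component conjecture. Two points need tightening, though.

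First, the statements actually proved in \cite{OY1} are for $\Sp_{\gamma_{m,n}}^0\subset\Gr_{PGL_n}$, i.e.\ Conjectures~\ref{conj:gen_pgl} and~\ref{conj: regular} in the $PGL_n$ setting, where the Springer fiber is connected (homeomorphic to the irreducible compactified Jacobian). For $GL_n$ the Springer fiber is $\Sp_{\gamma_{m,n}}^0\times\Z$, and the upgrade to the $GL_n$ version (Conjecture~\ref{conj:gen_gl}) is not automatic: one has to observe that $\pi_0(G_{\gamma_{m,n}})\cong\Z$ acts transitively on the set of connected components and invoke the spherical lattice action (Theorem~\ref{thm: lattice action factors}) to carry the cyclicity across components. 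Your write-up jumps directly from \cite{OY1} to the $GL_n$ statement; the paper makes this component-counting step explicit and it is genuinely needed.

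Second, your parenthetical ``representation-theoretic'' justification conflates two different homologies. The identification of $H_*^{\G_m}(\Sp_{t^k\gamma_{m,n}})\otimes\C(\hbar)$ (or its flag analogue) with the finite-dimensional module $L_{(m+kn)/n}(\triv)$ of the spherical/degenerate DAHA is a statement about \emph{localized equivariant} homology \cite{OY,VVfd}; cyclicity there over the spherical DAHA does not, by itself, give cyclicity of the \emph{non-equivariant} $H_*(\Sp_{t^k\gamma_{m,n}})$ over the commutative algebra $\C[T^*T^\vee]^W$ acting as in Proposition~\ref{prop:A0 action}. The non-equivariant cyclicity is a separate, harder result and is precisely what \cite{OY1} supplies (via the explicit ring structure on the cohomology of compactified Jacobians), so you should lean on that reference alone here. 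Likewise, \cite{GKM} treats the unramified (split) case and is not the right citation for the affine paving of an elliptic $\Sp_{\gamma_{m,n}}$; the relevant pavings are in \cite{VVfd,OY,OY1}. With those two corrections, your argument matches the paper's proof.
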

\begin{proof}

  The affine Grassmanian \(\Gr_G\) has \(\pi_1(\GL_n)=\Z\) connected components \(\Gr_G=\Gr_G^0\times \Z\). Respectively, we have
  \(\Sp_\gamma=\Sp_\gamma^0\times \Z\).
  
  Observe that if $\gamma_{m,n}$ is equivalued  with characteristic polynomial $x^m-y^n$ then $t^k\gamma_{m,n}$ is equivalued with characteristic polynomial $x^{kn+m}-y^n$.
  The compactified Jacobian \(J_{m/n}\) of the one-point compactification of the planar curve \(\{x^m-y^n\}\) is irreducible and homeomorphic to \(\Sp_{\gamma_{m,n}}^0\) \cite{OY1}.
  Moreover, \(\Gr_G^0=\Gr_{\PGL_n}\) and the \(\Sp_{\gamma_{m,n}}^0\) is the corresponding Springer fiber.

  It is shown in \cite{OY1} that for  \(\Sp_{\gamma_{m,n}}^0\) Conjectures~  \ref{conj: regular} and \ref{conj:gen_pgl} are true.  The group \(\pi_0(G_{\gamma_{m,n}})=\Z\) acts transitively on the connected components of \(\Sp_\gamma\), hence Conjecture~\ref{conj:gen_gl} is true for \(\Sp_{\gamma_{m,n}}\). Thus the theorem follows from Proposition~\ref{prop:fin-gen-ell}.

\end{proof}

\begin{example}
For $G=\GL_2$ and $\gamma=\gamma_{1,2}$ we recover the $\Z$-algebra module from Example \ref{ex: module sl2}.
\end{example}

\subsection{Examples in type A}
\label{sec:examples}
In the case \(G=\GL_n\) the sheaf \(\cF_{\gamma}\) can be described in terms of geometry of \(\Hilb_n(\C^\times\times \C)\) for some homogeneous \(\gamma\).

\begin{proposition}
\label{prop:punctualexample}
Let $\gamma$ be homogeneous of slope $(kn+1)/n$. Then $\cF_\gamma$ is isomorphic to the restriction of $\cO(k)$ to the punctual Hilbert scheme at $(1,0)\in \C^\times\times \C$.
\end{proposition}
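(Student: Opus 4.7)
The plan is to identify both the algebro-geometric and affine-Springer-theoretic sides as modules over the $\Z$-algebra $\cB = \bigoplus_{i\le j}{}_i\cB_j$, use Theorem~\ref{thm:Z-algebra-iso} to pass between $\cA$ and $\cB$ in type $A$, and finally recognize the resulting graded module as the one that $\Proj$-constructs to $\cO(k)|_{\pi^{-1}(1,0)}$.

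First, I would upgrade the classical identification of homogeneous affine Springer fibers with finite-dimensional representations. For $\gamma$ homogeneous of slope $\nu = (nk+1)/n$, the element $t^m\gamma$ is homogeneous of slope $\nu+m = (n(k+m)+1)/n$, and by Varagnolo--Vasserot \cite{VVfd} (together with Oblomkov--Yun \cite{OY,OY1}) there are natural isomorphisms
\[
H_*(\tSp_{t^m\gamma}) \cong L_{\nu+m}(\triv),\qquad H_*(\Sp_{t^m\gamma}) \cong \eee\, L_{\nu+m}(\triv),
\]
where $L_{\nu+m}(\triv)$ is the unique finite-dimensional irreducible of the trigonometric DAHA at the appropriate specialization of parameters. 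By Theorem~\ref{thm:iwahoricoulombisdaha} and Theorem~\ref{thm:actionscoincide}, these isomorphisms intertwine the Springer action with the Cherednik action at each level.

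Second, I would assemble the collection $\{H_*(\Sp_{t^m\gamma})\}_{m\ge 0}$ into a graded module over the Coulomb $\Z$-algebra $\cA$ via Theorem~\ref{thm:action}. In type $A$, Theorem~\ref{thm:Z-algebra-iso} gives $\cA\cong \cB$, so equivalently we obtain a $\cB$-module. The one-step bimodule ${}_m\cB_{m+1} = \eee\HH_{c+(m+1)\hbar,\hbar}\eee_-$ acts on $\eee L_{\nu+m}(\triv)\cong \eee_- L_{\nu+m+1}(\triv)$ and produces $\eee L_{\nu+m+1}(\triv)$, exactly as in the $SL_2$ calculation of Example~\ref{ex: module sl2}. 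Iterating, I would conclude that $\bF_\gamma := \bigoplus_{m\ge 0} H_*(\Sp_{t^m\gamma})$ is isomorphic, as a $\cB$-module, to $\bigoplus_{m\ge 0}\eee L_{\nu+m}(\triv)$.

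Third, I would pass to sheaves via the $\Proj$-construction. Setting $c=\hbar=0$, Theorem~\ref{thm: properties of coulomb algebra}(d) and Theorem~\ref{thm: intro hilb} give $\Proj\bigoplus_d {}_0\cA^{\hbar=0}_d\cong \Hilb^n(\C^\times\times\C)$, and $\bF_\gamma$ produces the sheaf $\cF_\gamma$. Since each graded piece $\eee L_{\nu+m}(\triv)$ is finite-dimensional, $\cF_\gamma$ has support contained in the locus where the $\Z$-algebra action ``collapses'', namely the punctual Hilbert scheme $\pi^{-1}(1,0)$. To see that the sheaf is precisely $\cO(k)|_{\pi^{-1}(1,0)}$, I would use the Ore-localization bridge between trigonometric and rational $\Z$-algebras appearing in the proof of Theorem~\ref{th: Gordon Stafford}, which identifies $\bF_\gamma$ with its rational analogue near $(1,0)\in \C^\times\times\C$; the translation $i_{\C^\times}$ then identifies the punctual Hilbert schemes at $(1,0)$ and $(0,0)$. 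In the rational setting it is classical (via Gordon--Stafford \cite{GS1,GS2} and the Haiman--Procesi framework) that the collection $\bigoplus_m \eee L^{\rat}_{\nu+m}(\triv)$ corresponds under $\Proj$ to $\cO(k)$ restricted to the punctual Hilbert scheme at the origin. Pulling back along $i_{\C^\times}$ gives $\cF_\gamma \cong \cO(k)|_{\pi^{-1}(1,0)}$.

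The main obstacle is the matching in the second step: one must check that the Coulomb/Springer-theoretic convolution action of ${}_m\cA_{m+1}$ on $H_*(\Sp_{t^m\gamma})$ agrees, under the isomorphisms of step one and Theorem~\ref{thm:Z-algebra-iso}, with the purely algebraic shift bimodule action on $\eee L_{\nu+m}(\triv)$. On each fixed level this is ensured by Theorem~\ref{thm:actionscoincide}, so the content is the compatibility of the shift maps between levels. I would pin this down by verifying it on a cyclic generator: Proposition~\ref{prop:ell_shift} (whose hypotheses are known to hold in the homogeneous case by \cite{OY1}) shows that, up to the lattice action, the fundamental class of $\Sp_{t^m\gamma}$ is sent to that of $\Sp_{t^{m+1}\gamma}$, which on the algebraic side matches the image of the cyclic vector of $\eee L_{\nu+m}(\triv)$ under the shift bimodule. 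Since both modules are cyclic over $\bigoplus_d {}_0\cA_d^{\hbar=0}$ by Theorem~\ref{thm:mn}, agreement on the generator forces the two $\cB$-module structures to coincide.
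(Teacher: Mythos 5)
Your proof follows essentially the same skeleton as the paper's: identify $H_*(\Sp_{t^m\gamma})$ with the spherical part of the unique finite-dimensional simple of the trigonometric DAHA (via \cite{VVfd,OY}), note that $\cF_\gamma$ is supported on the punctual Hilbert scheme by Lemma~\ref{lem:support}, reduce to the rational setting, and quote Gordon--Stafford \cite{GS2} to pin down the sheaf.

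There is one genuine imprecision in how you pass from the trigonometric to the rational world. You invoke the Ore-localization map $\imath:\HH^{\rat}_\hbar\to\HH^{\trig}_{1,\hbar}$ from the proof of Theorem~\ref{th: Gordon Stafford}, claiming it ``identifies $\bF_\gamma$ with its rational analogue near $(1,0)$.'' But Ore localization inverts $X_1\cdots X_n$ globally (it replaces $\C^n$ by $(\C^\times)^n$); it does not localize near the identity of $T^\vee$. To match a trigonometric object with a rational one in a neighborhood of $(1,0)$ you need \emph{completion} at that point, which is what the paper actually does: it completes the Coulomb $\Z$-algebra at a formal neighborhood of $1\in T^\vee$ and identifies the result with the completed rational Cherednik $\Z$-algebra (cf.\ Lemma~\ref{lem:localstructure} for the one-parameter version of this local comparison). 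With that replacement your argument works; without it, the step from trigonometric $\bF_\gamma$ to the rational Gordon--Stafford module on the punctual Hilbert scheme is not justified.

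Your second and fourth paragraphs --- checking compatibility of the Coulomb/Springer convolution with the algebraic shift-bimodule action via Theorem~\ref{thm:actionscoincide}, Proposition~\ref{prop:ell_shift}, and cyclicity from Theorem~\ref{thm:mn} --- are more explicit than the paper's one-line reliance on the $\Z$-algebra identification (Theorem~\ref{thm:Z-algebra-iso}) and are a welcome clarification, though they lean somewhat more heavily on Conjecture~\ref{conj: regular} (verified for these homogeneous $\gamma$ by \cite{OY1}) than the paper's formulation does.
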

\begin{proof}
The localized equivariant homology $H_*^{\G_m}(\Sp_\gamma)$ affords the unique finite-dimensional representation of $\eee \HH_{-\frac{kn+1}{n}}\eee$ as was checked in \cite{OY, VVfd}. By Lemma \ref{lem:support}, $\cF_\gamma$ is supported on this punctual Hilbert scheme (i.e. the corresponding fiber of the Hilbert-Chow map). Completing our $\Z$-algebra at a neighborhood of the identity in $T^\vee$, we get a completion of the rational Cherednik algebra and the corresponding bimodules, for $\GL_n$ with parameters given by integral shifts of $(kn+1)/n$. The Gordon-Stafford construction then implies \cite{GS2} that the corresponding sheaf on the punctual Hilbert scheme coincides with $\cO(k)$.

\end{proof}
\begin{proposition}
\label{prop:procesi}
Let $\gamma$ be homogeneous of slope $k$, or more generally equivalued of valuation $k$. Then $\cF_\gamma^{T}$ is isomorphic to $\cP\otimes \cO(k)$ where $\cP$ is the Procesi sheaf on $\Hilb^n(\C^\times\times \C)$.
\end{proposition}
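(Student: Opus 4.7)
The plan is to first reduce to the valuation zero case using Theorem~\ref{thm:mainthm}(1). That theorem gives the tautological twist $\cF_{t\gamma} \cong \cO(1) \otimes \cF_\gamma$, so iterating we obtain $\cF_{t^k\gamma_0} \cong \cO(k) \otimes \cF_{\gamma_0}$ whenever $\gamma$ is $G(\cK)$-conjugate to $t^k \gamma_0$. Since any equivalued element of valuation $k$ is conjugate to such a $t^k\gamma_0$ with $\gamma_0$ equivalued of valuation zero (i.e., unramified regular semisimple), and since $\cF_\gamma$ depends only on the conjugacy class of $\gamma$, it suffices to prove $\cF_{\gamma_0} \cong \cP$ in the unramified case.

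For such unramified $\gamma_0$, the centralizer contains a split maximal torus $T$ whose cocharacter lattice $X_*(T)$ acts freely on each $\Sp_{t^j\gamma_0}$ with finite-type quotient. The ordinary Borel--Moore homology $H_*(\Sp_{t^j\gamma_0})$ is then finitely generated over $\C[T^\vee]$, and using Lemma~\ref{lem: spherical antispherical} one has $H_*(\tSp_{t^j\gamma_0}) \cong \C[W] \otimes H_*(\Sp_{t^j\gamma_0})$ as a $W$-module. I would invoke \cite{Kivinen}, where the second author computed these (equivariant) Borel--Moore homologies in the unramified setting and identified the graded $\bigoplus_d {}_0\cA^{\hbar=0}_d$-module
\[
\bigoplus_{j\ge 0} H_*(\Sp_{t^j\gamma_0})
\]
with the global sections of $\bigoplus_k \cP \otimes \cO(k)$ on $\Hilb^n(\C^\times \times \C)$. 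Taking $\Proj$ and using Haiman's identification $\Hilb^n(\C^\times\times\C) = \Proj \bigoplus_d A^d$ (Theorem~\ref{thm: intro hilb}) yields $\cF_{\gamma_0} \cong \cP$.

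To make the previous paragraph work in our $\Z$-algebra framework, I need to match module structures. By Theorem~\ref{thm:Z-algebra-iso}, the Coulomb $\Z$-algebra for $GL_n$ is the bimodule $\Z$-algebra generated by antisymmetric polynomials $A \subset \C[T^\vee\times \ft]$, so it is enough to check that the BFN convolution action of Theorem~\ref{thm:action} in degree one coincides with polynomial multiplication by elements of $A$ on Haiman's model of $\bigoplus_k H^0(\Hilb^n(\C^\times\times\C),\cP\otimes \cO(k))$. The Springer-side degree one action is the one in \eqref{eq: intro A acts}, built from Lemma~\ref{lem: spherical antispherical}, Lemma~\ref{lem: Vandermonde} and Yun's dDAHA action; under the fixed-point localization of Lemma~\ref{lemma:minusculelocalizationformula} it reduces, in the unramified setting, to an explicit matrix acting on the lattice-indexed basis, which one recognizes as multiplication by $\Delta_S$-type antisymmetrizations from Section~\ref{sec: explicit schur} on the diagonal coinvariants.

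The main obstacle is pinning down the identification on the nose rather than up to an overall line-bundle twist and $S_n$-character: the Procesi bundle is characterized by its fibers being the regular representation of $S_n$ together with a normalization, and reconciling the normalization coming from BFN Springer theory (which carries a loop-rotation and a flavor weight) with the one used in \cite{Kivinen} and in Haiman's isospectral construction requires careful bookkeeping of degree shifts and equivariant weights. Once these match in a single value of $j$, the twist property from Theorem~\ref{thm:mainthm}(1) propagates the identification to all $j$ and in particular recovers the full family $\cO(k)\otimes \cP$.
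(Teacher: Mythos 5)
Your overall strategy (reduce to the valuation-zero unramified case, invoke \cite{Kivinen}, invoke Haiman, match the degree-one action of $A$ with multiplication by antisymmetric polynomials) is the same as the paper's, modulo your extra reduction to $k=0$ via the twist of Theorem~\ref{thm:mainthm}(1), which is valid but unnecessary since the cited result handles all valuations $k$ uniformly. But there is a genuine gap in the choice of homology theory.

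You work with the \emph{ordinary} Borel--Moore homology $H_*(\Sp_{t^j\gamma_0})$, while the proof requires the $T$-equivariant version $H^T_*(\Sp_{t^j\gamma_0})$ for $T$ the split maximal torus in the centralizer of the unramified element. This is not a bookkeeping detail: the parenthetical reference to Theorem~\ref{thm:mainthm} shows $\cF_\gamma$ depends on the compact subgroup $L_\gamma$ you choose, and these choices yield genuinely different sheaves. By Lemma~\ref{lem:support}, the quasi-coherent sheaf coming from ordinary homology is supported set-theoretically on the Lagrangian $\{0\}\times T^\vee/W\subset T^*T^\vee/W$, so its pullback to $\Hilb^n(\C^\times\times\C)$ is supported on the corresponding divisor, which cannot be the Procesi bundle. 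Only the $T$-equivariant homology, which carries a free $\C[\ft]^W$-action by equivariant formality, produces a module whose support is the whole $T^*T^\vee/W$, and that is the sheaf identified with $\cP\otimes\cO(k)$ in \cite{Kivinen}. Your displayed module $\bigoplus_j H_*(\Sp_{t^j\gamma_0})$ is therefore the wrong object; the ``(equivariant)'' aside suggests you may have sensed this, but the argument as written does not compute with the correct space.

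A smaller issue: the claim $H_*(\tSp_{t^j\gamma_0})\cong\C[W]\otimes H_*(\Sp_{t^j\gamma_0})$ as a $W$-module does not follow from Lemma~\ref{lem: spherical antispherical}, which only identifies the $W$-invariant and sign-isotypic pieces. It is true here (the fibers of $\tSp\to\Sp$ in the unramified case are full flag varieties or $G/T$, yielding the regular representation), but it needs a separate argument. Finally, the localization bookkeeping you flag as the ``main obstacle'' is handled in \cite{Kivinen} by directly matching the GKM presentation of the equivariant homology to Haiman's presentation of the isospectral Hilbert scheme ring; once the degree-one action of antisymmetric polynomials agrees (which is the content of the last step of the paper's proof), generation in degree one from Theorem~\ref{thm: properties of coulomb algebra}(d) finishes the argument without any further normalization work.
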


\begin{proof}
For equivalued $\gamma$ of valuation $k$ the main result of \cite{Kivinen} identifies the equivariant Borel-Moore homology $H^T_*(\Sp_{\gamma})$ with the space of global sections of $\cP\otimes \cO(k)$ on $\Hilb^n(\C^\times\times \C)$ as a module over the algebra of global functions 
$$
{}_0\cA_{0}=\C[T^*T^{\vee}]^W=\C[x_1^{\pm},\ldots,x_n^{\pm},y_1,\ldots,y_n]^{S_n},
$$
By the work of Haiman \cite{Haiman} we get: 
\begin{equation}
\label{eq: haiman}
H^0(\Hilb^n(\C^\times\times \C),\cP\otimes \cO(k))=\bigcap_{i\neq j}\langle 1-x_i/x_j,y_i-y_j\rangle ^k,\quad H^i(\Hilb^n(\C^\times\times \C),\cP\otimes \cO(k))=0,\quad i>0.
\end{equation}
By Theorem \ref{thm: properties of coulomb algebra} the graded algebra ${}_{0}\cA_{\bullet}$ is generated by the degree 1 component ${}_{0}\cA_{1}=A$, where 
$A$ is the space of antisymmetric polynomials in $\C[x_1^{\pm},\ldots,x_n^{\pm},y_1,\ldots,y_n]$. 

It is easy to see that by \eqref{eq: haiman} we have a correctly defined map
$$
A\otimes H^0(\Hilb^n(\C^\times\times \C),\cP\otimes \cO(k))\to H^0(\Hilb^n(\C^\times\times \C),\cP\otimes \cO(k+1)),
$$
and it follows from \cite{Kivinen} that this agrees with the convolution ${}_{0}\cA_{1}\otimes H^T_*(\Sp_{\gamma})\to H^T_*(\Sp_{t\gamma})$. This completes the proof.
\end{proof}

For general $\gamma$ elliptic of slope $\frac{m}{n}$, the situation is as follows. Whilst our construction gives a sheaf $\cF_\gamma$, which is coherent by Proposition \ref{prop:fin-gen-ell}, we do not know how to identify this sheaf on $\Hilb^n(\C^\times\times \C)$. Indeed, a variant of this problem already appears in \cite[Problem 5.5]{GS2}.

\subsection{Beyond type A}

For general $G$, both the computations of the cohomology of affine Springer fibers and the sheaves on $\widetilde{\Comm}_{G^{\vee},\fg^{\vee}}$ are very complicated. It would be for example interesting to compute the sheaf one gets from the Bernstein-Kazhdan example of \cite[Appendix]{KL88}. Nevertheless, we have the following analogue of Proposition \ref{prop:procesi} for general $G$.

\begin{theorem}
\label{thm: integral slope any type}
Let $G$ be arbitrary and  $\gamma$ equivalued of valuation $k$. Then we have the isomorphism of graded modules
$$
\bF_{\gamma}=\bigoplus_{j=0}^{\infty} H^T_*(\Sp_{t^j\gamma}) \simeq \bigoplus_{j=0}^{\infty} \bigcap_{\alpha\in\Phi^+}\langle 1-\alpha^\vee, y_\alpha\rangle^{k+j}
$$
over the graded algebra $\bigoplus {}_0 \cA(0)_d\simeq \bigoplus \eee_d \bigcap_{\alpha\in\Phi^+}\langle 1-\alpha^\vee, y_\alpha\rangle^{d}$. As a consequence, the  corresponding sheaves over $\Proj \bigoplus {}_0 \cA(0)_d =\widetilde{\Comm}_{G^{\vee}}$ are isomorphic as well.  
\end{theorem}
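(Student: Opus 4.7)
The plan is to adapt the argument of Proposition \ref{prop:procesi} to general $G$, replacing Haiman's identification of sections of the Procesi bundle with the direct symbolic-power description provided by Theorem \ref{thm: coulomb is symbolic}. Observe first that $t^j\gamma$ is equivalued of valuation $k+j$ for every $j\geq 0$, so it suffices to produce an isomorphism $H^T_*(\Sp_{t^j\gamma})\simeq I^{(k+j)}$ of $\C[T^*T^\vee]^W$-modules that is natural in $j$. The argument of \cite{Kivinen}, which gave this statement for $G=GL_n$ via Haiman, extends to general $G$: the compact torus acts on $\Sp_{t^j\gamma}$ with isolated fixed points indexed by a subset of the affine Weyl group, and equivariant localization embeds $H^T_*(\Sp_{t^j\gamma})$ into $\C[T^*T^\vee]$. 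Pole-order analysis at each root hyperplane, performed on a rank one subgroup of $G$ centralizing a generic lift of the hyperplane, then identifies the image with $I^{(k+j)}$.

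Next, by Theorem \ref{thm: coulomb is symbolic} we have ${}_0\cA^{\hbar=0}_d\simeq \eee_d I^{(d)}$, and the graded multiplication on $\bigoplus_d {}_0\cA^{\hbar=0}_d$ corresponds to ordinary polynomial multiplication inside $\C[T^*T^\vee]$. What remains is to check that the convolution action from Theorem \ref{thm:action} on $\bigoplus_j H^T_*(\Sp_{t^j\gamma})$ coincides with polynomial multiplication on $\bigoplus_j I^{(k+j)}$ under these identifications. For this one applies the localization formula of Proposition \ref{prop:noncomm localization for any group} specialized to $\hbar=c=0$, and compares the resulting fixed-point expressions on both sides. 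In each localized chart the convolution is a sum over cocharacters whose coefficients match the corresponding coefficients for polynomial multiplication, and the verification reduces to the rank one case handled by Corollary \ref{cor: rank 1}.

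Finally, to promote localized agreement to an isomorphism of graded modules, both $\bigoplus H^T_*(\Sp_{t^j\gamma})$ and $\bigoplus I^{(k+j)}$ are torsion-free over $\C[\ft]^W$, so a map which is an isomorphism on the complement of a codimension two subset is an isomorphism everywhere, exactly as in the last step of the proof of Theorem \ref{thm: coulomb is symbolic}. The main obstacle is that the graded algebra $\bigoplus_d {}_0\cA^{\hbar=0}_d$ is not generated in degree one outside type $A$, so the clean reduction used in Proposition \ref{prop:procesi} via the action of $A$ (as in \eqref{eq: intro A acts}) is unavailable. Instead, the rank one localization has to be carried out in every degree $d$ separately, rather than being bootstrapped from $d=1$ by multiplicativity.
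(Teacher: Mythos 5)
Your proposal takes essentially the same route as the paper: reduce to the per-$j$ identification from \cite{Kivinen}, invoke Theorem~\ref{thm: coulomb is symbolic}, and compare convolution against polynomial multiplication via torus fixed-point localization, with the rank-one case as the decisive check. The one substantive detail you elide is a normalization that the paper is careful about: what GKM localization identifies with $\bigcap_{\alpha\in\Phi^+}\langle 1-\alpha^\vee, y_\alpha\rangle^{k+j}$ is $\Delta^j H^T_*(\Sp_{t^j\gamma})$, not $H^T_*(\Sp_{t^j\gamma})$ itself. This $\Delta^j$ twist is precisely what lets the convolution action (which produces Euler-class denominators that shift with $j$) be matched with honest polynomial multiplication inside $\C[T^*T^\vee]\cong H_*^T(\Gr_T)$; the paper bundles this bookkeeping into the citation of \cite[Prop.~4.15]{GK}, whereas you propose to verify it by hand via Proposition~\ref{prop:noncomm localization for any group} and the rank-one Corollary~\ref{cor: rank 1}. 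That can be made to work, but you should be explicit about where the $\Delta$ factors live and how they cancel between the two sides; otherwise the ``coefficients match'' claim is not quite a proof. You are right that the failure of degree-one generation outside type $A$ forces a degree-by-degree rank-one reduction rather than bootstrapping from $d=1$, and this matches how Theorem~\ref{thm: coulomb is symbolic} is actually proved.
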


\begin{proof}
The proof is similar to Proposition \ref{prop:procesi}. By the main result of \cite{Kivinen} the isomorphism holds for each $j$ separately  on the level of modules over ${}_0 \cA(0)_0\cong \C[T^*T^{\vee}]^W$. The comparison of the action of ${}_0 \cA(0)_d$ follows from Theorem \ref{thm: coulomb is symbolic} and the constructions in \cite{Kivinen, GK}. More precisely, the result in \cite{Kivinen} identifies $\Delta^j H^T_*(\Sp_{t^j\gamma})$ with
$\bigcap_{\alpha\in\Phi^+}\langle 1-\alpha^\vee, y_\alpha\rangle^{k+j}$ 
inside $\C[T^*T^\vee]\cong H_*^T(\Gr_T)$ using GKM localization. The latter has a multiplication structure which coincides with convolution on the Coulomb branch for $T$ with zero matter. The fact that the convolution action for ${}_0 \cA(0)_0$ respects the localization is \cite[Proposition 4.15.]{GK}.
\end{proof}

Let \(G\) be quasisimple of adjoint type. Respectively, let \(cox\in W\) be the Coxeter element of the Weyl group of \(G\) and \(n\) be the order of \(cox\). For any \(m\) co-prime with \(n\) there is a regular semisimple element
\(\gamma_{m,n}\in \fg(\mathcal{O})\) which is homogeneous: \(\gamma_{m,n}(\lambda\cdot t)=\lambda^{m/n}\Ad_{g(\lambda)}\gamma_{m,n}(t) \), \(g(\lambda)\in G\). The element \(\gamma_{m.n}\) is unique up to
rescaling and conjugation, an explicit construction of \(\gamma_{m,n}\) can found for example in \cite{OY}. The element \(\gamma_{m,n}\) is equivalued of valuation
\(m/n\).

The stabilizer in $\tG_\cK\rtimes \G_m$ is given by \(L_{\gamma_{m,n}}=\mathbb{G}_m\), and it acts naturally on \(\Sp_{\gamma_{m,n}}\). It is shown in \cite{OY} that \(\dim \Sp_{\gamma_{m,n}}^{\mathbb{G}_m}=0\), the fixed points are isolated
and that the localized homology \(H_*^{\mathbb{G}_m}(\Sp_{\gamma_{m,n}})\otimes \C(\hbar)\) is generated by tautological classes \(H^*_{\mathbb{G}_m}(\Gr_G)\) from
the fundamental class \([\Sp_{\gamma_{m,n}}]\). We expect the generation statement in the non-equivariant setting:
\begin{conjecture}\label{conj:gen_general}
Let \(G\), \(\mathrm{Lie}(\fg)\),\(\gamma_{m,n}\in \fg(\cO)\) are as above, then \[H_{*}(\Sp_{\gamma_{m,n}})=H^*(\Gr_G)\cap [\Sp_{\gamma_{m,n}}].\]
\end{conjecture}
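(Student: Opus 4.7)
The plan is to deduce Conjecture \ref{conj:gen_general} from the equivariant generation statement of Oblomkov--Yun by a specialization $\hbar \to 0$. Let $\G_m = L_{\gamma_{m,n}}$ denote the one-parameter subgroup stabilizing $\gamma_{m,n}$, coming from its homogeneity. Since $\gamma_{m,n}$ is elliptic, $\Sp_{\gamma_{m,n}}$ is projective, and the fixed locus $\Sp_{\gamma_{m,n}}^{\G_m}$ is finite by \cite{OY}. The first step is to establish equivariant formality:
$$H_*^{\G_m}(\Sp_{\gamma_{m,n}}) \;\cong\; H_*(\Sp_{\gamma_{m,n}}) \otimes_{\C} \C[\hbar]$$
as graded $\C[\hbar]$-modules, compatibly with the cap-product action of $H^*_{\G_m}(\Gr_G)$ pulled back via $\Sp_{\gamma_{m,n}} \hookrightarrow \Gr_G$. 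This is a consequence of a Bia\l ynicki-Birula-style affine paving of $\Sp_{\gamma_{m,n}}$ indexed by the $\G_m$-fixed points, classical in type $A$ via \cite{OY1} and work of Hikita, and expected to hold for homogeneous elements in all types.

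Next, I would invoke the localized generation result of \cite{OY, VVfd}: the cap-product map
$$\phi: H^*_{\G_m}(\Gr_G) \longrightarrow H_*^{\G_m}(\Sp_{\gamma_{m,n}}), \qquad \xi \longmapsto \xi \cap [\Sp_{\gamma_{m,n}}],$$
becomes surjective after inverting $\hbar$. Equivalently, $\mathrm{coker}(\phi)$ is an $\hbar$-torsion graded $\C[\hbar]$-module, while the target is $\hbar$-torsion-free by Step 1. Once we establish $\mathrm{coker}(\phi) = 0$ over $\C[\hbar]$, the conjecture follows by setting $\hbar = 0$ and invoking the formality isomorphism of Step 1.

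The main obstacle is precisely this upgrade from $\C(\hbar)$-cyclicity to $\C[\hbar]$-cyclicity. I would pursue two parallel strategies. The first is a graded Hilbert series comparison: \cite{OY, VVfd} identify $H_*^{\G_m}(\Sp_{\gamma_{m,n}})$ with the irreducible finite-dimensional module $L_{m/n}$ of the trigonometric Cherednik algebra, whose graded character is given by an explicit rational Coxeter--Catalan type formula. Matching this against the Poincar\'e polynomial of $H^*_{\G_m}(\Gr_G)/\mathrm{Ann}([\Sp_{\gamma_{m,n}}])$, computed from the $\G_m$-weights on the Bia\l ynicki-Birula cells, would force $\phi$ to be surjective over $\C[\hbar]$. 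The second, more geometric strategy is to exhibit explicit tautological classes $\xi_1, \dots, \xi_N \in H^*_{\G_m}(\Gr_G)$ whose restrictions to $\Sp_{\gamma_{m,n}}^{\G_m}$ are linearly independent over $\C[\hbar]$, so that a GKM-style argument forces $\{\xi_i \cap [\Sp_{\gamma_{m,n}}]\}$ to be a $\C[\hbar]$-basis of $H_*^{\G_m}(\Sp_{\gamma_{m,n}})$; this is the route followed implicitly in \cite{OY1} leading to Theorem~\ref{thm:mn} for $G = GL_n, SL_n$, and its extension to other types requires genuine new combinatorics controlling $W$-symmetrized monomials against the localization ideals $\langle 1-\alpha^\vee, y_\alpha\rangle^{\bullet}$ appearing in Theorem~\ref{thm: coulomb is symbolic}.

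Granted either route, $\phi$ is surjective over $\C[\hbar]$, and setting $\hbar = 0$ yields Conjecture \ref{conj:gen_general}. Combined with Proposition \ref{prop:fin-gen-ell}, whose hypothesis then follows, this would in addition settle the finite generation Conjecture \ref{conj: fin gen} for $\bF_{\gamma_{m,n}}$ in all adjoint types (subject to Conjecture \ref{conj: regular}), extending Theorem \ref{thm:mn} beyond type $A$.
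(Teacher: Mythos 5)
You are being asked to prove a statement that the paper labels explicitly as a \emph{conjecture}, not a theorem. The paper offers no proof of Conjecture~\ref{conj:gen_general} beyond citing the localized result of \cite{OY,VVfd} (that $H_*^{\G_m}(\Sp_{\gamma_{m,n}})\otimes\C(\hbar)$ is cyclic over $H^*_{\G_m}(\Gr_G)$) and remarking that the non-equivariant generation statement is ``expected.'' The only cases known to the authors are in type~$A$: in the proof of Theorem~\ref{thm:mn} they record that \cite{OY1} establishes Conjecture~\ref{conj:gen_pgl} (which is exactly Conjecture~\ref{conj:gen_general} for $G=PGL_n$) for homogeneous elements, and they bootstrap this to $GL_n$ and $SL_n$. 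There is accordingly no paper proof against which to compare your argument.

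That said, your proposal does correctly reconstruct the paper's own picture of what is known and what is open. You identify the localized $\C(\hbar)$-generation result as the starting point, and you correctly locate the obstruction: surjectivity of the cap-product map over $\C[\hbar]$ (equivalently, after setting $\hbar=0$) does not follow formally from surjectivity after inverting $\hbar$. Your observation that the cokernel being $\hbar$-torsion does not by itself kill it is exactly where the argument stalls --- a torsion quotient of a torsion-free module can certainly be nonzero. The two strategies you sketch (Hilbert series matching against the Coxeter--Catalan character of $L_{m/n}$, or an explicit GKM-basis argument via the localization ideals of Theorem~\ref{thm: coulomb is symbolic}) are plausible attack routes, and the second is essentially how \cite{OY1} proceeds in type~$A$; but neither is carried to completion here, and the extension of the relevant combinatorics outside type~$A$ is, as you yourself note, genuinely open. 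So what you have is an accurate roadmap that reaches the same wall the paper acknowledges, not a proof. If you wish to make progress, the most concrete next step would be the Hilbert-series comparison in a small non-type-$A$ example (say $G_2$ with the Coxeter number as denominator) to see whether the integral generation can at least be verified there.
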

Note also that this ''Coxeter case" gives the so called spherical simple modules of the trigonometric DAHA, as first observed in \cite{VVfd}. More generally, the slopes with so called regular elliptic denominators yield (spherical and other) finite-dimensional modules of the trigonometric DAHA \cite{VVfd,OY}. Since $\gamma$ elliptic implies $t\gamma$ elliptic, one sees that the tensor products by the shift bimodules ${}_{i-1}\cB_i$ send finite-dimensional modules to finite-dimensional modules, which one could also deduce from the theory of shift functors for trigonometric DAHA like in \cite{BEG}. As far as the authors are aware, this theory is still undeveloped (but see \cite{WLiu} for some progress), but would potentially give insight on the $m=1, n=h$ case of Proposition \ref{prop:punctualexample} for other groups.

\end{document}